\newtheorem{theorem}{Theorem}[section]
\newtheorem{lemma}[theorem]{Lemma}
\newtheorem{corollary}[theorem]{Corollary}
\newtheorem{proposition}[theorem]{Proposition}
\theoremstyle{remark}
\theoremstyle{definition}
\newtheorem{example}[theorem]{Example}
\numberwithin{equation}{section}
\DeclareMathOperator{\Cdb}{{\mathbb C}}
\DeclareMathOperator{\Rdb}{{\mathbb R}}
\DeclareMathOperator{\Zdb}{{\mathbb Z}}
\DeclareMathOperator{\Ndb}{{\mathbb N}}
\begin{document}

\title[Positivity and approximate identities in Banach algebras]{Real 
positivity and approximate identities
in Banach algebras}
\thanks{The first author was supported by a grant from the NSF.   The second author was supported by JSPS KAKENHI Grant Number 26400114.  Some of this material was presented at the 7th Conference on Function Spaces, May 2014, and at the AMS National meeting in January 2015.} 
\subjclass{Primary 46H10, 46H99,   46J99,  47L10, 47L30; Secondary 47L75.}
\keywords{Banach algebra, approximate identity, unitization, real-positive, states, quasistate, ideals,
hereditary subalgebra, ordered linear spaces, M -ideal, accretive operator, sectorial operator,
operator roots, noncommutative Tietze theorem.}
\author{David P. Blecher}
\address{Department of Mathematics, University of Houston, Houston, TX
77204-3008, USA}
\email[David P. Blecher]{dblecher@math.uh.edu}

\author{Narutaka Ozawa}
\address{Research Institute for Mathematical Sciences, Kyoto University, Kyoto 606-8502,
JAPAN}
\email[Narutaka Ozawa]{narutaka@kurims.kyoto-u.ac.jp}

\begin{abstract}  Blecher and Read have recently introduced and studied a new 
notion of positivity in operator algebras, with an eye to
extending certain $C^*$-algebraic results and theories to more  general algebras.     In the present paper we generalize some part 
of this, and some other facts,  to larger classes 
of Banach algebras.    
  \end{abstract}

\maketitle

\section{Introduction}

An {\em operator algebra} is a closed subalgebra of $B(H)$, for a
complex Hilbert space $H$.  Blecher and Read  recently introduced and studied a new notion of positivity in operator algebras \cite{BRI,BRII,Bord,Read} (see also \cite{BNI,BNII,BBS,BHN}), with an eye to
extending certain $C^*$-algebraic results and theories to more  general algebras.    Over the last several years we have mentioned in lectures on this work that most of  the results of those papers make sense for bigger classes of Banach algebras, and that many of the 
tools and techniques exist there.   In the present paper we initiate this direction.  Thus we 
generalize a number of the main results from the series of papers mentioned above, and some
other facts, to a larger class
of Banach algebras.
 In the process we give simplifications of several facts in these earlier papers.      We will also point out 
some of the main results from the series of papers mentioned above
which do not seem to generalize, or  are  less tidy if they do.   (We will not spend much time  discussing aspects
from that series concerning 
noncommutative peak interpolation, or generalizations of noncommutative topology such as the 
noncommutative Urysohn lemma; these seem unlikely to generalize much farther.)

  Before we proceed we make an editorial/historical note:  The preprint \cite{BRIII},
 which contains many of the basic ideas and facts which we use here, 
 has been split into several papers, which have each taken on 
a life of their own (e.g.\ \cite{Bord} which focuses on operator algebras,
and the present paper in the setting of Banach algebras).     

In this paper we are 
interested in Banach algebras $A$ (over the complex field) with a bounded  approximate
identity (bai).   In fact often there will be a contractive approximate
identity (cai), and in this case we call $A$ an  {\em approximately unital}
Banach algebra.   A Banach algebra with an identity of norm $1$ will be called {\em unital}.  Most of our results are 
stated for approximately unital algebras.   Frequently this is simply because algebras in this class 
have an especially nice `multiplier unitization' $A^1$,  defined below, and a large portion of our constructs are defined in terms of $A^1$.    Also   approximately unital algebras constitute a strong platform for the simultaneous
generalization of as much as possible from the series of papers alluded to above, e.g.\ \cite{BHN,BRI,BRII,Bord}.
However, as one might expect, for algebras without any kind of 
approximate identity it is easy to derive variants of a large portion of 
our results (namely,  almost all of Sections 3, 4, and 7), 
by viewing the  algebra as a subalgebra of a unital Banach algebra (any unitization for example).
We will discuss this point in more detail in the final Section 9 and in a
forthcoming conference proceedings survey article \cite{B2015}.  

 Indeed many of our results are stated for special classes of Banach algebras, for example for Banach algebras with a sequential 
cai, or which are Hahn-Banach smooth in a sense defined later.  Several of the results are sharper 
for {\em $M$-approximately unital Banach algebras}, which means that $A$ is an $M$-ideal in 
its multiplier unitization $A^1$ (see Section 2).
This is equivalent to saying that $A$ is approximately unital 
and for all $x \in A^{**}$ we have $\Vert 1 - x \Vert_{(A^1)^{**}} = \max 
\{ \Vert e - x \Vert_{A^{**}} , 1 \}$.   Here $e$ is the identity  for $A^{**}$ if it has one (otherwise 
it is a `mixed identity' of norm 1--see below for the definition of this).
However as will be seen from the proofs, some of
the results involving the $M$-approximately unital hypothesis will work under weaker assumptions, for example,  {\em strong 
proximinality} of $A$ in $A^1$ at $1$ (that is, given $\epsilon > 0$ there exists a $\delta > 0$
such that if  $y \in A$ with $\Vert 1 - y \Vert < 1 + \delta$ then there is a $z \in A$
with $\Vert 1 - z \Vert = 1$ and $\Vert y - z \Vert < \epsilon$).  

We now outline the structure of this paper, describing each section briefly.  Because our paper 
is rather diverse, to help the readers focus we will also mention at least 
one highlight from each section.  In Section 2 we discuss unitization and states, and also  introduce some  classes
of Banach algebras.   A key result in this section
ensures the existence of a `real positive' cai in Banach algebras with a countable cai satisfying a reasonable extra condition.   We also characterize this extra condition, and the related property that the quasi-state space be weak* closed and convex.  In the latter setting
by the bipolar theorem there exists a `Kaplansky density theorem'.  (Conversely, such a density result often 
immediately gives a real positive approximate identity by weak* approximating an identity in the bidual
 by real positive elements in $A$, and using e.g.\ Lemma \ref{netwee} below.)  
 Section 3 starts by generalizing many of the 
basic ideas from the Blecher-Read papers cited above involving cai's, roots, and positivity.
With these in place, we  give several applications of the kind found in those 
papers, for example we characterize when $xA$ is closed in terms of `generalized invertibility' of  the
`real positive'  element $x$; and show that 
these are the right ideals $qA$ for a `real positive' idempotent $q$ in $A$.
We also list several examples illustrating some of the
 things from the cited series of papers that will break down without further restrictions on  the
class of Banach algebras considered.   The main advance in Section 4 is the introduction 
of the concept of {\em hereditary subalgebra} (HSA), an important tool 
in $C^*$-algebra theory, to  Banach algebras, and  establishing the basics of their
theory.  In  particular we study 
the relationship between HSA's and 
  one-sided ideals
with one-sided approximate identities.   
Some aspects of this relationship 
is problematic for general Banach algebras, but 
it works much better in 
separable algebras, as we shall see.   We characterize the HSA's, and the associated 
class of one-sided ideals, as increasing unions of `principal' ones; and indeed in the separable case
they are exactly the `principal' ones.   Indeed it is obvious that in a Banach algebra 
 $A$ every closed right ideal with a `real positive' 
left bai is of the form $\overline{EA}$ for a set $E$ of real positive elements of $A$.   
Section 4 contains an Aarnes-Kadison type 
theorem for Banach algebras, and related results that use the Cohen's factorization
proof technique.  Some similar results and ideas have been found by Sinclair
(in [Sinclair, 1978] for example), but these are somewhat different, and were not
directly connected to `positivity'.   It is interesting though that Sinclair was
inspired by papers of Esterle based on the Cohen's factorization proof technique, and
one of these does have some connection to our notion of positivity \cite{Est}.

In Section 5 we consider the better behaved class
of  $M$-approximately unital  Banach algebras.  The main result here
is the  generalization of  Read's theorem from \cite{Read} 
to this class.  That is, such algebras have  cai's $(e_t)$ satisfying $\Vert 1 - 2 e_t \Vert \leq 1$.
This may be  the class  to which the most 
results from our previous operator algebra papers will generalize, as we shall see at points throughout our paper.
In Section 6 we show that basic
aspects and notions from the classical theory of ordered linear spaces correspond to interesting facts about
our `positivity', for our various classes of approximately unital  Banach algebras (for example,
for $M$-approximately unital algebras, or certain algebras with a sequential cai).   Indeed 
the  highlight of this section
is the revealing of interesting connections  between Banach algebras and this classical ordered linear theory (see also
\cite{Bord} for more, and clearer, such connections if the algebras are in addition operator algebras).
In the  process we 
generalize several basic facts about
$C^*$-algebras.   For example we give the aforementioned variant of Kaplansky's density theorem, and variants
of several well known order-theoretic properties 
of the unit ball of a $C^*$-algebra and its dual.

In Sections 7 and 8 we find variants for 
approximately unital  Banach algebras of  several other results about two-sided ideals from  \cite{BRI,BRII,Bord}. 
In Section 7 we assume that  $A$ is commutative, and in this case 
 we are able to establish the converse of the last result mentioned in our description of 
Section 4 above.    Thus closed ideals having a `real positive' bai, in a commutative Banach algebra $A$,
are precisely the spaces 
$\overline{EA}$ for sets $E$ of real positive elements of $A$.
In Section 8 we only consider ideals that are $M$-ideals in $A$
(this does generalize the operator algebra case at least for two-sided ideals, since the closed two-sided ideals with cai in an operator algebra
are exactly the $M$-ideals \cite{ER}).   The lattice theoretic properties
of such ideals behaves considerably more like the $C^*$-algebra case, and is related to faces in the quasi-state space.
Section 8 may be considered to be a continuation of the study of $M$-ideals in Banach algebras
initiated  in \cite{SW1,SW2,SW3} and e.g.\ \cite[Chapter V]{HWW}.    At the end of this section
we give  a `noncommutative peak interpolation' result reminiscent of Tietze's extension theorem, which is based on a remarkable result of Chui-Smith-Smith-Ward \cite{CSSW}.
This solves an open problem from \cite{BRIII} or earlier concerning real positive elements in a quotient.  
 Finally, in Section 9 we discuss which results from earlier sections
generalize to algebras without a cai; more details on this are given in \cite{B2015}.
The latter is a survey article which also contains a few additional
details on some of the material in the present paper, as well as some
small improvements found after this paper was in press.

We now list some of our notation and general facts: We write ${\rm Ball}(X)$ for the set $\{ x \in X : \Vert x \Vert \leq 1 \}$.  If $E, F$ are sets then 
$EF$ denotes the span of products $xy$ for $x \in E, y \in F$.  If $x \in A$
for a Banach algebra $A$, then ba$(x)$ denotes the closed subalgebra generated
by $x$.
  For two spaces $X, Y$ which are in duality,
for a subset $E$ of $X$ we use the polar  $E^\circ = \{ y \in Y : \langle x, y \rangle \geq -1 \;
\text{for all} \; x \in E \}$.

For us Banach algebras satisfy $\Vert xy \Vert \leq \Vert x \Vert \Vert y \Vert$.  We recall that a nonunital Banach algebra  $A$ is Arens regular iff its unitization is Arens regular (any unitization
will do here). 
   In the rest of this paragraph we consider an Arens regular 
approximately unital  Banach algebra $A$.   For such an algebra we 
will always write $e$ for the unique identity of $A^{**}$.  Indeed if $A$ is an Arens regular Banach algebra with cai $(e_t)$,
and $e_{t_\mu} \to \eta$ weak* in $A^{**}$, then $e_{t_\mu} a
\to \eta a$ weak* for all $a \in A$.  So $\eta a = a$, and similarly $a \eta = a$.
Therefore $\eta$ is the unique identity $e$ of $A^{**}$,
and $e_t  \to e$ weak*.    We will show at the end of this section that
the `multiplier unitization'  $A^1$ is  isometrically isomorphic to the subalgebra
$A + \Cdb e$ of $A^{**}$.
  
  If $A$ is a Banach algebra which is not Arens regular, then the multiplication we usually use
on $A^{**}$ is the  `second Arens product' ($\diamond$ in the notation of \cite{Dal}).
This is weak* continuous in the second variable.
 If $A$ is a nonunital, not necessarily Arens regular,  Banach algebra with a bai, 
then $A^{**}$ has a so-called `mixed identity'  \cite{Dal,Pal,DW}, which we will again write as $e$.  This is a right  identity for 
the first Arens product, and a left identity for 
the second Arens product.   A mixed identity need not
be unique, indeed  mixed identities are just the weak* limit points of bai's for $A$. 

 We will also use the theory of $M$-ideals.  These
were invented by Alfsen and Effros,
and  \cite{HWW} is the basic text for their theory.  We recall a subspace $E$ of a Banach space $X$ is an $M$-ideal in
$X$ if $E^{\perp \perp}$ is complemented in $X^{**}$ via a contractive projection $P$ so that 
$X^{**} = E^{\perp \perp} \oplus^{\infty} {\rm Ker}(P)$.   In this case there is a unique contractive projection  onto $E^{\perp \perp}$.
$M$-ideals have many beautiful properties, some of which will be mentioned below.

We will need the following result several  times:

\begin{lemma} \label{ban}  Let $X$ be a Banach space, and suppose that $(x_t)$ is a bounded net in $X$ with
$x_t \to \eta$ weak* in $X^{**}$.  Then $$\Vert \eta \Vert = \lim_t \, \inf \{ \Vert y \Vert : y \in {\rm conv} \{ x_j : j \geq t \} \} .$$
    \end{lemma}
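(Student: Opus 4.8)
The plan is to abbreviate $C_t = {\rm conv}\{x_j : j \ge t\}$ and $\phi(t) = \inf\{\Vert y \Vert : y \in C_t\}$, so that the right-hand side is $\lim_t \phi(t)$. First I would observe that $t \mapsto \phi(t)$ is monotone nondecreasing along the net: whenever $t' \ge t$ one has $C_{t'} \subseteq C_t$, so the infimum over $C_{t'}$ is at least that over $C_t$. Since the net is bounded, $\phi$ is bounded above by $\sup_t \Vert x_t \Vert < \infty$, so $L := \lim_t \phi(t) = \sup_t \phi(t)$ exists and is finite. It then remains to establish the two inequalities $\Vert \eta \Vert \le L$ and $L \le \Vert \eta \Vert$.

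For the first inequality I would use the duality formula $\Vert \eta \Vert = \sup\{ |\langle \eta, f \rangle| : f \in {\rm Ball}(X^*) \}$. Fix such an $f$ and $\epsilon > 0$. Since $x_j \to \eta$ weak*, there is $t_0$ with $|\langle x_j, f \rangle - \langle \eta, f \rangle| < \epsilon$ for all $j \ge t_0$; averaging over any convex combination $y = \sum_{j \ge t_0} \lambda_j x_j \in C_{t_0}$ then gives $|\langle \eta, f \rangle - \langle y, f \rangle| \le \epsilon$, whence $|\langle \eta, f \rangle| \le \Vert y \Vert + \epsilon$. Taking the infimum over $y \in C_{t_0}$ yields $|\langle \eta, f \rangle| \le \phi(t_0) + \epsilon \le L + \epsilon$, and letting $\epsilon \to 0$ and then taking the supremum over $f$ gives $\Vert \eta \Vert \le L$.

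The reverse inequality $L \le \Vert \eta \Vert$ is where the real work lies, and I expect the separation argument here to be the main obstacle. I would prove $\phi(t) \le \Vert \eta \Vert$ for each fixed $t$ by contradiction: if $\phi(t) > \Vert \eta \Vert$, pick $s$ with $\Vert \eta \Vert < s < \phi(t)$. Every $y \in C_t$ then satisfies $\Vert y \Vert \ge \phi(t) > s$, so the convex set $C_t \subseteq X$ is disjoint from the open ball of radius $s$ in $X$. Hahn--Banach separation produces a norm-one $f \in X^*$ with ${\rm Re}\,\langle y, f \rangle \ge s$ for all $y \in C_t$; in particular ${\rm Re}\,\langle x_j, f \rangle \ge s$ for every $j \ge t$. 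The delicate point is to transport this inequality to $\eta$: viewing $f$ as the element of $X^*$ paired with $X^{**}$, weak* convergence of the tail $(x_j)_{j \ge t}$ gives ${\rm Re}\,\langle \eta, f \rangle = \lim_j {\rm Re}\,\langle x_j, f \rangle \ge s$, which contradicts $|\langle \eta, f \rangle| \le \Vert \eta \Vert < s$. Hence $\phi(t) \le \Vert \eta \Vert$ for all $t$, so $L = \sup_t \phi(t) \le \Vert \eta \Vert$, and combining the two inequalities completes the proof.
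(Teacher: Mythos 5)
Your proof is correct and follows essentially the same route as the paper's: the easy inequality $\Vert \eta \Vert \le L$ via weak* convergence of convex combinations of the tail (equivalently, weak* lower semicontinuity of the norm), and the reverse inequality by Hahn--Banach separation of $C_t$ from a ball of radius strictly between $\Vert \eta \Vert$ and $\phi(t)$, transported to $\eta$ by weak* convergence. Your additional observation that $\phi$ is monotone (so the limit exists) is a small tidy-up the paper leaves implicit, but the argument is the same.
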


\begin{proof}     It is easy to see that $\Vert \eta \Vert \leq \lim_t \, \inf \{ \Vert y \Vert : y \in {\rm conv} \{ x_j : j \geq t \} \} ,$
for example by using the weak*-semicontinuity of the norm, and noting that for every $t$ and any choice $y_t \in {\rm conv} \{ x_j : j \geq t \}$,
we have $y_t \to \eta$ weak*.   By way of contradiction suppose that $$\Vert \eta \Vert < C < \lim_t \, \inf \{ \Vert y \Vert : y \in {\rm conv} \{ x_j : j \geq t \} \} .$$
Then there exists $t_0$ such that the norm closure of ${\rm conv} \{ x_j : j \geq t \} \}$ is disjoint from $C {\rm Ball}(X)$, for all $t \geq t_0$.  
By the Hahn-Banach theorem there exists $\varphi \in X^*$ with $$C \Vert \varphi \Vert < K < {\rm Re} \, \varphi(x_j) , \qquad j \geq t,$$
so that $C \Vert \varphi \Vert < K  \leq {\rm Re} \, \varphi(\eta).$   This contradicts $\Vert \eta \Vert < C$.  
\end{proof}

Any nonunital operator algebra has a unique operator algebra  unitization (see \cite[Section 2.1]{BLM}), but of course 
this is not true for Banach algebras.    We will choose to use the unitization that typically has the 
smallest norm among all unitizations, and which we now describe.    If $A$ is an approximately unital  Banach algebra, then 
the left regular representation embeds $A$ isometrically in $B(A)$.  
We will always write $A^1$ for the {\em multiplier unitization} of $A$,
that is, we identify $A^1$ isometrically with $A + \Cdb I$ in
$B(A)$.   For $a \in A , \lambda \in \Cdb$ we have
$$\Vert a  + \lambda 1 \Vert = \sup \{ \Vert a c + \lambda c \Vert : c \in {\rm Ball}(A) \} = \sup_t  \, \Vert  a e_t+ \lambda e_t \Vert
= \lim_t \, \Vert  a e_t+ \lambda e_t \Vert ,$$     
by e.g.\ \cite[A.4.3]{BLM}.  If $A$ is actually nonunital then the map $\chi_0(a  + \lambda 1)  = \lambda $ on $A^1$ is contractive, as is any character
on a Banach algebra.  We call this the {\em trivial character}.
  Below $1$ will almost always denote the identity of $A^1$, if $A$ is not already unital.   Note that the 
multiplier unitization also makes sense for the so-called {\em self-induced} Banach algebras, namely those for
which the   left regular representation embeds $A$ isometrically in $B(A)$.

If $A$ is a nonunital, approximately unital  Banach algebra
then the multiplier  unitization $A^1$ may also be identified with a subalgebra of $A^{**}$.  
Indeed if $e$ is a `mixed identity' of norm $1$ for $A^{**}$ then  $A + \Cdb e$ is then a unitization of $A$ (by 
basic facts about the Arens product).   To see that this is isometric to
$A^1$ above note that  for any $c \in  {\rm Ball}(A), a \in A, \lambda \in \Cdb$ we have 
$$\Vert a c + \lambda c \Vert \leq \Vert a  + \lambda e \Vert_{A^{**}} =  \Vert e(a  + \lambda 1) \Vert_{(A^1)^{**}} 
 \leq \Vert a  + \lambda 1 \Vert_{A^1}.$$  Thus by the displayed equation in the last paragraph
$\Vert a  + \lambda e \Vert_{A^{**}}  = \Vert a  + \lambda 1 \Vert_{A^1}$
as desired.

\section{Unitization and states}  
  
If $A$ is an approximately unital  Banach algebra,
 then  we may view 
$A$ in its multiplier unitization $A^1$, and write $${\mathfrak F}_A = \{ a \in A : \Vert 1 - a \Vert
\leq 1 \} = \{ a \in A : \Vert e - a \Vert
\leq 1 \},$$  where $e$ is as in the last paragraph (or set $e=1$ if $A$ is unital).   So   $$\frac{1}{2} {\mathfrak F}_A = \{ a \in A : \Vert 1 - 2 a \Vert
\leq 1 \}.$$    If $x \in \frac{1}{2} {\mathfrak F}_A$ then $x, 1-x \in {\rm Ball}(A^1)$.
Also,  ${\mathfrak F}_A =  {\mathfrak F}_{A^1}  \cap A$, and ${\mathfrak F}_A$ is closed under the quasiproduct $a+b - ab$.
(It is interesting that cones containing ${\mathfrak F}_A$ were used to obtain nice results about `order' in unital Banach algebras and their
duals 
in Section 1 of the historically important paper \cite{KV}, based on a 1951 ICM talk.  Slightly earlier ${\mathfrak F}_A$ also appeared in a Memoir by Kadison.) 

If $\eta \in A^{**}$ then an expression such as $\lambda 1 + \eta$ will
usually need to be interpreted as an element of $(A^1)^{**}$,
with $1$ interpreted as the identity for $A^1$ and $(A^1)^{**}$.   Thus
$\Vert 1 - \eta \Vert$ denotes $\Vert 1 - \eta \Vert_{(A^1)^{**}}$.  
We define $${\mathfrak F}_{A^{**}} = \{ \eta \in A^{**} : \Vert 1 - \eta \Vert \leq 1 \} = A^{**} \cap {\mathfrak F}_{(A^1)^{**}}.$$  
We write ${\mathfrak r}_A$ for the set of $a \in A$ whose 
numerical range in $A^1$ is contained in the right half plane.
That is,  $${\mathfrak r}_A = \{ a \in A : {\rm Re} \, \varphi(a) \geq 0 \; \text{for all} \;
\varphi \in S(A^1) \} ,$$  where
$S(A^1)$ denotes the states on $A^1$.    
Note that ${\mathfrak r}_A$
is a closed cone in $A$, but it is not 
proper (hence is what is sometimes called a {\em wedge}).   We write $a \preceq b$ if $b-a \in 
{\mathfrak r}_A$.     It is easy to see that $\Rdb^+   {\mathfrak F}_A \subset {\mathfrak r}_A$.   Conversely,
if $A$ is a unital Banach algebra and $a \in   {\mathfrak r}_A$ then $a + \epsilon 1 \in \Rdb^+   {\mathfrak F}_A$
for every $\epsilon > 0$.   Indeed $a + \epsilon 1 \in C \, {\mathfrak F}_A$ where $C = \frac{\Vert a \Vert^2}{\epsilon} +  \epsilon$, as can be easily seen from the well known fact that the numerical range of $a$ is contained in the right half plane iff $\Vert 1 - t a \Vert \leq 1 + t^2 \Vert a \Vert^2$ for all $t > 0$ (see e.g.\ 
\cite[Lemma 2.1]{Mag}).

One main reason why we almost always assume that $A$ is  approximately unital in this paper is
that  ${\mathfrak F}_A$ and ${\mathfrak r}_A$ are well defined as above.   However as we said in the introduction,
 if $A$ is not  approximately unital it is easy to see how to proceed in a large number of our
results (namely in almost all of Sections 3, 4, and 7), and this is discussed
briefly in Section 9.    

The following is no doubt in the literature, but we do not know of a reference that proves all
that is claimed.   It follows from it that mixed identities
in $A^{**}$ are just the weak* limits of bai's for $A$, when these 
limits exist.  

\begin{lemma} \label{netwee}  If $A$ is a Banach algebra, and if a bounded net $x_t \in A$ converges weak* to a mixed identity
$e \in A^{**}$, then a bai for $A$ can be found with weak* limit $e$, and formed from convex combinations of the $x_t$.  \end{lemma}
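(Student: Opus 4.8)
The plan is to recognize that the hypothesis makes $(x_t)$ a two-sided \emph{weak} approximate identity, and then to upgrade this to a genuine norm bai by passing to convex combinations via Mazur's theorem, while retaining enough control to force the weak* limit of the new net to remain exactly $e$. So there are really three movements: extract the weak approximate identity property from the mixed-identity hypothesis, convexify to get norm convergence, and index the construction so that the weak* limit is pinned down.

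For the first movement I would unpack the mixed identity condition using the two Arens products and their one-sided weak* continuity, together with the standard fact that the first and second Arens products coincide whenever one of the two factors lies in $A$. Writing $\square$ for the first Arens product, continuity of $\square$ in its first variable and $x_t \to e$ weak* give $x_t a = x_t \,\square\, a \to e \,\square\, a$ weak*; but $e \,\square\, a = e \diamond a = a$, since the products agree on $A$ (as $a \in A$) and $e$ is a left identity for $\diamond$. Symmetrically, continuity of $\diamond$ in its second variable gives $a x_t = a \diamond x_t \to a \diamond e = a \,\square\, e = a$ weak*, using that $e$ is a right identity for $\square$. Thus $x_t a \to a$ and $a x_t \to a$ weakly in $A$ for every $a \in A$. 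Matching each one-sided approximate identity property to the correct product and the correct variable of continuity is the delicate point, and I expect this Arens-product bookkeeping to be the main obstacle.

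For the second movement, fix a finite set $F = \{a_1, \dots, a_n\} \subset A$, a real $\epsilon > 0$, and an index $t$, and consider the affine map $a \mapsto (a a_1 - a_1, \dots, a a_n - a_n, a_1 a - a_1, \dots, a_n a - a_n)$ from $A$ into $A^{2n}$ with the max norm. By the first movement it sends each $x_s$ to a vector tending weakly to $0$, so $0$ lies in the weak closure of the convex hull of the image of the tail $\{x_s : s \geq t\}$. By Mazur's theorem the weak and norm closures of a convex set agree, so $0$ lies in the norm closure; pulling back along the affine map yields a convex combination $y$ of $\{x_s : s \geq t\}$ with $\Vert y a_i - a_i \Vert < \epsilon$ and $\Vert a_i y - a_i \Vert < \epsilon$ for all $i$.

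Finally, to force the weak* limit to be $e$, I would index the new net by tuples $(F, \Lambda, \epsilon, t)$ with $F \subset A$ and $\Lambda \subset A^*$ finite, ordered in the evident way. Given such a tuple, first enlarge $t$ to some $t'$ with $|\varphi(x_s) - e(\varphi)| < \epsilon$ for all $s \geq t'$ and all $\varphi \in \Lambda$ (possible since $\Lambda$ is finite and $x_s \to e$ weak*); then every convex combination drawn from $\{x_s : s \geq t'\}$ automatically meets the weak* constraint, and I apply the Mazur step above on this shorter tail to meet the norm constraint. The resulting net is bounded (being convex combinations of a bounded net), is a two-sided norm bai, converges weak* to $e$, and by construction consists of convex combinations of the $x_t$, as required.
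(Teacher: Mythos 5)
Your proposal is correct and follows essentially the same route as the paper's proof: establish that $x_t a \to a$ and $a x_t \to a$ weakly, pass to convex combinations of tails to get norm approximation on finite sets, and index by triples/tuples so that the weak* limit stays pinned at $e$. The only difference is that you spell out the Arens-product bookkeeping and the Mazur step, which the paper leaves implicit.
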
 

\begin{proof}   Given $\epsilon > 0$ and a finite set  $F \subset A^*$,
there exists $t_{F, \epsilon}$
such that $$|\varphi(x_t) - e(\varphi)| < \epsilon , \qquad t \geq t_{F, \epsilon}, \; \; \varphi \in F.$$   Given a finite set $E = \{ a_1, \cdots , a_n \}
\subset A$, we have that $x_t a_k \to a_k$ and $a_k x_t \to a_k$ weakly.  So there 
is a convex combination $y$ of the $x_t$ for $t \geq  t_{F, \epsilon}$, with 
$$\Vert y a_k - a_k \Vert + \Vert a_k y - a_k \Vert
\leq \epsilon.$$  We also have $|\varphi(y) - e(\varphi)| \leq \epsilon$  for 
$\varphi \in F$.   Write this $y$ as $y_\lambda$, where $\lambda = (E,F, \epsilon)$.  
Given $\epsilon_0 > 0$ and $a \in A$, if $\epsilon \leq \epsilon_0$ and $\{ a \} \subset 
E$, then $\Vert y_\lambda a - a \Vert + \Vert a y_\lambda - a \Vert \leq \epsilon  \leq \epsilon_0$
for $\lambda = (E,F, \epsilon)$, any $F$.      So $(y_\lambda)$ is a bai.    Also if 
$\varphi \in F$ then $|\varphi(y_\lambda) - e(\varphi)| < \epsilon$.  So $y_\lambda \to e$ weak*. 
\end{proof}

{\bf Remark.}  The `sequential version' of the last result is false.  For example, consider the usual 
cai $(n \, \chi_{[-\frac{1}{2n},\frac{1}{2n}]})$  of $L^1(\Rdb)$ with convolution product.  
A subnet of this converges weak* to a mixed identity
$e \in L^1(\Rdb)^{**}$.   However there can be no weak* convergent sequential bai for  $L^1(\Rdb)$,
since $L^1(\Rdb)$ is weakly sequentially complete.  

\medskip

For a general approximately unital nonunital Banach algebra $A$ with cai $(e_t)$, the definition of `state' is problematic.  There are 
many natural notions, for example: (i) \ a contractive functional
$\varphi$ on $A$ with $\varphi(e_t) \to 1$ for some fixed  cai $(e_t)$ for $A$,
(ii) \   a contractive functional
$\varphi$ on $A$ with $\varphi(e_t) \to 1$ for all cai $(e_t)$ for $A$,  and 
(iii) \ a norm $1$ functional on $A$ that extends to a state on $A^1$,
where $A^1$ is the `multiplier unitization' above.   If $A$ is not Arens regular 
then (i) and (ii) can differ, that is whether 
 $\varphi(e_t) \to 1$ depends on which  cai for $A$ we use.  And if $e$ is a `mixed identity' 
 then the statement $\varphi(e) = 1$ may depend on which mixed identity one considers.
 In this paper though 
for simplicity, and because of its connections with the usual theory of numerical range and accretive operators, 
we will take (iii) above as the definition of a {\em state} of $A$.  We shall also often consider states in the sense
of (i), and will usually ignore (ii) since in some sense it may be treated as a `special case' of (i) (that is, almost all
computations in the paper  involving the class  (i)  are easily tweaked to give the `(ii) version').
 We define  $S(A)$ to be  
the set of states
in the sense of (iii) above.  This is easily seen to be norm closed, but will not be weak* closed
if $A$ is nonunital.    We define $${\mathfrak c}_{A^*} = \{ \varphi \in A^* : 
{\rm Re} \,  \varphi(a) \geq 0 \; {\rm for} \, {\rm all} \; a \in {\mathfrak r}_A \},$$  and
note that this is a weak* closed cone containing $S(A)$.    These are called
the {\em real positive functionals} on $A$.  
 If ${\mathfrak e} = (e_t)$   is a fixed cai for $A$,
 define $$S_{\mathfrak e}(A) = \{ \varphi \in {\rm Ball}(A^*) :  
\lim_t \, \varphi(e_t) = 1 \}$$
(this corresponds to (i) above).   Note that $S_{\mathfrak e}(A)$ is convex but $S(A)$ may not be (as in
e.g.\  Example \ref{Ex4}).  An argument
in  the
next proof   shows that $S_{\mathfrak e}(A) \subset S(A)$.    Finally we remark that for 
any $y \in A$ of norm $1$, if $\varphi \in {\rm Ball}(A^*)$ satisfies 
$\varphi(y) = 1$, then $x \mapsto \varphi(yx)$ is in $S_{\mathfrak e}(A)$ 
for all cai's ${\mathfrak e}$ of $A$.  

We recall that a subspace $E$ of a Banach space $X$ is called `Hahn-Banach smooth' in $X$
if every functional on $E$ has a unique Hahn-Banach extension to $X$.   Any 
$M$-ideal in $X$ is Hahn-Banach smooth in $X$.  See e.g.\ \cite{HWW} and references therein
for more on this topic.

 \begin{lemma} \label{hbs}  For approximately unital Banach 
algebras $A$ which are Hahn-Banach smooth in $A^1$,
and therefore for $M$-approximately unital Banach algebras, and 
$\varphi \in A^*$ with norm $1$, the following are equivalent:
\begin{itemize} 
\item [(i)]   $\varphi$ is a state on $A$ (that is, extends to a state on $A^1$).
\item [(ii)]  $\varphi(e_t) \to 1$ for  every cai  $(e_t)$ for $A$.
\item [(iii)]  $\varphi(e_t) \to 1$ for some cai $(e_t)$ for $A$.
\item [(iv)] $\varphi(e) = 1$ whenever $e \in A^{**}$ is a weak* limit point  of a cai for $A$
(that is, whenever $e$ is a mixed identity of norm $1$ for $A^{**}$).
\end{itemize}
 \end{lemma}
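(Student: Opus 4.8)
The plan is to prove the cycle (iii)$\Rightarrow$(i)$\Rightarrow$(iv)$\Rightarrow$(ii)$\Rightarrow$(iii), arranged so that Hahn--Banach smoothness is invoked \emph{only} for (i)$\Rightarrow$(iv); the other three implications will hold for any approximately unital $A$. I may assume $A$ is nonunital, since otherwise $A=A^1$ and all four conditions collapse to $\varphi(1)=1$. The engine of the argument is the isometric identification recorded at the end of Section 1: whenever $e\in A^{**}$ is a mixed identity of norm $1$, the map $a+\lambda 1\mapsto a+\lambda e$ identifies $A^1$ isometrically with $A+\Cdb e\subseteq A^{**}$, carrying the unit $1$ of $A^1$ to $e$, and realizing $A+\Cdb e$ as a unitization of $A$ with identity $e$.

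First I would dispatch (iii)$\Rightarrow$(i), which is exactly the promised inclusion $S_{\mathfrak e}(A)\subseteq S(A)$ and needs no smoothness. Given a cai $(e_t)$ with $\varphi(e_t)\to 1$, define $\psi(a+\lambda 1)=\varphi(a)+\lambda$ on $A^1$. Since $(a+\lambda 1)e_t\in A$ and $\psi((a+\lambda 1)e_t)=\varphi(a e_t)+\lambda\varphi(e_t)\to\varphi(a)+\lambda=\psi(a+\lambda 1)$ (using $a e_t\to a$ in norm and $\varphi(e_t)\to 1$), the norm formula $\|a+\lambda 1\|_{A^1}=\lim_t\|a e_t+\lambda e_t\|$ of Section 1 yields $|\psi(a+\lambda 1)|=\lim_t|\varphi((a+\lambda 1)e_t)|\leq\|a+\lambda 1\|_{A^1}$. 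Hence $\|\psi\|\leq 1$, while $\psi(1)=1$ forces $\|\psi\|=1$, so $\psi$ is a state on $A^1$ extending $\varphi$.

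Next come the two implications linking (iv) with (i). For a norm-$1$ mixed identity $e$, let $\hat\varphi$ be the weak*-continuous extension of $\varphi$ to $A^{**}$ (so $\|\hat\varphi\|=\|\varphi\|=1$ and $\hat\varphi|_A=\varphi$), and set $\psi_e:=\hat\varphi|_{A+\Cdb e}$. Transporting along the isometry $A^1\cong A+\Cdb e$, this $\psi_e$ is a norm-$\le 1$ extension of $\varphi$ with $\psi_e(1)=\hat\varphi(e)=\varphi(e)$; as $\psi_e|_A=\varphi$ has norm $1$, in fact $\|\psi_e\|=1$. This gives (iv)$\Rightarrow$(i) at once: if $\varphi(e)=1$ then $\psi_e$ is a state extending $\varphi$. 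For (i)$\Rightarrow$(iv)---the one place smoothness enters---let $\tilde\varphi$ be a state on $A^1$ extending $\varphi$. For \emph{every} norm-$1$ mixed identity $e$, both $\psi_e$ and $\tilde\varphi$ are norm-$1$ extensions of $\varphi$, so Hahn--Banach smoothness forces $\psi_e=\tilde\varphi$, whence $\varphi(e)=\psi_e(1)=\tilde\varphi(1)=1$. This is exactly (iv).

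Finally I would close the loop with (iv)$\Rightarrow$(ii)$\Rightarrow$(iii). Fix any cai $(e_t)$; the net $(\varphi(e_t))=(e_t(\varphi))$ is bounded, and any subnet limit arises, after passing to a weak*-convergent subnet $e_{t_\mu}\to e$ in ${\rm Ball}(A^{**})$, as $\varphi(e)=e(\varphi)$. Such an $e$ is a mixed identity (weak* limit points of bai's are mixed identities) of norm $1$ (since $e a=a$ on $A$ gives $\|e\|\ge 1$, while weak* lower semicontinuity gives $\|e\|\le\liminf\|e_{t_\mu}\|\le 1$), so (iv) gives $\varphi(e)=1$; as every subnet limit of $(\varphi(e_t))$ equals $1$, the net converges to $1$, proving (ii). The implication (ii)$\Rightarrow$(iii) is immediate since $A$ has at least one cai. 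The step I expect to be the crux is (i)$\Rightarrow$(iv): a state extension of $\varphi$ could a priori differ from the canonical $\psi_e$, and it is precisely uniqueness of norm-preserving extensions (Hahn--Banach smoothness, in particular the $M$-approximately unital case) that rules this out and forces $\varphi(e)=1$ simultaneously for all mixed identities.
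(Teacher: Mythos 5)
Your proposal is correct and follows essentially the same route as the paper: (iii)$\Rightarrow$(i) via a norm-one extension built from the cai (the paper uses the weak*-continuous extension restricted to $A+\Cdb e$, you verify contractivity directly from the formula $\Vert a+\lambda 1\Vert_{A^1}=\lim_t\Vert ae_t+\lambda e_t\Vert$, which amounts to the same thing), and (i)$\Rightarrow$(iv) by pitting the given state extension against $\tilde\varphi|_{A+\Cdb e}$ and invoking Hahn--Banach smoothness. You have also correctly supplied the implication (iv)$\Rightarrow$(ii) that the paper leaves as an exercise, so nothing is missing.
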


   \begin{proof}  Clearly (ii) implies (iii).
If $\varphi \in {\rm Ball}(A^*)$
write $\tilde{\varphi}$  for its canonical weak* continuous
extension   to $A^{**}$.  If 
$(e_t)$ is a cai for $A$ with weak* limit point $e$ and  $\varphi(e_t) \to 1$, then  $\tilde{\varphi}(e) = 1$.
It follows  that $\tilde{\varphi}_{|A^1}$ is a state on $A^1$.   
So (iii) implies (i).  To see that (i) implies (iv), suppose that
$A$ is  Hahn-Banach smooth
in $A^1$, and that $\varphi$ is a  norm $1$ functional on $A$ that extends to a state $\psi$ on $A^1$.   If $(e_t)$ is a cai for $A$ 
with weak* limit point $e$, then also 
$\tilde{\varphi}_{|A + \Cdb e}$ is a norm $1$ functional extending $\varphi$, so that $\tilde{\varphi}_{|A + \Cdb e}  = \psi$, and 
for some subnet,
$$\varphi(e) = \lim_t \varphi(e_{t_\mu}) = \tilde{\varphi}(e) = \psi(1) = 1.$$  
We leave the remaining implication as an exercise.  \end{proof}

Under certain conditions on  
an approximately unital Banach algebra $A$ we shall see  in Corollary \ref{snr} that $S(A^1)$ is the convex hull of the 
trivial character $\chi_0$ and the set of states on $A^1$ extending states of $A$, and that the weak* closure of $S(A)$ equals
$\{ \varphi_{|A} : \varphi \in S(A^1) \}$.   

The  numerical range $W(a)$ (or $W_A(a)$) of $a \in A$, if $A$ is an approximately unital Banach algebra,  
will be defined to be  
 $\{ \varphi(a) : \varphi \in S(A) \}$.   If $A$ is Hahn-Banach smooth in $A^1$
then it follows from Lemma \ref{hbs} that $S(A)$ is convex, and hence so 
is $W(a)$.  We shall see in Corollary \ref{snr} 
that  under 
the condition mentioned in the last paragraph,
we have  $\overline{W_A(a)} = {\rm conv} \{ 0, W_{A}(a) \} =  W_{A^1}(a)$.   

The following is related to 
results from \cite{SW2} or \cite[Section V.3]{HWW} or \cite{AR,BAIC}.

\begin{lemma} \label{oz2}   If $A$ is an approximately unital Banach algebra, if $A^1$ is the unitization above, and if $e$ 
is a weak* limit of a cai (resp.\ bai in ${\mathfrak F}_A$)  for $A$ 
then $\Vert 1 - 2 e \Vert_{(A^1)^{**}} \leq 1$ iff there is a cai (resp.\ bai  in ${\mathfrak F}_A$) $(e_i)$  with weak* limit 
$e$ and $\limsup_i \, \Vert 1 - 2 e_i \Vert_{A^1} \leq 1.$
\end{lemma}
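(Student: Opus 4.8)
The forward (``if'') implication is the soft one. The plan is: given a cai (resp.\ bai in ${\mathfrak F}_A$) $(e_i)$ with weak* limit $e$ and $\limsup_i \Vert 1 - 2 e_i \Vert_{A^1} \leq 1$, I would observe that $1 - 2 e_i \to 1 - 2 e$ weak* in $(A^1)^{**}$, since $e_i \to e$ weak* in $A^{**} \subset (A^1)^{**}$. Weak*-semicontinuity of the norm (as used already in the proof of Lemma \ref{ban}) then gives $\Vert 1 - 2 e \Vert_{(A^1)^{**}} \leq \liminf_i \Vert 1 - 2 e_i \Vert_{A^1} \leq \limsup_i \Vert 1 - 2 e_i \Vert_{A^1} \leq 1$.

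For the converse, fix a cai (resp.\ bai in ${\mathfrak F}_A$) $(f_t)$ with $f_t \to e$ weak* and assume $\Vert 1 - 2e\Vert_{(A^1)^{**}} \leq 1$. The engine is Lemma \ref{ban} applied to the net $1 - 2 f_t$, which converges weak* to $1 - 2 e$ in $(A^1)^{**}$:
\[ \Vert 1 - 2 e \Vert_{(A^1)^{**}} = \lim_t \, \inf \{ \Vert 1 - 2 g \Vert_{A^1} : g \in {\rm conv}\{ f_j : j \geq t \} \}. \]
Because the tails shrink, the inner infimum is nondecreasing in $t$ with supremum $\Vert 1 - 2e\Vert_{(A^1)^{**}} \leq 1$; hence for every index $t$ and every $\epsilon > 0$ there is a convex combination $g$ of $\{ f_j : j \geq t \}$ with $\Vert 1 - 2 g \Vert_{A^1} < 1 + \epsilon$.

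I would then assemble the new net exactly as in the proof of Lemma \ref{netwee}, indexing by triples $\lambda = (E, F, \epsilon)$ with $E \subset A$ and $F \subset A^*$ finite and $\epsilon > 0$. I would choose $t_0 = t_0(E, F, \epsilon)$ large enough that $\Vert f_t a - a \Vert + \Vert a f_t - a \Vert < \epsilon$ for $a \in E$ and $|\varphi(f_t) - e(\varphi)| < \epsilon$ for $\varphi \in F$ whenever $t \geq t_0$, and then apply the previous paragraph at $t_0$ to pick $e_\lambda = g \in {\rm conv}\{ f_j : j \geq t_0\}$ with $\Vert 1 - 2 e_\lambda \Vert_{A^1} < 1 + \epsilon$. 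The crucial point is that, apart from the norm bound, all of these requirements are inherited for free by any convex combination drawn from the tail beyond $t_0$: convexity of the estimates gives $\Vert e_\lambda a - a \Vert + \Vert a e_\lambda - a \Vert < \epsilon$ and $|\varphi(e_\lambda) - e(\varphi)| < \epsilon$. Thus $(e_\lambda)$ is an approximate identity with weak* limit $e$; it is contractive in the cai case (convex combinations of contractions) and $\limsup_\lambda \Vert 1 - 2 e_\lambda \Vert_{A^1} \leq 1$ since the $\epsilon$-component tends to $0$. In the ${\mathfrak F}_A$ case one uses in addition that ${\mathfrak F}_A$ is convex, so each $e_\lambda$ lies automatically in ${\mathfrak F}_A$ and is bounded (by $2$), giving the required bai in ${\mathfrak F}_A$.

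The one place needing care---and the main obstacle---is producing a \emph{single} convex combination meeting all the demands at once: approximate identity, weak* proximity to $e$, and the norm bound $\Vert 1 - 2g\Vert_{A^1} < 1 + \epsilon$. This is resolved by the observation above that the approximate-identity and weak*-convergence requirements cost nothing for convex combinations of a sufficiently advanced tail of the original net, so that Lemma \ref{ban} need only be invoked to secure the norm bound, which the same combination then carries along with the inherited properties.
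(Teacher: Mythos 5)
Your argument is correct and follows essentially the same route as the paper: the forward direction by weak* lower semicontinuity of the norm, and the converse by applying Lemma \ref{ban} to the net $1-2f_t$ and reassembling a net of convex combinations drawn from tails, which automatically inherit the approximate-identity, weak*-convergence, contractivity/${\mathfrak F}_A$, and norm properties. The only difference is cosmetic: you index by triples $(E,F,\epsilon)$ as in Lemma \ref{netwee} where the paper uses a double index $(n,t)$, and your observation that the tail infima are monotone slightly streamlines the use of Lemma \ref{ban}.
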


   \begin{proof}  The one direction follows from  Alaoglu's theorem.  Suppose that 
$\Vert 1 - 2 e \Vert_{(A^1)^{**}} \leq 1$ and there is a net $(x_t)$ which is a cai 
 (resp.\ bai in ${\mathfrak F}_A$)  for $A$ with  $x_t \to e$ weak*.  Then  $1 - 2x_t \to 1 - 2e$ weak* in $(A^1)^{**}$.  By Lemma \ref{ban},
for any $n \in \Ndb$ there exists  a $t_n$ such that
for every $t \geq t_n$, $$\inf \{ \Vert 1 -  2y \Vert :
y \in {\rm conv} \{x_j : j \geq t \} \} < 1 + \frac{1}{2n}.$$
For every $t \geq t_n$, choose such a $y^n_t \in {\rm conv} \{x_j : j \geq t \}$ with 
$\Vert 1 -  2y^n_t \Vert < 1 + \frac{1}{n}$.   If $t$ does not 
dominate $t_n$ define $y^n_t = y^n_{t_n}$.  So for all $t$
we have $\Vert 1 -  2y^n_t \Vert < 1 + \frac{1}{n}$.    
Writing $(n,t)$ as $i$, we may view $(y^n_t)$ as a 
net $(e_i)$ indexed by $i$, with 
$\Vert 1 - 2 y^n_t \Vert  \to 1$.  Given $\epsilon > 0$ and $a_1, \cdots , a_m \in A$,
there exists a $t_1$
such that 
$\Vert x_t a_k - a_k \Vert < \epsilon$ and  $\Vert a_k x_t  - a_k \Vert < \epsilon$ for all 
$t \geq t_1$ and all $k = 1, \cdots, m$.  Hence 
the same assertion is true with $x_t$ replaced by $y^n_t $.   Thus $(y^n_t) = (e_i)$ is a bai for $A$ with the desired
property.     \end{proof}

 We recall from the introduction that if $A$ is an approximately unital Banach algebra which is an $M$-ideal in the particular unitization $A^1$ above, then
$A$ is an $M$-approximately unital Banach algebra.   Any 
 unital Banach algebra is an $M$-approximately unital Banach algebra (here $A^1 = A$).
By \cite[Proposition I.1.17 (b)]{HWW}, examples of $M$-approximately unital Banach algebras include any Banach algebra that is an $M$-ideal in its bidual,
and which is approximately unital (or whose bidual has an identity).  Several examples of such are given in \cite{HWW};
for example the compact operators on $\ell^p$, for $1 < p < \infty$.  We also recall that the property of being an 
$M$-ideal in its bidual is inherited by subspaces, and hence by subalgebras.   Not every Banach algebra with cai is $M$-approximately unital.  By \cite[Proposition II.3.5]{HWW}, $L^1(\Rdb)$ with convolution multiplication cannot be an $M$-ideal in any proper
superspace.

We just said that any unital Banach algebra $A$ is $M$-approximately unital, hence 
any finite dimensional unital Banach algebra is   Arens regular and $M$-approximately unital (if one wishes to avoid 
the redundancy of $A = A^1$ in the discussion below take the direct sum of $A$ with any Arens regular $M$-approximately unital Banach algebra,
such as $c_0$).  
Thus any kind of bad behavior occurring in 
finite dimensional unital Banach algebras (resp.\ unital Banach algebras) will
appear in the class of Arens regular $M$-approximately unital Banach algebras
(resp.\ $M$-approximately unital Banach algebras).  This will have the consequence that 
several aspects of the Blecher-Read papers will not
generalize, for instance conclusions involving `near positivity'. 
This can also be seen in the examples scattered through our paper, for instance Examples \ref{Ex1}--\ref{Ex4} below.

Suppose that $(e_t)$ is a  cai 
for a Banach algebra $A$ with weak* limit point $e \in A^{**}$.  Then left multiplication by $e$ (in the second Arens product)
is a contractive projection from $(A^1)^{**}$ onto the ideal $A^{\perp \perp}$ of $(A^1)^{**}$ (note that $(A^1)^{**} = A^{\perp \perp} + \Cdb  1 = A^{\perp \perp} + \Cdb (1-e)$).      Thus by the theory of $M$-ideals \cite{HWW},  $A$ is an $M$-ideal in $A^1$  iff left multiplication by $e$ is an $M$-projection.

\begin{lemma} \label{hfin}  A nonunital approximately unital Banach algebra   $A$ is $M$-approximately unital  iff 
 for all $x \in A^{**}$ we have $\Vert 1 - x \Vert_{(A^1)^{**}} = \max 
\{ \Vert e - x \Vert_{A^{**}} , 1 \}$.    Here $e$ is a mixed identity for $A^{**}$ of norm $1$.
If these conditions hold then there is a unique mixed identity for $A^{**}$  of norm $1$, it belongs in
$\frac{1}{2} {\mathfrak F}_{A^{**}}$, and $$\Vert 1 - \eta \Vert = 1 \; \; \Leftrightarrow \; \; \Vert e - \eta \Vert \leq 1,  \qquad \eta \in A^{**} .$$
\end{lemma}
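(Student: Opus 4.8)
The plan is to route everything through the single structural fact recalled just before the lemma: $A$ is an $M$-ideal in $A^1$ if and only if the contractive projection $P$ onto $A^{\perp\perp}$ given by left multiplication by $e$ (in the second Arens product $\diamond$) is an $M$-projection, i.e.\ $(A^1)^{**} = A^{\perp\perp} \oplus^{\infty} {\rm Ker}(P)$. First I would pin down the two summands. Using $e \diamond a = a$ for $a \in A^{\perp\perp}$ and $e \diamond 1 = e$, the computation on $z = a + \lambda 1$ gives $P(a + \lambda 1) = a + \lambda e$, so that $P(1) = e$ and ${\rm Ker}(P) = \Cdb(1-e)$; together with $(A^1)^{**} = A^{\perp\perp} + \Cdb 1$ this yields the algebraic direct sum $(A^1)^{**} = A^{\perp\perp} \oplus \Cdb(1-e)$. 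I would also record that $A^{**} = A^{\perp\perp}$ embeds isometrically in $(A^1)^{**}$, so the two norms agree on this summand, and that $\Vert 1 - e \Vert = 1$: the bound $\leq 1$ follows from $\Vert 1 \Vert = 1$ once the decomposition is known to be $\ell^\infty$, while $\geq 1$ comes from evaluating the norm-one weak* extension $\tilde{\chi}_0$ of the trivial character, which kills $e$ and sends $1 \mapsto 1$.

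For the equivalence, the $M$-projection identity $\Vert z \Vert = \max\{ \Vert Pz \Vert, \Vert (I-P)z \Vert \}$ applied to $z = 1 - x$ with $x \in A^{**}$ reads $\Vert 1 - x \Vert = \max\{ \Vert e - x \Vert, \Vert 1 - e \Vert \} = \max\{ \Vert e - x \Vert, 1 \}$, giving the forward direction. Conversely, writing $a = e - x$ (which ranges over all of $A^{**}$) turns the hypothesis into $\Vert a + (1-e) \Vert = \max\{ \Vert a \Vert, 1 \}$ for every $a \in A^{\perp\perp}$; scaling in the multiple of $(1-e)$ (the case of coefficient $0$ being trivial) upgrades this to $\Vert a + \mu(1-e) \Vert = \max\{ \Vert a \Vert, |\mu| \}$ for all $\mu \in \Cdb$, which is exactly the statement that $P$ is an $M$-projection, hence that $A$ is $M$-approximately unital.

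For the three consequences I would argue under the now-established formula $\Vert 1 - x \Vert = \max\{ \Vert e - x \Vert, 1 \}$. Putting $x = 2e$ gives $\Vert 1 - 2e \Vert = \max\{ \Vert e - 2e \Vert, 1 \} = \max\{ \Vert e \Vert, 1 \} = 1$, so $e \in \frac{1}{2} {\mathfrak F}_{A^{**}}$; and the stated equivalence $\Vert 1 - \eta \Vert = 1 \Leftrightarrow \Vert e - \eta \Vert \leq 1$ is immediate upon reading off when $\max\{ \Vert e - \eta \Vert, 1 \}$ equals $1$. Uniqueness of the norm-one mixed identity I would deduce from the uniqueness of the contractive projection onto $A^{\perp\perp}$ (the general $M$-ideal fact quoted above): any norm-one mixed identity $e'$ is a left $\diamond$-identity on the ideal $A^{\perp\perp}$, so left multiplication by $e'$ is a contractive projection onto $A^{\perp\perp}$; by uniqueness it coincides with $P$, whence $e' = e' \diamond 1 = P(1) = e$.

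As for the main obstacle, I would flag the identification ${\rm Ker}(P) = \Cdb(1-e)$ together with the fact $\Vert 1 - e \Vert = 1$ as the only genuinely non-formal step, since everything afterward is either the $M$-projection identity applied to a well-chosen $z$, a one-line scaling, or a citation of the uniqueness of the $M$-projection. One must also take care throughout that $e$ denotes a norm-one mixed identity, and verify in the uniqueness argument that left multiplication by an arbitrary such $e'$ genuinely lands in and fixes $A^{\perp\perp}$ (using that $A^{\perp\perp}$ is an ideal and $e'$ a left $\diamond$-identity there), which is precisely what makes that argument go through.
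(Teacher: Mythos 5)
Your proposal is correct and follows essentially the same route as the paper: both reduce the equivalence to the fact that $A$ is an $M$-ideal in $A^1$ iff left multiplication by $e$ is an $M$-projection, extract $\Vert 1 - e \Vert = 1$, and obtain the consequences by specializing $x$ (the paper gets $\Vert 1-e\Vert \geq 1$ via the Neumann lemma where you use the trivial character, and it delegates uniqueness of the mixed identity to Lemma \ref{isun}, whose proof is exactly your uniqueness argument via the unique contractive projection onto $A^{\perp\perp}$). These are cosmetic differences only.
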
 

\begin{proof}    
By the statement immediately above the Lemma, 
and by the theory of $M$-ideals \cite{HWW},  $A$ is an $M$-ideal in $A^1$  iff left multiplication by $e$ is an $M$-projection.
That is, iff
$$\Vert \eta + \lambda 1 \Vert_{(A^1)^{**}} = \max \{ \Vert \eta + \lambda e \Vert_{A^{**}}  , \vert \lambda \vert 
\Vert 1 - e \Vert  \}, \qquad \eta \in A^{**}, \lambda \in \Cdb.$$
If this holds then setting $\lambda = 1$ and $\eta = 0$ shows that  $\Vert 1 - e \Vert \leq 1$.   
However by the Neumann lemma  we cannot have $\Vert 1 - e \Vert < 1$.   Thus $\Vert 1 - e \Vert =  1$ if these hold.
The statement is tautological if $\lambda = 0$ so we may assume the contrary.
Dividing by $|\lambda|$ and setting $x = - \frac{\eta}{|\lambda|}$, one sees that 
$A$ is $M$-approximately unital iff 
$$\Vert 1 - x \Vert_{(A^1)^{**}} = \max \{ \Vert e - x \Vert_{A^{**}}  , 1 \}, \qquad x \in A^{**} .$$
In particular, $\Vert 1 - 2e \Vert_{(A^1)^{**}} = \max \{ \Vert  e \Vert , 1 \} = 1.$    The final assertion is now
clear too.    The uniqueness of the mixed identity follows from the next result.  \end{proof}

{\bf Remark.}   Indeed if $B$ is any unitization of a nonunital approximately unital Banach algebra $A$, and if 
$A$ is an $M$-ideal in $B$, then the first few lines of the last proof, with $A^1$ replaced by $B$,
 show that  $B = A^1$, the multiplier unitization of $A$.

\bigskip

Thus $A$ is  $M$-approximately unital  iff  $\Vert 1 - x \Vert_{(A^1)^{**}} = \Vert e - x \Vert_{A^{**}}$ for all $x \in A^{**}$,
unless the last quantity is $< 1$ in which case $\Vert 1 - x \Vert_{(A^1)^{**}} = 1$.

We will show later that for $M$-approximately unital Banach algebras
there is a cai $(e_t)$ for $A$ with $\Vert 1 - 2 e_t \Vert_{A^1} \leq 1$ for all $t$.  

\begin{lemma} \label{isun}  Let $A$ be a
closed ideal,
and also 
an $M$-ideal, in a 
unital Banach algebra $B$.   If $e$ and $f$ are two weak* limit points
in $A^{**}$ of two cai for $A$, then $e = f$.
Thus $A^{**}$ has a unique mixed identity  of norm $1$.   In particular if
$A$ is $M$-approximately unital  then $A^{**}$ has a unique mixed identity  of norm $1$.  
\end{lemma}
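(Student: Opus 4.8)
The plan is to exploit the uniqueness of the contractive projection onto an $M$-ideal's biannihilator, exactly as in the discussion immediately preceding the lemma (which treats the case $B = A^1$). First I would recall that since $A$ is a closed two-sided ideal in $B$, its biannihilator $A^{\perp \perp}$ is a closed (two-sided) ideal in $B^{**}$ for the second Arens product $\diamond$, and that the $A^{**}$-product is the restriction to $A^{\perp \perp}$ of the $B^{**}$-product. Since $A$ is an $M$-ideal in $B$, there is a \emph{unique} contractive projection $P$ of $B^{**}$ onto $A^{\perp \perp}$. Both $e$ and $f$ lie in $A^{\perp \perp}$, being weak* limit points of nets in $A$, and each is a mixed identity of norm $1$: the norm is at most $1$ by the weak*-lower semicontinuity of the norm applied to the contractive cai, and at least $1$ since a mixed identity is a nonzero $\diamond$-idempotent.

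The key step is to show that left multiplication $L_e : b \mapsto e \diamond b$ is a contractive projection of $B^{**}$ onto $A^{\perp \perp}$, and likewise for $L_f$. Contractivity is immediate from $\Vert e \Vert = 1$ and submultiplicativity. Because $e$ is a left identity for $\diamond$ on $A^{**} = A^{\perp \perp}$, the map $L_e$ restricts to the identity on $A^{\perp \perp}$; in particular $e \diamond e = e$, so $L_e$ is idempotent, and its range is contained in $A^{\perp \perp}$ since $A^{\perp \perp}$ is an ideal and $e \in A^{\perp \perp}$. Hence the range of $L_e$ is exactly $A^{\perp \perp}$. By the uniqueness of the contractive projection onto $A^{\perp \perp}$, I conclude $L_e = P = L_f$.

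Finally, since $B$ is unital, its identity $1$ is a two-sided identity of $(B^{**}, \diamond)$, so evaluating at $1$ yields $e = e \diamond 1 = L_e(1) = P(1) = L_f(1) = f \diamond 1 = f$. This gives the first assertion. The same argument applies verbatim to any two mixed identities of norm $1$, since the only properties used are membership in $A^{\perp \perp}$, norm $1$, and being a left $\diamond$-identity; this yields the uniqueness of the norm-$1$ mixed identity. The $M$-approximately unital case is then the special case $B = A^1$.

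The main obstacle I anticipate is the bookkeeping around the Arens products rather than any deep difficulty: one must verify carefully and consistently (for the \emph{second} product) that $A^{\perp \perp}$ is a $\diamond$-ideal of $B^{**}$ with the $A^{**}$-multiplication being the restriction of the $B^{**}$-multiplication, so that $L_e$ lands in $A^{\perp \perp}$ and restricts to the identity there, and that the unit of $B$ remains a two-sided $\diamond$-identity of $B^{**}$, so that the final evaluation $e \diamond 1 = e$ is valid. These are standard facts about Arens products of ideals in a bidual, but they are precisely the points where a sign/side error would break the argument.
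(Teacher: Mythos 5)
Your proposal is correct and follows essentially the same route as the paper: the paper likewise observes that left multiplication by $e$ (or $f$) in the second Arens product is a contractive projection of $B^{**}$ onto the ideal $A^{\perp\perp}$, invokes the uniqueness of the contractive projection onto the biannihilator of an $M$-ideal to get $L_e = L_f$, and concludes $e = f$. The extra bookkeeping you supply (evaluating at $1$, checking that $L_e$ has range exactly $A^{\perp\perp}$ and restricts to the identity there) is exactly the content the paper leaves implicit.
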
 \begin{proof}  
As in the discussion above Lemma \ref{hfin},
left multiplication by $e$ or $f$, in the second Arens
product, are contractive projections  onto the ideal $A^{\perp \perp}$ of $(A^1)^{**}$.
So these maps  equal the $M$-projection \cite{HWW}, hence are equal.  So $e = f$.
Thus every cai for $A$ converges weak* to $e$, 
so that $A^{**}$ has a unique mixed identity. \end{proof}

 If $A$ is an approximately unital  
Banach algebra, but $A^{**}$ has no identity
then  we define ${\mathfrak r}_{A^{**}} = A^{**} \cap {\mathfrak r}_{(A^1)^{**}}$.  
 If $A$ is an approximately unital Banach algebra then 
${\mathfrak F}_{A^{**}}$ and ${\mathfrak r}_{A^{**}}$ are weak* closed. 
Indeed the ${\mathfrak F}_{A^{**}}$ case of this  is obvious.   By \cite{Mag}, ${\mathfrak r}_{(A^1)^{**}}$ is weak* closed,
hence so is ${\mathfrak r}_{A^{**}} = A^{**} \cap {\mathfrak r}_{(A^1)^{**}}$.   

\bigskip

{\bf Remark.}   Note that if 
$A^{**}$ has a 
mixed identity  of norm $1$ then 
we can define states of $A^{**}$ to be norm $1$ functionals $\varphi$
with $\varphi(e) = 1$ for all  
mixed identities $e$ of $A^{**}$  of norm $1$.
Then one could define ${\mathfrak r}_{A^{**}}$ to be the elements 
$x \in A^{**}$ with Re $\varphi(x) \geq 0$ for all such states of $A^{**}$.  This coincides
with the definition of ${\mathfrak r}_{A^{**}}$ above the Remark
 if $A$ is $M$-approximately unital.
Indeed 
such states $\varphi$ on $A^{**}$ extend to states $\varphi(e \, \cdot)$ of $(A^1)^{**}$.   Conversely
if $A$ is an $M$-approximately unital  
Banach algebra, then 
given a  state $\varphi$ of $(A^1)^{**}$, we have 
$$1 = \Vert  \varphi \Vert = \Vert  \varphi \cdot e \Vert + \Vert  \varphi \cdot (1-e) \Vert \geq
|\varphi(e)| + |\varphi(1-e)| \geq \varphi(1) = 1 = \varphi(e) + \varphi(1-e).$$
It follows from this that $\Vert  \varphi e \Vert = |\varphi(e)| = \varphi(e)$.
 Hence if $\eta \in {\rm Ball}(A^{**})$ then
$$|\varphi(\eta)| =  |\varphi e (\eta)| \leq \Vert  \varphi e \Vert = \varphi(e), $$ so that the restriction of $\varphi$ to $A^{**}$ is either zero
or is a positive
multiple of a state on $A^{**}$.  Thus for $M$-approximately unital  
Banach algebras, the two notions of ${\mathfrak r}_{A^{**}}$ under discussion coincide.

\bigskip

 Let 
$Q(A)$ be the quasi-state space of $A$, namely 
$$Q(A) = \{ t \varphi : t \in [0,1], \varphi \in S(A) \}.$$.   Similarly,
$Q_{{\mathfrak e}}(A) = \{ t \varphi : t \in [0,1], \varphi \in S_{{\mathfrak e}}(A) \}$.   
We set $${\mathfrak r}^{\mathfrak e}_A = \{ x \in A : {\rm Re} \, \varphi (x) \geq 0 \; {\rm for \, all} \; \varphi 
\in S_{\mathfrak e}(A) \} ,$$
and
$${\mathfrak c}^{\mathfrak e}_{A^*} = \{ \varphi \in A^* : \, {\rm Re} \; \varphi(x) \geq 0 \; {\rm for} \, {\rm all} \;
x \in {\mathfrak r}^{\mathfrak e}_A \}. $$   Note that ${\mathfrak r}_A \subset {\mathfrak r}^{\mathfrak e}_A$ since
$S_{\mathfrak e}(A) \subset S(A)$.

\begin{lemma} \label{hascai}   Let $A$
be a nonunital Banach algebra with a cai ${\mathfrak e}$.
\begin{itemize}
\item [(1)]  Then $0$ is in the weak* closure of $S_{\mathfrak e}(A)$.  Hence 
$0$  is in the weak* closure of $S(A)$.   Thus $Q(A)$ is a subset of the 
 weak* closure of $S(A)$, and similarly $Q_{{\mathfrak e}}(A)  \subset \overline{S_{{\mathfrak e}}(A)}^{w*}$.  
\item [(2)] The
weak* closure of $S_{\mathfrak e}(A)$  is contained in
${\mathfrak c}^{\mathfrak e}_{A^*}  \cap {\rm Ball}(A^*).$   It is also contained
in $S(A^1)_{|A}$, and both of the latter two sets
are subsets of ${\mathfrak c}_{A^*} \cap {\rm Ball}(A^*)$.
\end{itemize}
\end{lemma}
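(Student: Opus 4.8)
The plan is to treat (1) and (2) separately, with essentially all the content of (1) concentrated in the single assertion that $0$ lies in the weak* closure of $S_{\mathfrak e}(A)$; once that is in hand the remaining statements in (1) are soft, and (2) is bookkeeping with polars and weak* compactness. First I would prove $0 \in \overline{S_{\mathfrak e}(A)}^{w*}$, then read off $0 \in \overline{S(A)}^{w*}$ and the two ray inclusions, and finally dispatch (2).

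For the key point I would exploit nonunitality. A cai $(e_t)$ that were norm-Cauchy would converge in norm to an element $u$ with $ua = a = au$, i.e.\ to an identity; so nonunitality forces $(e_t)$ not to be norm-Cauchy, and I may extract a cofinal sequence $t_1 < t_2 < \cdots$ and a constant $c > 0$ with $\Vert f_k \Vert \geq c$, where $f_k := e_{t_{k+1}} - e_{t_k}$. For each $k$ pick a norming functional $\varphi_k \in {\rm Ball}(A^*)$ with $\varphi_k(f_k) = \Vert f_k \Vert$, and set $\psi_k(x) := \varphi_k(x f_k)/\Vert f_k \Vert$. Then $\Vert \psi_k \Vert \leq 1$, and $\psi_k(e_t) = \varphi_k(e_t f_k)/\Vert f_k \Vert \to 1$ since $e_t f_k \to f_k$, so $\psi_k \in S_{\mathfrak e}(A)$; in fact the formula $a + \lambda 1 \mapsto \varphi_k((a+\lambda 1) f_k)/\Vert f_k \Vert$ extends $\psi_k$ to a norm one functional on $A^1$ taking the value $1$ at $1$, hence to a state, so $\psi_k \in S(A)$ as well. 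The crucial observation is that for each fixed $a \in A$ one has $a f_k = a e_{t_{k+1}} - a e_{t_k} \to 0$, whence $|\psi_k(a)| \leq \Vert a f_k \Vert/c \to 0$; that is, $\psi_k \to 0$ weak*. This gives $0 \in \overline{S_{\mathfrak e}(A)}^{w*} \subseteq \overline{S(A)}^{w*}$.

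With $0$ in the closure, the ray statements follow. Since $S_{\mathfrak e}(A)$ is convex, for $\varphi \in S_{\mathfrak e}(A)$ and $t \in [0,1]$ the elements $t\varphi + (1-t)\psi_k$ again lie in $S_{\mathfrak e}(A)$ and converge weak* to $t\varphi$, giving $Q_{\mathfrak e}(A) \subseteq \overline{S_{\mathfrak e}(A)}^{w*}$. I expect the analogue $Q(A) \subseteq \overline{S(A)}^{w*}$ to be the main obstacle. The same recipe produces $t\varphi + (1-t)\psi_k \to t\varphi$ with $t\tilde\varphi + (1-t)\tilde\psi_k$ a state on $A^1$ restricting to it; but since $S(A)$ need not be convex — $\varphi$ and $\psi_k$ need not be normed at a common element of ${\rm Ball}(A)$ — this restriction can have norm strictly less than $1$, so it may fail to lie in $S(A)$, and mixing with the escaping states $\psi_k$ provably cannot raise the norm back to $1$ while keeping the value near $t\varphi$. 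The route I would take is to work inside the weak* compact convex set $\{\sigma_{|A} : \sigma \in S(A^1)\}$, note that $t\varphi = (t\tilde\varphi + (1-t)\chi_0)_{|A}$ already lies there, and then argue that points of $Q(A)$ — which, unlike general points of that set, are honest scalar multiples of norm one states — are reachable as weak* limits from $S(A)$. This norm-attainment point is the delicate step, and is precisely the phenomenon that later forces the extra hypotheses in the Corollary \ref{snr} circle of results.

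Finally, for (2) I would argue formally. The set ${\mathfrak c}^{\mathfrak e}_{A^*} = \{\varphi : {\rm Re}\,\varphi(x) \geq 0 \text{ for all } x \in {\mathfrak r}^{\mathfrak e}_A\}$ is an intersection of weak* closed half-spaces, hence weak* closed, and by the definition of ${\mathfrak r}^{\mathfrak e}_A$ every $\varphi \in S_{\mathfrak e}(A)$ satisfies ${\rm Re}\,\varphi(x) \geq 0$ on ${\mathfrak r}^{\mathfrak e}_A$; combined with $S_{\mathfrak e}(A) \subseteq {\rm Ball}(A^*)$ this gives $\overline{S_{\mathfrak e}(A)}^{w*} \subseteq {\mathfrak c}^{\mathfrak e}_{A^*} \cap {\rm Ball}(A^*)$. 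Since $S(A^1)$ is weak* compact and restriction to $A$ is weak*-to-weak* continuous, $\{\sigma_{|A} : \sigma \in S(A^1)\}$ is weak* closed and contains $S_{\mathfrak e}(A)$, yielding $\overline{S_{\mathfrak e}(A)}^{w*} \subseteq S(A^1)_{|A}$. The last assertion follows because ${\mathfrak r}_A \subseteq {\mathfrak r}^{\mathfrak e}_A$ forces ${\mathfrak c}^{\mathfrak e}_{A^*} \subseteq {\mathfrak c}_{A^*}$, and because for $\sigma \in S(A^1)$ and $a \in {\mathfrak r}_A$ one has ${\rm Re}\,\sigma(a) \geq 0$ by the definition of ${\mathfrak r}_A$, so that $\sigma_{|A} \in {\mathfrak c}_{A^*} \cap {\rm Ball}(A^*)$.
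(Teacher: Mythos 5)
Your construction of the net in $S_{\mathfrak e}(A)$ tending weak* to $0$ is, up to symmetry, exactly the paper's: the paper takes $\phi_t(x)=\psi_t((e_{s(t)}-e_t)x)/\Vert e_{s(t)}-e_t\Vert$, producing for each $t$ an index $s(t)\geq t$ with $\Vert e_{s(t)}-e_t\Vert\geq 1/2$ by noting that otherwise $\Vert 1-e_t\Vert_{A^1}<1$, which the Neumann lemma (or the contractivity of the trivial character) forbids; you multiply on the right instead of the left, and both choices work for a two-sided cai. One technical slip: you ``extract a cofinal sequence $t_1<t_2<\cdots$'' from the net. A general directed set need not have countable cofinality, so no cofinal sequence need exist, and non-Cauchyness by itself does not hand you \emph{consecutive} increments of size $\geq c$ along a chain (only, after a triangle-inequality step through a common upper bound, a comparable pair anchored above any given index). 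This is harmless and easily repaired --- index the new net by the original directed set as the paper does, or argue one basic weak* neighborhood of $0$ at a time --- but as written the extraction is not valid for an arbitrary cai. Your treatment of part (2), and of $Q_{\mathfrak e}(A)\subset\overline{S_{\mathfrak e}(A)}^{w*}$ via convexity of $S_{\mathfrak e}(A)$, agrees with the paper and is correct.

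The genuine gap is the inclusion $Q(A)\subset\overline{S(A)}^{w*}$, which you explicitly leave unfinished: you describe a ``route'' through the weak* compact convex set $S(A^1)_{|A}$ and then defer the decisive ``norm-attainment'' step. As a proof of the stated lemma this is incomplete --- you never exhibit, for $\varphi\in S(A)$ and $t\in(0,1)$, a net in $S(A)$ converging weak* to $t\varphi$. Your diagnosis of why the naive argument stalls is accurate and worth keeping: $t\varphi+(1-t)\phi_s$ does extend to a state on $A^1$ by convexity of $S(A^1)$, but its restriction to $A$ can have norm strictly less than $1$, because $\varphi\in S(A)$ need not be approximately normed at the $e_r$ where $\phi_s$ is normed (in $L^1(\Rdb)$ with convolution, the constant function $-1$ is a state with $\varphi(e_r)\to -1$ for the usual cai, and averaging it with the constant function $1$ drops the norm to $|1-2t|$). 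To be fair, the paper itself disposes of this point with the single sentence ``the rest is obvious,'' so it supplies no more detail than you do; but in your write-up the burden is to prove the claim, and you should either execute the reachability argument inside $S(A^1)_{|A}$ or isolate it as a precise statement and prove that. Until then, the third sentence of part (1) is asserted, not proved.
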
    \begin{proof}
(1) \ For every $t$, 
there exists $s(t) \geq t$ such that $\| e_{s(t)} - e_t \| \geq 1/2$  (or else taking the 
limit over $s > t$ we get the
contradiction $\Vert 1 - e_t \| < 1$, which is impossible by the Neumann lemma, or since the trivial character
$\chi_0$ is contractive).  
Take a norm one $\psi_t \in A^*$ such that
$\psi_t(e_{s(t)} - e_t) = \| e_{s(t)} - e_t \|.$
Let $\phi_t( x ) = \psi_t( (e_{s(t)} - e_t) x ) / \| e_{s(t)} - e_t \|.$
Then $\phi_t \in S_{\mathfrak e}(A)$  because it has  norm one
and $\lim_s \,  \phi_t( e_s) =1.$
One has $\lim_t \, \phi_t(x)= 0$ for all  $x \in A$.  To see this, given $\epsilon > 0$ choose $t_0$ such that 
$\Vert e_t x - x \Vert < \epsilon$ for all $t \geq t_0$.  
For such $t$ we have
$$| \psi_t( (e_{s(t)} - e_t) x )| / \| e_{s(t)} - e_t \| \leq 2 \Vert \psi_t \Vert \Vert  (e_{s(t)} - e_t) x \Vert
< 4 \epsilon .$$   Thus $\phi_t \to 0$ weak*.   The rest is obvious.

(2) \ The first assertion is clear by the definitions and since
${\mathfrak c}^{\mathfrak e}_{A^*}  \cap {\rm Ball}(A^*)$ is weak* closed.  Similarly,
that the weak* closure is contained in $S(A^1)_{|A}$ follows since $S_{\mathfrak e}(A)
\subset S(A)$ as we saw above, and because $S(A^1)$ and hence $S(A^1)_{|A}$,
are weak* closed.   We leave the rest as an exercise using
${\mathfrak r}_A \subset {\mathfrak r}^{\mathfrak e}_A$.  
\end{proof}

We will say that  an approximately unital Banach algebra $A$ is {\em scaled} 
(resp.\ ${\mathfrak e}$-{\em scaled}) if 
every $f$ in ${\mathfrak c}_{A^*}$ (resp.\ in ${\mathfrak c}^{\mathfrak e}_{A^*}$) is a nonnegative multiple of a state. 
That is, iff ${\mathfrak c}_{A^*} = \Rdb^+ \, S(A)$ (resp.\ ${\mathfrak c}^{\mathfrak e}_{A^*} = \Rdb^+ \, S_{{\mathfrak e}}(A)$).   
Equivalently, iff ${\mathfrak c}_{A^*}  \cap {\rm Ball}(A^*) = Q(A)$
(resp.\ ${\mathfrak c}^{\mathfrak e}_{A^*}  \cap {\rm Ball}(A^*)  = Q_{{\mathfrak e}}(A)$).     Examples of scaled
 Banach algebras include
$M$-approximately unital Banach algebras (see Proposition \ref{preq}) and $L^1(\Rdb)$ with convolution product. 
 One can show that $L^1(\Rdb)$ is not ${\mathfrak e}$-scaled 
if ${\mathfrak e}$ is the usual cai (see the Remark after Lemma \ref{netwee}
and Example \ref{Ex4}).

\begin{lemma} \label{escaled} Let $A$ be an approximately unital Banach algebra.
\begin{itemize}
\item [(1)]   Suppose that ${\mathfrak e} = (e_t)$ is a  cai 
for $A$.  Then  $Q_{{\mathfrak e}}(A)$ is weak* closed in $A^*$ iff 
$A$ is ${\mathfrak e}$-scaled.  If these hold then  $Q_{\mathfrak e}(A)$ is a weak* compact convex set in ${\rm Ball}(A^*)$, and 
 $S_{\mathfrak e}(A)$ is weak* dense in $Q_{\mathfrak e}(A)$. 
\item [(2)]
If $S(A)$ or $Q(A)$ is convex then $Q(A)$ is weak* closed in $A^*$ iff 
$A$ is scaled.  
\end{itemize}    \end{lemma}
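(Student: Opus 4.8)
The plan is to read both equivalences off the bipolar theorem and the Krein--Smulian theorem at once; the two parts differ only in which cone of states appears and in where convexity is invoked.

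For part (1), I would first note that, straight from the definitions (and since the paper's polar reduces to the dual cone on a cone), ${\mathfrak r}^{\mathfrak e}_A$ is the dual cone in $A$ of the convex cone $C_{\mathfrak e}:=\Rdb^+ S_{\mathfrak e}(A)\subseteq A^*$, and ${\mathfrak c}^{\mathfrak e}_{A^*}$ is in turn the dual cone of ${\mathfrak r}^{\mathfrak e}_A$ in $A^*$; hence by the bipolar theorem ${\mathfrak c}^{\mathfrak e}_{A^*}=C_{\mathfrak e}^{\circ\circ}=\overline{C_{\mathfrak e}}^{\,w*}$. As each $\varphi\in S_{\mathfrak e}(A)$ has norm $1$, scaling gives $C_{\mathfrak e}\cap{\rm Ball}(A^*)=Q_{\mathfrak e}(A)$, and by definition $A$ is ${\mathfrak e}$-scaled exactly when ${\mathfrak c}^{\mathfrak e}_{A^*}=C_{\mathfrak e}$, i.e.\ when $C_{\mathfrak e}$ is already weak* closed. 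Now $C_{\mathfrak e}$ is a convex cone (as $S_{\mathfrak e}(A)$ is convex), so Krein--Smulian applies: a convex cone is weak* closed iff its intersection with each $r\,{\rm Ball}(A^*)$ is, and for a cone this intersection is $r\,(C_{\mathfrak e}\cap{\rm Ball}(A^*))=r\,Q_{\mathfrak e}(A)$. Thus $C_{\mathfrak e}$ is weak* closed iff $Q_{\mathfrak e}(A)$ is, which is the asserted equivalence. (The implication scaled $\Rightarrow Q_{\mathfrak e}(A)$ weak* closed is also immediate, since ${\mathfrak c}^{\mathfrak e}_{A^*}$ and ${\rm Ball}(A^*)$ are weak* closed.)

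For the final clause of (1): once these hold, $Q_{\mathfrak e}(A)$ is a weak* closed subset of the weak* compact ${\rm Ball}(A^*)$, hence weak* compact, and it is convex (a short computation shows $Q_{\mathfrak e}(A)$ is convex whenever $S_{\mathfrak e}(A)$ is). Weak* density of $S_{\mathfrak e}(A)$ in $Q_{\mathfrak e}(A)$ I would extract from Lemma \ref{hascai}(1), which already gives $Q_{\mathfrak e}(A)\subseteq\overline{S_{\mathfrak e}(A)}^{\,w*}$; together with $S_{\mathfrak e}(A)\subseteq Q_{\mathfrak e}(A)$ and the weak* closedness of $Q_{\mathfrak e}(A)$, this forces $\overline{S_{\mathfrak e}(A)}^{\,w*}=Q_{\mathfrak e}(A)$.

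Part (2) runs along identical lines with $C:=\Rdb^+ S(A)$ and $C\cap{\rm Ball}(A^*)=Q(A)$ (using $\|\psi\|=1$ for $\psi\in S(A)$). Here $S(A)$ need not be convex, which is exactly why the hypothesis is imposed: assuming $S(A)$ or $Q(A)$ convex makes $Q(A)$ convex, whence $C=\Rdb^+ Q(A)$ is a convex cone and Krein--Smulian again yields ``$C$ weak* closed iff $Q(A)$ weak* closed.'' Since $A$ is scaled iff ${\mathfrak c}_{A^*}=C$, all that remains is to identify ${\mathfrak c}_{A^*}$ with $\overline{C}^{\,w*}$.

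This identification is the step I expect to be the main obstacle, and the only place (2) is not a verbatim copy of (1): by definition ${\mathfrak r}_A$ is tested against $S(A^1)$, not against $S(A)$, so the clean bipolar computation of part (1) only gives ${\mathfrak c}_{A^*}=\overline{\Rdb^+\,(S(A^1)|_A)}^{\,w*}$ (using, via Lemma \ref{hascai}(2), that $S(A^1)|_A\subseteq{\mathfrak c}_{A^*}$). To close the gap I must prove ${\mathfrak r}_A=(\Rdb^+ S(A))^\circ$, equivalently that every $\varphi|_A$ with $\varphi\in S(A^1)$ already lies in $\overline{\Rdb^+ S(A)}^{\,w*}$; this is the numerical-range identity $W_{A^1}(a)=\overline{\rm conv}(\{0\}\cup W_A(a))$, which I would establish by a contractive-approximate-identity compression argument (approximating $\varphi$ on $A$ by the functionals $x\mapsto\varphi(e_t x)$ and rescaling them to states of $A$). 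Granting this identity, scaled $\iff C$ weak* closed $\iff Q(A)$ weak* closed, exactly as in part (1), which completes (2).
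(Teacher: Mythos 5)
Your part (1) is correct and is essentially the paper's own argument: since ${\mathfrak r}^{\mathfrak e}_A$ and ${\mathfrak c}^{\mathfrak e}_{A^*}$ are by definition successive dual cones of $S_{\mathfrak e}(A)$, the bipolar theorem gives ${\mathfrak c}^{\mathfrak e}_{A^*}=\overline{\Rdb^+ S_{\mathfrak e}(A)}^{w*}$, and Krein--Smulian together with Lemma \ref{hascai} finishes it.

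In part (2) you have correctly isolated the one step that is not a verbatim copy of (1): because ${\mathfrak r}_A$ is defined by testing against $S(A^1)$ rather than against $S(A)$, the bipolar theorem only yields ${\mathfrak c}_{A^*}=\overline{\Rdb^+\,S(A^1)_{|A}}^{w*}$, and one must still identify this with $\overline{\Rdb^+ S(A)}^{w*}$. However, your proposed bridge does not hold up. First, the compression functionals $x\mapsto\varphi(e_t x)$ are not nonnegative multiples of states of $A$ in general: for $\varphi(e_t\,\cdot)/\Vert \varphi(e_t\,\cdot)\Vert$ to lie in $S(A)$ it must extend to a state of $A^1$, and there is no reason why $\varphi(e_t)$ (an arbitrary complex number in the disk) should be real, nonnegative, and equal to $\Vert \varphi(e_t\,\cdot)\Vert$; the remark before Lemma \ref{hbs} that $x\mapsto\varphi(yx)$ gives a state requires $\varphi(y)=1$, which you do not have here. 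Second, and more seriously, the identity $\overline{W_{A}(a)}={\rm conv}\{0,W_A(a)\}=W_{A^1}(a)$ is precisely one of the conclusions that Corollary \ref{snr} derives \emph{from} the scaled hypothesis (via its condition (iii), $Q(A)=S(A^1)_{|A}$), and Corollary \ref{snr} in turn cites Lemma \ref{escaled}(2) for the implication (iv)$\Rightarrow$(i); so invoking that identity without an independent proof inside the proof of \ref{escaled}(2) is circular. Note that the gap affects only the direction ``$Q(A)$ weak* closed and convex $\Rightarrow$ scaled'': there one must show ${\mathfrak c}_{A^*}\cap{\rm Ball}(A^*)\subseteq Q(A)$, i.e.\ that every norm-one $f\in A^*$ with ${\rm Re}\, f\ge 0$ on ${\mathfrak F}_A$ extends to a state of $A^1$, and this is where the real work lies. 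The converse direction, and all of part (1), are fine as you wrote them.
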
 

\begin{proof}   (1) \   By the bipolar theorem ${\mathfrak c}^{\mathfrak e}_{A^*}= \overline{\Rdb^+ S_{{\mathfrak e}}(A)}^{w^*}$. So $\Rdb^+ S_{{\mathfrak e}}(A)$
is weak* closed iff  ${\mathfrak c}^{\mathfrak e}_{A^*} = \Rdb^+ \, S_{{\mathfrak e}}(A)$; that is iff 
$A$ is ${\mathfrak e}$-scaled.  By the Krein-Smulian theorem this happens iff Ball$(\Rdb^+ S_{{\mathfrak e}}(A)) = Q_{{\mathfrak e}}(A)$ is weak* closed.
The weak* density assertion follows from Lemma \ref{hascai}.

(2) \ Follows by a similar argument to (1)  if $Q(A)$ is convex (and this is implied by  $S(A)$ being convex).
  \end{proof}

\begin{corollary}  \label{snr}   If $A$ is a 
nonunital  approximately unital  
 Banach algebra, then the following are equivalent:
\begin{itemize}
\item [(i)]    $A$ is scaled.
\item [(ii)]  
 $S(A^1)$ is the convex hull of the 
trivial character $\chi_0$ and the set of states on $A^1$ extending states of $A$.  
\item [(iii)] 
$Q(A) = \{ \varphi_{|A} : \varphi \in S(A^1) \}$.  
\item [(iv)]   $Q(A)$ is convex and weak* compact.
\end{itemize}   If these hold then  $Q(A) = \overline{S(A)}^{{\rm w*}}$, and the numerical range satisfies
$$\overline{W_{A}(a)} =  {\rm conv} \{ 0, W_{A}(a) \} =  W_{A^1}(a), \qquad a \in A.$$
\end{corollary}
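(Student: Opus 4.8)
The plan is to organise everything around the single set $D = \{ \varphi_{|A} : \varphi \in S(A^1) \}$ appearing in (iii), which is the image of the weak* compact convex set $S(A^1)$ under the weak* continuous linear restriction map $R : (A^1)^* \to A^*$; hence $D$ is always convex and weak* compact. First I would record the two inclusions that hold with no hypotheses, namely $Q(A) \subseteq D \subseteq {\mathfrak c}_{A^*} \cap {\rm Ball}(A^*)$. The first holds because $S(A)$ and $0$ lie in $D$ (every state of $A$ is the restriction of a state of $A^1$, and $0$ is the restriction of $\chi_0$) together with convexity of $D$; the second because a state of $A^1$ is real positive on ${\mathfrak r}_A$ by the very definition of ${\mathfrak r}_A$, and is contractive. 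I would also note the easy computations $R(\chi_0) = 0$ and $R(E) = S(A)$, where $E$ denotes the states of $A^1$ extending states of $A$.

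With this in hand the core cycle is short. For (i) $\Rightarrow$ (iii): if $A$ is scaled then ${\mathfrak c}_{A^*} \cap {\rm Ball}(A^*) = Q(A)$, so the displayed chain collapses and $Q(A) = D$. For (iii) $\Rightarrow$ (iv): since $D$ is always convex and weak* compact, $Q(A) = D$ gives (iv) at once. For (iv) $\Rightarrow$ (i): this is immediate from Lemma \ref{escaled}(2), whose hypothesis that $Q(A)$ be convex is part of (iv). Finally (iii) $\Rightarrow$ (ii), which with (i) $\Rightarrow$ (iii) also yields (i) $\Rightarrow$ (ii): given $\psi \in S(A^1)$, restrict to get $\psi_{|A} \in D = Q(A)$, write $\psi_{|A} = t \varphi$ with $t \in [0,1]$ and $\varphi \in S(A)$, pick $\tilde\varphi \in E$ extending $\varphi$, and check that $\psi = t \tilde\varphi + (1-t) \chi_0$ by comparing the two functionals on $A$ and on $1$; hence $\psi \in {\rm conv}(\{\chi_0\} \cup E)$, the reverse inclusion being trivial.

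The one remaining implication, (ii) $\Rightarrow$ one of (i), (iii), (iv), is where I expect the real work. Applying $R$ to (ii) gives only $D = {\rm conv}(\{0\} \cup S(A))$, which reproves that $D$ is convex and weak* compact but stops short of $D = Q(A)$: the gap is exactly that a sub-convex combination of states of $A$ need not normalise to a state (equivalently $\Rdb^+ S(A)$, and hence $Q(A)$, need not be convex), and this is precisely the failure of scaledness in general. The hard part is thus to extract convexity, and weak* closedness, of $Q(A)$ from the representation in (ii). My proposed route is to prove $Q(A) = D$ directly: take $f \in D$ with $f \neq 0$, realise $f$ as $\psi_{|A}$ for a suitable $\psi \in S(A^1)$, and use the weak* compactness of $D$ together with a Krein--Smulian argument in the spirit of Lemma \ref{escaled} to conclude that $f/\Vert f \Vert$ is an honest state, so that $f \in Q(A)$; the delicate point throughout is controlling norms so that the $E$-part of the convex decomposition supplied by (ii) yields a norm one functional and not merely a contraction.

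Finally, assuming the equivalent conditions, the closing assertions follow cleanly. For $Q(A) = \overline{S(A)}^{w*}$ the inclusion $\subseteq$ is Lemma \ref{hascai}(1), while $\supseteq$ holds because $S(A) \subseteq Q(A)$ with $Q(A)$ weak* closed by (iv). For the numerical range I would use that for $a \in A$ one has $W_{A^1}(a) = \{ f(a) : f \in D \} = \{ f(a) : f \in Q(A) \}$ by (iii); this set is convex as the linear image of the convex set $Q(A)$, it contains $0$ and $W_A(a)$, and it is contained in ${\rm conv}\{0, W_A(a)\}$ since each $f(a) = t \varphi(a)$ lies on the segment from $0$ to $\varphi(a)$, whence $W_{A^1}(a) = {\rm conv}\{0, W_A(a)\}$. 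For $\overline{W_A(a)} = W_{A^1}(a)$ the inclusion $\overline{W_A(a)} \subseteq W_{A^1}(a)$ holds because $W_{A^1}(a)$ is compact and contains $W_A(a)$, while the reverse uses $Q(A) = \overline{S(A)}^{w*}$ and the weak* continuity of $f \mapsto f(a)$ to place $W_{A^1}(a) = \{ f(a) : f \in \overline{S(A)}^{w*} \}$ inside $\overline{W_A(a)}$.
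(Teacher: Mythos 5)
Most of your argument is correct and, in substance, matches the paper's, just routed differently: the paper runs the cycle (i) $\Rightarrow$ (ii) $\Rightarrow$ (iii) $\Rightarrow$ (iv) $\Rightarrow$ (i), proving (i) $\Rightarrow$ (ii) by exactly the normalization you use for (iii) $\Rightarrow$ (ii) (write $\varphi_{|A}=t\psi$ with $\psi\in S(A)$ and compare $\varphi$ with $t\hat\psi+(1-t)\chi_0$ on $A$ and at $1$), proving (iii) $\Rightarrow$ (iv) by a subnet argument which is just your observation that $D$ is the weak* continuous affine image of the weak* compact convex set $S(A^1)$, and quoting Lemma \ref{escaled}(2) for (iv) $\Rightarrow$ (i). Your treatment of the two closing assertions also coincides with the paper's. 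The sandwich $Q(A)\subseteq D\subseteq {\mathfrak c}_{A^*}\cap{\rm Ball}(A^*)$ is a clean way to package (i) $\Rightarrow$ (iii).

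The gap is the one you flag yourself: you prove no implication out of (ii), so as written you have only shown that (i), (iii), (iv) are mutually equivalent and imply (ii). (The paper needs the same step --- it is its (ii) $\Rightarrow$ (iii) --- and disposes of it with ``we leave this as an exercise,'' so you have at least located the right pressure point.) But your proposed repair cannot work as described. Applying the restriction map to (ii) gives $D={\rm conv}(\{0\}\cup D_1)$, where $D_1=\{f\in D:\Vert f\Vert=1\}=S(A)$, whereas (iii) asserts $D=[0,1]\cdot D_1$; what is needed is that $f/\Vert f\Vert\in D$ for every nonzero $f\in D$. This is not a formal consequence of weak* compactness and convexity of $D$ together with a Krein--Smulian argument: a triangle with one vertex at the origin satisfies $K={\rm conv}(\{0\}\cup (K\cap \{\Vert\cdot\Vert=1\}))$ without satisfying $K=[0,1]\cdot(K\cap\{\Vert\cdot\Vert=1\})$. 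So convex geometry alone cannot close the loop; one must use something specific to $D$ being a restricted state space --- concretely, that for $\psi=\varphi_{|A}$ with $\varphi\in S(A^1)$ and $s=\Vert\psi\Vert>0$, the unique extension $a+\lambda 1\mapsto \psi(a)/s+\lambda$ of $\psi/s$ taking the value $1$ at $1$ is again contractive on $A^1$. Until that (or some substitute) is supplied, the equivalence of (ii) with the other three conditions is not established.
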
 \begin{proof}   (i) $\Rightarrow$ (ii) \  
Clearly  the convex hull in (ii) is a subset of $S(A^1)$.
Conversely, if $\varphi \in S(A^1)$  then $\varphi_{|A}$ is real positive, so that
by (i) we have 
 $\varphi_{|A} = t \, \psi$ for $t \in (0,1]$ and
$\psi \in S(A)$.   Then  
$\varphi = t \hat{\psi}+ (1-t) \chi_0$, where $\hat{\psi}$ is the state extending $\psi$.

(ii) $\Rightarrow$ (iii) \ We leave this as an exercise.

(iii) $\Rightarrow$ (iv) \   Suppose that $(\varphi_t)$ is a net in $S(A^1)$ whose restrictions to $A$
converge weak*
to $\psi \in A^*$.
A subnet $(\varphi_{t_\lambda})$ converges weak* to $\varphi \in S(A^1)$, and $\psi = \varphi_{|A}$
clearly.  This gives the weak* compactness in (iv), and the convexity is easier.

(iv) $\Rightarrow$ (i) \ This follows from (2) of the previous lemma.

Assume that these hold.    Since $S(A) \subset Q(A)$, that $Q(A) = \overline{S(A)}^{{\rm w*}}$  is now clear from
the fact from Lemma \ref{hascai}  that $Q(A) \subset
\overline{S(A)}^{w*}$.
Since  $A$ is
nonunital we have  $0 \in W_{A^1}(a)$.  Clearly $W_A(a) \subset W_{A^1}(a)$, so that 
 ${\rm conv} \{ 0, W_{A}(a) \}  \subset W_{A^1}(a)$.  
The converse inclusion follows easily from the above, 
so  ${\rm conv} \{ 0, W_{A}(a) \} =  W_{A^1}(a)$.     Also, 
clearly $\overline{W_{A}(a)} \subset W_{A^1}(a)$, and the converse
inclusion follows since $S(A^1)_{|A} = Q(A) = \overline{S(A)}^{w*}$.
  \end{proof}

{\bf Remarks.}  1) \   Thus if $S(A) = S_{{\mathfrak e}}(A)$ for some cai ${\mathfrak e}$ of $A$, 
then $A$ is scaled iff $Q(A)$ is weak* closed.

\smallskip

2) \ In particular, if $A$ is unital then conditions (i) and (iv) in the previous result  are automatically  true. 
Indeed  $S(A)$ is weak* closed, and hence $Q(A)$ is too, and the rest follows from  Lemma \ref{escaled}.  Item (i) also follows from the proof of \cite[Theorem 2.2]{Mag}.

\begin{theorem}  \label{con}  Let ${\mathfrak e} = (e_n)$  be a sequential cai
for a Banach algebra $A$.
 If   $Q_{\mathfrak e}(A)$ is weak* closed, 
 then $A$ possesses a sequential  cai in ${\mathfrak r}^{\mathfrak e}_A$.  Moreover
for every $a \in A$ with
$\inf \{ {\rm Re} \, \varphi(a) : \varphi \in S_{\mathfrak e}(A) \} > -1$,
there is a  sequential  cai $(f_n)$  in ${\mathfrak r}^{\mathfrak e}_A$  such that $f_n +a \in {\mathfrak r}^{\mathfrak e}_A$  for all $n$.  \end{theorem}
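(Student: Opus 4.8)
The plan is to reduce both statements to a single Kaplansky-type density assertion in the bidual, to prove that via the bipolar theorem using the scaled hypothesis, and then to manufacture the sequential cai by convex combinations. Since $Q_{\mathfrak e}(A)$ is weak* closed, Lemma~\ref{escaled} gives that $A$ is ${\mathfrak e}$-scaled, so ${\mathfrak c}^{\mathfrak e}_{A^*}=\Rdb^+\,S_{\mathfrak e}(A)$ is a weak*-closed cone and $Q_{\mathfrak e}(A)$ is weak* compact. Fix a weak* limit point $e\in A^{**}$ of $(e_n)$; this is a mixed identity of norm $1$, and $\tilde{\chi}(e)=1$ for every $\chi\in S_{\mathfrak e}(A)$, where $\tilde{\chi}$ is the canonical weak* continuous extension, so ${\rm Re}\,\nu(e)\geq 0$ for all $\nu\in{\mathfrak c}^{\mathfrak e}_{A^*}$. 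I would first dispose of the unadorned statement, which is the case $a=0$: here I claim $e\in\overline{L}^{w*}$ for $L=\{f\in A:\|f\|\leq 1,\ f\in{\mathfrak r}^{\mathfrak e}_A\}$. Indeed the bipolar theorem identifies $\overline{{\mathfrak r}^{\mathfrak e}_A}^{w*}=({\mathfrak r}^{\mathfrak e}_A)^{\circ\circ}=\{x\in A^{**}:{\rm Re}\,\tilde{\chi}(x)\geq 0\ \text{for all}\ \chi\in S_{\mathfrak e}(A)\}$, using $({\mathfrak r}^{\mathfrak e}_A)^\circ={\mathfrak c}^{\mathfrak e}_{A^*}=\Rdb^+S_{\mathfrak e}(A)$ (this is precisely where scaledness enters), and $e$ lies in the right-hand side since $\tilde{\chi}(e)=1$.

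For the \emph{moreover} part, record first that the hypothesis reads ${\rm Re}\,\chi(a)\geq -1+\delta$ ($\chi\in S_{\mathfrak e}(A)$) for some $\delta>0$, so ${\rm Re}\,\tilde{\chi}(a+e)={\rm Re}\,\chi(a)+1\geq\delta>0$ and therefore ${\rm Re}\,\nu(a+e)\geq 0$ for all $\nu\in{\mathfrak c}^{\mathfrak e}_{A^*}$. Now $L=\{f\in A:\|f\|\leq1,\ f\in{\mathfrak r}^{\mathfrak e}_A,\ a+f\in{\mathfrak r}^{\mathfrak e}_A\}$ is convex but no longer a cone, so I would homogenize: in $A\oplus\Rdb$ form the closed convex cone ${\mathcal K}=\{(f,\lambda):\lambda\geq0,\ f\in{\mathfrak r}^{\mathfrak e}_A,\ \lambda a+f\in{\mathfrak r}^{\mathfrak e}_A,\ \|f\|\leq\lambda\}$ and compute $\overline{\mathcal K}^{w*}={\mathcal K}^{\circ\circ}$ in $A^{**}\oplus\Rdb$. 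Granting the description of ${\mathcal K}^\circ$ below, the point is that $(e,1)$ pairs nonnegatively with ${\mathcal K}^\circ$: its generating functionals have the form $(\nu_1+\nu_2+\psi,\,{\rm Re}\,\nu_2(a)+t)$ with $\nu_1,\nu_2\in{\mathfrak c}^{\mathfrak e}_{A^*}$ and $\|\psi\|\leq t$, and on $(e,1)$ each evaluates to ${\rm Re}\,\nu_1(e)+{\rm Re}\,\nu_2(a+e)+({\rm Re}\,\psi(e)+t)\geq 0$ by the two positivity facts together with $\|e\|=1$. Slicing $\overline{\mathcal K}^{w*}$ at $\lambda=1$ then yields $e\in\overline{L}^{w*}$.

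The step I expect to be the main obstacle is showing that ${\mathcal K}^\circ$ really is the weak*-closed cone generated by the functionals just displayed --- equivalently, the strong-duality identity $\sup_{f\in L}{\rm Re}\,\mu(f)=\inf_{\nu_1,\nu_2\in{\mathfrak c}^{\mathfrak e}_{A^*}}\big(\|\mu+\nu_1+\nu_2\|+{\rm Re}\,\nu_2(a)\big)$ for $\mu\in A^*$. The inequality ``$\geq{\rm Re}\,e(\mu)$'' for the right side, which is what is ultimately needed, is the one-line estimate $\|\mu+\nu_1+\nu_2\|\geq{\rm Re}(\mu+\nu_1+\nu_2)(e)={\rm Re}\,\mu(e)+{\rm Re}\,\nu_1(e)+{\rm Re}\,\nu_2(a+e)-{\rm Re}\,\nu_2(a)$, using $\|e\|=1$ and the positivity facts; but it is only decisive once the infimum is attained, i.e.\ once the generated dual cone is weak* closed. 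That closedness --- that the image of ${\mathfrak c}^{\mathfrak e}_{A^*}\times{\mathfrak c}^{\mathfrak e}_{A^*}$ under $(\nu_1,\nu_2)\mapsto(\nu_1+\nu_2,{\rm Re}\,\nu_2(a))$ is weak* closed --- is exactly what the weak* compactness of the base $Q_{\mathfrak e}(A)$ (the scaled hypothesis) provides, and I expect the compactness bookkeeping here to be the technical heart.

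Finally, granting $e\in\overline{L}^{w*}$, I would build the cai as in Lemma~\ref{netwee}: choose a net $(g_\tau)$ in $L$ with $g_\tau\to e$ weak*; since $e$ is a mixed identity, $a'g_\tau\to a'$ and $g_\tau a'\to a'$ weakly for each $a'\in A$ (expand $\langle a'g_\tau,\varphi\rangle=\langle g_\tau,\varphi a'\rangle\to e(\varphi a')=\varphi(a')$, using $a'e_n\to a'$ in norm, and symmetrically), so by Mazur's theorem suitable convex combinations converge in norm. These combinations remain in the convex set $L$, hence are contractive and lie in ${\mathfrak r}^{\mathfrak e}_A$ with $a+(\cdot)\in{\mathfrak r}^{\mathfrak e}_A$ (Lemma~\ref{ban} may be used to keep norms at most $1$ if one prefers to omit the ball from $L$). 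To make the cai sequential it suffices, since $(e_n)$ is itself a cai and the constructed elements are bounded, to approximate the identity only on the countable set $\{e_k\}$; so at stage $n$ one selects an $f_n\in L$ with $\|f_ne_k-e_k\|+\|e_kf_n-e_k\|<1/n$ for all $k\leq n$. Then $(f_n)$ is the desired sequential cai in ${\mathfrak r}^{\mathfrak e}_A$ with $f_n+a\in{\mathfrak r}^{\mathfrak e}_A$ for every $n$, and $a=0$ gives the first assertion.
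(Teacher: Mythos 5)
Your argument is essentially correct, but it takes a genuinely different route from the paper's. The paper works entirely downstairs: it proves a dominated-convergence/separation lemma for sequences on the compact set $K=Q_{\mathfrak e}(A)$ (this is where weak* closedness enters, via Riesz--Markov), uses it to find convex combinations of the $e_n$ that are real-positive up to $\epsilon$, and then runs an iterated series construction $z=x+2\epsilon\sum_k 2^{-k}y_k$ to land exactly in ${\mathfrak r}^{\mathfrak e}_A$; the cai is the explicit normalization $z_m/\Vert z_m\Vert$ of convex combinations of a subsequence of $(e_n)$, and the ``moreover'' is obtained by rerunning the construction on $ta+e_n'$ for suitable $t>1$. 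You instead go upstairs: ${\mathfrak e}$-scaledness enters through the identification $({\mathfrak r}^{\mathfrak e}_A)^\circ={\mathfrak c}^{\mathfrak e}_{A^*}=\Rdb^+S_{\mathfrak e}(A)$ in a bipolar computation, you place the mixed identity $e$ in the weak* closure of the convex set $L$, and you descend by Mazur plus a diagonal selection. Both proofs consume the hypothesis at the same point, but yours is less constructive: the paper's explicit convex-combination construction is what permits the refinement in Corollary \ref{scase} (keeping the cai in ${\mathfrak F}_A$ when one exists), which would not drop out of your argument.

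The step you flag as the unresolved ``technical heart'' is in fact not needed. Writing ${\mathcal K}=C_1\cap C_2\cap C_3$ for the three norm-closed convex cones given by $f\in{\mathfrak r}^{\mathfrak e}_A$, by $\lambda a+f\in{\mathfrak r}^{\mathfrak e}_A$, and by $\Vert f\Vert\le\lambda$, the bipolar theorem yields ${\mathcal K}^\circ=\overline{C_1^\circ+C_2^\circ+C_3^\circ}^{w*}$, closure included; and that suffices, because you only ever pair ${\mathcal K}^\circ$ against the fixed element $(e,1)$ of the bidual, and $\mu\mapsto{\rm Re}\,\langle (e,1),\mu\rangle$ is weak* continuous, so nonnegativity on the generating set (your one-line estimate) passes to the weak* closure for free. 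No attainment of the infimum, and no weak* closedness of the generated dual cone, is required. Two minor repairs: your standalone $a=0$ argument only gives $e\in\overline{{\mathfrak r}^{\mathfrak e}_A}^{w*}$, not $e\in\overline{L}^{w*}$ for the ball-truncated $L$ (the net furnished by the bipolar theorem need not be bounded), but this is harmless since $a=0$ is a special case of the homogenized argument, whose cone already encodes $\Vert f\Vert\le\lambda$; and in slicing $\overline{\mathcal K}^{w*}$ at $\lambda=1$ one should observe that $\lambda_\tau\to1$ is eventually bounded below, so the rescaled net $f_\tau/\lambda_\tau$ lies in $L$ and still converges weak* to $e$.
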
 

\begin{proof} 
  We first state a general fact about compact spaces $K$.
If $(f_n )$ is a bounded sequence
in $C(K,\Rdb)$, such that $\lim_n f_n(x)$ exists
for every $x \in K$ and is non-negative, then
for every $\epsilon>0$, there is a function $f \in {\rm conv} \{ f_n \}$
such that $f \geq -\epsilon$ on $K$.   Indeed if this were not true, then $\overline{{\rm 
conv}} \{ f_n \}$ and $C(K)_+$
would be disjoint.   By a Hahn-Banach separation argument and the Riesz--Markov theorem there is
a probability measure $m$ such that
$\sup_n \int_K  f_n \, dm < 0$.  This is a contradiction since $\lim_n \, \int_K \, f_n \, dm \geq 0$ by 
Lebesgue's dominated  convergence theorem. 

Set $K$ to be the weak* closure of $S_{\mathfrak e}(A)$ in $A^*$
(so that
$K = Q_{\mathfrak e}(A)$ by Lemma \ref{hascai}), and let $f_n(\varphi) = {\rm Re} \, \varphi(e_n)$
for $\varphi \in K$.  Since $\lim_n \, {\rm Re} \, \varphi(e_n) \geq 0$ for all $\varphi \in Q_{\mathfrak e}(A)$, we can
apply the previous paragraph to find an
element $x \in {\rm conv} \{ e_n \}$ such that
$\inf_{\varphi \in K} \, \varphi( x) > -\epsilon.$
Similarly, choose  $y_1  \in {\rm conv} \{ e_n \}$ such that
$\inf_{\varphi \in K} \, \varphi( x + \epsilon y_1 ) > -\epsilon/2.$   Continue in this way, 
choosing  $y_n \in {\rm conv} \{ e_n \}$ such that
$$\inf_{\varphi \in K} \, \varphi( x + \epsilon \sum_{k=1}^n 2^{1 - k} y_k ) > -\epsilon/2^n.$$
Set $u =\sum_{k=1}^{\infty} \, 2^{-k} y_k  \in \overline{{\rm 
conv}}  \{ e_n \}$, and $z = x + 2 \epsilon u$.  This 
is in ${\mathfrak r}^{\mathfrak e}_A,$ and $|| z - x || < 2 \epsilon.$  
 
Choose a subsequence  $(e_{k_n})$ of $(e_n)$ such that 
$$\Vert e_{k_n} e_n -   e_n \Vert + \Vert  e_n e_{k_n} -  e_n \Vert < 2^{-n}.$$
  For each $m \in \Ndb$ apply the last paragraph to $(e_{k_n})_{n \geq m}$, and with $\epsilon$ replaced by $2^{-m}$,
 to find $x_m, u_m \in \overline{{\rm conv}} \{ e_{k_n} : n \geq m
\}$ with $z_m = x_m + 2^{1-m}  u_m \in  {\mathfrak r}^{\mathfrak e}_A$.     Then 
$\Vert x_m  e_m -  e_m \Vert + \Vert  e_m x_m -  e_m \Vert < 2^{-m}$.  
From this it is easy to see that  $(x_m)$ is a cai for $A$.  It is also easy to see now that 
$e'_m = \frac{1}{\Vert z_m \Vert} \, z_m$ is a bai (hence also a cai) for $A$ in ${\mathfrak r}^{\mathfrak e}_A$.

The case for the ``moreover" is similar.   Suppose that $\inf \{ {\rm Re} \, \varphi(a) : \varphi \in S_{\mathfrak e}(A) \} > -1$.  We may assume the infimum is negative, and choose $t > 1$ so that 
the infimum  is still
$>-1$ with $a$ replaced by  $ta$.  We now begin to follow the argument in previous paragraphs,
with the same $K$, but starting from a cai $(e'_n)$ in ${\mathfrak r}^{\mathfrak e}_A$.
Since $\lim_n \, {\rm Re} \, \varphi(ta + e'_n) \geq 0$ for all $\varphi \in Q_{\mathfrak e}(A)$, we can apply the above to find an
element $x \in {\rm conv} \{ e'_n \} \subset {\mathfrak r}^{\mathfrak e}_A$ such that
$\inf_{\varphi \in K} \, \varphi(ta +  x ) > -\epsilon.$   Continue as above to find $u  \in \overline{{\rm 
conv}}  \{ e'_n \} \subset {\mathfrak r}^{\mathfrak e}_A$ so that 
 $z = ta + x + 2 \epsilon u$    
is  in ${\mathfrak r}^{\mathfrak e}_A,$ with  $|| z - x - ta || < 2 \epsilon.$      For each $m \in \Ndb$
there exists such $x_m , u_m \in {\mathfrak r}^{\mathfrak e}_A$ so that 
 $z_m = ta + x_m + 2^{1-m} u_m$ is  in ${\mathfrak r}^{\mathfrak e}_A,$ with  $|| z_m - x_m - ta
\Vert \leq 2^{1-m}$, and such that $(x_m)$ is a cai for $A$.     Note that 
$z_m - ta \in {\mathfrak r}^{\mathfrak e}_A$, and hence  $f_m = \frac{1}{\Vert z_m - ta \Vert} (z_m - ta) \in {\mathfrak r}^{\mathfrak e}_A$.   Also $(f_m)$ is a bai (hence a cai) for $A$ in ${\mathfrak r}^{\mathfrak e}_A$.   There exists an $N$ such that  $\frac{t}{\Vert z_m - ta \Vert} > 1$ 
for $m \geq N$.  Thus  $f_m + a \in {\mathfrak r}^{\mathfrak e}_A$ 
for $m \geq N$, since this is a convex combination
of $f_m$ and $f_m + \frac{ta}{\Vert z_m - ta \Vert} = \frac{z_m}{\Vert z_m - ta \Vert}$.   \end{proof}

\begin{corollary} \label{scase}   Let ${\mathfrak e} = (e_n)$  be a sequential cai
for a Banach algebra $A$.  Assume that 
$S(A) = S_{\mathfrak e}(A)$ (which is the case for example if $A$ is Hahn-Banach smooth).
 If   $Q(A)$ is weak* closed, 
 then $A$ possesses a sequential  cai in ${\mathfrak r}_A$.  Moreover
for every $a \in A$ with
$\inf \{ {\rm Re} \, \varphi(a) : \varphi \in S(A) \} > -1$,
there is a   sequential  cai $(f_n)$  in  ${\mathfrak r}_A$ such that $f_n \succeq -a$  for all $n$.  
If, in addition, $A$ has a sequential  cai 
in ${\mathfrak F}_A$ then the sequential  cai $(f_n)$ in the last line can also
be chosen to be in ${\mathfrak F}_A$. \end{corollary}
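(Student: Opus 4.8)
The plan is to deduce the first two assertions directly from Theorem~\ref{con}, and then to rework its construction for the final $\mathfrak{F}_A$ refinement. Under the standing hypothesis $S(A)=S_{\mathfrak e}(A)$ the two quasi-state sets coincide, so $Q(A)=Q_{\mathfrak e}(A)$; hence the weak* closedness of $Q(A)$ is exactly weak* closedness of $Q_{\mathfrak e}(A)$, which is what Theorem~\ref{con} requires. Moreover $S(A)$ is convex, being equal to the convex set $S_{\mathfrak e}(A)$, so Lemma~\ref{escaled}(2) gives that $A$ is scaled. I would then identify $\mathfrak{r}_A$ with $\mathfrak{r}^{\mathfrak e}_A$: the inclusion $\mathfrak{r}_A\subset\mathfrak{r}^{\mathfrak e}_A$ is already noted, and for the reverse, since $A$ is scaled Corollary~\ref{snr} writes any $\varphi\in S(A^1)$ as $t\hat\psi+(1-t)\chi_0$ with $\psi\in S(A)$; as $\chi_0$ vanishes on $A$ we get ${\rm Re}\,\varphi(y)=t\,{\rm Re}\,\psi(y)\geq 0$ for every $y\in\mathfrak{r}^{\mathfrak e}_A=\{z:{\rm Re}\,\psi(z)\geq0\ \text{for all}\ \psi\in S(A)\}$, so $y\in\mathfrak{r}_A$. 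With $\mathfrak{r}_A=\mathfrak{r}^{\mathfrak e}_A$ in hand, the first statement and the ``moreover'' are precisely Theorem~\ref{con}, once one rewrites the condition $f_n+a\in\mathfrak{r}^{\mathfrak e}_A$ as $f_n\succeq-a$.

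The substantive point, and the step I expect to be the main obstacle, is the last sentence. Here I would re-run the ``moreover'' construction of Theorem~\ref{con} but seed it with the given sequential cai lying in $\mathfrak{F}_A$ (permissible since $\mathfrak{F}_A\subset\mathfrak{r}_A=\mathfrak{r}^{\mathfrak e}_A$). Because $\mathfrak{F}_A$ is closed and convex, the elements $x_m,u_m$ produced there as closed convex combinations of that cai again lie in $\mathfrak{F}_A$, and one still obtains $z_m=ta+x_m+2^{1-m}u_m\in\mathfrak{r}_A$ with $(x_m)$ a cai and $t>1$. The obstruction is that the normalized vector $(x_m+2^{1-m}u_m)/\Vert x_m+2^{1-m}u_m\Vert$ used in Theorem~\ref{con} need not lie in $\mathfrak{F}_A$, since dividing by a norm $\leq 1$ can scale a point out of $\mathfrak{F}_A$.

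To get around this I would replace the normalization by the genuine convex combination $v_m=\frac{1}{1+2^{1-m}}x_m+\frac{2^{1-m}}{1+2^{1-m}}u_m$, which lies in $\mathfrak{F}_A$ automatically; since $\Vert v_m-x_m\Vert\leq 2^{2-m}\to 0$ and $(x_m)$ is a cai, $(v_m)$ is also a cai. Finally I would recover $v_m+a\in\mathfrak{r}_A$ by using the slack $t>1$: writing $x_m+2^{1-m}u_m=(1+2^{1-m})v_m$, we have $ta+(1+2^{1-m})v_m=z_m\in\mathfrak{r}_A$ and $v_m\in\mathfrak{r}_A$, so once $2^{1-m}\leq t-1$ the positive combination with coefficients $1$ and $t-1-2^{1-m}$ gives $t(a+v_m)\in\mathfrak{r}_A$, whence $a+v_m\in\mathfrak{r}_A$ as $\mathfrak{r}_A$ is a cone. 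Discarding the finitely many small $m$ and re-indexing, $(v_m)$ is the required sequential cai in $\mathfrak{F}_A$ with $v_m\succeq-a$.
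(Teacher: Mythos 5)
Your proposal is correct and follows essentially the same route as the paper: reduce the first two assertions to Theorem \ref{con} via $Q(A)=Q_{\mathfrak e}(A)$ and $\mathfrak{r}_A=\mathfrak{r}^{\mathfrak e}_A$, then rerun the ``moreover'' construction seeded with the cai in ${\mathfrak F}_A$ and replace the normalized element by the convex combination $\frac{1}{1+2^{1-m}}x_m+\frac{2^{1-m}}{1+2^{1-m}}u_m$, which is exactly the element $\frac{\Vert x_m+2^{1-m}u_m\Vert}{1+2^{1-m}}f_m$ the paper uses. Your explicit cone argument using the slack $t>1$ to get $v_m+a\in{\mathfrak r}_A$ is precisely the ``tiny modification'' the paper leaves unstated, and your verification of ${\mathfrak r}^{\mathfrak e}_A\subset{\mathfrak r}_A$ via Corollary \ref{snr} fills a step the paper glosses over.
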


\begin{proof}  By the last result $A$ has a sequential   cai in  ${\mathfrak r}_A$
satisfying the first two assertions.  Suppose that
   $A$ has a sequential  cai,  $(e_n')$ say, in  ${\mathfrak F}_A$.    One then follows the last paragraph of 
the last proof.  Now $x_m, u_m \in {\mathfrak F}_A$.  Define  $f_m$ as before,
but the desired cai is $\frac{\Vert x_m +
2^{1-m} u_m \Vert}{1 + 2^{1-m}} \, f_m$, which 
is easy to see is a convex combination
of $x_m$ and $u_m$,
and hence is in ${\mathfrak F}_A$.   Moreover a tiny modification of the 
argument above shows that the sum of this cai and $a$ is in ${\mathfrak r}_A$
for $m$ large enough.  
\end{proof}

{\bf Remark.}\ 
Under the 
conditions of Corollary \ref{scase}, and if $A$ has a sequential 
approximate identity in $\frac{1}{2} {\mathfrak F}_A$ (resp. ${\mathfrak F}_A$), then a slight variant of the last proof shows that 
for any $a \in A$ with $\inf \{ {\rm Re} \, \varphi(a) : \varphi \in S(A) \} > -1$,
there is a   sequential  bai $(f_n)$  in  $\frac{1}{2}  {\mathfrak F}_A$ 
(resp. ${\mathfrak F}_A$) such that $f_n \succeq -a$  for all $n$.  
By Corollary \ref{hasbi} (and the 
remark after it) below, if $A$ has a sequential bai in ${\mathfrak r}_A$ 
then $A$ does have a sequential bai in ${\mathfrak F}_A$.

\bigskip

We also remark that Corollary 3.4 of \cite{B2015} generalizes the 
first assertion of Corollary \ref{scase} above to non-sequential cais.

\begin{proposition} \label{pgold}  If $A$ is a scaled approximately unital Banach algebra then the weak* closure
of ${\mathfrak r}_A$ is ${\mathfrak r}_{A^{**}}$.
\end{proposition}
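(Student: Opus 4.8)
The plan is to argue by a bipolar computation in the duality $(A^{**},A^*)$, using the scaled hypothesis to replace the functional cone ${\mathfrak c}_{A^*}$ by $\Rdb^+ S(A)$, and then to trap ${\mathfrak r}_{A^{**}}$ between two sets that can be computed using only the weak*-continuous states coming from $S(A)$. Throughout, ``weak* closure'' means closure in $\sigma(A^{**},A^*)$, and I regard ${\mathfrak r}_A \subset A \subset A^{**}$. It is convenient to introduce $P = \{ \eta \in A^{**} : {\rm Re}\, \langle \eta, \psi \rangle \geq 0 \text{ for all } \psi \in S(A) \}$.

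The main step is to show $\overline{{\mathfrak r}_A}^{w*} = P$. Since ${\mathfrak r}_A$ is a convex cone (being cut out by the inequalities ${\rm Re}\, \varphi(\cdot) \geq 0$ for $\varphi \in S(A^1)$), the bipolar theorem applied in the duality $(A^{**},A^*)$ gives that $\overline{{\mathfrak r}_A}^{w*}$ is the bipolar $({\mathfrak r}_A^\circ)^\circ$. By the definition of ${\mathfrak c}_{A^*}$ we have ${\mathfrak r}_A^\circ = {\mathfrak c}_{A^*}$ (for a cone the polar condition ${\rm Re} \geq -1$ forces ${\rm Re} \geq 0$), and the scaled hypothesis gives ${\mathfrak c}_{A^*} = \Rdb^+ S(A)$. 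As a dual cone is unchanged under positive rescaling of its defining set, $({\mathfrak c}_{A^*})^\circ = (\Rdb^+ S(A))^\circ = P$, so $\overline{{\mathfrak r}_A}^{w*} = P$. This is where scaledness is consumed.

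It remains to prove the sandwich $\overline{{\mathfrak r}_A}^{w*} \subseteq {\mathfrak r}_{A^{**}} \subseteq P$, whereupon the chain closes to equalities. For the left inclusion it suffices to check ${\mathfrak r}_A \subseteq {\mathfrak r}_{A^{**}}$ and recall that ${\mathfrak r}_{A^{**}}$ is weak* closed (established earlier in the paper): if $a \in {\mathfrak r}_A$ and $\Psi$ is any state of $(A^1)^{**}$, then $\Psi_{|A^1} \in S(A^1)$ and so ${\rm Re}\, \Psi(a) = {\rm Re}\, \Psi_{|A^1}(a) \geq 0$, giving $a \in A^{**} \cap {\mathfrak r}_{(A^1)^{**}} = {\mathfrak r}_{A^{**}}$. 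For the right inclusion, fix $\eta \in {\mathfrak r}_{A^{**}}$ and $\psi \in S(A)$; by definition $\psi$ extends to a state $\hat\psi \in S(A^1)$, which, viewed as a weak*-continuous functional on $(A^1)^{**}$, has norm $1$ and value $1$ at the identity and is therefore a state of $(A^1)^{**}$. Hence ${\rm Re}\, \hat\psi(\eta) \geq 0$ since $\eta \in {\mathfrak r}_{(A^1)^{**}}$; and as the embedding $A^{**} \hookrightarrow (A^1)^{**}$ is dual to the restriction map $(A^1)^* \to A^*$, one has $\hat\psi(\eta) = \langle \eta, \hat\psi_{|A} \rangle = \langle \eta, \psi \rangle$, so ${\rm Re}\, \langle \eta, \psi \rangle \geq 0$ and $\eta \in P$.

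The conceptual obstacle to keep in mind is that states of $(A^1)^{**}$ need not be weak*-continuous, so ${\mathfrak r}_{(A^1)^{**}}$ cannot be described directly through functionals in $A^*$. Routing through $P$ sidesteps this: each inclusion in the sandwich uses only states of $A$ and their canonical weak*-continuous extensions, while scaledness carries the burden of identifying $\overline{{\mathfrak r}_A}^{w*}$ with $P$. The only points that deserve a line of care are the identification $\hat\psi(\eta) = \langle \eta, \psi \rangle$ under the canonical embedding, and the (harmless) rescaling in $(\Rdb^+ S(A))^\circ = P$.
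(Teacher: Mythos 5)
Your proof is correct and takes essentially the same route as the paper's: a bipolar computation in the duality $(A^{**},A^*)$ in which scaledness converts ${\mathfrak c}_{A^*}$ into $\Rdb^+ S(A)$, with the key inclusion ${\mathfrak r}_{A^{**}} \subseteq P$ verified by exactly the paper's device of extending a state of $A$ to a weak*-continuous state of $(A^1)^{**}$. Your ``sandwich'' through $P$ is just the polar-dual packaging of the paper's computation that $({\mathfrak r}_{A^{**}})_\circ = {\mathfrak c}_{A^*}$, so no genuinely new ingredient is involved.
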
 

\begin{proof}     
It is easy to see from the definitions 
 that ${\mathfrak r}_A \subset {\mathfrak r}_{A^{**}}$.    
Clearly   ${\mathfrak r}_A^\circ = {\mathfrak c}_{A^*}$, so the result will  follows from the bipolar theorem if we can 
show that  $$({\mathfrak c}_{A^*})^\circ = 
{\mathfrak r}_{A^{**}} = {\mathfrak r}_{(A^1)^{**}} \cap A^{**}.$$    
Since ${\mathfrak r}_A \subset {\mathfrak r}_{A^{**}}$ it is clear that 
$({\mathfrak r}_{A^{**}})_\circ \subset {\mathfrak c}_{A^*}$.   If $\varphi
\in  {\mathfrak c}_{A^*}$ then $\varphi = t \psi$ for $t > 0, \psi \in S(A)$.
Then $\psi$ extends to a state $\hat{\psi}$ on $A^1$, and to a weak* continuous 
state $\rho$ on $(A^1)^{**}$.   If  $\eta \in 
{\mathfrak r}_{A^{**}}$ we have
$${\rm Re} \, \eta(\psi) =  {\rm Re} \, \eta(\hat{\psi})  = {\rm Re} \, \rho(\eta) \geq 0.$$   
That is, 
$\varphi
\in  ({\mathfrak r}_{A^{**}})_\circ$.   So $({\mathfrak r}_{A^{**}})_\circ = {\mathfrak c}_{A^*}$,
and hence by the bipolar theorem $({\mathfrak c}_{A^*})^\circ = {\mathfrak r}_{A^{**}}$.
\end{proof}

We 
 remark that if an approximately unital  
 Banach algebra $A$ is scaled then any mixed identity $e$ for $A^{**}$  of norm $1$ is lower semicontinuous on $Q(A)$.
For if $\varphi_t \to \varphi$ weak* in $Q(A)$,
and $\varphi_t(e) = \Vert \varphi_t \Vert \leq r$ for all $t$,
then $\Vert \varphi \Vert  = \varphi(e)  \leq r$.   A similar assertion holds in the ${\mathfrak e}$-scaled case.

\section{Positivity and roots in Banach algebras}

\begin{proposition} \label{aufr}  If $B$ is a closed subalgebra of a nonunital
Banach algebra $A$, and if $A$ and $B$ have a 
common cai, then $B^1 \subset A^1$ isometrically and unitally, 
 $S(B^1) = \{ f_{|B^{1}} : f \in S(A^1) \},$ and ${\mathfrak F}_B=B \cap {\mathfrak F}_A$
and ${\mathfrak r}_B = B \cap {\mathfrak r}_A$.   Moreover in this case if $A$ is $M$-approximately unital then so is $B$.
\end{proposition}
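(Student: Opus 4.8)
The plan is to prove the four stated identities in order, each a near-formal consequence of the common cai, and then deduce the ``Moreover'' clause from Lemma~\ref{hfin}. Write $(e_t)$ for the common cai; its members lie in ${\rm Ball}(B) \subseteq {\rm Ball}(A)$, and it is a cai for both algebras. First I would note that $B$ is necessarily nonunital: an identity $u$ of $B$ would satisfy $e_t = u e_t \to u$ in norm, making $u$ an identity for $A$, contrary to hypothesis. The map $b + \lambda 1_{B^1} \mapsto b + \lambda 1_{A^1}$ is plainly a unital algebra homomorphism, so only isometry is at issue; for this I would apply the norm formula of Section~1 (\cite[A.4.3]{BLM}) to the \emph{same} net $(e_t)$:
$$\Vert b + \lambda 1 \Vert_{A^1} = \lim_t \Vert b e_t + \lambda e_t \Vert = \Vert b + \lambda 1 \Vert_{B^1}, \qquad b \in B,\ \lambda \in \Cdb,$$
the value $\Vert b e_t + \lambda e_t \Vert$ being unambiguous since $b e_t + \lambda e_t \in B$ and $B \subseteq A$ isometrically. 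This yields the isometric unital inclusion $B^1 \subseteq A^1$.

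The remaining three identities follow from this inclusion. For states, the restriction $f_{|B^1}$ of any $f \in S(A^1)$ has norm $\leq 1$ and sends $1 \mapsto 1$, so lies in $S(B^1)$; conversely any $g \in S(B^1)$ extends by Hahn--Banach to a norm-one functional $f$ on $A^1$, and $f(1) = g(1) = 1$ forces $f \in S(A^1)$ with $f_{|B^1} = g$. Thus $S(B^1) = \{ f_{|B^1} : f \in S(A^1) \}$. For the cones, the isometry gives $\Vert 1 - b \Vert_{B^1} = \Vert 1 - b \Vert_{A^1}$ for $b \in B$, whence ${\mathfrak F}_B = B \cap {\mathfrak F}_A$ is immediate; and since $f(b) = f_{|B^1}(b)$ for $b \in B$, the state identity shows that ``${\rm Re}\,\psi(b) \geq 0$ for all $\psi \in S(B^1)$'' and ``${\rm Re}\,f(b) \geq 0$ for all $f \in S(A^1)$'' are the same condition, giving ${\mathfrak r}_B = B \cap {\mathfrak r}_A$.

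For the ``Moreover'' clause I would invoke Lemma~\ref{hfin}. Taking biduals of the isometric inclusions $B \subseteq A$ and $B^1 \subseteq A^1$ produces isometric inclusions $B^{**} = B^{\perp\perp} \subseteq A^{**}$ and $(B^1)^{**} \subseteq (A^1)^{**}$, compatible with the identity $1$ and with $B^{**} \subseteq (B^1)^{**}$. A weak*-limit point $e$ of $(e_t)$ lies in the weak*-closed set $B^{\perp\perp}$ and is a norm-one mixed identity simultaneously for $A^{**}$ and for $B^{**}$. Applying the forward direction of Lemma~\ref{hfin} to the $M$-approximately unital algebra $A$, and specializing $x$ to any $y \in B^{**} \subseteq A^{**}$,
$$\Vert 1 - y \Vert_{(B^1)^{**}} = \Vert 1 - y \Vert_{(A^1)^{**}} = \max\{ \Vert e - y \Vert_{A^{**}},\, 1 \} = \max\{ \Vert e - y \Vert_{B^{**}},\, 1 \},$$
the outer equalities being the bidual isometries. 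As $y \in B^{**}$ was arbitrary, the converse direction of Lemma~\ref{hfin}, applied to the nonunital $B$ with mixed identity $e$, shows that $B$ is $M$-approximately unital.

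I expect the main obstacle to be the bookkeeping in this last step: verifying that the several bidual maps are isometric and mutually compatible, and that a single element $e$ is simultaneously a norm-one mixed identity for $B^{**}$ and $A^{**}$ (the weak* topologies agreeing on $B^{\perp\perp}$). The first three identities are, by contrast, essentially formal once the norm formula is applied to the common cai. As an alternative to Lemma~\ref{hfin} in the final step, one could restrict the $M$-projection $e\,\cdot$ on $(A^1)^{**}$ to $(B^1)^{**}$: since $B^{**}$ is an ideal containing $e$, the restriction maps into $B^{**}$ and inherits the $\ell^\infty$-splitting, so that $B$ is an $M$-ideal in $B^1$.
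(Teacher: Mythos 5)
Your proposal is correct, and the first three assertions are exactly the intended exercise: the paper's proof reads, in full, ``We leave the first part of this as an exercise. The last assertion follows using [HWW, Proposition I.1.16], since in this case multiplying by $e$ leaves $(B^1)^\perp$ invariant inside $(A^1)^{**}$.'' Your computation $\Vert b + \lambda 1 \Vert_{A^1} = \lim_t \Vert b e_t + \lambda e_t \Vert = \Vert b + \lambda 1 \Vert_{B^1}$ over the \emph{common} cai, followed by Hahn--Banach for the state identity and the resulting transfer of ${\mathfrak F}$ and ${\mathfrak r}$, is precisely what the exercise calls for (and your preliminary observation that $B$ is forced to be nonunital is a worthwhile detail the paper suppresses). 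Where you genuinely diverge is the ``Moreover'' clause: the paper invokes the hereditary property of $M$-ideals from [HWW, Proposition I.1.16] --- the $M$-projection on $(A^1)^{**}$ is left multiplication by $e$, and since $e \in B^{\perp\perp}$ this projection leaves $(B^1)^{\perp\perp}$ invariant, so $A \cap B^1 = B$ is an $M$-ideal in $B^1$ --- whereas your main argument instead transports the norm identity of Lemma \ref{hfin} along the isometric bidual inclusions $B^{**} = B^{\perp\perp} \subset A^{**}$ and $(B^1)^{**} \subset (A^1)^{**}$, using that a weak* limit point $e$ of the common cai is simultaneously a norm-one mixed identity for both biduals (here, via Lemma \ref{isun}, it is \emph{the} norm-one mixed identity of $A^{**}$, so the forward direction of Lemma \ref{hfin} applies to your $e$). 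Both routes are sound and ultimately equivalent, since Lemma \ref{hfin} is itself the norm reformulation of the $M$-projection condition: yours stays internal to the paper's own lemmas at the cost of the bidual bookkeeping you flag (compatibility of the Arens products on $B^{\perp\perp}$, agreement of the weak* topologies, unitality of the bidual inclusion), while the paper's is a one-line appeal to external $M$-ideal machinery. Your closing alternative --- restricting the $M$-projection $e\,\cdot$ on $(A^1)^{**}$ to $(B^1)^{**}$ and checking it inherits the $\ell^\infty$-splitting --- is in substance exactly the paper's argument.
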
 

\begin{proof}  We leave the first part of this as an exercise.   The last assertion follows using 
\cite[Proposition I.1.16]{HWW}, since in this case multiplying by $e$ leaves $(B^1)^\perp$ invariant inside
$(A^1)^{**}$.  
\end{proof}

{\bf Remark.}  Similarly, in the situation of Proposition \ref{aufr} we have ${\mathfrak r}^{\mathfrak e}_B = B \cap {\mathfrak r}^{\mathfrak e}_A$ if ${\mathfrak e}$ is the common cai.

\begin{proposition} \label{auf2}  Suppose that  $J$ is a closed  
approximately unital ideal in an
approximately unital  Banach algebra $A$, and that  $J$ is
also an $M$-ideal in $A$.   \begin{itemize} \item [{\rm (1)}] 
 ${\mathfrak F}_J= J \cap {\mathfrak F}_A$ and 
${\mathfrak r}_J = J \cap {\mathfrak r}_A$, and  states on $J$ extend to states on $A$.  
\item [{\rm (2)}] If $J$ is nonunital then  $J^1 \subset A^1$ isometrically and unitally, and 
$S(J^1) = \{ f_{|J^{1}} : f \in S(A^1) \}.$   
\item [{\rm (3)}]  If  $A$  is $M$-approximately unital, then so is $J$. 
 \item [{\rm (4)}]  If ${\mathfrak e} = (e_i)$ is a cai of $A$, then there is a cai
${\mathfrak h} = (h_j)$ of $J$ such that $\varphi_{\vert J} \in Q_{{\mathfrak h}}(J)$ whenever
$\varphi \in S_{{\mathfrak e}}(A)$.
\end{itemize} 
\end{proposition}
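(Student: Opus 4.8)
The plan is to exploit the $M$-ideal structure on the dual side together with the mixed identity of $A^{**}$. Since $J$ is an $M$-ideal in $A$, the annihilator $J^\perp$ is an $L$-summand of $A^*$, so we may write $A^* = J^\perp \oplus^{1} V$, where the restriction map identifies $V$ isometrically with $J^*$. Let $\pi \colon A^* \to A^*$ be the $L$-projection with range $V$ and kernel $J^\perp$; then $P := \pi^*$ is a weak* continuous $M$-projection of $A^{**}$ onto $J^{\perp \perp} = J^{**}$, with $A^{**} = J^{\perp\perp} \oplus^{\infty} \ker P$. Fix a weak* limit point $e \in A^{**}$ of the cai $\mathfrak{e} = (e_i)$; this is a mixed identity of norm $1$ for $A^{**}$, and since $\varphi(e_i) \to 1$ for every $\varphi \in S_{\mathfrak{e}}(A)$ we have $\tilde{\varphi}(e) = 1$ for all such $\varphi$, where $\tilde{\varphi}$ denotes the weak* continuous extension of $\varphi$ to $A^{**}$. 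The candidate for the weak* limit of the desired cai of $J$ is $f := Pe \in J^{**}$.

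First I would construct $\mathfrak{h}$. The key step is to show that $f = Pe$ is a mixed identity of norm $1$ for $J^{**}$. Here I would use that $J^{\perp \perp}$ is a weak* closed two-sided ideal of $A^{**}$ (for both Arens products) on which $e$ acts as the appropriate one-sided identity, and that the $M$-projection $P$ respects this module structure --- the Banach-algebraic analogue of the fact that, for a closed ideal in a $C^*$-algebra, $Pe$ is the central support projection and is the identity of $J^{**}$. Granting this, $\|f\| = 1$ (it is $\leq \|e\| = 1$ by the $M$-decomposition $e = Pe \oplus^{\infty} (I-P)e$, and $\geq 1$ by the Neumann lemma), and $f$ is a weak* limit point of a bai of $J$. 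Choosing a bounded net in $J$ converging weak* to $f$ and passing to convex combinations, Lemma \ref{ban} lets me arrange the norms to tend to $1$ while Lemma \ref{netwee} turns these combinations into a bai, hence a cai $\mathfrak{h} = (h_j)$ of $J$ with $h_j \to f$ weak*. Crucially $\mathfrak{h}$ depends only on $e$, not on any individual $\varphi$.

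Next I would verify the membership $\varphi_{|J} \in Q_{\mathfrak{h}}(J)$ for every $\varphi \in S_{\mathfrak{e}}(A)$. Write $\varphi = \varphi_1 + \varphi_2$ along $A^* = J^\perp \oplus^{1} V$, so that $\|\varphi\| = \|\varphi_1\| + \|\varphi_2\|$, $\varphi_{|J} = \varphi_{2|J}$, and $\|\varphi_{|J}\| = \|\varphi_2\|$. Since $P = \pi^*$ one checks $\tilde{\varphi}_2(\eta) = \tilde{\varphi}(P\eta)$ and $\tilde{\varphi}_1(\eta) = \tilde{\varphi}((I-P)\eta)$ for $\eta \in A^{**}$. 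Then from $1 = \tilde{\varphi}(e) = \tilde{\varphi}(Pe) + \tilde{\varphi}((I-P)e)$ together with $|\tilde{\varphi}(Pe)| \leq \|\varphi_2\|$, $|\tilde{\varphi}((I-P)e)| \leq \|\varphi_1\|$ and $\|\varphi_1\| + \|\varphi_2\| = \|\varphi\| \leq 1$, the sandwich $1 \leq |\tilde{\varphi}(Pe)| + |\tilde{\varphi}((I-P)e)| \leq \|\varphi_2\| + \|\varphi_1\| \leq 1$ forces equality throughout; as the two terms sum to the real number $1$ with moduli summing to $1$, each is real and nonnegative, so $\tilde{\varphi}(Pe) = \|\varphi_2\| = \|\varphi_{|J}\|$. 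Because $h_j \in J \subset A$ and $h_j \to f = Pe$ weak*, this gives $\lim_j \varphi(h_j) = \tilde{\varphi}(f) = \|\varphi_{|J}\|$. Hence if $\varphi_{|J} \neq 0$ then $\psi := \varphi_{|J}/\|\varphi_{|J}\|$ is a norm $1$ functional with $\lim_j \psi(h_j) = 1$, i.e.\ $\psi \in S_{\mathfrak{h}}(J)$, and $\varphi_{|J} = \|\varphi_{|J}\|\,\psi \in Q_{\mathfrak{h}}(J)$ since $\|\varphi_{|J}\| \in [0,1]$; the case $\varphi_{|J} = 0$ is trivial.

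The main obstacle is the claim that $f = Pe$ is a mixed identity for $J^{**}$. The difficulty is that $e$ is only a one-sided identity for each Arens product and these products are weak* continuous in only one variable, so verifying $f \diamond a = a$ and $a\, \square\, f = a$ for $a \in J$ cannot be done by naively passing weak* limits; it requires the centralizer property of the $M$-projection $P$ relative to the (Arens) module actions, or equivalently a direct argument that $(I-P)e$ annihilates $J^{\perp\perp}$ on the relevant side. Everything else --- the $L$-decomposition, the weak* continuity of $P$, and the norming computation --- is routine once this structural point is in place.
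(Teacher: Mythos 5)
Your proposal only engages with part (4) of the proposition. Parts (1)--(3) --- the identities ${\mathfrak F}_J = J \cap {\mathfrak F}_A$ and ${\mathfrak r}_J = J \cap {\mathfrak r}_A$, the extension of states from $J$ to $A$, the isometric unital inclusion $J^1 \subset A^1$ with $S(J^1) = \{f_{|J^1} : f \in S(A^1)\}$, and the inheritance of $M$-approximate unitality --- are never addressed. These are not corollaries of your construction: in the paper they hinge on a separate computation, namely $\| a + \lambda 1\|_{A^1} = \|a + \lambda 1\|_{J^1}$ for $a \in J$, obtained by combining the $M$-decomposition $\|ax + \lambda x\| = \max\{\|fax + \lambda fx\|, \|\lambda(1-f)x\|\}$ (with $f$ a weak* limit of a cai $(f_i)$ of $J$) with the estimate $\|fax+\lambda fx\| \le \liminf_i \|af_ix + \lambda f_i x\| \le \|a+\lambda 1\|_{J^1}$; part (1) then follows by Hahn--Banach and part (3) from transitivity of the $M$-ideal relation. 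So roughly three quarters of the statement is simply unproved.

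For part (4) your strategy is essentially the paper's, and your third paragraph (the $L^1$-sandwich forcing $\tilde{\varphi}(Pe) = \|\varphi_{|J}\|$, hence $\varphi_{|J}/\|\varphi_{|J}\| \in S_{\mathfrak h}(J)$) is correct and is the same computation the paper performs with $h = ge$ in place of $Pe$. But the step you yourself flag as ``the main obstacle'' --- that $f = Pe$ is a mixed identity of norm $1$ for $J^{**}$, i.e.\ that the abstract $M$-projection $P$ is compatible with the Arens module actions --- is a genuine gap, and the ``centralizer property'' you invoke is not available for free for a bare $M$-projection. The paper closes exactly this hole by using the hypothesis that $J$ is approximately unital: take a cai $(g_k)$ of $J$ with weak* limit point $g \in J^{\perp\perp}$; left multiplication by $g$ in the second Arens product is then a contractive projection of $A^{**}$ onto the ideal $J^{\perp\perp}$, hence coincides with $P$ by the uniqueness of contractive projections onto $M$-summands. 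Consequently $Pe = ge$, which is manifestly a weak* limit point of the concrete cai $(g_k e_i)$ of $J$, so no appeal to Lemmas \ref{ban} and \ref{netwee} is needed to manufacture ${\mathfrak h}$, and the inequality $\|e - Pe\| \le 1$ and the rest of your argument go through. Without this identification (or some substitute argument that $(I-P)e$ kills $J^{\perp\perp}$), the construction of ${\mathfrak h}$ is not justified and part (4) remains incomplete.
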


\begin{proof}  (2) \  For $a \in J$ and $\lambda \in \Cdb$ we have 
$$\| a + \lambda 1 \|_{A^1}
 = \sup\{ \| ax + \lambda x \|_A : x \in {\rm Ball}(A) \}
 \geq \| a + \lambda 1 \|_{J^1}.$$
Let $f$ be a mixed identity of $J^{**}$ of norm one, which is the limit of a cai $(f_i)$.
For every $x \in {\rm Ball}(A)$, one has
$$\| ax +  \lambda x \|_A
 = \max\{ \| fax + \lambda fx \|, \| \lambda (1-f)x \| \} .$$
Setting $a = 0$ temporarily we see that $\| \lambda (1-f)x \| \leq |\lambda|  \le \| a + \lambda 1 \|_{J^1}$.   
For any $a \in J$ we have $fax = ax$ and
$ax + \lambda fx = w^*\lim_i \,  af_i x + \lambda f_i x$, 
so that
$$\| fax + \lambda fx \|
 \le \liminf_i \,  \| af_ix + \lambda f_ix \|
 \le \| a + \lambda 1 \|_{J^1}.$$
Thus $\| a + \lambda 1 \|_{A^1} = \| a + \lambda 1 \|_{J^1}$.  

(1) \   If $J$ is nonunital   then by (2) and the Hahn-Banach theorem we have $S(J^1) = \{ f_{|J^{1}} : f \in S(A^{1}) \}$,
and so  states on $J$ extend to states on $A$.    If $J$ is unital an extension of states is given by $\varphi
\mapsto \varphi(1_J \,\cdot)$.  
It also is clear from (1) that ${\mathfrak F}_J=J \cap {\mathfrak F}_A$ in the nonunital case; and we leave the  unital case as an exercise 
(using the fact that multiplication by the identity of $J$ is an $M$-projection).
Similarly for  ${\mathfrak r}_J = J \cap {\mathfrak r}_A$.    Indeed clearly 
$J \cap {\mathfrak r}_A \subset {\mathfrak r}_J$ since states on $J$ extend to states on $A^1$.   We leave the 
converse inclusion as an exercise (for example it follows from ${\mathfrak F}_J=J \cap {\mathfrak F}_A \subset J \cap {\mathfrak r}_A$,
and Proposition \ref{whau} below).  

 (3) \  We can assume $J$ nonunital.
It follows from \cite[Proposition 1.17b]{HWW} that if $J$ is an $M$-ideal in $A$,
and $A$ is an $M$-ideal in $A^1$, then
$J$ is an $M$-ideal in $A^1$.   By \cite[Proposition 1.17b]{HWW},
$J$ is an $M$-ideal in $J^1$.   

(4) \ 
 Let $e$ denote a weak* limit point in $A^{**}$ of $(e_i)$.
Let $(g_k)$ be any cai for $J$, with weak* limit point
$g$ in $J^{\perp \perp}$.
Then $(h_j) = (g_k \,  e_i)$ (indexed first by $i$ and then $j$)
 is a cai for $J$.  Then $h = ge$ is a weak* limit point of $(h_j)$.
We have $(1-g) e = e - h$. Since left multiplication
 by $g$ is the $M$-projection of $A^{**}$ onto $J^{\perp \perp}$,
 as we have seen several times above,
 one has $\| e - h \| \le 1.$
Let $\varphi \in S_{{\mathfrak e}}(A)$  be given. 
We claim that if $\varphi(h) = 0$ then $\varphi_{\vert J} = 0$;
 and if $\varphi(h) \neq 0$ then  
$\varphi(h \, \cdot ) / \varphi(h)$ is
a state on $J^1$. 
Note that if $\varphi(h) \neq 0$
then 
$$1 = \varphi(e) = \varphi(h) + \varphi((1-g)e)
\leq |\varphi(h)| + |\varphi((1-g)e)|
\leq \Vert \varphi(g \, \cdot) \Vert + 
\Vert \varphi((1-g) \, \cdot) \Vert,$$ 
which equals $1$ due to the $L$-decomposition in $A^*$.
Thus  we must have $\varphi(h) \geq 0$.
Let $a + \lambda 1 \in {\rm Ball}(J^1)$ be given.
Then for any unimodular scalar $\gamma$ one has
$$\|\gamma (ha + \lambda h) + e - h \|_{A^{**}}
 = \max\{ \| ha + \lambda h \|, \| e - h \| \} \le 1.$$ 
Therefore 
$$|\varphi(\gamma (ha + \lambda h) + e - h)| = 
|\gamma \varphi(ha + \lambda h) + 1 - \varphi(h)| \le 1$$
for all such $\gamma$. 
So for some such $\gamma$, 
$$|\varphi(ha + \lambda h)| + 1 - \varphi(h)
= \varphi(\gamma (ha + \lambda h) + e - h)
\le 1,$$
so that $|\varphi(ha + \lambda h)| \leq \varphi(h).$
\end{proof}
 
\begin{proposition} \label{clnth}  {\rm (Esterle)} \ If $A$ 
is a unital Banach algebra then 
${\mathfrak F}_A$ is closed under (principal) $t$'th powers  
for any $t \in [0,1]$.   Thus if $A$ is an approximately unital Banach algebra then
${\mathfrak F}_A$  and $\Rdb^+ {\mathfrak F}_A$ are closed under $t$'th powers
for any $t \in (0,1]$.
\end{proposition}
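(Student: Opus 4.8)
The plan is to prove the unital assertion by an explicit power-series computation and then deduce the rest. Assume first that $A$ is unital and $a\in{\mathfrak F}_A$, and put $b=1-a$, so $\Vert b\Vert\le 1$. I would realise the principal power through the binomial expansion $(1-b)^t=\sum_{k\ge 0}\binom{t}{k}(-1)^k b^k$, which gives $1-a^t=\sum_{k\ge 1}c_k b^k$ with $c_k=(-1)^{k+1}\binom{t}{k}$. The crucial point is that for $t\in(0,1)$ all the $c_k$ are non-negative: the numerator $t(t-1)\cdots(t-k+1)$ of $\binom{t}{k}$ has exactly one positive and $k-1$ negative factors, so $\binom{t}{k}=(-1)^{k-1}|\binom{t}{k}|$ and hence $c_k=|\binom{t}{k}|\ge 0$.

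Next I would record the normalisation $\sum_{k\ge 1}c_k=1$: since $c_k\ge 0$, monotone convergence gives $\sum_{k\ge 1}c_k=\lim_{r\to 1^-}\sum_{k\ge1}c_k r^k=\lim_{r\to 1^-}\bigl(1-(1-r)^t\bigr)=1$. Consequently $\sum_{k\ge 1}c_k\Vert b\Vert^k\le\sum_{k\ge1}c_k=1$, so the series $\sum_{k\ge1}c_k b^k$ converges absolutely in norm on the whole closed unit ball. This legitimises the series as the definition of $a^t$ (it coincides with the holomorphic functional calculus whenever $\Vert 1-a\Vert<1$, in particular after replacing $a$ by $a_\epsilon=(1-\epsilon)a+\epsilon 1$, for which $\Vert 1-a_\epsilon\Vert<1$, and letting $\epsilon\to 0$), and it yields at once $\Vert 1-a^t\Vert\le\sum_{k\ge1}c_k\Vert b\Vert^k\le 1$. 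Thus $a^t\in{\mathfrak F}_A$, the cases $t=0,1$ being trivial.

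For the approximately unital case I would work in the unitization $A^1$ and use the identity ${\mathfrak F}_A={\mathfrak F}_{A^1}\cap A$ recorded earlier. If $a\in{\mathfrak F}_A\subset{\mathfrak F}_{A^1}$, the unital case applied in $A^1$ gives $a^t\in{\mathfrak F}_{A^1}$, so it only remains to see $a^t\in A$. Expanding $(1-a)^k=1+w_k$ with $w_k\in A$ (collect the terms carrying a factor of $a$), the normalisation $\sum_k c_k=1$ turns the series into $a^t=1-\sum_{k\ge1}c_k(1-a)^k=-\sum_{k\ge1}c_k w_k$; since $\Vert w_k\Vert\le 2$ and $\sum_k c_k<\infty$, this converges in the closed ideal $A$, so $a^t\in A$ and hence $a^t\in{\mathfrak F}_A$. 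Finally $\Rdb^+{\mathfrak F}_A$ is handled by positive homogeneity of the principal power, $(ra)^t=r^t a^t$ for $r\ge 0$ (for $r>0$ because $(rz)^t=r^t z^t$ on the spectrum, and $0^t=0$ for $t>0$), which is also why this assertion is restricted to $t\in(0,1]$ (for $t=0$ the power is $1\notin A$).

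The one genuinely delicate point, and the reason a soft functional-calculus estimate does not suffice, is that when $0$ lies in the spectrum of $a$ and the spectrum meets the circle $|z-1|=1$, the map $z\mapsto z^t$ fails to be holomorphic at the boundary point $0$; this is exactly what forces the sign-and-summability analysis of the coefficients $c_k$, and it is resolved precisely by the identity $\sum_k c_k=1$, which makes the defining series absolutely convergent on all of ${\rm Ball}(A)$ and removes any boundary issue.
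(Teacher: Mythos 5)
Your proof is correct and follows essentially the same route as the paper's: the binomial series $a^t=\sum_{k\ge0}\binom{t}{k}(-1)^k(1-a)^k$, the observation that the coefficients $\binom{t}{k}(-1)^k$ are nonpositive for $k\ge1$ and sum to $-1$ (so that $1-a^t$ is a convex combination in ${\rm Ball}(A^1)$ and $a^t$ is a norm limit of polynomials in $a$ with no constant term, hence lies in $A$). You merely supply the sign analysis, the Abel-summation identity $\sum_k c_k=1$, and the homogeneity step for $\Rdb^+{\mathfrak F}_A$ that the paper leaves implicit.
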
 

\begin{proof}  This is in  \cite[Proposition 2.4]{Est} (see also \cite[Proposition 2.3]{BRI}), but for convenience we repeat the construction.  If $\Vert 1 - x \Vert \leq 1$, define 
$$x^{t} = \sum_{k = 0}^\infty \, {t \choose k} (-1)^k (1-x)^k \; , \qquad 
t > 0.$$
For $k \geq 1$ the sign of ${t \choose k} (-1)^k$ is
always negative, and $\sum_{k = 1}^\infty \,
{t \choose k} (-1)^k = -1$.
It follows that the series for $x^t$ above 
 is a norm limit of polynomials in $x$ with no constant term.
Also,   
$1 - x^{t} = \sum_{k=1}^\infty\,
{t \choose k} (-1)^k \, (1-x)^k$, which is a convex combination
in Ball$(A^1)$.   So $x^{t} \in {\mathfrak F}_A$.
 
Using
 the Cauchy product formula in Banach algebras in a standard way,
one deduces that  
$(x^{\frac{1}{n}})^n = x$ for any positive integer $n$.   
\end{proof}

From  \cite[Proposition 2.4]{Est} if $x \in {\mathfrak F}_A$ then we also have $(x^t)^r = x^{tr}$ for $t \in [0,1]$ and 
any real $r$; and that if $a x_n \to a$ where $a \in A$ and $(x_n)$ is a sequence with $\Vert x_n - 1 \Vert < 1$, then  $a x^t_n \to a$ with $n$ for all real $t$.

If $A$ is  a unital 
Banach algebra then we define the ${\mathfrak F}$-transform 
to be  ${\mathfrak F}(x) = x (1 + x)^{-1} = 1 - (1 + x)^{-1}$ for $x \in {\mathfrak r}_A$.  Then ${\mathfrak F}(x) 
\in {\rm ba}(x)$.  The inverse transform takes $y$ to 
$y(1 -y)^{-1}$.

\begin{lemma} \label{stam}  If $A$ is an approximately unital 
Banach algebra then ${\mathfrak F}({\mathfrak r}_A) \subset {\mathfrak F}_A$.  \end{lemma}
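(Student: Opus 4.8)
The plan is to reduce everything to the unital algebra $A^1$ and then to show that for an accretive element the resolvent $(1+x)^{-1}$ is a contraction. First I would observe, directly from the definitions, that ${\mathfrak r}_A = A \cap {\mathfrak r}_{A^1}$, so any $x \in {\mathfrak r}_A$ is an accretive element of the \emph{unital} algebra $A^1$; that is, its numerical range $W_{A^1}(x) = \{ \varphi(x) : \varphi \in S(A^1) \}$ lies in the closed right half-plane $\{ {\rm Re}\, z \geq 0 \}$. In particular the spectrum of $x$ in $A^1$ lies in $\{ {\rm Re}\, z \geq 0 \}$, so $-1$ is not in the spectrum and $1 + x$ is invertible in $A^1$. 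Thus ${\mathfrak F}(x) = x(1+x)^{-1} = 1 - (1+x)^{-1}$ is a well-defined element of $A^1$, lying in ${\rm ba}(x)$ as already noted.

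The crux is the estimate $\Vert (1+x)^{-1} \Vert \leq 1$. I would deduce this from the standard resolvent bound attached to the Banach-algebra numerical range: since $W_{A^1}(x) \subseteq \{ {\rm Re}\, z \geq 0 \}$, we have ${\rm dist}(-1, \overline{W_{A^1}(x)}) \geq 1$, and hence $\Vert (1+x)^{-1} \Vert \leq {\rm dist}(-1, \overline{W_{A^1}(x)})^{-1} \leq 1$. If one prefers a self-contained derivation, the bound $\Vert (1+x)^{-1} \Vert \le 1$ follows from the contraction-semigroup estimate $\Vert e^{-tx} \Vert \leq 1$ for $t \geq 0$ (the exponential form of accretivity, namely $\Vert e^{a} \Vert \le e^{\sup {\rm Re}\, W_{A^1}(a)}$ applied to $a = -tx$, using $\sup {\rm Re}\, W_{A^1}(-x) = -\inf {\rm Re}\, W_{A^1}(x) \le 0$) together with the Laplace representation $(1+x)^{-1} = \int_0^\infty e^{-s} e^{-sx}\, ds$, which gives $\Vert (1+x)^{-1} \Vert \le \int_0^\infty e^{-s}\, ds = 1$.

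With the estimate in hand, since $1 - {\mathfrak F}(x) = (1+x)^{-1}$, we get $\Vert 1 - {\mathfrak F}(x) \Vert \leq 1$, so ${\mathfrak F}(x) \in {\mathfrak F}_{A^1}$. Finally I would use that $A$ is a closed two-sided ideal in $A^1 = A + \Cdb 1$: since $x \in A$ and $(1+x)^{-1} \in A^1$, the product ${\mathfrak F}(x) = x(1+x)^{-1}$ lies in $A$. Combining these, ${\mathfrak F}(x) \in {\mathfrak F}_{A^1} \cap A = {\mathfrak F}_A$ (the last equality being recorded at the start of Section 2), which is exactly ${\mathfrak F}({\mathfrak r}_A) \subset {\mathfrak F}_A$.

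The main obstacle is the contractivity of the resolvent, $\Vert (1+x)^{-1}\Vert \le 1$: this is precisely where accretivity in the general Banach-algebra sense has to be converted into a genuine norm bound. Unlike the $C^*$- or Hilbert-space setting, this cannot be read off from the states (the norm is not recovered from $S(A^1)$), so some analytic input—the resolvent or semigroup estimate from numerical range theory—is essential. Everything else is routine bookkeeping with the ideal structure of $A$ in $A^1$ and the elementary identities for the ${\mathfrak F}$-transform.
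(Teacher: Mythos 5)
Your proof is correct and follows essentially the same route as the paper: the whole lemma reduces to $\Vert 1 - {\mathfrak F}(x)\Vert = \Vert (1+x)^{-1}\Vert \leq d^{-1} \leq 1$, where $d$ is the distance from $-1$ to the numerical range of $x$, which is exactly the Stampfli--Williams resolvent bound the paper cites. Your optional self-contained derivation of that bound via $\Vert e^{-tx}\Vert \leq 1$ and the Laplace representation $(1+x)^{-1} = \int_0^\infty e^{-s}e^{-sx}\,ds$ is a valid (standard) substitute for the citation.
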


 \begin{proof} This is because by a result of Stampfli 
and Williams \cite[Lemma 1]{SW},
$$\Vert 1 - x (1 + x)^{-1} \Vert  =  \Vert (1 + x)^{-1}
\Vert \leq d^{-1} \leq 1$$ where $d$ is the distance from $-1$ 
to the numerical range of $x$.  \end{proof}

  If $A$ is also an operator algebra then we have shown 
elsewhere \cite[Lemma 2.5]{Bord} that the range of the  ${\mathfrak F}$-transform is exactly the set of strict contractions
in $\frac{1}{2}{\mathfrak F}_A$.

\begin{proposition} \label{whau}  If $A$ is an approximately unital Banach algebra then
$\overline{\Rdb^+ {\mathfrak F}_A} = {\mathfrak r}_A$.  
\end{proposition}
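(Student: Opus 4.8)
The plan is to prove the two inclusions separately, with all the work going into the nontrivial one. The inclusion $\overline{\Rdb^+ {\mathfrak F}_A} \subseteq {\mathfrak r}_A$ is immediate: we already observed in Section 2 that $\Rdb^+ {\mathfrak F}_A \subseteq {\mathfrak r}_A$, and ${\mathfrak r}_A$ is a norm-closed cone, so taking closures preserves the containment. The real content of Proposition \ref{whau} is therefore the reverse inclusion ${\mathfrak r}_A \subseteq \overline{\Rdb^+ {\mathfrak F}_A}$. In the unital case one simply perturbs $a \in {\mathfrak r}_A$ to $a + \epsilon 1 \in \Rdb^+ {\mathfrak F}_A$ and lets $\epsilon \to 0$, but this is unavailable when $1 \notin A$, so a different device is needed.

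For the reverse inclusion I would exploit the ${\mathfrak F}$-transform together with Lemma \ref{stam}. Fix $x \in {\mathfrak r}_A$. Since ${\mathfrak r}_A$ is a cone, $tx \in {\mathfrak r}_A$ for every $t > 0$, and Lemma \ref{stam} then gives ${\mathfrak F}(tx) = tx(1+tx)^{-1} \in {\mathfrak F}_A$. Although the inverse is computed in the unitization $A^1$, we have ${\mathfrak F}(tx) \in {\rm ba}(tx) \subseteq A$, so this is a genuine element of $A$, and in particular ${\mathfrak F}(tx) \in {\mathfrak F}_A \subseteq \Rdb^+ {\mathfrak F}_A$. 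Rescaling, $\tfrac{1}{t}\,{\mathfrak F}(tx) = x(1+tx)^{-1} \in \Rdb^+ {\mathfrak F}_A$ for every $t > 0$; this is the nonunital substitute for the perturbation trick, scaling $x \mapsto tx$ and then undoing the scaling on the transform.

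It then remains to let $t \to 0^+$ and check that $x(1+tx)^{-1} \to x$ in norm. Using that $x$ commutes with $(1+tx)^{-1}$, one writes $x(1+tx)^{-1} - x = -t\,x^2 (1+tx)^{-1}$, and the Stampfli--Williams estimate $\Vert (1+tx)^{-1} \Vert \leq 1$ (valid since the numerical range of $tx$ in $A^1$ lies in the right half plane, exactly as in the proof of Lemma \ref{stam}) yields $\Vert x(1+tx)^{-1} - x \Vert \leq t\,\Vert x \Vert^2 \to 0$. Hence $x$ is a norm limit of the elements $\tfrac{1}{t}\,{\mathfrak F}(tx) \in \Rdb^+ {\mathfrak F}_A$, so $x \in \overline{\Rdb^+ {\mathfrak F}_A}$, completing the proof.

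The step I expect to require the most care is the bookkeeping around the ambient algebra: all manipulations of $(1+tx)^{-1}$ live in $A^1$, yet the approximating elements $x(1+tx)^{-1}$ must be confirmed to lie back in $A$ (so that membership in $\Rdb^+ {\mathfrak F}_A \subseteq A$ is meaningful), which is precisely what ${\mathfrak F}(x) \in {\rm ba}(x)$ guarantees. The uniform bound $\Vert (1+tx)^{-1} \Vert \leq 1$ is the other load-bearing ingredient, since it is what forces the perturbation $-t\,x^2(1+tx)^{-1}$ to vanish uniformly as $t \to 0^+$; without it the limit could fail.
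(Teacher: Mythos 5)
Your proof is correct and follows essentially the same route as the paper's: the paper's one-line argument is precisely that $x = \lim_{t \to 0^+} \frac{1}{t}\, tx(1+tx)^{-1}$ with $tx(1+tx)^{-1} \in {\mathfrak F}_A$ by Lemma \ref{stam}. The only difference is that you spell out the convergence estimate $\Vert x(1+tx)^{-1} - x \Vert \leq t \Vert x \Vert^2$ (which the paper delegates to a citation of \cite[Theorem 3.3]{BRII}), and that extra detail is accurate.
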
  \begin{proof}  
As in \cite[Theorem 3.3]{BRII}, it follows that if
 $x \in {\mathfrak r}_A$
then $x = \lim_{t \to 0^+} \, \frac{1}{t} \, tx(1 + tx)^{-1}$.  
By Lemma \ref{stam}, $tx(1 + tx)^{-1} \in {\mathfrak F}_A$. So
$\Rdb^+ {\mathfrak F}_A$ is dense in ${\mathfrak r}_A$.  \end{proof} 

In the following results we will use the fact that 
if $A$ is an approximately unital Banach algebra,
then the `regular representation' $A \to B(A)$ is
isometric.  Thus we can view an accretive $x \in A$ and its (principal) roots
as operators in $B(A)$.   These are sectorial of angle $\frac{\pi}{2}$, and so we can use the theory of roots (fractional powers) from 
e.g.\ \cite[Section 3.1]{Haase},
or \cite{LRS,NF}.  
Basic properties of such  (principal)  powers include: $x^s x^t = x^{s+t}$, $(cx)^t = c^t x^t$ for 
positive scalars $c,s,t$, and $t \to x^t$ is continuous. See also  e.g.\
Section 11 in IX in Yosida's
classic Functional Analysis text,  \cite[Lemma 1.1 (1)]{Bord}, or \cite[p.\ 64]{Est}.  
Also $x^t = \lim_{t \to 0^+} \, (x+\epsilon 1)^t$ for $t > 0$, and the latter can be taken to be with respect to the 
usual Riesz functional calculus (see \cite[Proposition 3.1.9]{Haase}).  Principal nth roots of accretive elements 
are unique, for any positive integer $n$ (see \cite{LRS}).

\bigskip

{\bf Remark.}    It is easy to see from the last fact that the definitions of $x^t$ given in 
\cite{Haase} and \cite[Theorem 1.2]{LRS} coincide.    A similar argument shows that if 
$x \in {\mathfrak F}_A$ then  the definitions of $x^t$ given in 
\cite{Haase} and  Proposition \ref{clnth} coincide,  if $t  > 0$. 
Indeed for the latter we may assume that $0 < t \leq 1$ and work in $B(A)$ as above (and we may assume $A$ unital).  Then the two definitions of $y^t$ coincide
if  $y = \frac{1}{1+\epsilon} (x + \epsilon I)$, since 
both equal the  $t$th power of $y$ as given by the Riesz functional calculus.   However 
$\sum_{k = 0}^\infty \,
{t \choose k} (-1)^k (1-y)^k$ converges uniformly to  $\sum_{k = 0}^\infty \,
{t \choose k} (-1)^k (1-x)^k$, as $\epsilon \to 0^+$, since the norm of the difference of these two series is dominated by
$$\sum_{k = 1}^\infty \,
{t \choose k} (-1)^k \, (\frac{1}{1+\epsilon} - 1) \, \Vert (1-x)^k \Vert \; \leq \; \frac{\epsilon}{1 + \epsilon} \, \to \, 0 .$$ 

See \cite{B2015} for more details concerning
the last remark, and also for a better estimate in the next result in 
the operator algebra case.

\begin{lemma} \label{Bal} Let  $A$ be an approximately unital Banach algebra.   If $||x|| \leq 1$ and $x \in {\mathfrak r}_A$,
 then $||x^{1/m}||
\leq \frac{2m^2}{(m-1) \pi} \sin(\frac{\pi}{m}) \leq \frac{2m}{m-1}$ for $m \geq 2$.
More generally, $||x^{\alpha}|| \leq  \frac{2 \sin(\alpha \pi)}{\pi \alpha (1 - \alpha)} \, \Vert x \Vert^\alpha$ if $0 < \alpha < 1$ and $x \in {\mathfrak r}_A$.   If $A$ is also an operator algebra then one may 
remove the $2$'s in these estimates.
\end{lemma}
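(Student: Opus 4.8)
The plan is to represent the fractional power $x^{\alpha}$ via an integral formula and estimate it directly. Since $A$ is approximately unital, the regular representation $A \to B(A)$ is isometric, so I may view $x$ as an accretive operator on the Banach space $A$; its numerical range lies in the right half-plane, so $x$ is sectorial of angle $\tfrac{\pi}{2}$, and the standard sectorial functional calculus applies (as recalled just before the lemma, following \cite[Section 3.1]{Haase}). The key tool I would use is the Balakrishnan integral representation of fractional powers: for $0 < \alpha < 1$,
\[
x^{\alpha} = \frac{\sin(\alpha \pi)}{\pi} \int_0^\infty \lambda^{\alpha - 1} \, x (\lambda + x)^{-1} \, d\lambda .
\]
This is exactly the classical formula (Yosida, \cite{Haase}) and is where the factor $\sin(\alpha \pi)/\pi$ in the claimed bound must come from, so it is the natural starting point.

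**The core estimate.** The hard part is bounding $\Vert x(\lambda + x)^{-1} \Vert$ in a way that makes the integral converge and produces the advertised constant. For this I would invoke the Stampfli--Williams estimate already used in Lemma~\ref{stam}: since the numerical range of $x$ lies in the right half-plane, $\Vert (\lambda + x)^{-1} \Vert \le 1/\lambda$ for $\lambda > 0$, giving $\Vert x(\lambda+x)^{-1}\Vert \le 1$. To get a second bound useful for large $\lambda$, I would write $x(\lambda+x)^{-1} = 1 - \lambda(\lambda+x)^{-1}$ and again use $\Vert \lambda(\lambda+x)^{-1}\Vert \le 1$, but more profitably bound $\Vert x(\lambda+x)^{-1}\Vert \le \Vert x\Vert \, \Vert (\lambda+x)^{-1}\Vert \le \Vert x\Vert/\lambda$. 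Splitting the integral at $\lambda = \Vert x \Vert$ and using $\Vert x(\lambda+x)^{-1}\Vert \le 1$ on $(0,\Vert x\Vert)$ and $\le \Vert x\Vert/\lambda$ on $(\Vert x\Vert, \infty)$ yields
\[
\Vert x^{\alpha}\Vert \le \frac{\sin(\alpha\pi)}{\pi}\left( \int_0^{\Vert x\Vert} \lambda^{\alpha-1}\, d\lambda + \Vert x\Vert \int_{\Vert x\Vert}^\infty \lambda^{\alpha-2}\, d\lambda\right) = \frac{\sin(\alpha\pi)}{\pi}\left(\frac{\Vert x\Vert^\alpha}{\alpha} + \frac{\Vert x\Vert^\alpha}{1-\alpha}\right),
\]
and $\tfrac{1}{\alpha}+\tfrac{1}{1-\alpha} = \tfrac{1}{\alpha(1-\alpha)}$ gives precisely $\frac{2\sin(\alpha\pi)}{\pi\,\alpha(1-\alpha)}\,\Vert x\Vert^{\alpha}$ once one checks the optimization; I would verify that the split point $\lambda=\Vert x\Vert$ is what delivers the symmetric constant.

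**The $m$-th root specialization and the operator-algebra case.** For the first inequality I set $\alpha = 1/m$ and $\Vert x\Vert \le 1$, so $\Vert x^{1/m}\Vert \le \frac{2\sin(\pi/m)}{\pi\,(1/m)(1-1/m)} = \frac{2m^2}{(m-1)\pi}\sin(\tfrac{\pi}{m})$; the further bound $\le \frac{2m}{m-1}$ follows from $\sin(\pi/m) \le \pi/m$. For the operator-algebra refinement (removing the $2$'s), I expect the improvement comes from a sharper resolvent bound: when $A \subseteq B(H)$ is an operator algebra and $x$ is accretive, one has the sharper dissipative estimate $\Vert \lambda(\lambda+x)^{-1}\Vert$ controlled so that $\Vert x(\lambda+x)^{-1}\Vert \le \min\{1, \Vert x\Vert/\lambda\}$ can be replaced by a bound half the size on the relevant range, halving each integral. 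The main obstacle is getting the constant exactly right rather than merely up to a factor; I anticipate the cleanest route is to reference \cite{B2015}, as the paper signals, for the improved operator-algebra constant, while the Banach-algebra estimate follows self-containedly from the Balakrishnan formula together with the Stampfli--Williams resolvent bound of Lemma~\ref{stam}.
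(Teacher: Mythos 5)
Your overall strategy matches the paper's: both proofs start from the Balakrishnan representation $x^\alpha = \frac{\sin(\alpha\pi)}{\pi}\int_0^\infty \lambda^{\alpha-1} x(\lambda+x)^{-1}\,d\lambda$, split the integral, use $\Vert(\lambda+x)^{-1}\Vert\le 1/\lambda$ on the tail, and compute $\frac{1}{\alpha}+\frac{1}{1-\alpha}=\frac{1}{\alpha(1-\alpha)}$. But there is a genuine gap in your estimate on the head of the integral. You assert $\Vert x(\lambda+x)^{-1}\Vert\le 1$, first claiming it follows from $\Vert(\lambda+x)^{-1}\Vert\le 1/\lambda$ (it does not: that only gives $\Vert x\Vert/\lambda$, which is useless for small $\lambda$), and then from the identity $x(\lambda+x)^{-1}=1-\lambda(\lambda+x)^{-1}$ with $\Vert\lambda(\lambda+x)^{-1}\Vert\le 1$ (which by the triangle inequality gives only $\Vert x(\lambda+x)^{-1}\Vert\le \Vert 1\Vert+1=2$). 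In a general Banach algebra the bound $1$ is simply not available: $x(\lambda+x)^{-1}={\mathfrak F}(x/\lambda)$ lies in ${\mathfrak F}_A$ by Lemma \ref{stam} (Stampfli--Williams), i.e.\ $\Vert 1-{\mathfrak F}(x/\lambda)\Vert\le 1$, and elements of ${\mathfrak F}_A$ can have norm as large as $2$ (see e.g.\ Example \ref{Ex1}). The constant $2$ in the statement exists precisely because of this; only in the operator algebra case does the ${\mathfrak F}$-transform land in $\frac{1}{2}{\mathfrak F}_A$, giving $\Vert x(\lambda+x)^{-1}\Vert\le 1$ and allowing the $2$'s to be removed. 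This is exactly how the paper's proof proceeds: it bounds $\Vert(\lambda+x)^{-1}x\Vert=\Vert{\mathfrak F}(x/\lambda)\Vert\le 2$ in general ($\le 1$ for operator algebras) and carries the $2$ through the integral.

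The gap is also visible in your own arithmetic: your displayed computation yields $\frac{\sin(\alpha\pi)}{\pi}\bigl(\frac{1}{\alpha}+\frac{1}{1-\alpha}\bigr)\Vert x\Vert^\alpha = \frac{\sin(\alpha\pi)}{\pi\alpha(1-\alpha)}\Vert x\Vert^\alpha$ \emph{without} the factor $2$, and you then claim it equals the stated bound \emph{with} the $2$; the sum $\frac{1}{\alpha}+\frac{1}{1-\alpha}$ does not produce a $2$. Had your head estimate been correct, you would have proved the operator-algebra-strength bound for all Banach algebras, which the lemma is at pains to say is not what happens. The fix is straightforward: replace the bound $1$ on $(0,\Vert x\Vert)$ by $2$ (via Lemma \ref{stam}), which inserts the factor $2$ and recovers exactly the paper's argument. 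Your description of the operator-algebra refinement should likewise be corrected: the improvement is not a ``sharper resolvent bound'' halving the integrals, but the fact that ${\mathfrak F}$ maps accretive elements into $\frac{1}{2}{\mathfrak F}_A$ there, so the head estimate improves from $2$ to $1$.
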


\begin{proof}
This follows from the well known A.\ V.\ Balakrishnan representation of powers,
 $x^\alpha = \frac{\sin(\alpha \pi)}{\pi} \int_0^\infty \, t^{\alpha - 1} \, (t + x)^{-1} x \, dt$
(see e.g.\ \cite{Haase}).
We use the simple fact that $\Vert (t + x)^{-1} \Vert \leq \frac{1}{t}$ for accretive $x$ and $t > 0$,
and so 
$$\Vert (t + x)^{-1} x \Vert = \Vert  (1 + \frac{x}{t})^{-1}  \frac{x}{t} \Vert
= \Vert {\mathfrak F}(\frac{x}{t}) \Vert \leq 2,$$
and is even $\leq 1$ in the operator algebra case by the observation
after  Lemma \ref{stam}.
Then
the norm of $x^\alpha$ is dominated by
$$\frac{2 \sin(\alpha \pi)}{\pi} (\int_0^1 t^{\alpha - 1} \, \cdot 1  dt + \int_1^\infty \, t^{\alpha - 1}
\frac{1}{t}  \, dt ) = \frac{2 \sin(\alpha \pi)}{\pi \alpha (1 - \alpha)}.$$
The rest is clear from this.
\end{proof}

We will sometimes use the fact from  \cite[Corollary 1.3]{LRS} that the $n$th root function is continuous on 
${\mathfrak r}_{A}$.  

\begin{lemma} \label{sin}
 There is a nonnegative sequence $(c_n)$ in $c_0$  such that for any unital Banach algebra $A$,
and $x \in {\mathfrak F}_A$ or $x \in  {\rm Ball}(A) \cap {\mathfrak r}_A$, we have
$\Vert x^{\frac{1}{n}} x - x \Vert \leq c_n$ for all $n \in \Ndb$.
    \end{lemma}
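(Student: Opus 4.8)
The plan is to treat the two cases separately, obtaining in each a bound of the shape $O(1/n)$, and then take the larger of the two (concretely $c_n = 8/n$) as the desired $c_0$-sequence, noting it is independent of $A$ and of $x$.

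For $x \in {\mathfrak F}_A$ I would use the explicit binomial series for $x^{1/n}$ from Proposition \ref{clnth}. Writing $w = 1 - x$ (so $\|w\|\le 1$) and $b_k = -\binom{1/n}{k}(-1)^k \ge 0$, that proof gives $x^{1/n} - 1 = \sum_{k\ge1} b_k w^k$, where the $b_k$ sum to $1$ and, from the ratio $b_{k+1}/b_k = (k - 1/n)/(k+1) < 1$, decrease strictly to $0$. Since $w^k x = w^k - w^{k+1}$, I get $x^{1/n}x - x = \sum_{k\ge1} b_k(w^k - w^{k+1}) = b_1 w + \sum_{k\ge2}(b_k - b_{k-1})w^k$; taking norms and telescoping the monotone coefficients yields $\|x^{1/n}x - x\| \le b_1 + \sum_{k\ge2}(b_{k-1}-b_k) = 2 b_1 = 2/n$. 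This case is routine.

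For $x \in {\rm Ball}(A)\cap{\mathfrak r}_A$ the element need not lie in ${\mathfrak F}_A$, so I would instead work with the Balakrishnan representation used in Lemma \ref{Bal}, applied to the accretive $x$ viewed in $B(A)$. Writing $\beta = 1/n$ and subtracting the scalar identity $1 = \frac{\sin\beta\pi}{\pi}\int_0^\infty t^{\beta-1}(t+1)^{-1}\,dt$ from the representation of $x^\beta$, one reaches the difference formula $x^{1+\beta}-x = (x^\beta - 1)x = \frac{\sin\beta\pi}{\pi}\int_0^\infty t^{\beta}(t+x)^{-1}(x-1)(t+1)^{-1}x\,dt$, all factors commuting, using $(t+x)^{-1}x-(t+1)^{-1} = t(t+x)^{-1}(x-1)(t+1)^{-1}$. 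To bound the integrand I would use $\|x-1\|\le 2$ together with the two estimates $\|(t+x)^{-1}x\| = \|{\mathfrak F}(x/t)\| \le 2$ (from Lemma \ref{stam}, since $x/t\in{\mathfrak r}_A$) and $\|(t+x)^{-1}x\|\le\|(t+x)^{-1}\|\le 1/t$ (the accretive resolvent bound from the proof of Lemma \ref{Bal}). Splitting at $t=1$ and using $\tfrac1{t+1}\le\min(1,1/t)$ gives $\int_0^1 \le \int_0^1 4t^\beta\,dt \le 4$ and $\int_1^\infty \le \int_1^\infty 2t^{\beta-2}\,dt = 2/(1-\beta)\le 4$ for $\beta\le\tfrac12$, so the integral is $\le 8$ uniformly; the prefactor then supplies the smallness, $\|x^{1/n}x-x\|\le \frac{\sin(\pi/n)}{\pi}\cdot 8 \le 8/n$ for $n\ge 2$ (using $\sin\theta\le\theta$).

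The main obstacle is the ${\mathfrak r}_A$ case: a naive norm bound on the Balakrishnan integrand yields only $O(1)$, since the triangle inequality destroys the cancellation behind the scalar estimate $\sup_{|s|\le1,\,{\rm Re}\,s\ge0}|s^{1+\beta}-s| = O(\beta)$. The two devices that rescue the argument are (i) subtracting the representation of the constant $1$, which forces the integrand to decay like $t^{\beta-2}$ at infinity while staying integrable at $0$, and (ii) pairing this with the prefactor $\frac{\sin\beta\pi}{\pi}=O(\beta)$, which converts a uniformly bounded integral into an $O(1/n)$ bound. I would finish by setting $c_n = 8/n$, which dominates the ${\mathfrak F}_A$ estimate, manifestly lies in $c_0$, and is independent of $A$ and $x$; the case $n=1$ is absorbed by the crude bound $\|x^2-x\|\le 2\le 8$.
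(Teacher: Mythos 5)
Your argument is correct, but it is genuinely different from the one in the paper. The paper's proof of Lemma \ref{sin} is a uniformity check layered on the contour-integral argument of \cite[Theorem 3.1]{BRII} (itself resting on the construction in \cite{LRS}): the sequence $(c_n)$ comes out of that external computation, and the only new work is verifying via the Stampfli--Williams resolvent estimate that the constant $D$ there is independent of $A$ and $x$ on both pieces of the contour $\Gamma$ (the piece where $\lambda = te^{i\theta}$ with $\tfrac{\pi}{2}<|\theta|<\pi$ requiring the distance-to-the-imaginary-axis bound). No explicit rate is extracted. You instead split the two cases and argue self-containedly: for $x\in{\mathfrak F}_A$ the Abel-summation/telescoping of the monotone binomial coefficients gives $2/n$ (modulo a harmless sign slip in writing $x^{1/n}-1$ rather than $1-x^{1/n}$ as the convex combination, which does not affect the norm estimate); for $x\in{\rm Ball}(A)\cap{\mathfrak r}_A$ the key device of subtracting the scalar Balakrishnan identity for $1$ produces the factor $t(t+x)^{-1}(x-1)(t+1)^{-1}$, whose $t^{\beta-2}$ decay at infinity and boundedness near $0$ (via $\Vert{\mathfrak F}(x/t)\Vert\le 2$ from Lemma \ref{stam} and $\Vert(t+x)^{-1}\Vert\le 1/t$) make the integral uniformly bounded, so the prefactor $\sin(\beta\pi)/\pi$ delivers $O(1/n)$. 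Your route buys an explicit sequence $c_n=8/n$ and avoids any appeal to \cite{BRII} or \cite{LRS} beyond the Balakrishnan representation already invoked in Lemma \ref{Bal}; the paper's route is shorter on the page but opaque about the rate. One point worth making explicit if you write this up: for $x\in{\mathfrak F}_A$ you are using the binomial-series root of Proposition \ref{clnth} while the second case uses the sectorial-calculus root, and these agree by the Remark preceding Lemma \ref{Bal}, so the two cases really do bound the same quantity on their overlap.
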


\begin{proof}   We follow the
proof of \cite[Theorem 3.1]{BRII}, taking $R = 3$ there.   This is based on the Banach algebra construction from 
\cite{LRS}, so will be valid in the present generality.
There an estimate $\Vert x^{\frac{1}{n}} x - x \Vert \leq D c_n$ is given,
 for a   nonnegative sequence $(c_n)$ in $c_0$.  We need to know that $D$ does not 
depend on $A$ or $x$.  
This follows if $\Vert \lambda \, (\lambda 1 - x)^{-1} \Vert$ is
bounded independently of $A$ or $x$ on the curve $\Gamma$ there.  
On the piece of the curve $\Gamma_2$, this follows by the  result of Stampfli 
and Williams \cite[Lemma 1]{SW} that $\Vert (\lambda 1 - x)^{-1}
\Vert \leq d^{-1}$ where $d$ is the distance from $\lambda$ to $W(x)$.
On the other part of $\Gamma$ we have $\lambda = t e^{i \theta}$
for $0 \leq t \leq R$, and for a fixed $\theta$ with 
$\frac{\pi}{2} < |\theta| < \pi$.  However by the same result of Stampfli 
and Williams $\Vert (\lambda 1 - x)^{-1}
\Vert \leq d^{-1}$ if $\lambda \neq 0$, where $d$ is the distance from $\lambda$ 
to the $y$-axis.  Thus the quantity will be bounded
since $|\lambda|/d = \csc(\theta - \frac{\pi}{2})$.
\end{proof}

 The following (essentially from \cite{MP})  is a related result:

\begin{lemma} \label{strsq}
Let  $A$ be an unital Banach algebra.
If $\alpha \in (0,1)$ then there
exists a constant $K$ such that if $a, b \in  {\mathfrak r}_{A}$, and $ab = ba$,  then
 $\Vert (a^{\alpha}  - b^\alpha) c \Vert
\leq K \Vert (a-b) c \Vert^\alpha$, for any $c \in {\rm Ball}(A)$.
    \end{lemma}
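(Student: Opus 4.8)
The plan is to represent the fractional powers by the Balakrishnan formula already used in Lemma \ref{Bal}, namely $x^\alpha = \frac{\sin(\alpha\pi)}{\pi}\int_0^\infty t^{\alpha-1}(t+x)^{-1}x\,dt$ for $x \in {\mathfrak r}_A$, and to reduce $a^\alpha-b^\alpha$ to a single resolvent integral in which the factor $(a-b)c$ can be isolated. First I would subtract the two representations and use $(t+x)^{-1}x = 1 - t(t+x)^{-1}$ to rewrite the difference of integrands as $t[(t+b)^{-1} - (t+a)^{-1}]$. Since $a$ and $b$ commute, the resolvents $(t+a)^{-1}, (t+b)^{-1}$ and the element $a-b$ all mutually commute, so the resolvent identity gives $(t+b)^{-1} - (t+a)^{-1} = (t+a)^{-1}(t+b)^{-1}(a-b)$. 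Multiplying on the right by $c$ and using commutativity to move $(a-b)c$ together yields
$$(a^\alpha - b^\alpha)c = \frac{\sin(\alpha\pi)}{\pi}\int_0^\infty t^\alpha\,(t+a)^{-1}(t+b)^{-1}(a-b)c\,dt.$$

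Next I would estimate the integrand in two complementary ways. Using $\Vert (t+x)^{-1}\Vert \leq 1/t$ for accretive $x$ (as in Lemma \ref{Bal}) together with $\Vert c \Vert \leq 1$, one bound is $\Vert (t+a)^{-1}(t+b)^{-1}(a-b)c \Vert \leq \delta/t^2$, where $\delta := \Vert (a-b)c \Vert$. The second, crucial, bound comes from reading the same algebraic identity backwards: $(t+a)^{-1}(t+b)^{-1}(a-b) = (t+b)^{-1} - (t+a)^{-1}$, whence this quantity is at most $2/t$. Hence the norm of the integrand is dominated by $\min\{\delta t^{\alpha-2}, 2 t^{\alpha-1}\}$.

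Finally I would split the integral at a threshold $T$, bounding it by $\int_0^T 2 t^{\alpha-1}\,dt + \int_T^\infty \delta t^{\alpha-2}\,dt = \frac{2T^\alpha}{\alpha} + \frac{\delta T^{\alpha-1}}{1-\alpha}$ (both integrals converging precisely because $0 < \alpha < 1$), and then optimize in $T$; the choice $T = \delta/2$ gives the bound $\frac{2^{1-\alpha}}{\alpha(1-\alpha)}\,\delta^\alpha$. This yields the claim with $K = \frac{\sin(\alpha\pi)}{\pi}\cdot\frac{2^{1-\alpha}}{\alpha(1-\alpha)}$, which depends only on $\alpha$; the degenerate case $\delta = 0$ is recovered by letting $T \to 0^+$. (One expects the constant $2$ to be removable when $A$ is an operator algebra, via the sharper resolvent bound noted after Lemma \ref{stam}.)

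The main obstacle I anticipate is not the estimates themselves but justifying the manipulations of the Balakrishnan integral: one must confirm that the representation is valid for every $x \in {\mathfrak r}_A$ in a general unital Banach algebra (convergence at both endpoints and legitimacy of subtracting and recombining the two integrals), and must verify that commutativity of $a$ and $b$ genuinely propagates to the resolvents and to $a-b$, so that the resolvent-identity rearrangement is valid. Once those routine but essential points are secured, the two-sided bound and the split-and-optimize step are elementary.
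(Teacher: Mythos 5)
Your proposal is correct and follows essentially the same route as the paper's proof: the Balakrishnan representation, the resolvent identity (using commutativity) to extract the factor $(a-b)c$, the two complementary bounds on the integrand, and the split-and-optimize step at a threshold comparable to $\Vert (a-b)c\Vert$. The only differences are cosmetic — your small-$t$ bound via $\Vert (t+b)^{-1}-(t+a)^{-1}\Vert \le 2/t$ gives a slightly better constant than the paper's use of $\Vert (t+x)^{-1}x\Vert \le 2$ on each term, and you optimize the threshold a bit more carefully.
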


\begin{proof}    By the Balakrishnan representation in the proof of Lemma \ref{Bal},
 if $c \in {\rm Ball}(A)$ we have
$$(a^{\alpha}  - b^\alpha) c =
\frac{\sin(\alpha \pi)}{\pi} \int_0^\infty \, t^{\alpha - 1} \, [(t + a)^{-1} a - (t + b)^{-1} b]  c \, dt.$$
By the inequality $\Vert (t + x)^{-1} \Vert \leq \frac{1}{t}$ for accretive  $x$, we have
$$\Vert [(t + a)^{-1} a - (t + b)^{-1} b] c \Vert =
\Vert (t + a)^{-1} (t + b)^{-1} (a-b) t c \Vert
\leq \frac{1}{t}  \Vert (a-b) c \Vert ,$$
   and so as in the proof of Lemma \ref{Bal},  $\Vert \int_0^\infty \, t^{\alpha - 1} \, [(t + a)^{-1} a - (t + b)^{-1} b] c \, dt \Vert$ is dominated
by $$4  \int_0^\delta \, t^{\alpha - 1}  \, dt  + \int_{\delta}^\infty \,  t^{\alpha - 2} \, dt \, \Vert (a-b) c \Vert
= \frac{4}{\alpha} \delta^{\alpha} + \frac{\delta^{\alpha -1}}{1-\alpha} \, \Vert (a-b) c \Vert$$
for any $\delta > 0$.
We may now set $\delta = \Vert (a-b) c \Vert$ to obtain our inequality.
   \end{proof}

\begin{corollary} \label{hasbi}  An approximately unital  Banach algebra with a left bai 
(resp.\ right bai, bai) in 
${\mathfrak r}_A$ has a left bai 
(resp.\ right bai, bai)  in ${\mathfrak F}_A$.  
\end{corollary}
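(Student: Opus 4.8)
The plan is to pass from a bai $(e_t)$ in ${\mathfrak r}_A$ to its ${\mathfrak F}$-transforms, suitably rescaled. Recall from Lemma \ref{stam} that ${\mathfrak F}({\mathfrak r}_A) \subset {\mathfrak F}_A$, so that for every $n \in \Ndb$ and every index $t$ the element ${\mathfrak F}(n e_t) = n e_t (1 + n e_t)^{-1} = 1 - (1 + n e_t)^{-1}$ lies in ${\mathfrak F}_A$. Here $n e_t \in {\mathfrak r}_A$ since ${\mathfrak r}_A$ is a cone, and $1 + n e_t$ is invertible in $A^1$ because the numerical range of $n e_t$ avoids $-1$; moreover ${\mathfrak F}(n e_t) \in {\rm ba}(n e_t) \subset A$, and these elements are automatically bounded in norm by $2$ (being in ${\mathfrak F}_A$). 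The point of the dilation parameter $n$ is that the unscaled transforms $({\mathfrak F}(e_t))$ need not form an approximate identity, whereas letting $n \to \infty$ repairs this.

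First I would treat the left bai case. The key computation, carried out in $A^1$, is
$${\mathfrak F}(n e_t) \, a - a = - (1 + n e_t)^{-1} a, \qquad a \in A.$$
Splitting $a = e_t a + (a - e_t a)$, using $\Vert (1 + n e_t)^{-1} \Vert \leq 1$ for the accretive element $n e_t$ (as in the proof of Lemma \ref{Bal}), and invoking the identity $(1 + n e_t)^{-1} (n e_t) = 1 - (1 + n e_t)^{-1}$ to handle the first summand, one obtains
$$\Vert {\mathfrak F}(n e_t) a - a \Vert = \Vert (1 + n e_t)^{-1} a \Vert \leq \Vert a - e_t a \Vert + \frac{2 \Vert a \Vert}{n}.$$
Now I would index the family $({\mathfrak F}(n e_t))$ by the pairs $(n,t)$ with the product direction. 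Given $a \in A$ and $\epsilon > 0$, choosing $n$ so large that $2 \Vert a \Vert / n < \epsilon/2$ and then $t$ so large that $\Vert a - e_t a \Vert < \epsilon/2$ shows that this net is a left bai; since it is contained in ${\mathfrak F}_A$, it is a left bai in ${\mathfrak F}_A$ of the required kind.

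The right bai case is entirely symmetric, using instead $a \, {\mathfrak F}(n e_t) - a = - a (1 + n e_t)^{-1}$ and the right approximate identity property $\Vert a - a e_t \Vert \to 0$; and for a two-sided bai one simply runs both estimates simultaneously on the single net $({\mathfrak F}(n e_t))$. No serious obstacle arises: the two displayed estimates do all the work, and the only point requiring care is the verification that the product-directed double net is genuinely an approximate identity, which is routine bookkeeping once those estimates are in hand. I would expect this indexing step to be the most delicate part, though it is elementary. (For the sequential refinement noted in the following remark one would, in addition, extract a diagonal subsequence $({\mathfrak F}(n_k e_{t_k}))$ from this double net.)
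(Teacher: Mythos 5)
Your proof is correct, but it takes a genuinely different route from the paper's. The paper keeps the approximate identity fixed and passes to roots: it sets $b_t = {\mathfrak F}(e_t)$ and shows that the double net $(b_t^{1/n})$ is a left bai, using the decomposition $b_t^{1/n} a = b_t^{1/n}(a - e_t a) + (b_t^{1/n} b_t - b_t)(1+e_t)a + e_t a$ and controlling the middle term by the uniform estimate $\Vert x^{1/n}x - x \Vert \leq c_n$ of Lemma \ref{sin}, which in turn rests on the fractional power machinery and the resolvent estimates behind it. You instead dilate before transforming, working with ${\mathfrak F}(n e_t) = 1 - (1 + n e_t)^{-1}$, and your two-term estimate
$$\Vert {\mathfrak F}(n e_t)a - a \Vert \leq \Vert a - e_t a \Vert + \frac{2\Vert a \Vert}{n}$$
needs only the resolvent bound $\Vert (1 + x)^{-1} \Vert \leq 1$ for accretive $x$ (already used in Lemma \ref{stam}) and the identity $(1+ne_t)^{-1}(ne_t) = {\mathfrak F}(ne_t)$, which has norm at most $2$. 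So your argument is more elementary: it bypasses roots and Lemma \ref{sin} entirely, and the quantitative error $2\Vert a\Vert/n$ is explicit and independent of the algebra. What the paper's approach buys in exchange is that its construction stays inside the single commutative algebra generated by each $b_t$ and reuses the root calculus that is needed elsewhere (e.g.\ in Proposition \ref{wh} and Corollary \ref{lba}, where the $n$th roots of a single element serve as the bai), so the two proofs feed different parts of the surrounding theory. Both yield a double net from which a sequence can be extracted in the sequential case, as you note.
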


 \begin{proof}  If $(e_t)$ is a left bai in
${\mathfrak r}_A$, let $b_t = {\mathfrak F}(e_t) 
\in {\mathfrak F}_A$.  If $a \in A$ then 
$$b_t^{\frac{1}{n}} a = 
b_t^{\frac{1}{n}}(a  - e_t a) +
(b_t^{\frac{1}{n}} e_t - e_t) a + e_t a .$$
The first term here converges to $0$ with $t$ since 
$(b_t^{\frac{1}{n}})$ is in ${\mathfrak F}_A$,
hence is bounded. 
Similarly, the middle term can be seen to converge to $0$ with
$n$ by rewriting it  as
$(b_t^{\frac{1}{n}} b_t - b_t) \, (1+e_t) \, a$.
Working in $A^1$ and applying  Lemma \ref{sin}
we have $$\Vert (b_t^{\frac{1}{n}} b_t - b_t) \, (1+e_t) \, a
\Vert \leq c_n \Vert 1+e_t \Vert \Vert a \Vert \leq
K c_n \to 0,$$
for a constant $K$ independent of $t$.  
The third term converges to $a$ with $t$.
  So $(b_t^{\frac{1}{n}})$ is a left bai.
Similarly in the 
right and two-sided cases.        \end{proof}

{\bf Remark.}   If the bai in the last result is sequential, then so is
the one constructed in ${\mathfrak F}_A$.

\begin{corollary} \label{hasb}  If $A$ is an approximately unital Banach algebra then ${\mathfrak r}_A$ is closed under $n$th roots
for any positive integer $n$. 
\end{corollary}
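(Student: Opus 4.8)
The plan is to prove directly that the principal $n$th root $x^{1/n}$ of any $x \in {\mathfrak r}_A$ is accretive, by feeding the A.\ V.\ Balakrishnan representation used in the proof of Lemma \ref{Bal} into an arbitrary state. For $n = 1$ there is nothing to prove, so fix $n \geq 2$ and set $\alpha = 1/n \in (0,1)$. Since ${\mathfrak r}_A = A \cap {\mathfrak r}_{A^1}$ by definition, and since $x^{1/n}$ already lies in $A$ --- indeed each integrand $(t+x)^{-1} x = {\mathfrak F}(x/t)$ in the Balakrishnan formula belongs to ${\rm ba}(x)$, and the integral converges in norm by the estimate in Lemma \ref{Bal}, so that $x^{1/n} \in {\rm ba}(x) \subseteq A$ --- it suffices to show that ${\rm Re} \, \varphi(x^{1/n}) \geq 0$ for every state $\varphi$ on $A^1$.

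The key step is the following observation about the Balakrishnan integrand. Fix a state $\varphi \in S(A^1)$ and $t > 0$. Writing $(t+x)^{-1} x = 1 - t (t+x)^{-1}$ and using that $\varphi(1) = 1$, we get ${\rm Re} \, \varphi((t+x)^{-1} x) = 1 - t \, {\rm Re} \, \varphi((t+x)^{-1})$. Now $x$ is accretive, so by the Stampfli--Williams bound already invoked in Lemma \ref{Bal} we have $\Vert (t+x)^{-1} \Vert \leq 1/t$; hence $|\varphi((t+x)^{-1})| \leq 1/t$ and therefore ${\rm Re} \, \varphi((t+x)^{-1} x) \geq 1 - t \cdot (1/t) = 0$ for every $t > 0$.

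It then remains only to integrate. Since $0 < \alpha < 1$ we have $\sin(\alpha \pi) > 0$, and $\varphi$ is bounded so it passes inside the norm-convergent integral; thus
$$ {\rm Re} \, \varphi(x^{1/n}) = \frac{\sin(\alpha \pi)}{\pi} \int_0^\infty t^{\alpha - 1} \, {\rm Re} \, \varphi((t+x)^{-1} x) \, dt \geq 0 , $$
by the previous paragraph. Hence $x^{1/n} \in {\mathfrak r}_{A^1} \cap A = {\mathfrak r}_A$, as desired. I expect the only delicate points to be the interchange of $\varphi$ with the Bochner integral (routine, given norm convergence) and the confirmation that the Balakrishnan formula holds as an identity among elements of $A$ rather than merely among the associated multiplication operators in $B(A)$; both are handled by the fact that the integrand lies in ${\rm ba}(x) \subseteq A$ and that the integral converges in $A$.
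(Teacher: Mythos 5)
Your proof is correct, but it takes a genuinely different route from the paper. The paper approximates $x$ by $\frac{1}{t}\,{\mathfrak F}(tx)$ with ${\mathfrak F}(tx)\in{\mathfrak F}_A$ (as in Proposition \ref{whau}), invokes the continuity of the $r$th root function on ${\mathfrak r}_A$ from \cite[Corollary 1.3]{LRS} to pass the limit through the root, uses Esterle's result (Proposition \ref{clnth}) that ${\mathfrak F}_A$ is closed under $t$th powers, and concludes via $\overline{\Rdb^+{\mathfrak F}_A}={\mathfrak r}_A$. You instead go straight at the Balakrishnan integral: your observation that ${\rm Re}\,\varphi((t+x)^{-1}x)\geq 0$ for every state $\varphi$ is exactly the statement that the integrand ${\mathfrak F}(x/t)$ lies in ${\mathfrak r}_A$ (so you could also just cite Lemma \ref{stam}), and then $x^{\alpha}$ is exhibited as a norm-convergent integral of real positive elements against a positive weight, hence real positive. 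What your argument buys is self-containedness: it avoids both the external continuity result from \cite{LRS} and the binomial-series machinery, relying only on the Balakrishnan formula and the resolvent bound already used in Lemma \ref{Bal}; it also transparently places $x^{\alpha}$ in the closed cone generated by $\{{\mathfrak F}(x/t): t>0\}\subset{\mathfrak F}_A$, recovering $x^{\alpha}\in\overline{\Rdb^+{\mathfrak F}_A}={\mathfrak r}_A$ for every $\alpha\in(0,1)$. What the paper's route buys is that it stays entirely at the level of the ${\mathfrak F}$-transform and facts it needs elsewhere anyway, without leaning a second time on the validity of the integral representation. Your two flagged delicate points (interchanging $\varphi$ with the integral, and the identity holding in $A$ rather than only in $B(A)$) are handled correctly: the integrand lies in ${\rm ba}(x)$, the integral converges absolutely by the estimates in Lemma \ref{Bal}, and the left regular representation is isometric.
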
 \begin{proof}   We saw in the 
proof of Proposition \ref{whau} 
that if $x \in {\mathfrak r}_A$ then
 $x = \lim_{t \to 0^+} \, \frac{1}{t} \, tx(1 + tx)^{-1}$, and $tx(1 + tx)^{-1} \in 
 {\mathfrak F}_A$.  Thus by \cite[Corollary 1.3]{LRS} we have that 
$x^r = \lim_{t \to 0^+} \, \frac{1}{t^r} \, (tx(1 + tx)^{-1})^r$ for $0 < r < 1$.  By Proposition \ref{clnth},
the latter powers are in  $\Rdb^+ {\mathfrak F}_A$, so that $x^r \in \overline{\Rdb^+ {\mathfrak F}_A} = {\mathfrak r}_A$.  \end{proof} 

\begin{proposition} \label{whba}   If $A$ is an approximately unital Banach algebra and $x \in {\mathfrak r}_A$
then $ {\rm ba}(x) = {\rm ba}({\mathfrak F}(x))$, and so
$\overline{xA} =\overline{{\mathfrak F}(x) A}$.  \end{proposition}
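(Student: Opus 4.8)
The plan is to prove the two inclusions ${\rm ba}({\mathfrak F}(x)) \subseteq {\rm ba}(x)$ and ${\rm ba}(x) \subseteq {\rm ba}({\mathfrak F}(x))$ separately, working throughout inside the unitization $A^1$ and abbreviating $y = {\mathfrak F}(x) = x(1+x)^{-1}$. The first inclusion is immediate: it was already noted above that ${\mathfrak F}(x) \in {\rm ba}(x)$, and since ${\rm ba}(x)$ is a closed subalgebra containing $y$, it contains the closed subalgebra ${\rm ba}(y)$ generated by $y$. So the whole content of the statement is the reverse inclusion, which amounts to showing that $x \in {\rm ba}(y)$.

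For that I would first record the algebraic identity supplied by the inverse ${\mathfrak F}$-transform: since $1 - y = (1+x)^{-1}$ is invertible with inverse $1 + x$, we have $x = y(1-y)^{-1}$. The difficulty is that $y \in {\mathfrak F}_A$ (Lemma \ref{stam}), so $\Vert 1 - y \Vert \leq 1$, but this gives no control forcing $\Vert y \Vert < 1$, and hence the naive Neumann series for $(1-y)^{-1}$ need not converge. The device I would use to get around this is a spectral radius estimate together with an Abel-type limit. Since $x \in {\mathfrak r}_A \subseteq {\mathfrak r}_{A^1}$, its $A^1$-spectrum lies in the closure of its numerical range, hence in $\{ {\rm Re} \geq 0 \}$; by the spectral mapping theorem applied to the holomorphic function $\lambda \mapsto \lambda/(1+\lambda)$ (holomorphic near $\sigma_{A^1}(x)$ as $-1 \notin \{{\rm Re} \geq 0\}$), we get $\sigma_{A^1}(y) = \{ \lambda/(1+\lambda) : \lambda \in \sigma_{A^1}(x) \}$. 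For ${\rm Re}\, \lambda \geq 0$ one has $|1+\lambda|^2 = |\lambda|^2 + 2\,{\rm Re}\,\lambda + 1 \geq |\lambda|^2$, so $|\lambda/(1+\lambda)| \leq 1$, and therefore the spectral radius of $y$ is at most $1$.

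Now for each $r \in (0,1)$ the element $ry$ has spectral radius at most $r < 1$, so the Neumann series converges in norm and gives $(1-ry)^{-1} = \sum_{n \geq 0} (ry)^n$; consequently
$$\frac{ry}{1-ry} = \sum_{n=1}^{\infty} r^n y^n \in {\rm ba}(y),$$
being a norm-convergent series of polynomials in $y$ with zero constant term. It remains to let $r \to 1^-$. Since all the elements in sight are functions of $y$ and commute, I would verify the identity
$$x - \frac{ry}{1-ry} = (1-r)\, x\, (1-ry)^{-1},$$
and observe that $(1-ry)^{-1}$ stays bounded as $r \to 1^-$, because $1 - y$ is invertible and inversion is norm-continuous near it. Hence the right-hand side tends to $0$ in norm, so $x = \lim_{r \to 1^-} ry(1-ry)^{-1} \in {\rm ba}(y)$ as ${\rm ba}(y)$ is closed. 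This yields ${\rm ba}(x) \subseteq {\rm ba}(y)$ and hence the claimed equality of subalgebras.

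Finally the ideal statement follows formally. From ${\mathfrak F}(x) \in {\rm ba}(x)$ we may write ${\mathfrak F}(x)$ as a norm-limit of polynomials $p_n(x)$ with $p_n(0) = 0$; factoring $x$ out of each $p_n(x)$ shows $p_n(x) A \subseteq xA$, so ${\mathfrak F}(x) A \subseteq \overline{xA}$. Symmetrically $x \in {\rm ba}({\mathfrak F}(x))$ gives $xA \subseteq \overline{{\mathfrak F}(x)A}$, and taking closures yields $\overline{xA} = \overline{{\mathfrak F}(x)A}$. (Alternatively, since $A$ is an ideal in $A^1$ and $1+x$ is invertible there, both $1+x$ and $(1+x)^{-1}$ carry $A$ onto $A$; combined with $x = {\mathfrak F}(x)(1+x)$ and ${\mathfrak F}(x) = x(1+x)^{-1}$ this even gives the exact equality $xA = {\mathfrak F}(x)A$.) I expect the only real obstacle to be the reverse subalgebra inclusion — placing $x$ inside ${\rm ba}(y)$ despite the possible failure of $\Vert y \Vert < 1$ — and the spectral radius bound on ${\mathfrak F}(x)$ together with the Abel limit is precisely what resolves it.
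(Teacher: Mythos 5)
Your proof is correct, and its skeleton is the same as the paper's: both arguments reduce to (a) ${\mathfrak F}(x)\in{\rm ba}(x)$, which is immediate since $(1+x)^{-1}\in{\rm ba}(1,x)$ and ${\rm ba}(x)$ is an ideal in ${\rm ba}(1,x)$, and (b) $x={\mathfrak F}(x)(1-{\mathfrak F}(x))^{-1}\in{\rm ba}({\mathfrak F}(x))$. Where you differ is in how you justify (b). The paper argues via Gelfand theory in the commutative algebra ${\rm ba}(x)^1$: characters send ${\mathfrak F}(x)$ into the disk $|z-\tfrac12|\leq\tfrac12$ minus the point $1$, so $1\notin{\rm Sp}({\mathfrak F}(x))$ and $(1-{\mathfrak F}(x))^{-1}$ lies in ${\rm ba}(1,{\mathfrak F}(x))$; this is quick but leans (implicitly) on spectral permanence for the subalgebra ${\rm ba}(1,{\mathfrak F}(x))$, i.e.\ that the polynomially convex hull of the ambient spectrum still omits $1$. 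Your Abel-summation route needs only the spectral radius bound $\rho({\mathfrak F}(x))\leq 1$ (from spectral mapping and $|\lambda/(1+\lambda)|\leq 1$ on ${\rm Re}\,\lambda\geq 0$) together with invertibility of $1-{\mathfrak F}(x)$ in $A^1$, which is free since $(1-{\mathfrak F}(x))^{-1}=1+x$; the identity $x-ry(1-ry)^{-1}=(1-r)x(1-ry)^{-1}$ then does the rest. This is more self-contained and entirely sidesteps the subalgebra-spectrum subtlety, at the cost of a few extra lines. Two small bonuses of your write-up: your formula $x={\mathfrak F}(x)(1-{\mathfrak F}(x))^{-1}$ is the correct one (the paper's proof has a stray minus sign there, inconsistent with its own description of the inverse transform), and your parenthetical observation that $(1+x)^{\pm 1}$ map $A$ onto $A$ gives the exact equality $xA={\mathfrak F}(x)A$, which is sharper than the closure statement the paper leaves as ``clear.''
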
  \begin{proof}   This follows from the 
elementary spectral theory of unital Banach algebras, applied in $A^1$.  Below we compute the spectrum in ba$(x)^1$.  
Since $0 \notin {\rm Sp}(1+x)$  we have $(1+x)^{-1} 
\in  {\rm ba}(1,x)$, so that ${\mathfrak F}(x) \in {\rm ba}(x)$.  Any character of ba$(x)^1$ applied to
${\mathfrak F}(x)$ gives a number of form
$z = w (1+w)^{-1}$ in the open unit disk, and in fact also  inside the 
circle 
$|z - \frac{1}{2}| \leq  \frac{1}{2}$ if Re$(w) \geq 0$.  Since $1 \notin
 {\rm Sp}({\mathfrak F}(x))$ we have 
 $(1- {\mathfrak F}(x))^{-1} \in {\rm ba}(1,{\mathfrak F}(x))$, so that
$x = - {\mathfrak F}(x) \, (1- {\mathfrak F}(x))^{-1} \in {\rm ba}({\mathfrak F}(x))$.   
The rest is clear.   
 \end{proof}

\begin{lemma} \label{lump}  If $p$ is an idempotent in a unital Banach algebra 
$A$ then $p  \in {\mathfrak F}_A$
iff $p  \in {\mathfrak r}_A$.   If $p$  is an idempotent in $A^{**}$ for an approximately  unital Banach algebra  $A$ then 
$p  \in {\mathfrak F}_{A^{**}}$
iff $p  \in {\mathfrak r}_{A^{**}}$.   \end{lemma}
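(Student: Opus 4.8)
The plan is to use that one inclusion is automatic and that the ${\mathfrak F}$-transform collapses to a scalar multiple of $p$ when $p$ is idempotent. Throughout, I note that ${\mathfrak F}_A \subset \Rdb^+ {\mathfrak F}_A \subset {\mathfrak r}_A$ (the second inclusion was observed in Section 2), so the implication $p \in {\mathfrak F}_A \Rightarrow p \in {\mathfrak r}_A$ is free in the unital case, and only the converse needs work.

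So suppose $p$ is an idempotent in the unital algebra $A$ with $p \in {\mathfrak r}_A$. Since ${\mathfrak r}_A$ is a cone, $tp \in {\mathfrak r}_A$ for every $t > 0$. Using $p^2 = p$ one checks directly that $1 + tp$ is invertible with $(1+tp)^{-1} = 1 - \frac{t}{1+t} p$, and hence that the ${\mathfrak F}$-transform of $tp$ simplifies dramatically:
\[ {\mathfrak F}(tp) = tp(1+tp)^{-1} = \frac{t}{1+t} \, p . \]
By Lemma \ref{stam} this lies in ${\mathfrak F}_A$, that is $\Vert 1 - \frac{t}{1+t} p \Vert \leq 1$ for all $t > 0$. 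Letting $t \to \infty$ and using continuity of the norm gives $\Vert 1 - p \Vert \leq 1$, i.e.\ $p \in {\mathfrak F}_A$. This settles the unital case.

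For the bidual statement I would simply transfer the unital case to the unital Banach algebra $B = (A^1)^{**}$, recalling that the canonical image of the identity $1$ of $A^1$ is a two-sided identity for both Arens products on $B$, so $B$ is genuinely unital. An idempotent $p \in A^{**}$ (with respect to the Arens product in use) is in particular an idempotent of $B$, since $A^{**}$ sits inside $B$ as a subalgebra. By the defining relations ${\mathfrak F}_{A^{**}} = A^{**} \cap {\mathfrak F}_{(A^1)^{**}}$ and ${\mathfrak r}_{A^{**}} = A^{**} \cap {\mathfrak r}_{(A^1)^{**}}$, and because $p \in A^{**}$, the desired equivalence $p \in {\mathfrak F}_{A^{**}} \Leftrightarrow p \in {\mathfrak r}_{A^{**}}$ is precisely the instance $p \in {\mathfrak F}_B \Leftrightarrow p \in {\mathfrak r}_B$ already proved.

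I expect no serious obstacle here: the only things to get right are the bookkeeping in the bidual case (confirming that $B$ is unital and that idempotency passes correctly from $A^{**}$ to $B$) and the elementary verification of the inverse of $1 + tp$. The one genuinely useful observation — the step that makes the limiting argument trivial rather than requiring any direct estimate on $\Vert 1 - p \Vert$ — is that ${\mathfrak F}(tp)$ is simply a scalar multiple of $p$.
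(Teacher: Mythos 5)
Your proof is correct, but in the unital case it takes a genuinely different (and equally short) route from the paper's. The paper invokes the Lumer--Phillips characterization of accretivity, $\Vert \exp(-tp)\Vert \leq 1$ for all $t>0$, computes $\exp(-tp) = 1 - (1-e^{-t})p$ for an idempotent, and lets $t \to \infty$; you instead feed the cone element $tp$ into the ${\mathfrak F}$-transform, observe that $(1+tp)^{-1} = 1 - \frac{t}{1+t}p$ so that ${\mathfrak F}(tp) = \frac{t}{1+t}p$, apply Lemma \ref{stam} to get $\Vert 1 - \frac{t}{1+t}p\Vert \leq 1$, and let $t \to \infty$. The two arguments are structurally parallel -- each produces a family $1 - c_t p$ of contractions with $c_t \to 1^-$ -- but they rest on different characterizations of accretivity: the semigroup (exponential) one versus the resolvent estimate of Stampfli--Williams already packaged in Lemma \ref{stam}. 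Your version has the mild advantage of staying entirely inside machinery the paper has already set up, rather than citing the Lumer--Phillips criterion from the numerical range literature; the paper's version is self-contained in a different sense, needing no inverse computation. Your handling of the easy direction (via $\Rdb^+{\mathfrak F}_A \subset {\mathfrak r}_A$) and of the bidual case (passing to the unital algebra $(A^1)^{**}$ and using that ${\mathfrak F}_{A^{**}}$ and ${\mathfrak r}_{A^{**}}$ are by definition the traces of ${\mathfrak F}_{(A^1)^{**}}$ and ${\mathfrak r}_{(A^1)^{**}}$ on $A^{**}$) is exactly what the paper does.
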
  \begin{proof}   The first follows from the well-known 
Lumer-Phillips
characterization of accretiveness in terms of $\Vert {\rm exp}(-tp) \Vert \leq 1$
for all $t > 0$ (see e.g.\
\cite[Theorem 6, p.\ 30]{BoNR1}).   If $p$ is idempotent then ${\rm exp}(-tp) = 1- (1-e^{-t})p$, and if this is contractive for all $t > 0$ then
$\Vert 1 - p \Vert \leq 1$.
For the second, work in $(A^1)^{**}$ and use facts above.   \end{proof} 

 However one cannot say that the idempotents in the last result are also in 
$\frac{1}{2} {\mathfrak F}_A$ as is the case for operator algebras.  The following examples illustrate
this, and other `bad behavior' not seen in the class of operator algebras.

\begin{example} \label{Ex1}  Let $\ell^1_4$ be identified with the 
$l^1$-semigroup algebra of the abelian semigroup  $\{ 1, a, b, c \}$ with
relations making $a, b, c$ idempotent, and $ab = ac = bc = c$. 
Then $p = 1-a, q = 1-b
 \in {\mathfrak F}_A \setminus \frac{1}{2} {\mathfrak F}_A \subset {\mathfrak r}_A$. 
   For such $p$ set $x = \frac{1}{2} p \in
\frac{1}{2} {\mathfrak F}_A$, and notice that $x^{\frac{1}{n}} = \frac{1}{2^{\frac{1}{n}}} p$ which is
not always in $\frac{1}{2} {\mathfrak F}_A$ (if it were, then we get the contradiction that its limit
$p$ is  in $\frac{1}{2} {\mathfrak F}_A$).   So we see that   $\frac{1}{2} {\mathfrak F}_A$ is not closed
under $n$th roots. 
  We also see that if $x \in \frac{1}{2} {\mathfrak F}_A$ then 
$\overline{xA}$ need not have a left cai (even if $A$ is commutative).  It does have a left bai 
of norm $\leq 2$, indeed a left bai in ${\mathfrak F}_A$ by Corollary \ref{lba}. 

In this example  $p q = p^{\frac{1}{2}} q^{\frac{1}{2}} = 1 - a - b + c  \notin  {\mathfrak r}_A$ (as can be seen by considering states
$f(\alpha a + \beta b + \gamma c + \lambda 1) =  \gamma z + \lambda + \alpha + \beta$ for $|z| \leq 1$).   So
 $x^{\frac{1}{2}}  y^{\frac{1}{2}}$ need not be
in  ${\mathfrak r}_A$ even if $x, y \in \frac{1}{2} {\mathfrak F}_A$.   This shows that
the main results about roots in \cite{BBS} fail in more general $M$-approximately
unital  Arens regular Banach algebras.    Note too that if $J_1 = pA$ and $J_2 = qA$, 
then $J_1 \cap J_2 = \Cdb d = d A$, where $d = pq$, but $dA$ has no identity or bai in ${\mathfrak r}_A$.
This shows that, unlike in the operator algebra case,
 finite intersections of extremely nice closed ideals need not be `nice' in
the sense of the theory developed in this paper.   See however Section 8 for a context 
in which finite intersections  will behave well.
 \end{example} 

\begin{example} \label{Ex2}   In the Banach algebra $A = l^1(\Zdb_2)$ with convolution multiplication, $p = (\frac{1}{2}, \frac{1}{2})$  is
a contractive idempotent in $\frac{1}{2} {\mathfrak F}_A$ with numerical range $\overline{B(\frac{1}{2}, \frac{1}{2})}$.
The states in this example are the functionals $(a,b) \mapsto a + bz$, for $|z| \leq 1$.
All of the  principal $n$th roots of $p$ obviously have the same  
numerical range.  So the numerical range  of $p^{\frac{1}{n}}$ does not `converge' to the $x$-axis.  Thus we cannot 
expect statements in the Blecher-Read papers involving `near positivity' to generalize (unless $A$ is a Hermitian Banach *-algebra satisfying the conditions in the latter part of \cite{LRS}, in which case 
 the numerical ranges  of  $x^{\frac{1}{n}}$ do `converge' to the $x$-axis if $x$ is accretive).     Note also in this example
that $p$ is not an $M$-projection in $A$.   Thus we cannot 
expect  support projections to be associated with  $M$-projections in general.   In this example it is easy to see that $x = (a,b) \in {\mathfrak r}_A$ iff $|b| \leq {\rm Re} \,
 a$, whereas $x \in \frac{1}{2} {\mathfrak F}_A$ iff $|b|^2 - |b| \leq {\rm Re} \,
a - |a|^2$.   
In this example the Cayley transform does not take ${\mathfrak r}_A$ into the set of contractions, so that 
$x(1+x)^{-1}$ need not be in $\frac{1}{2} {\mathfrak F}_A$.  

This example also serves to show that if $B$ is an approximately unital  closed ideal in a commutative
finite dimensional approximately unital  Banach algebra, then ${\mathfrak r}_B$ and ${\mathfrak F}_B$ need not
be related to ${\mathfrak r}_A$ and ${\mathfrak F}_A$, unlike the setting of operator algebras (where
there is a very strong relationship between these, even in the case $B$ is a subalgebra). 
 Indeed let $B = \Cdb (1,1)$
inside the last example.   We have $1_B = (\frac{1}{2}, \frac{1}{2})$, and ${\mathfrak r}_B=  \{ (a,a) : 
{\rm Re} \, a \geq 0 \}$ and ${\mathfrak F}_B
= \{ (a,a) : a \in \overline{B(\frac{1}{2}, \frac{1}{2})} \}$.

For a state $\varphi$ on an operator algebra $A$ and $x \in {\mathfrak F}_A$
 it is the case that $\varphi(s(x)) = 0$ iff
$\varphi(x) = 0$ iff $\varphi \in {\rm ba}(x)^\perp$.  Here $s(x)$ is the support
projection of $x$ from \cite{BRI}.  In Example \ref{Ex2},
if $x = (\frac{1}{2}, \frac{i}{2})$ and $\varphi((a,b)) = a + ib$
then $x \in {\rm Ker} \, \varphi$ but $x^2$ and $s(x) = 1$
are not in ${\rm Ker} \, \varphi$.   Thus much of the theory 
of `strictly real positive' elements from \cite{BRI} and its sequels breaks down.

A slight variant of this example is the same algebra, but with norm 
$|||(a,b)||| = |a| + 2 |b|$.  Here $J = \Cdb (\frac{1}{2}, \frac{1}{2})$ 
is an ideal equal to $xA$ for $x \in {\mathfrak F}_A$, but 
this ideal has no cai.   \end{example} 

\begin{example} \label{Ex3}   The unital Banach algebra $l^1(\Ndb)$,
with convolution product,  is easily seen to be equal to ba$(x)$ 
where $x = 1 + \frac{1}{2} \vec e_2 \in {\mathfrak F}_A$.   However
$l^1(\Ndb)$ is not Arens regular; thus its second dual 
is not commutative in either one of the Arens products \cite[1.4.9]{Pal}.
Thus ba$(x)^{**}$ need not be  commutative if
$x \in {\mathfrak F}_A$.    In this example it is easy to compute ${\mathfrak F}_A$ and ${\mathfrak r}_A$.   C. A. Bearden has verified
that in this example, unlike the operator algebra case \cite{BBS},
$(x^{\frac{1}{n}})$ need not increase in the 
`real positive ordering' with $n$, for $x \in
\frac{1}{2} {\mathfrak F}_A$. 
  \end{example}

\begin{example} \label{Ex4}  The approximately unital Banach algebra $A = 
L^1(\Rdb)$ 
with convolution product has `multiplier unitization' $A^1 = A \oplus^1 \Cdb$.  
 This can be seen from Wendel's result
that the measure algebra $M(\Rdb)$ embeds canonically in $B(L^1(\Rdb))$ isometrically \cite{Dal},
 so that
$L^1(\Rdb)^1$ can be identified with $L^1(\Rdb) + \Cdb \delta_0$,
where $\delta_0$ is the point mass at $0$.   Thus $S(A)$ corresponds to 
the set of $f \in L^{\infty}(\Rdb)$ of norm $1$.  
It follows immediately 
that ${\mathfrak F}_A = {\mathfrak r}_A = (0)$ in this case.
This algebra is not Arens regular.  Note that any 
norm one functional on $L^1(\Rdb)$ extends to a state on $L^1(\Rdb)^1$ 
clearly.   However there are many
norm one functions $g \in L^\infty(\Rdb)$ with
$1 \neq \lim_{t \to 0^+} \, \int_{\Rdb} g e_t$, for the usual positive
cai ${\mathfrak e} = (e_t)$ of $L^1(\Rdb)$ (the one in the Remark after Lemma \ref{netwee}).  
For example if $g$ takes only negative values.    This shows that Lemma \ref{hbs}  fails for more general Banach algebras.   For this same cai ${\mathfrak e}$ we remark that $S_{\mathfrak e}(A)$ corresponds to 
the set of $f \in {\rm Ball}(L^{\infty}(\Rdb))$ for which the mean value of $f$ at $0$
(this mean value is the limit
with $n$  of the (integral) average of $f$ over the interval
of width $1/n$ centered at 0) exists
and equals $1$.   
\end{example}

Because of the above examples, and the considerations mentioned 
after Lemma \ref{oz2} 
above,
the following result cannot be improved, even for $M$-approximately
unital  Arens regular Banach algebras: 

\begin{proposition} \label{wh}  If $x \in {\mathfrak r}_A$
then ba$(x)$ has a bai in ${\mathfrak F}_A$, and hence any 
 weak* limit point of this bai is a mixed identity 
residing in ${\mathfrak F}_{A^{**}}$.   Indeed 
$(x^{\frac{1}{n}})$ is a bai for ${\rm ba}(x)$ in ${\mathfrak r}_A$,
and $({\mathfrak F}(x)^{\frac{1}{n}})$ is a bai for ${\rm ba}(x)$ in ${\mathfrak F}_A$.
  \end{proposition}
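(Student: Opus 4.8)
The plan is to establish the two explicit ``Indeed'' assertions first, since the opening two sentences of the proposition follow from them. I would begin with the sequence $(x^{1/n})$ of principal roots. Each $x^{1/n}$ lies in ${\mathfrak r}_A$ by Corollary \ref{hasb}, and the sequence is bounded: applying Lemma \ref{Bal} with $\alpha = 1/n$ gives $\Vert x^{1/n} \Vert \leq \frac{2 \sin(\pi/n)}{\pi (1/n)(1 - 1/n)} \Vert x \Vert^{1/n}$, whose right-hand side converges to $2$ as $n \to \infty$, so $\sup_n \Vert x^{1/n} \Vert < \infty$. To see that $(x^{1/n})$ is an approximate identity for ${\rm ba}(x)$, recall that $\{ x^k : k \geq 1 \}$ has dense linear span in ${\rm ba}(x)$; by the power law $x^{1/n} x^k = x^{k + 1/n}$ together with the continuity of $t \mapsto x^t$ (both recalled just before Lemma \ref{Bal}), we get $x^{1/n} x^k \to x^k$, and the same on the right since the powers $x^s$ commute. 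A routine $\epsilon/3$ estimate, using the uniform bound just obtained, then upgrades this to $x^{1/n} a \to a$ and $a x^{1/n} \to a$ for every $a \in {\rm ba}(x)$. This proves the third assertion.

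For $({\mathfrak F}(x)^{1/n})$, set $y = {\mathfrak F}(x)$. By Lemma \ref{stam} we have $y \in {\mathfrak F}_A$, and by Proposition \ref{whba} we have ${\rm ba}(y) = {\rm ba}(x)$. By Proposition \ref{clnth}, ${\mathfrak F}_A$ is closed under $t$th powers for $t \in (0,1]$, so $y^{1/n} \in {\mathfrak F}_A$ for all $n$; in particular the sequence is bounded, since $\Vert 1 - a \Vert \leq 1$ forces $\Vert a \Vert \leq 2$ for any $a \in {\mathfrak F}_A$. By the Remark following Lemma \ref{Bal}, the power $y^{1/n}$ of Proposition \ref{clnth} agrees with the sectorial power, so the power law and continuity apply exactly as above, yielding $y^{1/n} y^k = y^{k + 1/n} \to y^k$; hence $(y^{1/n})$ is a bai for ${\rm ba}(y) = {\rm ba}(x)$ lying in ${\mathfrak F}_A$. (Alternatively the bai property is immediate from Lemma \ref{sin}, since $\Vert y^{1/n} y^k - y^k \Vert = \Vert (y^{1/n} y - y) y^{k-1} \Vert \leq c_n \Vert y \Vert^{k-1} \to 0$.) This is the fourth assertion, and it already contains the first sentence of the proposition.

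It remains to treat the second sentence. Let $e$ be any weak* limit point, taken inside ${\rm ba}(x)^{\perp\perp} \subseteq (A^1)^{**}$, of the bai $(y^{1/n})$ for ${\rm ba}(x)$. Being a weak* limit point of a bai, $e$ is a mixed identity for ${\rm ba}(x)^{**}$, by the general fact recalled in the introduction that mixed identities are exactly the weak* limit points of bai's. Since each $y^{1/n} \in {\mathfrak F}_A \subseteq {\mathfrak F}_{A^{**}}$ and ${\mathfrak F}_{A^{**}}$ is weak* closed (as recorded in Section 2), we conclude that $e \in {\mathfrak F}_{A^{**}}$, as required.

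I do not expect a genuine obstacle here; the results assembled above do almost all of the work. The one point demanding care is that the symbol ${\mathfrak F}(x)^{1/n}$ be read so that the Banach-algebra power law and the continuity of the sectorial functional calculus are simultaneously available, which is precisely what the Remark after Lemma \ref{Bal} guarantees. A secondary bookkeeping point is the location of the weak* limit: it should be taken in ${\rm ba}(x)^{\perp\perp}$ so that ``mixed identity'' is understood relative to ${\rm ba}(x)$, while the membership in ${\mathfrak F}_{A^{**}}$ is read in the ambient bidual $(A^1)^{**}$.
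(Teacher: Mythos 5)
Your proposal is correct and follows essentially the same route as the paper: boundedness of $(x^{1/n})$ via Lemma \ref{Bal}, membership of the roots in ${\mathfrak r}_A$ (resp.\ ${\mathfrak F}_A$) via Corollary \ref{hasb} (resp.\ Proposition \ref{clnth}), and the reduction of the general case to ${\mathfrak F}(x)$ via Lemma \ref{stam} and Proposition \ref{whba}. The only cosmetic difference is that the paper gets the approximate-identity property directly from the estimate $\Vert x^{1/n}x - x\Vert \leq c_n$ of Lemma \ref{sin}, whereas you use the power law and continuity of $t \mapsto x^t$ (mentioning the Lemma \ref{sin} route only as an aside), and you spell out the final weak*-limit assertion that the paper leaves implicit.
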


\begin{proof}     Note that  $x^{\frac{1}{n}} x \to x$ by Lemma \ref{sin}. 
That $(x^{\frac{1}{n}})$ is bounded follows from Lemma \ref{Bal}.  Thus 
$(x^{\frac{1}{n}})$ is a bai for ${\rm ba}(x)$ in ${\mathfrak r}_A$.

In the case that $x \in {\mathfrak F}_A$, then $(x^{\frac{1}{n}})$
is in  ${\mathfrak F}_A$ (using Proposition \ref{clnth}). 
We remark that the proof of \cite[Lemma 2.1]{BRI} (see also 
\cite{BHN}) displays 
a different, and often useful, bai
in ${\mathfrak F}_A$.
In the general case note that if $x \in {\mathfrak r}_A$ then 
ba$(x) = {\rm ba}({\mathfrak F}(x))$ by Proposition \ref{whba}, and so 
$({\mathfrak F}(x)^{\frac{1}{n}})$ is a bai for ba$(x)$.   
\end{proof}

For an approximately
unital Banach algebra $A$ and
$x \in {\mathfrak r}_A$, 
by Proposition \ref{whba}  we have ba$(x) = 
{\rm ba}({\mathfrak F}(x))$ and $\overline{xA} = 
\overline{{\mathfrak F}(x) A}$.   If $A$ is not Arens regular then 
Example \ref{Ex3} shows that ba$(x)$ need not be Arens regular if
$x \in {\mathfrak F}_A$.  (However it is Arens semiregular 
as is any commutative Banach algebra \cite{Pal}.)   Thus
ba$(x)^{**}$  need not be commutative.   We    
write $s(x)$ for the weak* Banach limit of $(x^{\frac{1}{n}})$
in $A^{**}$.    That is $s(x)(f) = {\rm LIM}_n \, f(x^{\frac{1}{n}})$ for 
$f \in A^*$, where LIM is a Banach limit.    It is easy to see that $x s(x) = s(x) x = x$, by applying these to
$f \in A^*$.  Hence $s(x)$  is a mixed  identity of ${\rm ba}(x)^{**}$, and 
is idempotent.
By the Hahn-Banach theorem it is easy to see that 
$s(x) \in \overline{{\rm conv}( \{ x^{\frac{1}{n}} : n \in \Ndb \})}^{w*}$.
By Corollary \ref{hasb} and  \ref{lump},  and the fact below Lemma \ref{isun}
that ${\mathfrak F}_{A^{**}}$ is weak* closed, we see that $s(x)$ resides in ${\mathfrak F}_{A^{**}}$.
If ${\rm ba}(x)$ is Arens regular then $s(x)$ will be the identity of ${\rm ba}(x)^{**}$.
 Therefore in this 
case, or more generally if   ${\rm ba}(x)^{**}$ has a unique left identity in the 
second Arens product, then $s(x)$ is also
the weak* limit of $({\mathfrak F}(x)^{\frac{1}{n}})$.  Indeed  in this 
case
we can set $s(x)$ to be the weak* limit of any bai
for ${\rm ba}(x)$.      This is the case for example, if ${\rm ba}(x)$ is $M$-approximately
unital (that is, if it is an $M$-ideal in ${\rm ba}(x)^{1}$), by Lemma \ref{isun}.

\bigskip

{\bf Remark.}   Note that if  $x \in {\mathfrak r}_A$ then ${\rm ba}(x)$
is $M$-approximately
unital if $A$ is $M$-approximately unital
and ${\rm ba}(x)^1 \subset A^1$ isometrically
(by the argument in Proposition \ref{aufr}).      It is claimed in  \cite{SW3} that the `support projection'
of an $M$-ideal in a commutative Banach algebra is central.   We did not follow this proof (and its author confirmed that
at present there seemed to him to be a gap), but this would imply that 
if ${\rm ba}(x)$ is $M$-approximately unital then
$s(x)$  is central in ${\rm ba}(x)^{**}$, and thus is actually a (unique) two-sided
identity for ${\rm ba}(x)^{**}$.  

\bigskip

We call $s(x)$ above a {\em support} idempotent of $x$,
or a (left) support idempotent of $\overline{xA}$
(or a (right) support idempotent of $\overline{Ax}$).   
The reason for this name is the following result.
   
\begin{corollary}  \label{lba}  If  $A$ is an approximately unital 
Banach algebra, and  $x \in {\mathfrak r}_A$
then $\overline{xA}$ has a left bai in ${\mathfrak F}_A$ and 
$x \in \overline{xA} = s(x) A^{**} \cap A$ and $(xA)^{\perp \perp} = s(x) A^{**}$. 
(These products are with respect to the second Arens product.)
  \end{corollary}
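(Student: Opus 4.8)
The plan is to read off all three assertions from the two approximate identities already produced in Proposition~\ref{wh}, together with the standard fact that $Y^{\perp\perp}\cap A=Y$ for a norm-closed subspace $Y\subseteq A$ (the bipolar $Y^{\perp\perp}$ being the weak* closure of $Y$ in $A^{**}$). The heart of the matter is the third assertion, $(xA)^{\perp\perp}=s(x)A^{**}$; once this is in hand the middle assertion is immediate, since then $\overline{xA}=(xA)^{\perp\perp}\cap A=s(x)A^{**}\cap A$, while $x\in\overline{xA}$ holds because $x\,x^{1/n}\in xA$ and $x\,x^{1/n}\to x$ by Proposition~\ref{wh}.

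For the left bai of assertion one, I would set $b={\mathfrak F}(x)\in{\mathfrak F}_A$ and use $(b^{1/n})$, which by Proposition~\ref{wh} is a bai for ${\rm ba}(b)={\rm ba}(x)$ lying in ${\mathfrak F}_A$ (hence bounded by $2$) and inside ${\rm ba}(x)\subseteq\overline{xA}$. Since $b\in{\rm ba}(b)$ we have $b^{1/n}b\to b$, so $b^{1/n}(ba)=(b^{1/n}b)a\to ba$ for each $a\in A$; as $(b^{1/n})$ is bounded and $bA$ is dense in $\overline{bA}=\overline{xA}$ (Proposition~\ref{whba}), a routine three-$\epsilon$ estimate upgrades this to $b^{1/n}y\to y$ for all $y\in\overline{xA}$, producing a left bai for $\overline{xA}$ inside ${\mathfrak F}_A$.

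The two inclusions in $(xA)^{\perp\perp}=s(x)A^{**}$ are where the Arens bookkeeping must be done carefully; I write $\square$ for the first Arens product. For $s(x)A^{**}\subseteq(xA)^{\perp\perp}$, I would first note $s(x)\in(xA)^{\perp\perp}$, since $s(x)$ lies in the weak* closure of ${\rm conv}\{x^{1/n}\}\subseteq{\rm ba}(x)\subseteq\overline{xA}$, and then show $(xA)^{\perp\perp}$ is a right ideal for $\diamond$: for $\eta\in(xA)^{\perp\perp}$ and $c\in A$ one has $\eta\diamond c=\eta\square c=w^*\lim_\mu\eta_\mu c$ along a net $\eta_\mu\in xA$ with $\eta_\mu\to\eta$ weak* (the two products agreeing since $c\in A$, and $m\mapsto m\square c$ being weak* continuous), and each $\eta_\mu c\in xA$; weak* continuity of $\diamond$ in its second variable together with Goldstine's theorem then extends this to all $\zeta\in A^{**}$, giving $s(x)\diamond A^{**}\subseteq(xA)^{\perp\perp}$. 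For the reverse inclusion I would check $s(x)\diamond(xa)=xa$ for every $a\in A$: since $xa\in A$, $s(x)\diamond(xa)=s(x)\square(xa)=(s(x)\square x)\square a=x\square a=xa$, using associativity of $\square$ and $s(x)x=x$. As $\eta\mapsto s(x)\diamond\eta$ is weak* continuous, the set $\{\eta:s(x)\diamond\eta=\eta\}$ is weak* closed, contains $xA$, and hence contains $(xA)^{\perp\perp}$; since $s(x)$ is idempotent this set is exactly $s(x)A^{**}$.

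The main obstacle I anticipate is precisely this Arens bookkeeping: the second product $\diamond$ is weak* continuous only in its second variable, so limits cannot be passed through the first slot, and the argument must be routed through the agreement of the two products when one factor lies in $A$, the separate weak* continuity of $\square$ in its first slot, and the relation $s(x)x=x$. Keeping straight which product is used at each step, and verifying that $s(x)A^{**}$ really coincides with $\{\eta:s(x)\diamond\eta=\eta\}$ via idempotence of $s(x)$, is the only genuinely delicate part; everything else is the standard bipolar and three-$\epsilon$ machinery.
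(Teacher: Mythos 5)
Your argument is correct and follows essentially the same route as the paper's proof: reduce to the ${\mathfrak F}$-transform ${\mathfrak F}(x)$ via Proposition~\ref{whba}, use the $n$th roots as the left bai inside ${\rm ba}(x)\subseteq\overline{xA}$, and obtain $(xA)^{\perp\perp}=s(x)A^{**}$ from $s(x)\in J^{\perp\perp}$ together with $J^{\perp\perp}$ being a right ideal and $s(x)$ acting as a weak*-continuous left identity on it. The only difference is that you spell out the Arens-product bookkeeping (agreement of the two products when one factor is in $A$, separate weak* continuity in each slot) that the paper compresses into the single clause ``since $J^{\perp\perp}$ is a right ideal in $A^{**}$.''
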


\begin{proof}    Indeed if $J = \overline{xA}$ then $J = \overline{{\mathfrak F}(x)A}$ by Proposition \ref{whau}.
So we may assume that $x \in {\mathfrak F}_A$.   Since $\overline{xA}$ contains
$\overline{x {\rm ba}(x)}$, which in turn contains
(actually, is equal to) ${\rm ba}(x)$, it contains $x$ and $x^{\frac{1}{n}}$.  So 
$(x^{\frac{1}{n}})$ is a left bai in ${\mathfrak F}_A$ for $\overline{xA}$.
    We have  
$s(x) \in J^{\perp \perp}$, and $J^{\perp \perp} \subset s(x) A^{**} \subset J^{\perp \perp},$
since $J^{\perp \perp}$ is a right ideal in $A^{**}$.
Hence $J^{\perp \perp} = s(x) A^{**}$, so that $J = s(x) A^{**} \cap A$.
\end{proof}

As in \cite[Lemma 2.10]{BRI} we have:

\begin{corollary}  \label{supp3}  If $A$ is an
approximately unital  Banach
 algebra, and $x, y \in {\mathfrak r}_A$, 
then $\overline{xA} \subset \overline{yA}$
iff $s(y) s(x) = s(x)$.  In this case $\overline{xA} = A$
iff   $s(x)$ is a left identity for $A^{**}$.   (These products are with respect to the second Arens product.)
\end{corollary}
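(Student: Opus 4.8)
The plan is to transfer the inclusion to the bidual, where it becomes a purely algebraic statement about the idempotents $s(x)$ and $s(y)$. The essential input is Corollary~\ref{lba}, which identifies $(xA)^{\perp \perp} = s(x) A^{**}$ and $(yA)^{\perp \perp} = s(y) A^{**}$, with products taken in the second Arens product. First I would record the elementary biduality fact that for closed subspaces $M, N$ of $A$ one has $M \subset N$ iff $M^{\perp \perp} \subset N^{\perp \perp}$: the forward direction is immediate from taking annihilators twice, and the reverse follows by intersecting back with $A$, using that $M^{\perp \perp} \cap A = M$ (a consequence of Hahn--Banach). Applying this with $M = \overline{xA}$ and $N = \overline{yA}$ reduces the first assertion to showing
$$ s(x) A^{**} \subset s(y) A^{**} \; \Leftrightarrow \; s(y) s(x) = s(x). $$

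For this equivalence I would use that $s(x)$ is idempotent (noted just before the statement), so that $s(x) = s(x)^2 \in s(x) A^{**}$. If $s(y) s(x) = s(x)$, then $s(x) A^{**} = (s(y) s(x)) A^{**} = s(y) (s(x) A^{**}) \subset s(y) A^{**}$, using associativity of the Arens product. Conversely, if $s(x) A^{**} \subset s(y) A^{**}$, then $s(x) = s(y) w$ for some $w \in A^{**}$, whence $s(y) s(x) = s(y)^2 w = s(y) w = s(x)$, invoking again the idempotence of $s(y)$. This settles the first ``iff''.

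For the final assertion, the same biduality fact gives $\overline{xA} = A$ iff $s(x) A^{**} = A^{\perp \perp} = A^{**}$. Since $s(x) A^{**}$ is a right ideal of $A^{**}$ containing the idempotent $s(x)$, we have $s(x) A^{**} = A^{**}$ iff $s(x)$ is a left identity for $A^{**}$: if $s(x)$ is a left identity this is clear, and conversely if $s(x) A^{**} = A^{**}$ then writing an arbitrary $\eta \in A^{**}$ as $\eta = s(x) w$ gives $s(x) \eta = s(x)^2 w = s(x) w = \eta$.

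Since essentially all the real content already resides in Corollary~\ref{lba}, I do not anticipate a serious obstacle; the remaining points requiring care are bookkeeping ones. The main one is to keep every product in the weak*-continuous (second) Arens product, so that $s(x) A^{**}$ is genuinely a right ideal and the associativity steps $s(y)(s(x) A^{**}) = (s(y) s(x)) A^{**}$ are justified; the secondary one is to invoke the idempotence of both $s(x)$ and $s(y)$ at the right moments, which is precisely what gives $s(x) \in s(x) A^{**}$ and drives both directions of each equivalence.
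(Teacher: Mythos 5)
Your proof is correct and follows essentially the same route as the paper: both arguments pivot on Corollary~\ref{lba}'s identification $(xA)^{\perp\perp}=s(x)A^{**}$ and the idempotence of the support elements in the second Arens product. The only cosmetic difference is that the paper's forward direction deduces $s(y)z=z$ for $z\in{\rm ba}(x)$ and then passes to $s(x)$ by weak* continuity, whereas you work directly in the bidual and write $s(x)=s(y)w$; both are sound and of the same depth.
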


\begin{proof}   This is essentially just as in the proof of Theorem 2.10 (and 2.6) of \cite{BRI}.   For example
if $\overline{xA} \subset \overline{yA}$ then since $x \in \overline{xA}$ we have $s(y) x = x$.
Hence $s(y) z = z$ for all $z \in {\rm ba}(x)$, and so $s(y) s(x) = s(x)$, since as we said earlier 
$s(x) \in \overline{{\rm ba}(x)}^{w*}$.  \end{proof}

As in \cite[Corollary 2.7]{BRI} we have:

\begin{corollary}  \label{perm}  Suppose that $A$ is a closed
approximately unital subalgebra of an
approximately unital Banach
 algebra $B$, and that ${\mathfrak r}_A
\subset {\mathfrak r}_B$.  If  $x \in {\mathfrak r}_A$, then
the support projection of $x$ computed in $A^{**}$ is
the same, via the canonical embedding $A^{**} \cong A^{\perp \perp}
\subset B^{**}$, as the support projection of $x$ computed in $B^{**}$.
\end{corollary}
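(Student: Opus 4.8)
The plan is to reduce the statement to two facts: that the principal roots $x^{1/n}$ are literally the \emph{same} elements of $\mathrm{ba}(x)$ whether $x$ is regarded as living in $A$ or in $B$, and that the defining weak* Banach limit of the support idempotent passes correctly through the canonical embedding $j \colon A^{**} \to B^{**}$. Granting these, the support idempotents computed in the two biduals are identified by $j$, which is precisely the assertion. I will write $s_A(x)$ and $s_B(x)$ for the two support idempotents.

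First I would check the coincidence of roots. Since ${\mathfrak r}_A \subset {\mathfrak r}_B$ and $x \in {\mathfrak r}_A$, Corollary \ref{hasb} gives $x^{1/n} \in {\mathfrak r}_A \subset {\mathfrak r}_B$; moreover each $x^{1/n}$ lies in the common closed subalgebra $\mathrm{ba}(x)$, which sits inside both $A$ and $B$. Thus the element $a_n$ obtained as the principal $n$th root of $x$ in $A$ is an accretive $n$th root of $x$ in $B$ belonging to $\mathrm{ba}(x)$; by the uniqueness of principal roots of accretive elements \cite{LRS}, it coincides with the principal $n$th root computed in $B$. Hence $(x^{1/n})$ is a single sequence in $\mathrm{ba}(x) \subset A \subset B$, independent of the ambient algebra. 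I expect this to be the main obstacle, since it is where one must be sure that the \emph{principal} branch selected by the sectorial functional calculus in $B(A)$ agrees with the one selected in $B(B)$; the point is that $x^{1/n} = \lim_{\epsilon \to 0^+}(x + \epsilon 1)^{1/n}$ is a norm limit of Riesz-functional-calculus elements of $\mathrm{ba}(x)^1$ (each computed on a sector of angle $\tfrac{\pi}{2}$ avoiding $0$, so unaffected by spectral permanence), so it is determined by $x$ alone, and the uniqueness result then pins it down.

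Next I would transport the Banach limit. Recall $s(x)(f) = \mathrm{LIM}_n \, f(x^{1/n})$ for $f$ in the relevant dual, where $\mathrm{LIM}$ is a fixed Banach limit. The canonical embedding $j = \iota^{**} \colon A^{**} \to B^{**}$ induced by the inclusion $\iota \colon A \hookrightarrow B$ is the adjoint of the restriction map $B^* \to A^*$, hence is weak*-continuous and satisfies $j(\eta)(g) = \eta(g_{|A})$ for $\eta \in A^{**}$ and $g \in B^*$. Because $x^{1/n} \in A$, we have $g(x^{1/n}) = (g_{|A})(x^{1/n})$, and therefore, using the \emph{same} Banach limit in both computations,
\[
j(s_A(x))(g) = s_A(x)(g_{|A}) = \mathrm{LIM}_n \, (g_{|A})(x^{1/n}) = \mathrm{LIM}_n \, g(x^{1/n}) = s_B(x)(g)
\]
for every $g \in B^*$. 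Thus $j(s_A(x)) = s_B(x)$, which is the desired identification under $A^{**} \cong A^{\perp\perp} \subset B^{**}$.

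Finally, I would remark that no appeal to the Arens-product structure is needed here, only weak* continuity of $j$ together with the coincidence of roots. One could alternatively note that $s(x) \in \overline{\mathrm{conv}\{ x^{1/n} : n \in \Ndb \}}^{w*}$ and that $j$ carries this weak* closed convex hull in $A^{**}$ into the corresponding one in $B^{**}$; but identifying the image with $s_B(x)$ still forces one back to the Banach-limit computation above, so the direct route is preferable.
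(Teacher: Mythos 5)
Your proof is correct and is exactly the argument the paper leaves implicit here (the corollary is stated with only a pointer to the operator-algebra analogue): uniqueness of accretive principal $n$th roots, together with ${\mathfrak r}_A \subset {\mathfrak r}_B$, forces the sequence $(x^{1/n})$ to be the same in both algebras, and the weak*-continuous embedding $\iota^{**}$ then carries the Banach-limit functional $f \mapsto \mathrm{LIM}_n f(x^{1/n})$ on $A^*$ to the corresponding one on $B^*$. Your one caveat — that the \emph{same} Banach limit must be used in both biduals, since $s(x)$ may a priori depend on that choice when $\mathrm{ba}(x)$ is not Arens regular — is worth stating, and you state it.
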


We recall that  $x$ is pseudo-invertible in $A$ if 
there exists $y \in A$  with $xy x = x$.   The following result (and several of its corollaries below) 
 should be compared with the $C^*$-algebraic version of the result due to  Harte and Mbekhta \cite{HM,HM2},
and to the earlier version of the result in the operator algebra case (see particularly \cite[Section 3]{BRI}, and 
\cite[Subsection 2.4]{Bord}).
 
\begin{theorem}   \label{ws}  Let $A$ be
 an approximately unital Banach algebra $A$, and
 $x \in {\mathfrak r}_A$.  The following
are equivalent:
  \begin{itemize}
\item [(i)] $s(x) \in A$,
\item [(ii)]  $xA$ is closed,
\item [(iii)] $Ax$  is closed,
\item [(iv)]  $x$ is pseudo-invertible in $A$,
 \item [(v)]  $x$ is invertible in ${\rm ba}(x)$.
\end{itemize}
Moreover, these conditions imply
 \begin{itemize}
\item [(vi)] $0$ is isolated in, or absent from, ${\rm Sp}_A(x)$.
\end{itemize}
Finally, if  ${\rm ba}(x)$ is semisimple
then {\rm (i)--(vi)} are equivalent.
\end{theorem}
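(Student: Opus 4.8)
The plan is to prove (i)--(v) equivalent by a short algebraic core plus one analytic implication, and to treat (vi) by spectral theory in ${\rm ba}(x)$. First I would reduce to the case $x \in {\mathfrak F}_A$: since ${\mathfrak F}(x) = x(1+x)^{-1}$ with $(1+x)^{-1} = 1 - {\mathfrak F}(x) \in A^1$, and inversely $x = {\mathfrak F}(x)(1-{\mathfrak F}(x))^{-1}$ with $(1-{\mathfrak F}(x))^{-1} = 1+x \in A^1$, one obtains the genuine equality of right ideals $xA = {\mathfrak F}(x)A$ (not merely of closures, as in Proposition~\ref{whba}), together with ${\rm ba}(x) = {\rm ba}({\mathfrak F}(x))$, $s(x) = s({\mathfrak F}(x))$, and the equivalence of invertibility of $x$ and of ${\mathfrak F}(x)$ in this algebra. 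Hence each of (i)--(v) is unchanged upon replacing $x$ by ${\mathfrak F}(x) \in {\mathfrak F}_A$.

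For the core, (v)$\Rightarrow$(i): if $x$ is invertible in ${\rm ba}(x)$ then ${\rm ba}(x)$ is unital, its identity lies in ${\rm ba}(x) \subseteq A$, and being the identity of ${\rm ba}(x)^{**}$ it is the unique mixed identity there by Lemma~\ref{isun}, hence equals $s(x)$; so $s(x) \in A$. For (i)$\Rightarrow$(v): $s(x) \in A$ forces $s(x) \in {\rm ba}(x)^{\perp\perp} \cap A = {\rm ba}(x)$, so ${\rm ba}(x)$ is unital with identity $s(x)$; if $x$ were non-invertible there would be a character $\chi$ of ${\rm ba}(x)$ with $\chi(x) = 0$, whence $\chi(x^{1/n})^n = \chi(x) = 0$ and $\chi(x^{1/n}) = 0$ for all $n$, while the defining formula $f(s(x)) = {\rm LIM}_n\, f(x^{1/n})$ applied to a Hahn--Banach extension of $\chi$ gives $1 = \chi(s(x)) = {\rm LIM}_n\, \chi(x^{1/n}) = 0$, a contradiction. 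Next (v)$\Rightarrow$(iv): the inverse $y \in {\rm ba}(x) \subseteq A$ satisfies $xyx = x(yx) = x\,s(x) = x$. Finally (iv)$\Rightarrow$(ii),(iii): from $xyx = x$ the elements $p = xy$ and $q = yx$ are idempotents in $A$ with $xA = pA$ and $Ax = Aq$, which are closed as ranges of the bounded idempotent operators $L_p$ and $R_q$.

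The substantial implication is (ii)$\Rightarrow$(i) (and, symmetrically, (iii)$\Rightarrow$(i)). With $x \in {\mathfrak F}_A$ the sequence $(x^{1/n})$ is a bounded left approximate identity for ${\rm ba}(x)$ (Lemma~\ref{sin}) and for $J = xA$. The map $\psi = L_x|_{{\rm ba}(x)}$ is injective, since $xz = 0$ with $z \in {\rm ba}(x)$ gives $wz = 0$ for all $w \in {\rm ba}(x)$ and hence $z = \lim_n x^{1/n} z = 0$, and it has dense range. If $J$ is closed then $L_x : A \to J$ is open, so its bidual carries $A^{**}$ onto $(xA)^{\perp\perp} = s(x)A^{**}$ (Corollary~\ref{lba}); thus $s(x) = x\eta$ for some $\eta \in A^{**}$, and $x\eta x = s(x)x = x$. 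What remains is to descend this generalized inverse to $A$, equivalently to show $\psi$ is bounded below and so, being injective with dense range, invertible, which gives $x$ invertible in ${\rm ba}(x)$. I expect this descent to be the main obstacle: open-mappedness of $L_x$ only controls ${\rm dist}(z, \ker L_x)$, whereas boundedness below of $\psi$ requires controlling ${\rm dist}(z, \ker\psi)$ with $\ker\psi = {\rm ba}(x) \cap \ker L_x = (0)$, that is, a positive gap between ${\rm ba}(x)$ and $\ker L_x$. The plan is to extract this from the two-sided identity relation $x\,s(x) = s(x)\,x = x$ and the weak*-continuity of both $L_x$ and $R_x$ on $A^{**}$ (valid since $x \in A$), promoting $\eta$ to an honest inverse of $x$ in the unital corner with identity $s(x)$ and then invoking commutativity of ${\rm ba}(x)$; the right-handed version handles (iii)$\Rightarrow$(i).

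For (vi): assuming (v), set $e = s(x) \in {\rm ba}(x)$ and note ${\rm Sp}_{{\rm ba}(x)}(x)$ is compact and avoids $0$; for small $|\lambda| > 0$ the element $\lambda 1 - x$ is invertible in $A^1$ with inverse $\lambda^{-1}(1-e) + (\lambda e - x)^{-1}$ (the last inverse taken in ${\rm ba}(x)$, using $x(1-e) = 0$), so $0$ is isolated in, or absent from, ${\rm Sp}_{A^1}(x)$. For the converse when ${\rm ba}(x)$ is semisimple: since ${\rm Sp}(x)$ lies in the right half-plane with $0$ on its boundary, (vi) forces $0$ to be isolated or absent in ${\rm Sp}_{{\rm ba}(x)^1}(x)$ as well; the Riesz idempotent $1-p$ for the part of the spectrum away from $0$ equals $g(x)$ for a holomorphic $g$ with $g(0) = 0$, so writing $g(\lambda) = \lambda h(\lambda)$ gives $1-p = x\,h(x) \in {\rm ba}(x)$, whence $xp = x - x(1-p) \in {\rm ba}(x)$ is quasinilpotent and therefore $0$ by semisimplicity. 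Then $1-p$ is an identity for ${\rm ba}(x)$, necessarily equal to $s(x)$, so $s(x) \in A$ and (i) holds.
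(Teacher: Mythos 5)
The decisive implication (ii)$\Rightarrow$(i) is not actually proved. You reduce it to showing that $\psi = L_x|_{{\rm ba}(x)}$ is bounded below, correctly observe that openness of $L_x : A \to xA$ only controls ${\rm dist}(z,\ker L_x)$ rather than a gap between ${\rm ba}(x)$ and $\ker L_x$, and then offer only a ``plan'' for bridging this; that bridge is the entire content of the implication, and nothing in the relations $x s(x)=s(x)x=x$ or in the weak* continuity of $L_x$ and $R_x$ produces the required lower bound, nor a way to push the element $\eta\in A^{**}$ with $x\eta=s(x)$ down into $A$. Since (ii)$\Rightarrow$(i) (or (iii)$\Rightarrow$(i)) is the only route back into your algebraic core (i)$\Leftrightarrow$(v)$\Rightarrow$(iv)$\Rightarrow$(ii),(iii), the cycle of equivalences is broken. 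The paper's argument here is far more elementary and sidesteps the descent problem entirely: closedness of $xA$ gives $x^{\frac{1}{2}}\in {\rm ba}(x)\subset \overline{x\,{\rm ba}(x)}\subset\overline{xA}=xA$, so $x^{\frac{1}{2}}=xy$ for some $y\in A$; setting $z=x^{\frac{1}{2}}y$ one gets $x=xz$, hence $a=az$ for all $a\in{\rm ba}(x)$, and then $s(x)z=z$ (since $s(x)x^{\frac{1}{2}}=x^{\frac{1}{2}}$) while $s(x)z=s(x)$ (since $x^{\frac{1}{n}}z=x^{\frac{1}{n}}$), so $s(x)=z\in A$. The only use of closedness is to place the single element $x^{\frac{1}{2}}$ inside $xA$; no open mapping theorem is needed.

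The remaining portions are essentially sound and in places genuinely different from the paper: your (i)$\Rightarrow$(v) via a character vanishing on $x$ but not on $s(x)$ replaces the paper's Neumann-lemma argument, and your explicit Riesz-idempotent treatment of (vi) plus semisimplicity fills in what the paper delegates to \cite{BRI}. Two smaller points. First, the blanket claim $s(x)=s({\mathfrak F}(x))$ in your reduction is not justified in general: these are a priori just two mixed identities of ${\rm ba}(x)^{**}$, which need not be unique, and they are only forced to coincide once one of (i)--(v) holds and ${\rm ba}(x)$ is unital; in any case the reduction to ${\mathfrak F}_A$ is unnecessary, since $(x^{\frac{1}{n}})$ is already a bai for ${\rm ba}(x)$ for every $x\in{\mathfrak r}_A$ by Proposition \ref{wh} and Lemma \ref{Bal}. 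Second, Lemma \ref{isun} concerns $M$-ideals and is not the right citation for uniqueness of the mixed identity of the bidual of a unital algebra, though that fact is true and easy.
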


 \begin{proof}
We recall that $(x^{\frac{1}{m}})_{m \in \Ndb}$ is a bai for ba$(x)$, by Proposition \ref{wh},
and it has weak* limit point $s(x) \in {\rm ba}(x)^{\perp \perp} \subset A^{**}$.

(ii)  $\Rightarrow$ (i) \ Suppose $xA$ is closed.  Then   $$x^{\frac{1}{2}} \in {\rm ba}(x) \subset \overline{x  {\rm ba}(x)} \subset
\overline{xA} = xA,$$
so $x^{\frac{1}{2}} = xy$ for some $y \in A$.
Thus if $z = x^{\frac{1}{2}} y \in A$  then $x =  x^{\frac{1}{2}} x y = x z$,
and so $a = a z$ for every $a \in {\rm ba}(x)$.
Now  $s(x) z = z$ since $x^{\frac{1}{2}} \in {\rm ba}(x)$ for example.
On the other hand $s(x) z = s(x)$ since $x^{\frac{1}{n}} z = x^{\frac{1}{n}}$ so that 
$$(s(x) z)(f) = fs(x) (z) =
 {\rm LIM}_n f(x^{\frac{1}{n}} z) 
=   {\rm LIM}_n f(x^{\frac{1}{n}}) 
= s(x)(f) , \qquad f \in A^*.$$    Thus $s(x) = z \in A$.  (Of course in this case 
$x^{\frac{1}{n}} \to s(x)$ in norm.)

(i) $\Rightarrow$ (iv) \  Recall $s(x)$ is a left identity of ${\rm ba}(x)^{**}$ in the second Arens product, and if 
(i) holds it is an identity, and ${\rm ba}(x)$ is unital.  This implies by the Neumann
lemma that $x$ is invertible in ${\rm ba}(x)$, hence that $x$ is pseudo-invertible in $A$.

(iv) $\Rightarrow$ (ii) \ Item (iv) implies
that $xA = xy A$ is closed since $xy$ is idempotent.

That (iii) is equivalent to the others follows from (ii) and the
symmetry in (i) or (iv).   That (v) is equivalent to (i) is now
obvious from the above.

For the equivalences with (vi), by definition of spectrum, and because of the form of  (v), we may assume $A$ is unital.
That (iv) implies (vi) may be proved similarly to the analogous argument in  \cite[Theorem {\rm 3.2}]{BRI}, but replacing $B(H)$ and $B(K)$ with $B(A)$ and $B(xA)$.   We can assume that 
$0 \in Sp_A(x)$, so that $x$ is not invertible.   Then $xA \neq A$, for if $xA = A$
then $s(x)$ is a left identity for $A$.  It is also a right identity since if $(e_t)$ is a cai for $A$
then  $s(x) e_t = e_t \to s(x)$.   Then the inverse of $x$ in ba$(x)$ is an inverse in $A$,
contradicting the  fact that  $x$ is not invertible in $A^1$.   It may be simpler to 
prove the equivalent fact that $0$ is isolated in the spectrum of $x^{\frac{1}{2}}$.
By the argument in \cite[Theorem {\rm 3.2}]{BRI} it is enough to prove that 
$0$ is isolated in the spectrum of $L$ in $B(A)$, where $L$ is left multiplication by
$x^{\frac{1}{2}}$.   We note that $$x^{\frac{1}{2}} A \subset xA \subset eA \subset x^{\frac{1}{2}}A ,$$ where $e = x^{\frac{1}{2}} y = s(x)$ and $y$ is the pseudoinverse of $x$.   So these subspaces 
coincide; call this space $K$.  It follows that $K$ is an invariant subspace for $L$,
indeed $R = L_{|K}$ is continuous, surjective and one-to-one (since $x^{\frac{1}{2}} x^{\frac{1}{2}} a
= 0$ implies that $x^{\frac{1}{2}} a = 0$, since $x^{\frac{1}{2}}$ is a limit of polynomials
in $x$ with no constant term).  Thus $0 \notin {\rm Sp}_{B(K)}(R)$; hence 
$R + z I_K$ is invertible for $z$ in a small disk centered at $0$.  Since $A = eA \oplus (1-e)A$, it is then easy to argue 
that $L + z I_A = (L + z I)e  \oplus z (1-e)$ is invertible in $B(A)$ for such $z$, if $z \neq 0$.  So $0$ is isolated in  the spectrum of $L$ in $B(A)$. 

The last assertion follows just as in \cite[Theorem {\rm 3.2}]{BRI}.
\end{proof}  

{\bf Remark.}  We have been informed by Matthias Neufang that he and M. Mbehkta have also generalized 
the analogous result from \cite{BRI,BRIII},  
or a variant of it, to the class of  Banach algebras that are ideals in their bidual.  

\bigskip

The next result
 is an analogue of \cite[Theorem 2.12]{BRI}:

\begin{proposition} \label{oka}
If $A$ is an approximately unital Banach algebra, a 
subalgebra of a unital Banach algebra $B$ with
${\mathfrak r}_A \subset {\mathfrak r}_B$,
and $x \in {\mathfrak r}_A$,  then $x$ is invertible in $B$ 
iff  $1_B \in A$ and $x$
is invertible in $A$, and iff  ${\rm ba}(x)$ contains $1_B$; and in this case $s(x) = 1_B$.
\end{proposition}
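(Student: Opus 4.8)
The plan is to prove the cycle of implications
(a) $x$ is invertible in $B$ $\Rightarrow$ (c) $1_B \in {\rm ba}(x)$ $\Rightarrow$ (b) $1_B \in A$ and $x$ is invertible in $A$ $\Rightarrow$ (a), extracting the equality $s(x) = 1_B$ as these are proved. Everything pivots on the support idempotent $s(x)$, the weak* limit of the bai $(x^{\frac{1}{n}})$ of ${\rm ba}(x)$ supplied by Proposition \ref{wh}; recall from the discussion preceding Corollary \ref{lba} that $s(x)$ is a mixed identity for ${\rm ba}(x)^{**}$. The implication (b) $\Rightarrow$ (a) is immediate: if $1_B \in A$ then $A$ is unital with identity $1_B$ (since $1_B a = a 1_B = a$ for $a \in A$), so an inverse of $x$ in $A$ is automatically an inverse in $B$.

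For (a) $\Rightarrow$ (c) I would avoid functional calculus altogether and argue from the bai directly. By Proposition \ref{wh}, $(x^{\frac{1}{n}})$ is a bai for ${\rm ba}(x)$, so $x^{\frac{1}{n}} x \to x$ in norm. If $x$ is invertible in $B$, multiplying on the right by $x^{-1} \in B$ and using continuity of multiplication gives $x^{\frac{1}{n}} = (x^{\frac{1}{n}} x) x^{-1} \to x x^{-1} = 1_B$ in norm. Since each $x^{\frac{1}{n}} \in {\rm ba}(x)$ and ${\rm ba}(x)$ is closed, we conclude $1_B \in {\rm ba}(x)$, which is (c); and because a norm-convergent sequence has its Banach limit equal to its norm limit, $s(x) = \lim_n x^{\frac{1}{n}} = 1_B$. (The hypothesis ${\mathfrak r}_A \subseteq {\mathfrak r}_B$ is not needed here, but it guarantees that $x \in {\mathfrak r}_A$ is genuinely accretive in $B$ and that $s(x)$, computed in $A^{**}$, is the same idempotent as in $B^{**}$ by Corollary \ref{perm}.)

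For (c) $\Rightarrow$ (b), suppose $1_B \in {\rm ba}(x)$. Then $1_B \in A$, and $1_B$ is the identity of both $A$ and ${\rm ba}(x)$, so ${\rm ba}(x)^{**}$ is unital with identity $1_B$ in the second Arens product. The mixed identity $s(x)$ is a left identity for that product, so $s(x) = s(x) \diamond 1_B = 1_B$, giving $s(x) = 1_B \in A$. Now Theorem \ref{ws} (equivalence of (i) and (v)) yields that $x$ is invertible in ${\rm ba}(x)$, and its inverse lies in ${\rm ba}(x) \subseteq A$; this is (b). This closes the cycle, and since $s(x) = 1_B$ was obtained inside both (a) $\Rightarrow$ (c) and (c) $\Rightarrow$ (b), the final assertion holds whenever the equivalent conditions do. I expect the main delicacy to be the bookkeeping across the three algebras $A \subseteq B$ in these two nontrivial steps: in (a) $\Rightarrow$ (c) the inverse $x^{-1}$ must be taken in $B$ (where invertibility is known) while the limit is recognized to live in ${\rm ba}(x)$, and in (c) $\Rightarrow$ (b) one must correctly invoke the Arens-product structure of ${\rm ba}(x)^{**}$ to identify the mixed identity $s(x)$ with $1_B$ before appealing to Theorem \ref{ws}.
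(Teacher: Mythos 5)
Your proof is correct, and it is organized around the same pivot as the paper's (the support idempotent $s(x)$ and the root bai $(x^{\frac{1}{n}})$ of ${\rm ba}(x)$), but your forward implication is genuinely different and more elementary. The paper handles ``$x$ invertible $\Rightarrow 1_B\in{\rm ba}(x)$'' by applying Theorem \ref{ws} inside $B$ --- which is precisely where the hypothesis ${\mathfrak r}_A\subset{\mathfrak r}_B$ is used, since one needs $x\in{\mathfrak r}_B$ to run that theorem in the ambient algebra --- obtaining $s(x)\in B$ as the identity of ${\rm ba}(x)$ and then computing $1_B=xy=s(x)xy=s(x)$. Your computation $x^{\frac{1}{n}}=(x^{\frac{1}{n}}x)x^{-1}\to xx^{-1}=1_B$ reaches the same conclusion by a one-line norm-limit argument carried out entirely with the roots formed in $A$ (which do lie in the closed algebra ${\rm ba}(x)$, by Proposition \ref{wh} and Corollary \ref{hasb}), using $x^{-1}$ only as a right multiplier; as you observe, this renders the hypothesis ${\mathfrak r}_A\subset{\mathfrak r}_B$ superfluous, so your route in fact proves the proposition under weaker assumptions. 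For the converse direction the paper appeals directly to the Neumann lemma, whereas you route through the equivalence of (i) and (v) in Theorem \ref{ws}; these are the same mechanism, since that implication of Theorem \ref{ws} is itself the Neumann lemma argument. Both treatments are sound; yours trades the invocation of Theorem \ref{ws} in $B$ for a direct computation, at no cost.
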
 \begin{proof}  It is clear by the Neumann
lemma that if ${\rm ba}(x)$ contains $1_B$ then $x$ is invertible in
${\rm ba}(x)$, and hence in $A$.
Conversely, if $x$ is invertible in $B$ (or in $A$)
then by the equivalences 
(i)--(iv)
proved in the last theorem,
we have $s(x) \in B$, and this is the identity
of ${\rm ba}(x)$.  If $x y = 1_B$,
then $1_B = xy = s(x) x y = s(x) \in {\rm ba}(x) \subset
A$.
\end{proof}

\begin{corollary} \label{Aha3}   Let $A$ be an approximately unital Banach algebra.  A closed right ideal $J$ of $A$
is of the form $x A$ for some $x \in {\mathfrak r}_A$  iff
$J  = qA$ for an idempotent 
 $q \in {\mathfrak F}_A$.
\end{corollary}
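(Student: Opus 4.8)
The plan is to dispatch the easy implication at once and then read off $q$ from the support idempotent of $x$ supplied by the earlier results. For the backward implication, suppose $J = qA$ with $q \in {\mathfrak F}_A$ an idempotent. Since $\Rdb^+ {\mathfrak F}_A \subset {\mathfrak r}_A$, in particular ${\mathfrak F}_A \subset {\mathfrak r}_A$, so I may simply take $x = q \in {\mathfrak r}_A$ and then $J = qA = xA$; note that idempotency of $q$ is not even needed for this direction.

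For the forward implication, suppose $J = xA$ is a closed right ideal with $x \in {\mathfrak r}_A$, and set $q = s(x)$, the support idempotent of $x$. First I would apply Theorem \ref{ws}: since $xA$ is closed, condition (ii) holds, hence condition (i), so $s(x) \in A$. This $q$ is idempotent by construction, being an idempotent mixed identity of ${\rm ba}(x)^{**}$. To place $q$ in ${\mathfrak F}_A$, recall from the discussion preceding Corollary \ref{lba} that $s(x)$ resides in ${\mathfrak F}_{A^{**}}$; since $s(x) \in A$ and the embedding $A^1 \hookrightarrow (A^1)^{**}$ is isometric, $\Vert 1 - s(x) \Vert_{A^1} = \Vert 1 - s(x) \Vert_{(A^1)^{**}} \leq 1$, whence $q = s(x) \in {\mathfrak F}_{A^1} \cap A = {\mathfrak F}_A$.

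It then remains to verify $xA = qA$. Since condition (i) of Theorem \ref{ws} holds, ${\rm ba}(x)$ is unital with identity $s(x) = q$, so $x = qx$ and therefore $xA \subseteq qA$. For the reverse inclusion the point is that $q$ itself lies in the closed ideal $xA$: when (i) holds one has $x^{\frac{1}{n}} \to s(x)$ in norm (as noted in the proof of Theorem \ref{ws}), and each $x^{\frac{1}{n}} \in {\rm ba}(x) \subseteq \overline{x \, {\rm ba}(x)} \subseteq \overline{xA} = xA$, so $q = \lim_n x^{\frac{1}{n}} \in xA$; as $xA$ is a right ideal this gives $qA \subseteq xA$, and hence $J = xA = qA$.

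The one step requiring care is precisely this reverse inclusion $qA \subseteq xA$: it rests on the \emph{norm} convergence $x^{\frac{1}{n}} \to s(x)$, rather than mere weak* convergence, and this is exactly what closedness of $J$ (equivalently $s(x) \in A$, via Theorem \ref{ws}) delivers. Absent closedness one has only $s(x) \in A^{**}$ together with the weak* description $(xA)^{\perp\perp} = s(x) A^{**}$ from Corollary \ref{lba}, which does not locate $q$ inside $A$; so this is where the hypothesis is genuinely used.
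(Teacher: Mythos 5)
Your proof is correct and takes essentially the same route as the paper: apply Theorem \ref{ws} to get $q = s(x) \in A$, observe $s(x) \in {\mathfrak F}_{A^{**}} \cap A = {\mathfrak F}_A$, and verify $xA = qA$ via $x = qx$ together with $q = \lim_n x^{1/n} \in xA$ (the paper leaves these last details as "easy to see", and your filling-in, including the remark that the norm convergence of $x^{1/n}$ is what closedness buys, is exactly the intended argument). The trivial converse is handled the same way in both.
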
   \begin{proof}   If $xA$ is closed for a nonzero $x \in {\mathfrak r}_A$
then by the theorem $q = s(x) \in {\mathfrak F}_A$.   Hence it 
is easy to see that
 $xA = qA$. The other direction is trivial.   \end{proof}

\begin{corollary} \label{Aha2}   If a  nonunital 
approximately unital Banach  algebra
$A$ contains a nonzero $x \in {\mathfrak r}_A$ with $xA$ closed,
 then $A$ contains  a nontrivial 
idempotent in ${\mathfrak F}_A$.  
 \end{corollary}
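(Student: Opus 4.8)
The plan is to read this off directly from the structure theory already in place, since the hypotheses are exactly those of Corollary \ref{Aha3}. First I would feed the assumption that $xA$ is closed into Theorem \ref{ws}: the implication (ii) $\Rightarrow$ (i) there gives $s(x) \in A$. This is the decisive step, since it is what places the support idempotent inside the algebra rather than merely in the bidual; everything else is bookkeeping.

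Next I would recall, from the discussion following Proposition \ref{wh}, that $s(x)$ is an idempotent and that it resides in $\mathfrak{F}_{A^{**}}$ (this used Corollary \ref{hasb} and Lemma \ref{lump} together with the weak* closedness of $\mathfrak{F}_{A^{**}}$ noted below Lemma \ref{isun}). Since the norm of $A^1$ agrees with that of its image in $(A^1)^{**}$, one has $\mathfrak{F}_A = A \cap \mathfrak{F}_{A^{**}}$; combined with $s(x) \in A$ this upgrades to $s(x) \in \mathfrak{F}_A$. In fact one may simply quote Corollary \ref{Aha3}, whose proof exhibits $q = s(x)$ as an idempotent in $\mathfrak{F}_A$ with $xA = qA$, so no separate argument for membership in $\mathfrak{F}_A$ is even needed.

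It then remains to verify nontriviality. Because $s(x)$ is a mixed identity of $\mathrm{ba}(x)^{**}$ and $x \in \mathrm{ba}(x)$, we have $s(x) x = x$ (as established after Proposition \ref{wh}), and since $x \neq 0$ this forces $s(x) \neq 0$. Moreover $s(x)$ cannot be an identity for $A$, as $A$ is assumed nonunital; hence $q = s(x)$ is a genuine nontrivial idempotent lying in $\mathfrak{F}_A$, as claimed.

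I expect no real obstacle here: the entire content is already packaged into Theorem \ref{ws} and Corollary \ref{Aha3}, and the only points needing a line of justification are the identification $\mathfrak{F}_A = A \cap \mathfrak{F}_{A^{**}}$ and the nonvanishing $s(x)\neq 0$ coming from $s(x)x = x \neq 0$. If anything is delicate it is purely a matter of making sure the word ``nontrivial'' is interpreted correctly in the nonunital setting, where it should mean nonzero and not a unit.
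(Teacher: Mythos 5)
Your proposal is correct and follows essentially the same route as the paper: the paper's proof of this corollary simply invokes Corollary \ref{Aha3} (whose proof in turn uses Theorem \ref{ws} (ii)$\Rightarrow$(i) to get $q=s(x)\in A\cap\mathfrak{F}_{A^{**}}=\mathfrak{F}_A$). Your extra remarks on nontriviality ($s(x)x=x\neq 0$, and no element of a nonunital algebra can be an identity) are exactly the implicit bookkeeping the paper leaves to the reader.
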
   \begin{proof}
By the above $xA = qA$ for a nontrivial 
idempotent $q$ in ${\mathfrak F}_A$. 
\end{proof}

\begin{corollary} \label{Aha}   If an 
approximately unital Banach  algebra $A$ has  no 
left identity, then $x A  \neq A$ for all $x \in {\mathfrak r}_A$.
\end{corollary}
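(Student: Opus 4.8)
The plan is to prove the contrapositive: assuming that $xA = A$ for some $x \in {\mathfrak r}_A$, I would exhibit a left identity for $A$, contradicting the hypothesis. The two ingredients are Theorem~\ref{ws} and Corollary~\ref{supp3}, and the support idempotent $s(x)$ is the object that ties them together.

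First, since $A$ is closed and $xA = A$, the right ideal $xA$ is in particular closed, so condition (ii) of Theorem~\ref{ws} holds. By the equivalence (ii) $\Rightarrow$ (i) there, the support idempotent $s(x)$ lies in $A$ itself, not merely in $A^{**}$. Next, since $\overline{xA} = xA = A$, Corollary~\ref{supp3} tells us that $s(x)$ is a left identity for $A^{**}$ in the second Arens product; that is, $s(x) \diamond \eta = \eta$ for every $\eta \in A^{**}$.

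It remains to descend from $A^{**}$ to $A$. Taking $\eta = a \in A \subset A^{**}$ gives $s(x) \diamond a = a$. Since $s(x) \in A$ by the first step, and since the second Arens product restricts to the given multiplication of $A$ on pairs of elements of $A$, this reads simply $s(x)\, a = a$ in $A$. Thus $s(x)$ is a genuine left identity for $A$ lying in $A$, the desired contradiction.

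The argument is essentially a bookkeeping combination of the two cited results, so I do not expect a serious obstacle. The only point deserving a moment's care is the final descent: one must record that a left identity for $A^{**}$ which happens to belong to $A$ is automatically a left identity for $A$, which is immediate once one recalls that both Arens products extend the original product on $A$.
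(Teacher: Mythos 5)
Your proof is correct and is precisely the argument the paper intends: Corollary \ref{Aha} is stated there without proof as an immediate consequence of Theorem \ref{ws} and Corollary \ref{supp3}, which is exactly the combination you carry out. The final descent is unproblematic since both Arens products restrict to the original multiplication on $A$ (alternatively, one can bypass $A^{**}$ entirely by noting $s(x)\in A$, $s(x)x=x$, and hence $s(x)(xb)=xb$ for all $b$, so $s(x)$ is a left identity for $xA=A$).
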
  

{\bf Remark.}    If $A$ is a Banach algebra such that $\frac{1}{2} {\mathfrak F}_A$ closed under $n$th roots then
one may also generalize other parts of the theory in \cite{BRI}.  For example in this case, if $x \in   {\mathfrak F}_A$
then the support projection $s(x)$ is a bicontractive projection, and ba$(x)$ has a cai in $\frac{1}{2} {\mathfrak F}_A$.

\section{One-sided ideals and hereditary subalgebras}

At the outset 
it should be said there seems to be no completely satisfactory  theory 
of hereditary subalgebras.  This can already be seen in finite dimensional unital examples
where one may have  $p A = qA$ for projections $p,q \in 
{\mathfrak F}_A$, but  no  good relation between 
$pAp$ and $qAq$.   For example one could take the opposite algebra to the one in Example \ref{Ex7}.
Another example arises when one considers
 various mixed identities in the second dual  $A^{**}$, with the second 
Arens product, inside
$(A^1)^{**}$.   In this section we will investigate what initial  parts of the  theory do work.
We shall see that things work considerably better if $A$ is separable.

We define an {\em inner ideal} in $A$ to be a closed subalgebra $D$ with $D A D \subset D$.
To see what kinds of results one might hope for, note that in the unital example in the last paragraph,
given an idempotent $p \in A$, 
the right ideal $J = p A$ contains a unital inner ideal $D = pAp$ of $A$.  
Conversely if $D = pAp$ then $J = DA = pA$
is a right ideal with a left identity.  

In nonunital examples things become more complicated.  One may define a 
hereditary subalgebra to be an  inner ideal $D$ of $A$ which has a bai.  This then induces 
a right ideal $J = DA$ with a left bai, and a left ideal $K = AD$ with a right bai.   We shall call 
these the {\em  induced} one-sided ideals.  We have $JK = J \cap K = D$ just as in \cite[Corollary 2.6]{BHN}.
However unlike the previous paragraph, without further conditions one cannot in general 
obtain a hereditary subalgebra from a right 
ideal  with a left bai.  The following example illustrates some of what can go wrong.   

\begin{example} \label{Ex6}   
One of the main results in \cite{BHN} is that if $J$ is
a closed right ideal with a left cai in an operator algebra $A$, 
then there exists an associated  hereditary subalgebra $D$ of $A$,
in particular a closed approximately unital subalgebra $D \subset J$
with $J = DA$.   This is false without further conditions
in more general Banach algebras.
Indeed, suppose that $J = A$ is a  separable Banach algebra with a sequential 
left cai, but no commuting bounded left approximate identity. See 
\cite{Dix} for such an example.   By way of contradiction, suppose that there is a closed subalgebra $D \subset J$
with a bai, such that 
$J = DA$.  By \cite{Sinc}, $D$ has a commuting bounded approximate identity,
and this will be a commuting bounded left approximate identity for $J$,
a contradiction.   

This example also shows that if $J$ is
a closed right ideal with a left cai, we cannot rechoose another left cai
$(e_t)$ with $e_s e_t \to e_s$ with $t$, for all $s$.    This is critical 
in the operator algebra theory in e.g.\ \cite[Section 2]{BHN}. 
 \end{example}

In order to obtain a working theory, we now impose the condition that the bai's considered are 
in ${\mathfrak r}_A$.
Thus we define a {\em right ${\mathfrak F}$-ideal} (resp.\ left ${\mathfrak F}$-ideal) in an approximately unital
Banach algebra $A$  to be a closed right (resp.\ left) ideal with a left (resp.\ right) bai in ${\mathfrak F}_A$
(or equivalently, by Corollary \ref{hasbi}, in ${\mathfrak r}_A$).     
Henceforth in this section, by a {\em hereditary subalgebra} (HSA) of $A$ we will mean 
 an inner ideal  $D$ with a two-sided  bai in ${\mathfrak F}_A$
(or equivalently, by Corollary \ref{hasbi}, 
in ${\mathfrak r}_A$).
Perhaps these should be called ${\mathfrak F}$-HSA's to avoid confusion with the notation  in 
\cite{BHN,BRI} where one uses cai's instead of bai's, but for brevity we shall use the 
shorter term.  Also it is shown in \cite{B2015} that in an operator 
algebra $A$  these two notions coincide, and that right
${\mathfrak F}$-ideals in A are just the r-ideals of \cite{BHN}
(and similarly in the left case).

Note that a HSA $D$ induces a pair of right and left ${\mathfrak F}$-ideals $J = DA$ and $K = AD$.   As we pointed
out a few paragraphs back, it is
not clear that the converse holds, namely that every right  ${\mathfrak F}$-ideal comes from a
HSA in this way.  In fact the main results of this section 
are, firstly, that if $A$ is  separable then this is true, and indeed  all HSA's and ${\mathfrak F}$-ideals
are of the form in the next lemma.  Secondly,
we shall prove (see Corollaries  \ref{pred} and \ref{pred2}) that if $A$ is not necessarily separable then 
the HSA's and ${\mathfrak F}$-ideals in $A$ are just the closures of increasing unions of ones of the form in this lemma:

\begin{lemma}  \label{zH}  If $A$ is an approximately unital Banach algebra, and $z \in  {\mathfrak F}_A$,
set  $J = \overline{zA}$, 
 $D = \overline{zAz}$, and $K = \overline{Az}$. \ Then $D$ is a HSA in $A$ 
and $J$ and $K$ are the induced right and left ${\mathfrak F}$-ideals mentioned above.
\end{lemma}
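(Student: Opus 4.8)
The plan is to run everything off the single observation that the closed subalgebra ${\rm ba}(z)$ lies inside each of $J$, $D$, and $K$, and then to use the sequence $(z^{\frac1n})$ as a simultaneous approximate identity. Note first that $z^{\frac1n} \in {\mathfrak F}_A$ by Proposition \ref{clnth}, that $(z^{\frac1n})$ is bounded (by Lemma \ref{Bal}, or since ${\mathfrak F}_A \subset 2\,{\rm Ball}(A)$), and that $z^{\frac1n} z = z^{1+\frac1n} \to z$ and $z z^{\frac1n} \to z$ in norm by continuity of fractional powers. The purely algebraic points are routine: $zAz$ is closed under multiplication since $(za_1z)(za_2z)=z(a_1z^2a_2)z$, so $D=\overline{zAz}$ is a closed subalgebra, and $(za_1z)\,b\,(za_2z)=z(a_1zbza_2)z\in zAz$ gives, on taking closures, $DAD\subseteq D$, so $D$ is an inner ideal; that $J$ is a closed right ideal and $K$ a closed left ideal is immediate.

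Next I would prove the two easy containments. Since $zz^{\frac1m}=z^{1+\frac1m}\to z$ we have $z\in\overline{zA}=J$, and as each monomial $z^{j}$ with $j\ge 1$ lies in $zA$ (for $j\ge 2$) or equals $z$, every polynomial in $z$ with no constant term lies in $J$; taking closures, ${\rm ba}(z)\subseteq J$, so in particular $z^{\frac1n}\in J$. The symmetric argument gives ${\rm ba}(z)\subseteq K$. For $J$ to be a right ${\mathfrak F}$-ideal it then suffices to note that for $x=za$ one has $z^{\frac1n}x=(z^{\frac1n}z)a\to za=x$, so $(z^{\frac1n})$, which lies in $J\cap{\mathfrak F}_A$ and is bounded, is a left bai for $J$ by density of $zA$; likewise $(z^{\frac1n})$ is a right bai for $K$.

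The delicate point, and the main obstacle, is showing ${\rm ba}(z)\subseteq D=\overline{zAz}$: unlike the one-sided case one cannot write $z$ as $z\cdot a\cdot z$. Here I would argue $z\in D$ by a resolvent (equivalently ${\mathfrak F}$-transform) approximation. For $\epsilon>0$, $z+\epsilon 1$ is invertible in $A^1$ and $(z+\epsilon 1)^{-1}=\tfrac1\epsilon\big(1-{\mathfrak F}(z/\epsilon)\big)$ with ${\mathfrak F}(z/\epsilon)\in{\rm ba}(z)\subseteq A$. The same element $z(z+\epsilon 1)^{-1}z$ can be rewritten in two ways:
$$z(z+\epsilon 1)^{-1}z=\tfrac1\epsilon z^2-\tfrac1\epsilon\,z\,{\mathfrak F}(z/\epsilon)\,z=z-\epsilon\,{\mathfrak F}(z/\epsilon).$$
The first form shows membership in $D$, since $z^2\in D$ (because $zz^{\frac1m}z=z^{2+\frac1m}\to z^2$) and $z\,{\mathfrak F}(z/\epsilon)\,z\in zAz$; the second shows convergence, since $\Vert\epsilon\,{\mathfrak F}(z/\epsilon)\Vert\le 2\epsilon\to 0$ using $\Vert(1+z/\epsilon)^{-1}\Vert\le 1$ from Lemma \ref{stam}. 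Hence $z\in D$, and as $D$ is a closed subalgebra we get ${\rm ba}(z)\subseteq D$, so $z^{\frac1n}\in D$.

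Finally I would verify the bai for $D$ and the induced-ideal identifications. For $d=zaz$ one has $z^{\frac1n}d=(z^{\frac1n}z)az\to d$ and $dz^{\frac1n}=za(zz^{\frac1n})\to d$; by density of $zAz$ in $D$ and the uniform bound, $(z^{\frac1n})$ is a two-sided bai for $D$ lying in $D\cap{\mathfrak F}_A$, so $D$ is a HSA. It remains to see that $J$ and $K$ are exactly the ideals induced by $D$: since $zAz\subseteq zA$ we have $D\subseteq J$, whence $\overline{DA}\subseteq\overline{zA}=J$, while $z\in D$ gives $zA\subseteq DA$ and so $\overline{DA}=J$; symmetrically $\overline{AD}=K$. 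I expect the resolvent argument for $z\in D$ to be the only step requiring genuine care, the rest being density and boundedness bookkeeping.
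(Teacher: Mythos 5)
Your proof is correct, and the only step that requires real work --- getting $z\in D=\overline{zAz}$ so that $(z^{\frac1n})\subset{\rm ba}(z)\subset D$ can serve as the two-sided bai --- is handled by a genuinely different device than the one in the paper. You use the resolvent identity $z(z+\epsilon 1)^{-1}z=\tfrac1\epsilon z^2-\tfrac1\epsilon z\,{\mathfrak F}(z/\epsilon)\,z=z-\epsilon\,{\mathfrak F}(z/\epsilon)$, reading membership in $D$ off the first form and convergence to $z$ off the second; the paper instead proves the stronger identification $D=JK=J\cap K$ (via Cohen factorization for one inclusion and the two-sided approximation $x=\lim_n z^{\frac1n}xz^{\frac1n}\in\overline{zAz}$ for $x\in J\cap K$ for the other), and then simply observes $z\in J\cap K$; it goes on to record the bidual descriptions $J=s(z)A^{**}\cap A$, $D=s(z)A^{**}s(z)\cap A$, $K=A^{**}s(z)\cap A$ using Corollary \ref{lba}. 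What the paper's route buys is exactly these extra structural facts ($D=J\cap K=JK$ and the support-idempotent pictures), which are used later in the HSA theory; what your route buys is a shorter, more self-contained argument that avoids Cohen factorization and $s(z)$ entirely. Note also that your key step admits an even quicker variant in the paper's spirit: since ${\rm ba}(z)\subset\overline{zA}\cap\overline{Az}$ (which you prove), one has $z^{1+\frac2n}=z^{\frac1n}zz^{\frac1n}\in\overline{zAz}$ directly, and $z^{1+\frac2n}\to z$. All remaining verifications in your write-up (the bai computations, $\overline{DA}=J$, $\overline{AD}=K$) match the paper's and are sound.
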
  

\begin{proof}     By Cohen factorization
$D = D^4 \subset JK \subset J \cap K$, and if $x \in J \cap K$ then 
$x = \lim_n \, z^{\frac{1}{n}} x z^{\frac{1}{n}} \in D$.   So $z \in D = JK = J \cap K$.  
Also $J = p A^{**}  \cap A$ by Corollary  \ref{lba}, and    
$D = p A^{**} p \cap A$ is a HSA in $A$, and $K = A^{**} p \cap A$, where $p= s(z)$.
 To see this, note that $p z = z = zp$, so that $K \subset A^{**} p \cap A$.  If $a \in A^{**} p \cap A$, then $az^{\frac{1}{n}}$
has weak* limit point $a p = a$.   Hence a convex combination converges in norm, so that $a \in K$,
so that $K = A^{**} p \cap A$.   A similar argument works for $D$.   Finally, $DA = J$, since $zA \subset DA \subset J$,
and similarly $AD = K$. \end{proof}

{\bf Remarks.}   1) \  In general
$D$ and $K$ are  determined by  the particular $z$ used above, and not by $J$ alone.  

\smallskip

2) \  We note that if $z \in  {\mathfrak F}_A$ then with the notation in the last proof,
$K^{\perp \perp} =  \overline{A^{**} p}^{w*}$ and $D^{\perp \perp} =
\overline{p A^{**} p}^{w*}$.   (The weak* closure here is not necessary if $A$ is Arens regular.)  
Indeed $K^{\perp \perp} \subset  \overline{A^{**} p}^{w*} $.
Also $p \in {\rm ba}(z)^{\perp \perp}
\subset D^{\perp \perp} \subset K^{\perp \perp}$, so that 
$A^{**} p \subset K^{\perp \perp}$.    Thus $K^{\perp \perp} =  \overline{A^{**} p}^{w*}$.
 It is well known that 
$J + K$ is closed, which implies as in the proof
of e.g.\ \cite[Lemma 5.29]{BZ} that $(J \cap K)^\perp = \overline{J^{\perp} + K^{\perp}}$, 
so that $D^{\perp \perp} = J^{\perp \perp} \cap K^{\perp \perp} = \overline{p A^{**} p}^{w*}$.

\begin{example} \label{Ex7}  The following example illustrates some other issues that arise
for left ideals in general Banach algebras, which obstruct following the r-ideal and hereditary subalgebra
theory of operator algebras \cite{BHN,BRI}.
First, for $E \subset  {\mathfrak F}_A$ it may be that $\overline{EA}$ has no  left cai.
Even if $E$ has two elements this may fail, and in this case
$\overline{EA}$ may not even 
equal $\overline{aA}$ for any $a \in A$.   Thus in general the class of right ${\mathfrak F}$-ideals
in noncommutative  algebras is not closed under either finite sums or finite intersections (see 
Example \ref{Ex1}).
Also, it need not be the case that $EAE$ has a bai if $E \subset {\mathfrak F}_A$.  
A simple three-dimensional example illustrating all of these points is the lower 
triangular $2 \times 2$ matrices with its norm  as an operator on $\ell^1_2$
(see \cite[Example 4.1]{SW1}), and $E = \{ E_{11} \pm E_{21} \}$.  
\end{example}  

\begin{theorem} \label{oleftc}  Suppose that $J$ is a right ${\mathfrak F}$-ideal in an approximately unital
Banach algebra $A$.   For every compact subset $K \subset J$, there exists 
$z \in J \cap {\mathfrak F}_A$ with $K \subset z J \subset zA$.
\end{theorem}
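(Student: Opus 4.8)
The plan is to run Cohen's factorization method, exploiting two features special to this setting: the defining left bai of $J$ may be taken inside $J \cap {\mathfrak F}_A$, and the factor produced by Cohen's construction is, in the limit, a genuine \emph{convex} combination of approximate identity elements, hence automatically lands in ${\mathfrak F}_A$. First I would reduce a compact $K$ to a single factorization. Since $K$ is compact it lies in the closed absolutely convex hull of some sequence $(x_m)$ in $J$ with $x_m \to 0$ (a standard fact), and one checks that $c_0(J) = \{ (\xi_m) : \xi_m \in J, \ \| \xi_m \| \to 0 \}$, with the coordinatewise action of $A$ and sup norm, is an essential left Banach $A$-module on which $(e_t)$ acts as a left bai. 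Thus it suffices to factor the single element $(x_m)$ of this module, and then transport the factorization back to $K$.

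For the factorization I would use the usual Cohen construction in $A^1$, with a left bai $(e_t) \subset J \cap {\mathfrak F}_A$ (which is bounded by $2$, since $\| 1 - e_t \| \le 1$). Fix $\mu \in (0,1)$, and choose $e_{\lambda_n}$ recursively so that $e_{\lambda_n}$ nearly fixes the current module vector; set $z_n = \mu \sum_{k=1}^n (1-\mu)^{k-1} e_{\lambda_k}$ and $b_n = (1-\mu)^n 1 + z_n \in A^1$. The crucial observation is that the limit $z = \sum_{k=1}^\infty \mu (1-\mu)^{k-1} e_{\lambda_k}$ has coefficients summing to $1$, so it is a norm-convergent \emph{convex} combination of elements of ${\mathfrak F}_A \cap J$; as ${\mathfrak F}_A$ is closed and convex and $J$ is closed, $z \in J \cap {\mathfrak F}_A$. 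The finite stages $b_n$ are invertible in $A^1$: each $z_n$ is a nonnegative multiple of an element of ${\mathfrak F}_A$, hence lies in ${\mathfrak r}_A$, so ${\rm Re}\, \varphi(b_n) \ge (1-\mu)^n > 0$ for every $\varphi \in S(A^1)$; since the spectrum is contained in the closure of the numerical range over $S(A^1)$, we get $0 \notin {\rm Sp}_{A^1}(b_n)$, and the Stampfli--Williams estimate \cite{SW} gives $\| b_n^{-1} \| \le (1-\mu)^{-n}$.

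With invertible stages in hand, the standard Cohen estimates show that $c_n = b_n^{-1}\cdot (x_m)$ is Cauchy in $c_0(J)$: indeed $c_n - c_{n-1} = \mu (1-\mu)^{n-1} b_n^{-1}\cdot\bigl( (1-e_{\lambda_n})\cdot c_{n-1}\bigr)$, whose norm is at most $\tfrac{\mu}{1-\mu}\, \| (1-e_{\lambda_n})\cdot c_{n-1} \|$, made summable by the choice of $e_{\lambda_n}$. That the $c_n$ stay in $c_0(J)$ follows from the identity $(1-\mu)^n c_n = (x_m) - z_n\cdot c_n$: since $A$ is an ideal in $A^1$ the coordinates of $c_n$ lie in $A$, and then $z_n\cdot c_n$ has coordinates in $J$ because $J$ is a right ideal and $z_n \in J$, forcing $c_n \in c_0(J)$. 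Passing to the limit in $(x_m) = b_n c_n$ gives $x_m = z\, y_m$ with $y_m \in J$ and $y_m \to 0$. Finally I would lift back to $K$: writing $k \in K$ as $k = \sum_m c_m x_m$ with $\sum_m |c_m| \le 1$, the series $\eta = \sum_m c_m y_m$ converges absolutely to an element of $J$, and $z \eta = \sum_m c_m z y_m = \sum_m c_m x_m = k$. Hence $K \subset zJ \subset zA$, as required.

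The main obstacle is the apparent tension between the two demands on the factor: to lie in ${\mathfrak F}_A$ one wants $1-z$ to be a product of contractions, whereas Cohen's method produces its invertible approximants $b_n$ as ordinary sums/products, and ${\mathfrak F}_A$ is \emph{not} closed under ordinary multiplication. The resolution, and the heart of the argument, is that Cohen's limiting factor is in fact a convex combination of the (${\mathfrak F}_A$-valued) approximate identity, so convexity alone places it in ${\mathfrak F}_A$; invertibility of the finite stages is then recovered not from a Neumann series but from the numerical-range/accretivity information carried by ${\mathfrak r}_A$. A secondary point needing care is that the defining left bai of the right ideal $J$ can be taken inside $J$, so that the resulting $z$ lies in $J$ and not merely in ${\mathfrak F}_A$; this is part of the standard setup for these ${\mathfrak F}$-ideals.
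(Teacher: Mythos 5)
Your proof is correct, and its engine is the same as the paper's: run Cohen's factorization with the left bai of $J$ taken in $J \cap {\mathfrak F}_A$, and observe that the limiting factor $z = \sum_k \mu (1-\mu)^{k-1} e_{\lambda_k}$ is an infinite convex combination of bai elements, hence lies in the closed convex set $J \cap {\mathfrak F}_A$ (the paper's proof is exactly the case $\mu = \frac{1}{2}$, with $z_n = \sum_{k=1}^n 2^{-k} f_k + 2^{-n}$ playing the role of your $b_n$). Two points of genuine divergence are worth recording. First, you handle compactness of $K$ by the Grothendieck null-sequence reduction and then factor a single element of the auxiliary module $c_0(J)$; the paper instead iterates directly over $K$, using an Arzela--Ascoli/subnet argument to choose each $f_{n+1}$ with $\Vert (1-f_{n+1}) z_n^{-1} x \Vert$ small uniformly in $x \in K$. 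The direct route has the advantage of producing, for each $x \in K$, a cofactor $w$ with $\Vert w - x \Vert \leq 2\epsilon$, a refinement the paper exploits later (in the proof of Corollary \ref{pred}); your module route proves the stated theorem but does not immediately yield this extra control. Second, you invert the partial sums $b_n$ via accretivity (numerical range in the right half-plane plus the Stampfli--Williams resolvent estimate), whereas the paper simply notes $\Vert 1 - b_n \Vert \leq 1 - (1-\mu)^n$ and applies the Neumann lemma; both give $\Vert b_n^{-1} \Vert \leq (1-\mu)^{-n}$, and the Neumann route is the more elementary, so the real ``heart of the argument'' is the convexity observation rather than the accretivity one. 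All the supporting details you invoke (that $c_0(J)$ is an essential module, that the identity $(1-\mu)^n c_n = (x_m) - z_n \cdot c_n$ keeps the $c_n$ in $c_0(J)$, and the final summation $z\eta = k$) check out.
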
  

\begin{proof}  We may assume that $A$ is unital, and follow
the idea in the proof of Cohen's factorization theorem
(see e.g.\ \cite[Theorem 4.1]{PedE}, or \cite{Dal}).
For any $f_1, f_2, \cdots \in J \cap {\mathfrak F}_A$  define $z_n = 
\sum_{k=1}^n \, 2^{-k} f_k + 2^{-n} \in  J + \Cdb 1$.  We have $$\Vert 1 - z_n \Vert
= \Vert \sum_{k=1}^n \, 2^{-k} (1-f_k) \Vert 
\leq \sum_{k=1}^n \, 2^{-k}  = 1 - 2^{-n},$$
and so by the Neumann lemma $z_n^{-1} \in J + \Cdb 1$ and $\Vert z_n^{-1} \Vert
\leq 2^{n}$.  

Let $(e_t)$ be a  left cai for $J$ in ${\mathfrak F}_A$, set $z_0 = 1$, and choose $\epsilon > 0$.
For each $x \in K$ we have $\lim_t \,  \Vert (1 - e_t) z_n^{-1} x \Vert = 0$.
Thus by the Arzela-Ascoli theorem, and passing repeatedly
to subnets, we can inductively  choose a
subsequence $(f_n)$ of $(e_t)$, and
 use these to inductively define $z_n$ by the formula above,
so that
$$\max_{x \in K} \,
 \Vert (1 - f_{n+1}) z_n^{-1} x \Vert \leq 2^{-n} \epsilon, \qquad n \geq 0.$$ 
 Set $z = \sum_{k=1}^\infty \, 2^{-k} f_k \in \overline{{\rm conv}}(e_n) \subset J \cap {\mathfrak F}_A$.   If $x \in K$ set 
$x_n = z_n^{-1} x$.   Then 
$$\Vert x_{n+1}- x_n \Vert = \Vert z_{n+1}^{-1} (z_n - z_{n+1}) z_n^{-1} x
\Vert = \Vert 2^{-n-1} \, z_{n+1}^{-1} (1- f_{n+1}) z_n^{-1} x
\Vert \leq 
2^{-n} \epsilon.$$
Hence $w = \lim_n \, x_n$ exists and $z w = x$.   Note also  that 
$$\Vert x_n - x \Vert \leq \sum_{k=1}^n \, \Vert x_k - x_{k-1} \Vert \leq 2 \epsilon,$$ so that $\Vert w - x \Vert   \leq 2 \epsilon$ if
one wishes for that (so that
$\Vert w \Vert \leq  \Vert x \Vert +   \epsilon$).
\end{proof}

{\bf Remark.}   
 In the case of operator algebras, or in the commutative case considered in Section 7,
one can choose  the 
$z$ in the last result in ${\rm conv}(K)$, if $K$ is for example a  finite set 
in $J \cap {\mathfrak F}_A$.  
 If $A$ is noncommutative this fails as we saw in  Example \ref{Ex7}.   

\begin{corollary} \label{ctrid}  Let $A$ be an approximately unital
Banach algebra.   The  closed right ideals with a countable 
left bai in   ${\mathfrak r}_A$ are precisely the `principal right ideals'
$\overline{zA}$ for some $z \in {\mathfrak F}_A$.   Every separable right ${\mathfrak F}$-ideal 
is of this form.
\end{corollary}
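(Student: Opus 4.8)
The plan is to establish the claimed equality of classes in two directions, and then reduce the separable statement to the first sentence. For the easy direction, that every $\overline{zA}$ with $z \in {\mathfrak F}_A$ is a closed right ideal carrying a countable left bai in ${\mathfrak r}_A$, I would simply invoke Corollary \ref{lba}: for $z \in {\mathfrak F}_A \subset {\mathfrak r}_A$ the sequence $(z^{\frac{1}{n}})$ is a (countable) left bai for $\overline{zA}$ lying in ${\mathfrak F}_A$, hence in ${\mathfrak r}_A$, so $\overline{zA}$ is of the asserted type.

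For the converse, suppose $J$ is a closed right ideal possessing a countable left bai $(e_n)$ in ${\mathfrak r}_A$. By the equivalence recorded in the definition of ${\mathfrak F}$-ideals (via Corollary \ref{hasbi}), $J$ is a right ${\mathfrak F}$-ideal, so Theorem \ref{oleftc} is available. The key device is to hand a suitably scaled version of the bai to that theorem. Since $(e_n)$ is bounded, the set $K = \{0\} \cup \{2^{-n} e_n : n \geq 1\}$ is a compact subset of $J$, because the points $2^{-n} e_n$ converge in norm to $0$. Theorem \ref{oleftc} then produces $z \in J \cap {\mathfrak F}_A$ with $K \subset zJ \subset zA$. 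As $zA$ is a linear subspace, $2^{-n} e_n \in zA$ forces $e_n = 2^n (2^{-n} e_n) \in zA$ for every $n$. Now $zA$ is a right ideal, so for any $x \in J$ we have $e_n x \in zA$ while $e_n x \to x$; hence $x \in \overline{zA}$, giving $J \subset \overline{zA}$. The reverse inclusion $\overline{zA} \subset J$ is immediate since $z \in J$ and $J$ is a closed right ideal. Thus $J = \overline{zA}$ with $z \in {\mathfrak F}_A$, as required.

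To treat an arbitrary separable right ${\mathfrak F}$-ideal $J$, I would first extract a countable left bai in ${\mathfrak F}_A$ from the given one. Fixing a countable dense set $\{x_k\}$ in $J$ and a left bai $(e_t)$ for $J$ in ${\mathfrak F}_A$, for each $n$ one chooses $f_n = e_{t_n}$ with $\Vert f_n x_k - x_k \Vert < \frac{1}{n}$ for all $k \leq n$; a routine $3\epsilon$-estimate, using the uniform bound on $(e_t)$ together with the density of $\{x_k\}$, shows that $(f_n)$ is again a left bai for $J$, and it lies in ${\mathfrak F}_A \subset {\mathfrak r}_A$. With this countable left bai in hand, the converse direction just proved yields $J = \overline{zA}$ for some $z \in {\mathfrak F}_A$.

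The step I expect to be the crux is the application of Theorem \ref{oleftc} in the converse direction. The obstacle is that the sequence $(e_n)$ need not be relatively compact, so it cannot be passed to Theorem \ref{oleftc} directly; the scaling by $2^{-n}$ is precisely what repairs this, turning the bai into a genuinely compact set, while the linearity of $zA$ then lets us recover membership of each unscaled $e_n$. Everything else is bookkeeping with the left bai property and the ideal structure.
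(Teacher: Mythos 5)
Your proof is correct and follows essentially the same route as the paper: both directions hinge on feeding a norm-null scaling of the countable left bai (the paper uses $\frac{1}{n}e_n$, you use $2^{-n}e_n$ together with $0$) into Theorem \ref{oleftc} and then rescaling by linearity of $zA$. The only cosmetic difference is in the separable case, where the paper applies Theorem \ref{oleftc} directly to a scaled countable dense set $\{d_n/(n\Vert d_n\Vert)\}$, while you first extract a sequential left bai and reduce to the first assertion; both are valid.
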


\begin{proof}  The one direction is easy since $(z^{\frac{1}{n}})$ is a left bai for $\overline{zA}$ (see the 
proof of Corollary  \ref{lba}).    Conversely, if $(e_n)$ is a  countable 
left bai in   ${\mathfrak r}_A$ for right ideal $J$, set $K = \{ \frac{1}{n} \, e_n \}$ and apply 
Theorem \ref{oleftc}.   

 For the last assertion, if $\{ d_n \}$ is a countable dense set in a right ${\mathfrak F}$-ideal  $J$,  apply 
Theorem \ref{oleftc}, with $K = \{ \frac{d_n}{n \Vert d_n \Vert} \}$.    There exists
 $z \in J \cap {\mathfrak F}_A$ with $K \subset \overline{zA}$.  Hence 
$J \subset \overline{zA} \subset J$.
 \end{proof}

\begin{corollary}  \label{pred}   The right ${\mathfrak F}$-ideals  in an approximately unital  Banach algebra $A$, are precisely the 
closures of  increasing unions of closed right ${\mathfrak F}$-ideals of the form $\overline{zA}$ for some $z \in {\mathfrak F}_A$. \end{corollary}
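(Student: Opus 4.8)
The plan is to prove both inclusions, with Theorem \ref{oleftc} as the engine for the forward direction. First I would let $\mathcal{F}$ denote the collection of all closed right ideals $\overline{zA}$ with $z \in J \cap {\mathfrak F}_A$ and $\overline{zA} \subseteq J$; by Lemma \ref{zH} (or Corollary \ref{ctrid}) each such $\overline{zA}$ is itself a right ${\mathfrak F}$-ideal. The key observation is that $\mathcal{F}$ is directed by inclusion: given $\overline{z_1 A}, \overline{z_2 A} \in \mathcal{F}$, apply Theorem \ref{oleftc} to the finite (hence compact) set $K = \{z_1,z_2\} \subset J$ to obtain $z_3 \in J \cap {\mathfrak F}_A$ with $\{z_1,z_2\} \subset \overline{z_3 A} \subseteq J$; since $\overline{z_3 A}$ is a right ideal containing each $z_i$, it contains $\overline{z_i A}$, so $\overline{z_3 A} \in \mathcal{F}$ dominates both. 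Applying Theorem \ref{oleftc} to singletons $K = \{x\}$ then shows that every $x \in J$ lies in some member of $\mathcal{F}$, so $J = \bigcup \mathcal{F}$ is literally an increasing (directed-by-inclusion) union of principal right ${\mathfrak F}$-ideals, and equals its own closure since it is closed.

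For the converse I would show that the closure $J = \overline{\bigcup_\alpha J_\alpha}$ of an increasing family of principal right ${\mathfrak F}$-ideals $J_\alpha = \overline{z_\alpha A}$ is again a right ${\mathfrak F}$-ideal. That $J$ is a closed right ideal is immediate, since the union of a directed family of right ideals is a right ideal. The real work is to manufacture a left bai for $J$ lying in ${\mathfrak F}_A$. Here I would use that each $J_\alpha$ carries the explicit left bai $(z_\alpha^{1/n})_n \subset {\mathfrak F}_A$ (from the proof of Corollary \ref{lba}, via Proposition \ref{clnth}), all uniformly bounded by $2$ since $\Vert a \Vert \leq \Vert 1 \Vert + \Vert 1 - a \Vert \leq 2$ for $a \in {\mathfrak F}_A$. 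I would index a net by pairs $\lambda = (F,\epsilon)$, with $F$ a finite subset of the dense right ideal $\bigcup_\alpha J_\alpha$ and $\epsilon > 0$, ordered by $(F,\epsilon) \leq (F',\epsilon')$ iff $F \subseteq F'$ and $\epsilon \geq \epsilon'$. For each $\lambda$, directedness gives an $\alpha$ with $F \subset J_\alpha$, and then an $n$ with $\Vert z_\alpha^{1/n} y - y \Vert < \epsilon$ for all $y \in F$; I set $u_\lambda = z_\alpha^{1/n} \in {\mathfrak F}_A$.

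It then remains to verify that $(u_\lambda)$ is genuinely a left approximate identity on all of $J$, not merely on $\bigcup_\alpha J_\alpha$. Given $x \in J$ and $\delta > 0$, I would choose $y \in \bigcup_\alpha J_\alpha$ with $\Vert x - y \Vert < \delta/4$ and take $\lambda_0 = (\{y\}, \delta/4)$; for $\lambda \geq \lambda_0$ the triangle inequality together with the uniform bound $\Vert u_\lambda \Vert \leq 2$ gives $\Vert u_\lambda x - x \Vert \leq 2 \Vert x - y \Vert + \Vert u_\lambda y - y \Vert + \Vert y - x \Vert \leq \delta$. Thus $(u_\lambda)$ is a bounded left approximate identity in ${\mathfrak F}_A$, so $J$ is a right ${\mathfrak F}$-ideal. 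I expect this converse --- specifically the passage from approximation on the dense right ideal $\bigcup_\alpha J_\alpha$ to a bona fide net convergence on all of $J$ while keeping the approximate identity inside ${\mathfrak F}_A$ --- to be the main (though routine) obstacle, whereas the forward direction is essentially a one-line consequence of Theorem \ref{oleftc} once directedness of $\mathcal{F}$ is noted.
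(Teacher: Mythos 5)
Your proof is correct, and while the forward direction runs on the same engine as the paper's, the converse takes a genuinely different route. For the forward inclusion both arguments reduce to Theorem \ref{oleftc} applied to finite (hence compact) subsets; the paper fixes the left bai $E$ of $J$, indexes by finite subsets $G \subset E$, and inductively builds a coherent system $z_G$ with $z_H A \subset z_G A$ for proper subsets $H \subset G$ (together with an $\epsilon$-factorization control that is irrelevant to the statement but is exploited in the Remark that follows), whereas you simply observe that the family of all principal sub-ideals $\overline{zA}$, $z \in J \cap {\mathfrak F}_A$, is directed by inclusion and exhausts $J$ --- a slightly cleaner packaging of the same idea. The real divergence is in the converse. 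The paper passes to the bidual: it takes a weak* limit point $p$ of the support idempotents $s(z_t)$, shows $p$ is a left identity for $J^{\perp \perp}$ lying in the weak* closure of the convex set ${\mathfrak F}_A \cap J$, and then invokes the standard convex-combination argument (p.\ 81 of \cite{BLM}) to extract a left bai. You instead construct the bai by hand, as a net indexed by pairs consisting of a finite subset of $\cup_\alpha J_\alpha$ and an $\epsilon > 0$, with terms drawn from the explicit root bais $(z_\alpha^{1/n})$ of the pieces, using the uniform bound $\Vert a \Vert \leq 2$ on ${\mathfrak F}_A$ to pass from the dense union to all of $J$. Your route is more elementary, avoids the bidual entirely, and produces an explicit bai; the paper's route additionally identifies $J^{\perp \perp} = pA^{**}$ and the limiting support idempotent, information it reuses elsewhere (e.g.\ in the surrounding remarks and in Corollary \ref{pred2}).
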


\begin{proof}  Suppose that  $J$ is an arbitrary right ${\mathfrak F}$-ideal in $A$.   Let $\epsilon > 0$
be given (this is not needed for the proof but will be useful elsewhere).   Let $E$ be the
 left bai  in ${\mathfrak F}_A$ considered as a set, and let $\Lambda$ be the set of  finite subsets of $E$ ordered
by inclusion.   Define $z_G = x$ if 
$G = \{ x \}$ for $x \in E$.  For any two element set $G = \{ x_1, x_2 \}$ in  $\Lambda$,
 one can apply  Theorem  \ref{oleftc} to obtain an element
$z_G \in {\mathfrak F}_A$ with $GA \subset z_G A$, and moreover such that $x_k = z_G w_k$ with $\Vert w_k - x_k \Vert < \epsilon$,
for each $k$,  if
one wishes for that. 
  For any three element set $G = \{ x_1, x_2, x_3 \}$ in $\Lambda$ we can similarly 
choose  $z_G \in {\mathfrak F}_A$ with $z_H A \subset z_G A$ for all proper
subsets $H$ of $G$ (and with the `moreover' above
too).
  Proceeding in this way, we can inductively choose for any $n$ element set $G$ in $\Lambda$ 
an element  $z_G \in {\mathfrak F}_A$ with $z_H A \subset z_G A$ for all proper 
subsets $H$ of $G$ (and moreover
such that each such $z_H$ can be written
as $z_G w$ for some $w$ with  $\Vert w - z_H \Vert < \epsilon$ if
one wishes for that).  
Thus $(\overline{z_G A})$ is increasing (as sets) with $G \in \Lambda$, and $\overline{\cup_{G \in \Lambda} \, z_G A} = J$.  

Conversely, suppose that $\Lambda$ is a directed set and that $J = \overline{\cup_t \, J_t}$,
where $(J_t )_{t \in \Lambda}$ is
an increasing net  of subspaces of $A$, and $J_t = \overline{z_t A}$ for $z_t \in {\mathfrak F}_A$.
Thus if $t_1 \leq t_2$ then $J_{t_1} \subset J_{t_2}$, so that $s(z_{t_2}) z_{t_1} = z_{t_1}$.  Hence 
$s(z_{t}) x \to x$ with $t$ for all $x \in J$.  Thus a weak* limit point $p$ of $(s(z_t))_{t \in \Lambda}$ acts as a left identity for $J$, and hence 
is a left identity  for $J^{\perp \perp}$.   Thus $J^{\perp \perp} = p A^{**}$.
Since this left identity $p$ is in the weak* closure of the convex set
${\mathfrak F}_A \cap J$, the usual argument (see e.g.\ p.\ 81 of \cite{BLM}) shows that 
 $J$ has a left bai in ${\mathfrak F}_A \cap J$.  So $J$ is a right ${\mathfrak F}$-ideal  in $A$. 
\end{proof}
 
{\bf Remarks.}   1) \  
Note that $(z_G^{\frac{1}{n}})$ in the last proof is a left bai for the right ideal $J$ there.
This net is indexed by $n \in \Ndb$ and $G \in \Lambda$.  
To see this, suppose  $x \in J$ is given, and that $\Vert z_{G_1} a - x \Vert < \epsilon$, where
$a \in A$.  If $G_1 \subset G$ then $z_{G_1} \in z_G A$.
By the proof of Corollary \ref{pred} we can choose $w$ with $z_{G_1} = z_{G} w$ and $\Vert w \Vert \leq 3$.
Choose $N$ such that $c_n < \epsilon/3$ for $n \geq N$,
where $c_n$ is as in Lemma \ref{sin}.  Then by that result, $\Vert z_{G}^{\frac{1}{n}}   z_{G_1} - z_{G_1} \Vert
= \Vert z_{G}^{\frac{1}{n}}   z_{G}w - z_{G} w \Vert   \leq 3 c_n < \epsilon$.  
Thus
$$\Vert z_{G}^{\frac{1}{n}}  x - x \Vert \leq \Vert z_{G}^{\frac{1}{n}}  x -  z_{G}^{\frac{1}{n}}  z_{G_1} a  \Vert + \Vert z_{G}^{\frac{1}{n}}  
z_{G_1} a - z_{G_1} a
 \Vert + \Vert z_{G_1} a - x \Vert < (3 + \Vert a \Vert) \epsilon,$$
for all $G$ containing $G_1$, and $n \geq N$.    So $(z_G^{\frac{1}{n}})$  is a left bai for $J$.

\smallskip

2) \ If $(z_G)_{G \in \Lambda}$ is as above, it is tempting to define 
$D = \overline{\cup_{G \in \Lambda} \, z_G A z_G}$.     However
we do not see that this can be adjusted to make it a HSA.    

\bigskip

In the operator algebra 
case, most of the following result and its proof was first in 
the preprint \cite{BRIII} (which as we said on the first page, has now morphed into several papers).  
We thank
 Charles Read for discussions on that result in May 2013,
and thank 
Garth Dales and Tomek Kania for conversations in the same period  on
algebraically finitely generated ideals in
Banach algebras, and in particular for 
drawing our attention to the results in \cite{ST} (these will not 
be used in the present proof below,
but were used in an earlier version).  
We say that a right module $Z$ over $A$ is
{\em algebraically countably generated} (resp.\ {\em algebraically finitely
generated})  
over $A$ if there exists a
countable (resp.\  finite set) $\{ x_k \}$ in $Z$ such that
every $z \in Z$ may be written as a finite sum $\sum_{k=1}^n \, x_k a_k$
for some $a_k \in A$.

\begin{corollary} \label{Aha4}   Let $A$ be an approximately unital  Banach algebra.   A right ${\mathfrak F}$-ideal $J$ in $A$
is algebraically countably generated as a right module over $A$   iff   $J  = q A$ for an idempotent $q \in {\mathfrak F}_A$.  This
is also equivalent to $J$ being algebraically countably generated as a right module over $A^1$.
\end{corollary}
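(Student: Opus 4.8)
The plan is to prove the easy implication first and then reduce the substantive direction to the structure theory already in place, in particular Theorem \ref{oleftc} and Corollary \ref{Aha3}.

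For the reverse implication, suppose $J = qA$ for an idempotent $q \in {\mathfrak F}_A$. Since $q = q^2 \in qA$, the single element $q$ generates $J$ as a right module over $A$: every element of $J$ has the form $qa$ with $a \in A$. Thus $J$ is algebraically finitely (hence countably) generated over $A$, and a fortiori over $A^1$. This disposes of one direction, and simultaneously records the trivial fact that generation over $A$ implies generation over $A^1$.

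For the forward implication, which carries the real content, suppose $J$ is algebraically countably generated, say by $\{x_k\} \subset J$; the argument will be identical whether the generation is taken over $A$ or over $A^1$. First I would normalize: replacing each nonzero $x_k$ by $\frac{1}{k \Vert x_k \Vert} x_k$ produces a norm-null sequence, so the set $K$ consisting of these together with $0$ is compact and contained in $J$. Applying Theorem \ref{oleftc} to $K$ yields a single element $z \in J \cap {\mathfrak F}_A$ with $K \subset zA$; since scaling is irrelevant, each generator satisfies $x_k = z c_k$ for some $c_k \in A$. The key observation is then that for any $z' = \sum_{k=1}^n x_k b_k \in J$ we have $z' = z \sum_{k=1}^n c_k b_k$, and because $A$ is a two-sided ideal in $A^1$, each product $c_k b_k$ lies in $A$ whether the $b_k$ are chosen in $A$ or in $A^1$. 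Hence $z' \in zA$, giving $J \subseteq zA$; and since $z \in J$ forces $zA \subseteq J$, we conclude that $J = zA$ exactly.

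At this point $zA$ is closed (being equal to the closed ideal $J$) with $z \in {\mathfrak F}_A \subseteq {\mathfrak r}_A$, so Corollary \ref{Aha3} (which rests on the equivalence of (i) and (ii) in Theorem \ref{ws}) immediately gives $J = zA = qA$ for the idempotent $q = s(z) \in {\mathfrak F}_A$. The equivalence with countable generation over $A^1$ now closes up: the passage from $A$ to $A^1$ is trivial, while the argument just given already produced $J = qA$ starting only from generation over $A^1$, and $J = qA$ in turn yields generation over $A$. The step I expect to carry the weight is the use of Theorem \ref{oleftc} to absorb all countably many generators into a single $z$; the only delicate point afterwards is the remark that $c_k b_k \in A$, which is precisely what makes the $A^1$ case collapse into the $A$ case without introducing stray scalar multiples of the generators.
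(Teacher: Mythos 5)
Your proof is correct and follows essentially the same route as the paper's: normalize the countably many generators into a norm-null (hence compact) set, absorb them into a single $z \in J \cap {\mathfrak F}_A$ via Theorem \ref{oleftc}, conclude $J = zA$, and invoke Corollary \ref{Aha3}. The only difference is cosmetic: for the $A^1$ version the paper reruns the argument over $A^1$ to get $J = qA^1$ and then observes $q \in A$, whereas you note directly that $c_k b_k \in A$ for $b_k \in A^1$ since $A$ is an ideal in $A^1$; both work.
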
   \begin{proof}
Let $J$ be a right ${\mathfrak F}$-ideal which is algebraically countably generated 
over $A$ by elements $x_1, x_2, \cdots$ in $A$.   We can assume that
$\Vert x_k \Vert \to 0$, and so $\{ x_k : k \in \Ndb \}$ is compact. 
  By Theorem \ref{oleftc} there exists $z \in J$
 such that $\{ x_k \} \subset zA$.  Thus $x_k A \subset zA^2=zA$
 for all $k$, and so $J \subset zA \subset J$, and $J = zA$.
By Corollary \ref{Aha3}, $J = qA$ for an idempotent $q \in {\mathfrak F}_A$.

If  $J$ is algebraically countably generated  over $A^1$  then by the above
$J  = q A^1$.  Clearly $q \in A$, and so $J = \{ x \in A : q x = x \} = qA$.  \end{proof}

\begin{lemma}  \label{preot}  Let $A$ be an
approximately unital Banach algebra, with a
closed subalgebra $D$.   If $D$ has a bai from ${\mathfrak F}_A$, then for every compact subset $K\subset D$,
 there is 
$x\in D \cap {\mathfrak F}_A$ such that $K\subset xDx \subset   xAx$. \end{lemma}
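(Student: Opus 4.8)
The plan is to run a bilateral version of the Cohen factorization argument used for Theorem \ref{oleftc}, producing a single element $x$ that factors $K$ on both sides simultaneously. As there, I would first reduce to the case that $A$ is unital and work inside $A^1$, writing $1$ for its identity. Let $(e_t)$ be the two-sided bai for $D$ lying in ${\mathfrak F}_A \cap D$. Choosing $f_1, f_2, \dots$ from $(e_t)$, set $z_n = \sum_{k=1}^n 2^{-k} f_k + 2^{-n} 1 \in D + \Cdb 1$; since $f_k \in {\mathfrak F}_A$ we have $\Vert 1 - z_n \Vert \le \sum_{k=1}^n 2^{-k} \Vert 1 - f_k \Vert \le 1 - 2^{-n}$, so by the Neumann lemma $z_n$ is invertible in $D^1$ with $\Vert z_n^{-1} \Vert \le 2^n$. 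For each $a \in K$ I would track the symmetric iterate $a_n = z_n^{-1} a z_n^{-1}$, which lies in $D^1 D D^1 \subset D$ because $D$ is an ideal in $D^1$. Note that $z_n a_n z_n = a$ identically, so once $z_n \to x$ and $(a_n)$ converges the limit will give the desired factorization.

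The crux is to choose the $f_k$ so that $(a_n)$ is norm-convergent, and here the naive symmetric estimate fails: the factor $z_n^{-1}$ has scalar part $2^n 1$, so $(1 - f_{n+1}) z_n^{-1}$ cannot be made small. I would get around this by splitting $a_{n+1} - a_n$ through the cross term $m_n = z_{n+1}^{-1} a z_n^{-1}$, giving $m_n - a_n = (z_{n+1}^{-1} - z_n^{-1}) a z_n^{-1}$ and $a_{n+1} - m_n = z_{n+1}^{-1} a (z_{n+1}^{-1} - z_n^{-1})$, and then using the resolvent identity in its two forms, $z_{n+1}^{-1} - z_n^{-1} = 2^{-n-1} z_{n+1}^{-1}(1 - f_{n+1}) z_n^{-1} = 2^{-n-1} z_n^{-1}(1 - f_{n+1}) z_{n+1}^{-1}$. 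The first form applied to $m_n - a_n$ yields $2^{-n-1} z_{n+1}^{-1} (1 - f_{n+1}) a_n$, while the second form applied to $a_{n+1} - m_n$ yields $2^{-n-1} z_{n+1}^{-1} r_n (1 - f_{n+1}) z_{n+1}^{-1}$, where $r_n = a z_n^{-1}$. The point is that in both expressions $(1 - f_{n+1})$ now sits immediately next to one of the fixed elements $a_n$ or $r_n$ (fixed because $z_n$ is chosen before $f_{n+1}$); since $(e_t)$ is a two-sided bai I can therefore choose $f_{n+1}$ from it making $\Vert (1 - f_{n+1}) a_n \Vert$ and $\Vert r_n (1 - f_{n+1}) \Vert$ as small as I please. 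Bounding the surrounding inverses by $\Vert z_{n+1}^{-1} \Vert \le 2^{n+1}$, this forces $\Vert a_{n+1} - a_n \Vert < 2^{-n} \epsilon$, so $(a_n)$ is Cauchy. I expect this to be the main obstacle: the symmetric estimate is defeated precisely by rewriting so as to control one left and one right defect, which is exactly why two-sidedness of the bai is indispensable here.

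To make a single choice of $f_{n+1}$ serve every $a \in K$ at once, I would, as in the proof of Theorem \ref{oleftc}, use that the continuous images $\{ a_n : a \in K \}$ and $\{ r_n : a \in K \}$ of the compact set $K$ are again compact, so the bai converges uniformly on them (passing repeatedly to subnets, and to a subsequence $(f_n)$ via the Arzela--Ascoli theorem, if desired). Then $a_n \to b_a \in D$ since $D$ is closed, while $z_n \to x := \sum_{k=1}^\infty 2^{-k} f_k$, which is a convex combination of the $f_k$ and hence lies in $D \cap {\mathfrak F}_A$. Passing to the limit in the identity $z_n a_n z_n = a$, using that $(z_n)$ and $(a_n)$ are bounded and convergent, gives $x b_a x = a$ with $b_a \in D$. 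As $a \in K$ was arbitrary, this shows $K \subset x D x \subset x A x$, completing the proof.
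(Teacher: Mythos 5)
Your proof is correct and follows essentially the same route as the paper's: a bilateral Cohen factorization in which the increment $a_{n+1}-a_n$ is split through a cross term so that, via the two forms of the resolvent identity, the defect $1-f_{n+1}$ lands adjacent to a previously fixed element of $D$ on the left in one piece and on the right in the other. The only cosmetic difference is your choice of cross term $z_{n+1}^{-1}az_n^{-1}$ (the paper in effect uses $z_n^{-1}az_{n+1}^{-1}$, controlling $(1-f_{n+1})z_n^{-1}a$ and $az_n^{-1}(1-f_{n+1})$), which is interchangeable with yours.
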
  

\begin{proof}     This can be done by adapting the proof of Theorem \ref{oleftc} as follows.
We can inductively  choose a
subsequence $(f_n)$ of the bai $(e_n)$ with 
$$\max_{x \in K} \, [ \Vert (1 - f_{n+1}) z_n^{-1} x \Vert  \, + \, 
\Vert x z_n^{-1} (1 - f_{n+1})  \Vert ]
\leq 2^{-2n} \epsilon$$ for each $n$.  Choose $z$ as before.
  If $x \in K$ set 
$x_n = z_n^{-1} x z_n^{-1} \in D$.   Then 
$$\Vert x_{n+1}- x_n \Vert \leq \Vert (z_{n+1}^{-1} x - z_n^{-1}   x )   z_{n+1}^{-1}  \Vert + 
\Vert  z_n^{-1} (x   z_{n+1}^{-1} - x  z_n^{-1}) \Vert,$$
which is dominated by $2^{n+1} \, \Vert z_{n+1}^{-1} x - z_n^{-1}   x   \Vert + 2^n \Vert  x   z_{n+1}^{-1} - x  z_n^{-1} \Vert.$
Again we have  $\Vert z_{n+1}^{-1} x - z_n^{-1}   x   \Vert  \leq 2^{-2n} \epsilon$, and similarly
$\Vert x   z_{n+1}^{-1} - x  z_n^{-1} \Vert \leq 2^{-2n} \epsilon$.  So $\Vert x_{n+1}- x_n \Vert
\leq (2^{1-n} + 2^{-n}) \epsilon < \frac{\epsilon}{2^{n-2}}$.    Thus $w = \lim_n \, x_n$ exists in $D$,
 and $zwz = \lim_n \, z_n x_n  z_n = x$ as desired.
We also have $\Vert w - x \Vert \leq 2 \epsilon$ as before, if we wish for this.  
\end{proof}

{\bf Remark.}  The above, and the next couple of results, are closely related to  
the results of Sinclair, Esterle, and others on the Cohen factorization method (see e.g.\ \cite{Sinc}), which also shows there is a commuting cai or bai
under certain hypotheses.  However the result above 
does not follow from Sinclair's results, and the latter do not directly connect to
`positivity' in our sense.   

\medskip

 Applying Lemma \ref{preot} to a suitable scaling of a countable bai in ${\mathfrak  F}_A$
as in the proof of Corollary \ref{ctrid}, we obtain:

\begin{theorem} \label{otter}  Let $A$ be
 an approximately unital Banach algebra, and let $D$ be an inner ideal in $A$.
Then  $D$  has a
 countable bai from ${\mathfrak F}_A$ 
(or equivalently, from ${\mathfrak r}_A$)
 iff there exists an element 
$z \in D \cap {\mathfrak F}_A$ with $D =  \overline{z A z}$.  Thus 
such $D$ has a countable commuting 
bai from ${\mathfrak F}_A$.  Any separable inner ideal in $A$ with a bai from ${\mathfrak r}_A$
is of this form.  
 \end{theorem}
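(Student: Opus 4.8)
The plan is to establish both directions of the equivalence, reading off the ``commuting bai'' assertion from the easy direction, and then to reduce the separable statement to the equivalence via Corollary \ref{hasbi}.

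First I would prove that if $D = \overline{zAz}$ for some $z \in D \cap {\mathfrak F}_A$ then $D$ has a countable \emph{commuting} bai in ${\mathfrak F}_A$, taking $(z^{\frac{1}{n}})_{n \in \Ndb}$ as the candidate. By Proposition \ref{clnth} each $z^{\frac{1}{n}}$ lies in ${\mathfrak F}_A$, and since $z^{\frac{1}{n}} \in {\rm ba}(z) \subset D$ these elements lie in $D$, commute with one another, and are bounded in norm by $2$ (as they lie in ${\mathfrak F}_A$). By Proposition \ref{wh} the net $(z^{\frac{1}{n}})$ is a bai for ${\rm ba}(z)$, so $z^{\frac{1}{n}} z \to z$ and $z z^{\frac{1}{n}} \to z$; hence $z^{\frac{1}{n}}(zaz) = (z^{\frac{1}{n}} z) a z \to zaz$ and likewise $(zaz) z^{\frac{1}{n}} \to zaz$ for every $a \in A$. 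As $zAz$ is dense in $D$ and $(z^{\frac{1}{n}})$ is bounded, the standard approximation argument yields $z^{\frac{1}{n}} d \to d$ and $d\, z^{\frac{1}{n}} \to d$ for all $d \in D$, proving this direction and the penultimate sentence of the theorem simultaneously.

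For the converse, suppose $D$ has a countable bai $(e_n)$ in ${\mathfrak F}_A$, and mimic the proof of Corollary \ref{ctrid}: set $K = \{ \frac{1}{n} e_n : n \in \Ndb \} \cup \{ 0 \}$, which is compact because $(e_n)$ is bounded and $\frac{1}{n} e_n \to 0$. Applying Lemma \ref{preot} to $K$ produces $z \in D \cap {\mathfrak F}_A$ with $K \subset zAz$, so each $e_n = z a_n z$ with $a_n \in A$. Then for $d \in D$ the sandwiched element $e_n d\, e_n = z(a_n z d z a_n) z$ lies in $zAz$, while $\Vert e_n d\, e_n - d \Vert \le \Vert e_n \Vert \, \Vert d\, e_n - d \Vert + \Vert e_n d - d \Vert \to 0$ since $(e_n)$ is a two-sided bai for $D$; thus $d \in \overline{zAz}$ and $D \subset \overline{zAz}$. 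The reverse inclusion is immediate from $z \in D$ and the inner ideal property, namely $zAz \subset DAD \subset D$, giving $D = \overline{zAz}$.

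Finally, the parenthetical interchangeability of ``${\mathfrak F}_A$'' and ``${\mathfrak r}_A$'' follows from Corollary \ref{hasbi} and the remark after it: a countable bai in ${\mathfrak r}_A$ yields a countable bai in ${\mathfrak F}_A$, and the ${\mathfrak F}$-transforms and roots used there stay inside ${\rm ba}(e_n) \subset D$, while ${\mathfrak F}_A \subset {\mathfrak r}_A$ is trivial. For the last assertion, a separable $D$ with a bai in ${\mathfrak r}_A$ admits, by the usual extraction against a countable dense subset, a sequential bai still lying in ${\mathfrak r}_A$; converting this to one in ${\mathfrak F}_A$ and applying the converse just proved gives $D = \overline{zAz}$. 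I expect the main obstacle to be precisely the converse: recognizing that $d$ must be sandwiched \emph{symmetrically} as $e_n d\, e_n$ (rather than the one-sided $e_n d$ used for right ideals in Corollary \ref{ctrid}) so as to land in $zAz$, and checking that this symmetric net still converges to $d$ --- this is exactly where the two-sided conclusion $K \subset xDx$ of Lemma \ref{preot} is needed.
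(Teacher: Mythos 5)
Your proposal is correct and follows essentially the same route as the paper, whose proof is just the one-line remark preceding the theorem: apply Lemma \ref{preot} to the compact set $\{\frac{1}{n}e_n\}$ obtained by scaling a countable bai in ${\mathfrak F}_A$, exactly as in Corollary \ref{ctrid}, with the easy direction and the commuting assertion coming from $(z^{1/n})$ as in Lemma \ref{zH} and Proposition \ref{wh}. Your filling-in of the details — the symmetric sandwich $e_n d e_n = z(a_n z d z a_n)z \in zAz$, the inclusion $zAz \subset DAD \subset D$, and the reduction of the ${\mathfrak r}_A$ and separable cases via Corollary \ref{hasbi} — is accurate and matches the intended argument.
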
  

The following is an Aarnes-Kadison type theorem
for Banach algebras.  For another result of this type see \cite{Sinc}.

\begin{corollary} \label{ottertoo}  If $A$ is a subalgebra of a unital Banach algebra $B$, 
and we set ${\mathfrak r}_A = A \cap {\mathfrak r}_B$, then the following are equivalent: 
\begin{itemize} 
\item [(i)]    $A$ has a sequential    (commuting) bai from ${\mathfrak r}_A$.
\item [(ii)]    There  exists
an $x \in {\mathfrak r}_A$ with $A = \overline{x A x}$.
\item [(iii)]   There  exists
an $x \in {\mathfrak r}_A$ with $A = \overline{xA} =
\overline{Ax}$.
\item [(iv)]    There  exists
an $x \in {\mathfrak r}_A$ with $s(x)$ a mixed identity for $A^{**}$.
\end{itemize}   Any  separable Banach algebra with a bai from ${\mathfrak r}_A$ 
satisfies all of the above, as does any $M$-approximately unital Banach algebra
which is separable or has a countable bai.
\end{corollary}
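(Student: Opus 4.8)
The plan is to prove (i)--(iv) equivalent around the cycle (i) $\Rightarrow$ (ii) $\Rightarrow$ (iii) $\Rightarrow$ (iv) $\Rightarrow$ (i), viewing $A$ as an inner ideal in itself and performing every construction inside the given unital $B$, so that ${\mathfrak F}_A = A \cap {\mathfrak F}_B$ and the roots, the ${\mathfrak F}$-transform and the estimate of Lemma \ref{sin} are all available. The results of this section (Theorem \ref{oleftc}, Lemma \ref{preot}, Theorem \ref{otter}) were proved by reducing to a unital overalgebra, and their proofs use only that $A$ is a closed subalgebra of the unital $B$ together with a bai from ${\mathfrak r}_A$; in particular they apply here with $B$ in the role of $A^1$.

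For (i) $\Rightarrow$ (ii) I would run the proof of Lemma \ref{preot} with $D = A$: scaling the given sequential bai $(e_n)\subset {\mathfrak r}_A$ (which we may take in ${\mathfrak F}_A$ by Corollary \ref{hasbi}) to the norm-null, hence compact, set $K = \{ \frac{1}{n} e_n \}$, the lemma produces $z \in A \cap {\mathfrak F}_A \subset {\mathfrak r}_A$ with $K \subset \overline{zAz}$; since $e_n a e_n \to a$ for every $a \in A$, this forces $A = \overline{zAz}$, which is (ii). The implication (ii) $\Rightarrow$ (iii) is immediate from $\overline{xAx} \subset \overline{xA} \subset A$ and $\overline{xAx} \subset \overline{Ax} \subset A$.

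The heart of the argument is (iii) $\Leftrightarrow$ (iv), and here the bookkeeping of Arens products is the main obstacle. By Corollary \ref{lba} (in the second Arens product) $(xA)^{\perp\perp} = s(x) A^{**}$, so by Corollary \ref{supp3} the condition $\overline{xA} = A$ is equivalent to $s(x)$ being a left identity for the second Arens product. The left-handed mirror of Corollaries \ref{lba} and \ref{supp3} (pass to the opposite algebra, which swaps the two Arens products and the two sides) gives $(Ax)^{\perp\perp} = A^{**} s(x)$ and shows that $\overline{Ax} = A$ is equivalent to $s(x)$ being a right identity for the first Arens product. Thus (iii) says exactly that $s(x)$ is simultaneously a left identity for the second product and a right identity for the first --- which is the definition of a mixed identity recalled in Section 1. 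Hence (iii) $\Leftrightarrow$ (iv).

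Finally, for (iv) $\Rightarrow$ (i) I would exploit that (iv), through the equivalence just established, yields $\overline{xA} = A = \overline{Ax}$. Since $x^{1/n} \in {\mathfrak r}_A$ (Corollary \ref{hasb}), $(x^{1/n})$ is bounded (Lemma \ref{Bal}) with $x^{1+1/n} \to x$ by continuity of $t \mapsto x^t$; so $(x^{1/n})$ is a left bai for $\overline{xA} = A$ and a right bai for $\overline{Ax} = A$, that is, a genuine commuting sequential two-sided bai for $A$ lying in ${\mathfrak r}_A$. This is the point where one must avoid the sequential form of Lemma \ref{netwee} (which fails, cf.\ the Remark after it): we do not pass to a net of convex combinations, but use the honest sequence of roots. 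For the closing assertions, if $A$ is separable with a bai from ${\mathfrak r}_A$ one extracts, using a countable dense set, a sequential sub-bai whose terms remain in ${\mathfrak r}_A$, giving (i) (equivalently, apply the last line of Theorem \ref{otter} with $D = A$). If $A$ is $M$-approximately unital then it is Hahn--Banach smooth in $A^1$, so $S(A) = S_{\mathfrak e}(A)$ (Lemma \ref{hbs}), it is scaled (Proposition \ref{preq}) with $Q(A)$ convex and weak* closed (Corollary \ref{snr}), and its mixed identity is unique (Lemma \ref{isun}); when in addition $A$ is separable or has a countable bai one obtains a sequential cai (in the countable-bai case by replacing tail convex combinations by a sequence of norm tending to $1$ via Lemma \ref{ban}, then normalising), whence Corollary \ref{scase} delivers a sequential cai in ${\mathfrak r}_A$, again giving (i).
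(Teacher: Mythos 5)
Your proof is correct and follows essentially the same route as the paper's: (i)$\Rightarrow$(ii) via the Cohen-factorization argument of Lemma \ref{preot}/Theorem \ref{otter} applied to a scaled bai, (ii)$\Rightarrow$(iii) by the obvious inclusions (the paper cites Lemma \ref{zH}), (iii)$\Leftrightarrow$(iv) via Corollary \ref{supp3} and its opposite-algebra mirror, and (iii)$\Rightarrow$(i) by the sequence $(x^{1/n})$. The only divergence is your more roundabout treatment of the $M$-approximately unital countable-bai case (via Corollary \ref{scase}); it suffices to intersect the cai in $\frac{1}{2}{\mathfrak F}_A$ from Theorem \ref{lbai} against the countable bai to extract a sequential bai in ${\mathfrak r}_A$, but this does not affect correctness.
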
   

This is clear from earlier results.  Indeed the last theorem gives the  equivalence of (i) and (ii) above and the separability assertion, and that (ii) implies (iii) follows from e.g.\ Lemma \ref{zH}.   Also (iii) implies (i)
by considering $(x^{\frac{1}{n}})$; and (iii) is equivalent to (iv) by Corollary  \ref{supp3}.   Again, ${\mathfrak r}_A$
can be replaced by ${\mathfrak F}_A =  A \cap {\mathfrak F}_B$ throughout this result, or in any of the items (i) to (iv).

As a consequence of the last results, if $D$ is a HSA in an approximately unital  Banach algebra
$A$, and if $D$ has a countable bai from ${\mathfrak F}_A$,
then $D$ is of the form in Lemma  \ref{zH}.      We leave it to the 
reader to check that doing a `HSA variant' of the 
proof of Corollary \ref{pred}, using Lemma \ref{preot} and mixed identities rather than left
identities,  yields:

\begin{corollary}  \label{pred2}  The HSA's in an approximately unital  Banach algebra $A$ are exactly the 
closures of  increasing unions of HSA's of the form $\overline{zAz}$ for  $z \in {\mathfrak F}_A$. \end{corollary}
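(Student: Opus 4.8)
The plan is to mirror the proof of Corollary \ref{pred}, systematically replacing the one-sided objects by two-sided ones: Theorem \ref{oleftc} is replaced by Lemma \ref{preot}, the principal right ideals $\overline{zA}$ by the principal HSA's $\overline{zAz}$, and ``left identity'' by ``mixed identity''. As there, two inclusions must be established, and the forward one is fairly mechanical while the converse carries the real content.

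For the forward direction, let $D$ be a HSA with two-sided bai $E \subset D \cap {\mathfrak F}_A$, and let $\Lambda$ be the directed set of finite subsets of $E$ ordered by inclusion. By induction on $|G|$ I would, for each $G \in \Lambda$, apply Lemma \ref{preot} to the finite (hence compact) set $K_G = G \cup \{ z_H : H \subsetneq G \} \subset D$ to produce $z_G \in D \cap {\mathfrak F}_A$ with $K_G \subset z_G D z_G \subset z_G A z_G$. Then each $z_H$ with $H \subsetneq G$ lies in $z_G D z_G$, say $z_H = z_G d z_G$, so $z_H A z_H = z_G d z_G A z_G d z_G \subset z_G A z_G$ and hence $\overline{z_H A z_H} \subset \overline{z_G A z_G}$. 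Thus $(\overline{z_G A z_G})_{G \in \Lambda}$ is an increasing net of HSA's (each a HSA by Lemma \ref{zH}), all contained in the inner ideal $D$. Finally, for $d \in D$ the two-sided bai gives $e d e \to d$, and since each $e \in E$ satisfies $e \in z_{\{e\}} D z_{\{e\}}$ one has $e d e \in z_{\{e\}} A z_{\{e\}}$; hence $\overline{\bigcup_{G} z_G A z_G} = D$, as required.

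For the converse, suppose $D = \overline{\bigcup_t D_t}$ for an increasing net of HSA's $D_t = \overline{z_t A z_t}$ with $z_t \in {\mathfrak F}_A$. That $D$ is a closed inner ideal follows routinely from directedness, the fact that each $D_t$ is an inner ideal, and a density argument. Writing $p_t = s(z_t)$, Lemma \ref{zH} (with Corollary \ref{lba}) gives $D_t = p_t A^{**} p_t \cap A$ and $z_t = p_t z_t = z_t p_t$. For $t_1 \le t_2$ the inclusion $D_{t_1} \subset D_{t_2}$ yields $\overline{z_{t_1} A} \subset \overline{z_{t_2} A}$ and $\overline{A z_{t_1}} \subset \overline{A z_{t_2}}$, whence by Corollary \ref{supp3} (and its left-handed analogue, obtained by passing to the opposite algebra) one gets $p_{t_2} p_{t_1} = p_{t_1} = p_{t_1} p_{t_2}$. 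Consequently, for $x \in D_{t_0}$ and any $t \ge t_0$ one has $p_t x = x = x p_t$. Taking a weak* limit point $p$ of the bounded net $(p_t)$ (bounded since $p_t \in {\mathfrak F}_{A^{**}}$), and using that left and right multiplication by a fixed element of $A$ are weak* continuous on $A^{**}$, I would deduce $p x = x = x p$ first for all $x \in \bigcup_t D_t$ and then, by norm density, for all $x \in D$; thus $p$ acts as a two-sided identity on $D$ (indeed, via weak* continuity of the two Arens products, as a mixed identity for $D^{\perp \perp}$). Since each $p_t \in \overline{{\rm conv}(\{ z_t^{1/n} : n \in \Ndb \})}^{w*}$ with $z_t^{1/n} \in {\mathfrak F}_A \cap D$ (using Proposition \ref{clnth} and ${\rm ba}(z_t) \subset D_t \subset D$), and ${\mathfrak F}_A \cap D$ is convex, the limit point $p$ lies in the weak* closure of ${\mathfrak F}_A \cap D$. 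The usual argument (in the spirit of Lemma \ref{netwee}, or p.~81 of \cite{BLM}) then converts the resulting weak two-sided approximate identity drawn from the convex set ${\mathfrak F}_A \cap D$ into an honest norm two-sided bai, so $D$ is a HSA.

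The main obstacle is the two-sidedness in the converse. One must verify that a \emph{single} weak* limit point $p$ simultaneously acts as a left and a right identity on all of $D$; this is exactly what forces the symmetric support-idempotent relation $p_{t_2} p_{t_1} = p_{t_1} = p_{t_1} p_{t_2}$ (needing both Corollary \ref{supp3} and its opposite-algebra version) and a careful bookkeeping of the two Arens products, the second weak* continuous in its second variable and the first in its first. Once $p$ is known to be a two-sided identity on $D$ lying in $\overline{{\mathfrak F}_A \cap D}^{\,w*}$, promoting it to a genuine two-sided approximate identity inside ${\mathfrak F}_A$ is the standard convexity/Hahn--Banach maneuver already employed for bai's elsewhere in the paper.
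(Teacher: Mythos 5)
Your proof is correct and follows essentially the same route as the paper, which merely sketches the ``HSA variant'' of Corollary \ref{pred} using Lemma \ref{preot} and mixed identities in place of left identities; you supply the details the paper leaves to the reader. One small remark: the appeal to an opposite-algebra version of Corollary \ref{supp3} is both unnecessary and slightly delicate (passing to the opposite algebra swaps the two Arens products), since the relation $p_t x = x = x p_t$ for $x \in D_{t_0} \subset D_t = p_t A^{**} p_t \cap A$ follows directly from Lemma \ref{zH}, which is all your weak* limit argument actually needs.
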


 \begin{proof}    We just sketch the more  difficult direction of
this since this is so close to  the proof of Corollary \ref{pred}.
Indeed we proceed as in the proof of Corollary \ref{pred}, taking $E$ to be the  bai $(e_t)$.  Define $\Lambda$ and $z_G \in D  \cap {\mathfrak F}_A$ for
$G \in \Lambda$ as before, but using Lemma \ref{preot}.  Note that each $e_t$ is in some $z_G A z_G$, which in turn is contained 
in the closed inner ideal $D'= \overline{\cup_{G \in \Lambda} \, z_G A z_G}$.  Since for $x \in D$ we have 
$x = \lim_t e_t x e_t  \in D'  \subset D$, the result is now clear.   \end{proof}  

{\bf Remark.}  As in the remark after Corollary \ref{pred}, if one takes care with the choice of the $z$ in the last Corollary, the $n$th roots of these
$z$'s  can be  a bai  for  the HSA.

\section{Better cai for $M$-approximately unital algebras}

In this section we consider the better behaved class of $M$-approximately unital Banach algebras.
We will use the fact that $M$-ideals
in Banach spaces are {\em  strongly proximinal}. (Actually the only 
`proximinality-type' condition we use here is `the strongly proximinal at 1 property' mentioned in the introduction.)

\begin{lemma} \label{prox}  Let $X$ be a Banach space, and suppose that $J$ is an $M$-ideal in $X$, and $x \in X, y \in J$, and
$\epsilon > 0$,  with
$\Vert x - y \Vert < d(x,J) + \epsilon$.  Then there exists 
a $z \in J$ with $\Vert y - z \Vert < 3 \epsilon$ and 
$\Vert x - z \Vert = d(x,J)$.
    \end{lemma}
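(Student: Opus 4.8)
The plan is to build a Cauchy sequence in $J$ by repeatedly invoking the ball-intersection property of $M$-ideals, so that the limit attains the distance exactly while staying close to $y$. Write $d = d(x,J)$. Since $\|x-w\|\ge d$ for every $w\in J$, in particular $\|x-y\|\ge d$, set $\delta_0 = \|x-y\| - d \in [0,\epsilon)$. If $\delta_0 = 0$ then $z=y$ already works, so I may assume $\delta_0 > 0$. I will produce $y_0 = y, y_1, y_2, \dots \in J$ with $\|x-y_n\| \le d + \delta_n$, where $\delta_n = (\tfrac34)^n\delta_0 \to 0$, and with $\sum_n \|y_n - y_{n+1}\| \le 3\delta_0 < 3\epsilon$. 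Since $J$ is a closed subspace of the Banach space $X$ it is complete, so $z := \lim_n y_n$ exists in $J$; then $\|x-z\| \le \lim_n (d+\delta_n) = d$, which forces $\|x-z\| = d$, and $\|y-z\| = \|y_0 - z\| \le \sum_n \|y_n - y_{n+1}\| < 3\epsilon$, as required.

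The inductive step is where the hypothesis that $J$ is an $M$-ideal enters, through the restricted ball property \cite{HWW}: if finitely many closed balls $B(x_i,r_i)$ each meet $J$ and have a common point in $X$, then for every $\eta>0$ the enlarged balls $B(x_i, r_i+\eta)$ have a common point \emph{in} $J$. Given $y_n \in J$ with $\|x-y_n\| \le d + \delta_n$, I apply this to the two balls $B(x,\, d+\tfrac{\delta_n}{2})$ and $B(y_n,\, \tfrac{\delta_n}{2})$. The first meets $J$ because its radius $d + \tfrac{\delta_n}{2}$ strictly exceeds $d = d(x,J)$; the second contains $y_n$; and the two share a point of $X$ since $\|x-y_n\| \le d + \delta_n = (d+\tfrac{\delta_n}{2}) + \tfrac{\delta_n}{2}$ (closed balls intersect as soon as the distance between centres is at most the sum of the radii). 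Choosing the enlargement $\eta = \tfrac{\delta_n}{4}$ then yields $y_{n+1} \in J$ with $\|x-y_{n+1}\| \le d + \tfrac34\delta_n = d + \delta_{n+1}$ and $\|y_n - y_{n+1}\| \le \tfrac{\delta_n}{2} + \tfrac{\delta_n}{4} = \tfrac34\delta_n$, which maintains the invariant and bounds the displacement.

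Summing the displacements gives $\sum_{n\ge 0}\|y_n-y_{n+1}\| \le \tfrac34\delta_0\sum_{n\ge 0}(\tfrac34)^n = \tfrac34\delta_0\cdot 4 = 3\delta_0$, which is exactly the constant claimed; note that the tuning $\eta = \tfrac{\delta_n}{4}$ is what makes the telescoping series sum to $3\delta_0$ rather than something larger (a smaller enlargement would in fact give a better constant, so the bound $3\epsilon$ has slack). The only substantive ingredient is the ball property; the convergence and the two final estimates are then routine. I expect the main obstacle to be stating that property in precisely the form needed here (closed balls, arbitrary centres in $X$, simultaneous enlargement into $J$) and verifying its two hypotheses — that each ball meets $J$ and that the balls have a common point — at every stage of the induction; once that is secured, the rest is geometric bookkeeping.
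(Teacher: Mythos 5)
Your proof is correct and is essentially the argument the paper relies on: the paper's one-line proof defers to the proof of Proposition II.1.1 in \cite{HWW}, which is exactly this iterative two-ball construction via the restricted $n$-ball ($M$-ideal) intersection property, with the same kind of geometric bookkeeping yielding the constant $3$. You have simply written out that cited argument in full, and all the verifications (each ball meets $J$, the balls intersect in $X$, the telescoping sum) are sound.
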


\begin{proof}   This follows from the proof of \cite[Proposition II.1.1]{HWW}.
\end{proof}

\begin{theorem}  \label{lbai}  Let $A$ be an  $M$-approximately unital  
Banach algebra.  Then ${\mathfrak F}_A$ is weak* dense in  ${\mathfrak F}_{A^{**}}$, and ${\mathfrak r}_A$ is weak* dense in  ${\mathfrak r}_{A^{**}}$. 
Thus 
$A$ has a cai in $\frac{1}{2} {\mathfrak F}_A$. \end{theorem}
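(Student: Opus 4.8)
The plan is to deduce the existence of a cai in $\frac{1}{2}{\mathfrak F}_A$ from the weak* density assertion established in the first part of the theorem, combined with the structure of the mixed identity. First I would record that, by Lemma \ref{hfin}, the algebra $A$ has a unique mixed identity $e$ for $A^{**}$ of norm $1$, and crucially $e \in \frac{1}{2}{\mathfrak F}_{A^{**}}$, that is, $2e \in {\mathfrak F}_{A^{**}}$. Since multiplication by $\frac{1}{2}$ is weak*-continuous, the density of ${\mathfrak F}_A$ in ${\mathfrak F}_{A^{**}}$ immediately gives that $\frac{1}{2}{\mathfrak F}_A$ is weak* dense in $\frac{1}{2}{\mathfrak F}_{A^{**}}$; in particular there is a net $(x_j)$ in $\frac{1}{2}{\mathfrak F}_A$ with $x_j \to e$ weak* in $A^{**}$.

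Next I would note that this net is bounded: as remarked in Section 2, $x \in \frac{1}{2}{\mathfrak F}_A$ forces $x \in {\rm Ball}(A^1)$, hence $\Vert x_j \Vert \leq 1$ for all $j$. Since $e$ is a mixed identity of norm $1$ and $(x_j)$ is a bounded net converging weak* to $e$, Lemma \ref{netwee} produces a bai $(y_\lambda)$ for $A$ whose terms are convex combinations of the $x_j$. The set $\frac{1}{2}{\mathfrak F}_A$ is convex, since if $\Vert 1 - 2a \Vert \leq 1$ and $\Vert 1 - 2b \Vert \leq 1$ then $\Vert 1 - 2(ta + (1-t)b) \Vert \leq t + (1-t) = 1$ for $t \in [0,1]$. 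Hence each $y_\lambda$ again lies in $\frac{1}{2}{\mathfrak F}_A$, and in particular $\Vert y_\lambda \Vert \leq 1$. Thus $(y_\lambda)$ is a contractive approximate identity lying in $\frac{1}{2}{\mathfrak F}_A$, as desired.

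In this route essentially all of the difficulty has been pushed into the weak* density statement preceding the \emph{Thus}; once that is in hand the conclusion is a soft combination of Lemma \ref{hfin}, convexity of $\frac{1}{2}{\mathfrak F}_A$, and the convex-combination bai of Lemma \ref{netwee}. If one instead wanted an argument not relying on the density assertion, I would start from Lemma \ref{oz2}: since $\Vert 1 - 2e \Vert_{(A^1)^{**}} \leq 1$ by Lemma \ref{hfin}, that lemma yields a cai $(e_i)$ with weak* limit $e$ and $\limsup_i \Vert 1 - 2e_i \Vert_{A^1} \leq 1$. The genuine obstacle is then the passage from $\limsup \leq 1$ to membership of $\frac{1}{2}{\mathfrak F}_A$ on the nose, and this is precisely where the strong proximinality of the $M$-ideal $A$ in $A^1$ (Lemma \ref{prox}, or the strongly-proximinal-at-$1$ property of the introduction) would enter: one approximates each $2e_i$ by some $z_i \in {\mathfrak F}_A$ with $\Vert 2e_i - z_i \Vert$ small, and then reindexes $(\frac{1}{2}z_i)$ over pairs (finite subset of $A$, tolerance $\epsilon$) to recover the approximate-identity property while remaining inside $\frac{1}{2}{\mathfrak F}_A$.
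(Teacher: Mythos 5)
Your deduction of the final assertion is fine, and it matches what the paper does: since $e \in \frac{1}{2}{\mathfrak F}_{A^{**}}$ by Lemma \ref{hfin}, a net in $\frac{1}{2}{\mathfrak F}_A$ converging weak* to $e$ can be pushed through a convex-combination argument (Lemma \ref{netwee}, or the standard references the paper cites) to yield a bai, and convexity of $\frac{1}{2}{\mathfrak F}_A$ keeps the convex combinations inside that set, hence contractive. But this is only the ``Thus'' of the theorem. The two weak* density assertions are the substance of the statement, and you do not prove either of them: you explicitly treat the density of ${\mathfrak F}_A$ in ${\mathfrak F}_{A^{**}}$ as ``established in the first part of the theorem'' and acknowledge that ``essentially all of the difficulty has been pushed into'' it. That is a genuine gap, not a stylistic choice; as written, the argument is circular on the main point.

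The missing argument is the one your last paragraph gestures at, but carried out for an arbitrary $\eta \in {\mathfrak F}_{A^{**}}$ rather than only for $2e$. Concretely: take a bounded net $(x_t)$ in $A$ with $x_t \to \eta$ weak*, so $1 - x_t \to 1 - \eta$ weak* in $(A^1)^{**}$ with $\Vert 1 - \eta \Vert \leq 1$. Lemma \ref{ban} lets you pass to convex combinations $y^n_t \in {\rm conv}\{x_j : j \geq t\}$ with $\Vert 1 - y^n_t \Vert < 1 + \frac{1}{n}$, still converging weak* to $\eta$ after reindexing over pairs $(n,t)$. Then strong proximinality of the $M$-ideal $A$ in $A^1$ at $1$ (Lemma \ref{prox}) replaces each $y^n_t$ by $w^n_t \in A$ with $\Vert 1 - w^n_t \Vert = 1$ and $\Vert w^n_t - y^n_t \Vert < \frac{3}{n}$, so $w^n_t \in {\mathfrak F}_A$ and $w^n_t \to \eta$ weak*. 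You would also need to address the density of ${\mathfrak r}_A$ in ${\mathfrak r}_{A^{**}}$, which the paper derives from the ${\mathfrak F}$ case together with the ${\mathfrak F}$-transform idea of Proposition \ref{whau} (or from Propositions \ref{pgold} and \ref{preq}); your proposal is silent on this second density claim entirely. Your alternative route via Lemma \ref{oz2} plus proximinality would recover the cai without the general density statement, but it cannot substitute for the density assertions themselves, which are part of what is to be proved.
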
 

\begin{proof}    This is easy if $A$ is unital, so we will focus on the 
nonunital case.  Suppose that $\eta \in A^{**}$
with  $\Vert 1 -  \eta \Vert \leq 1$  .
Suppose that $(x_t)$ is a bounded net in $A$
with weak* limit $\eta$ in $A^{**}$, so that $1 - x_t \to 1 - \eta$ weak* in $(A^1)^{**}$.  By Lemma \ref{ban},
for any $n \in \Ndb$ there exists  a $t_n$ such that
for every $t \geq t_n$, $$\inf \{ \Vert 1 -  y \Vert :
y \in {\rm conv} \{x_j : j \geq t \} \} < 1 + \frac{1}{2n}.$$
For every $t \geq t_n$, choose such a $y^n_t \in {\rm conv} \{x_j : j \geq t \}$ with 
$\Vert 1 -  y^n_t \Vert < 1 + \frac{1}{n}$.   If $t$ does not 
dominate $t_n$ define $y^n_t = y^n_{t_n}$.  So for all $t$
we have $\Vert 1 -  y^n_t \Vert < 1 + \frac{1}{n}$.    
Writing $(n,t)$ as $i$, we may view $(y^n_t)$ as a 
net indexed by $i$, with 
$\Vert 1 - y^n_t \Vert  \to 1$.  Given $\epsilon > 0$ and $\varphi \in A^*$,
there exists a $t_1$
such that $|\varphi(x_t) - \eta(\varphi)| < \epsilon$ for all 
$t \geq t_1$.  Hence $|\varphi(y^n_t) - \eta(\varphi)| \leq
 \epsilon$ for all $t \geq t_1$, and all $n$.   Thus $y^n_t \to \eta$ weak* with $t$.
By Lemma \ref{prox},
since $d(1,A) = 1$, we can choose  $w^n_t \in A$ with $\Vert w^n_t - y^n_t \Vert < \frac{3}{n}$ and $\Vert 1 - w^n_t
\Vert  = 1$.  Clearly $w^n_t \to \eta$ weak*.

That ${\mathfrak r}_A$ is weak* dense in  ${\mathfrak r}_{A^{**}}$ follows from this, and the idea in Proposition \ref{whau}.    We omit the details, since this also follows from Propositions \ref{pgold} and \ref{preq}.

Next, let $e$ be the identity of
$A^{**}$.   By Lemma \ref{hfin} we have that $e \in \frac{1}{2} {\mathfrak F}_{A^{**}}$.  Suppose that $(z_t)$ is a net in $\frac{1}{2} {\mathfrak F}_A$ with weak* limit $e$ in $A^{**}$.  
Standard arguments (see e.g.\ \cite[Proposition 2.9.16]{Dal})  
show that convex combinations $w_t$  of the $z_t$ have the property that $aw_t$ and $w_t a$ converge weakly 
to $a$ for all $a \in A$.  The usual argument (see e.g.\ the proof of \cite[Theorem 6.1]{BHN}) shows that further convex combinations are a  cai in $\frac{1}{2} {\mathfrak F}_A$.   
\end{proof}

{\bf Remark.}   For the first statements of the Theorem we do not need 
the full strength of the `$M$-approximately unital' condition, just strong proximinality at $1$.
For the existence of a cai in $\frac{1}{2} {\mathfrak F}_{A}$ the argument only 
uses strong proximinality at $1$ and  $\Vert 1 - 2 e \Vert \leq 1$.   Similarly,
the existence of a  bai in ${\mathfrak F}_{A}$ will follow from
strong proximinality at $1$ and  $\Vert 1 - e \Vert \leq 1$.

\bigskip

Applied to operator algebras, the latter gives  short proofs of a recent theorem of Read \cite{Read} (see also \cite{Bnpi}), as well as \cite[Lemma 8.1]{BRI} and \cite[Theorem 3.3]{BRII}.    (We remark though that the proof of Read's theorem in \cite{Bnpi} does contain useful extra information that does
not seem to follow from the methods of the present paper, as is pointed out in e.g. Remark 2 after Theorem 2.1 in \cite{Bord}.)
Several other results from \cite{BRI} now follow from the
last result, and with otherwise unchanged proofs,  for 
$M$-approximately unital Banach algebras.  For example: 

\begin{corollary} \label{rd5} {\rm (Cf.\ \cite[Corollary 1.5]{BRI},
\cite[Theorem 2.8]{SW2})}
 \  If $J$ is a closed two-sided ideal in a unital Arens regular 
Banach algebra $A$, and if $J$ is $M$-approximately unital, 
and if the  support projection of $J$ in $A^{**}$ is central there,
then $J$ has a cai $(e_t)$ with $\Vert 1 - 2 e_t \Vert \leq
1$ for all $t$, which is also quasicentral (that is,
$e_t a - a e_t \to 0$ for all $a \in A$).
\end{corollary}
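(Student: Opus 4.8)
The plan is to combine Theorem \ref{lbai}, which already hands us a cai of $J$ inside the convex set $\frac{1}{2}{\mathfrak F}_J$, with the classical Arveson-type averaging argument that upgrades such a cai to a quasicentral one; the new structural input is that the centrality of the support projection forces the relevant commutators to tend weakly to $0$. So the conclusion $\Vert 1 - 2 e_t \Vert \leq 1$ is understood as membership $e_t \in \frac{1}{2}{\mathfrak F}_J$, the natural object attached to the $M$-approximately unital algebra $J$ (and these coincide with $J \cap \frac{1}{2}{\mathfrak F}_A$ when $J$ is moreover an $M$-ideal in $A$, via Proposition \ref{auf2}).

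First I would record the setup. Since $J$ is $M$-approximately unital, Lemma \ref{isun} shows that $J^{**}$ has a unique mixed identity $e$ of norm one and that \emph{every} cai of $J$ converges weak* to $e$; this $e$ is exactly the support projection of $J$ in $A^{**}$, which is central there by hypothesis. As $J$ is in particular approximately unital, Theorem \ref{lbai} supplies a cai $(u_\lambda)$ of $J$ with $u_\lambda \in \frac{1}{2}{\mathfrak F}_J$ for every $\lambda$, and by the previous sentence $u_\lambda \to e$ weak* in $A^{**}$.

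Next I would extract the weak commutator estimate. Because $A$ is Arens regular, the product on $A^{**}$ is separately weak*-continuous (as recalled in the introduction for Arens regular algebras), so for each fixed $a \in A$ we have $u_\lambda a \to ea$ and $a u_\lambda \to ae$ weak* in $A^{**}$. Centrality of $e$ gives $ea = ae$, whence $u_\lambda a - a u_\lambda \to 0$ weak*; and since each $u_\lambda a - a u_\lambda$ already lies in $A$ (in fact in $J$, as $J$ is a two-sided ideal), this is precisely weak convergence to $0$ in $J$.

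Finally I would run the averaging. Fix finite sets $F \subset A$ and $G \subset J$ and an $\epsilon > 0$, and pick $\lambda_0$ so that $\Vert u_\lambda b - b \Vert, \Vert b u_\lambda - b \Vert < \epsilon$ for all $b \in G$ and $\lambda \geq \lambda_0$. In the finite product space $J^{|F|}$ the net $\bigl( (u_\lambda a - a u_\lambda)_{a \in F} \bigr)_{\lambda \geq \lambda_0}$ tends weakly to $0$, so by Mazur's theorem $0$ lies in the norm closure of the convex hull of its range; hence there is a convex combination $y = \sum_i c_i u_{\lambda_i}$ with all $\lambda_i \geq \lambda_0$ and $\Vert y a - a y \Vert < \epsilon$ for every $a \in F$. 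Convexity of $\frac{1}{2}{\mathfrak F}_J$ keeps $y \in \frac{1}{2}{\mathfrak F}_J$, while averaging only over tail indices preserves $\Vert y b - b \Vert, \Vert b y - b \Vert < \epsilon$ for $b \in G$. Indexing these $y$ by the triples $(F,G,\epsilon)$ then yields the desired quasicentral cai in $\frac{1}{2}{\mathfrak F}_J$. The main obstacle is exactly this last step: one must simultaneously make the commutators small \emph{in norm} (only weak smallness is available a priori), retain the approximate-identity property, and stay inside $\frac{1}{2}{\mathfrak F}_J$; the device reconciling all three is to pass from weak to norm via Mazur on the finite product while averaging over a cofinal tail, exploiting the convexity of $\frac{1}{2}{\mathfrak F}_J$, with the centrality of the support projection (together with Arens regularity) being what makes the commutators weakly null in the first place.
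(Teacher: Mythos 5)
Your proof is correct and is exactly the argument the paper intends: it cites no proof of its own but states that the corollary follows from Theorem \ref{lbai} ``with otherwise unchanged proofs'' from \cite[Corollary 1.5]{BRI}, which is precisely your combination of the cai in $\frac{1}{2}{\mathfrak F}_J$ from Theorem \ref{lbai}, the weak nullity of commutators coming from centrality of the support projection together with separate weak* continuity of the Arens product, and the Mazur/convexity averaging over tails. Your parenthetical care about when $\frac{1}{2}{\mathfrak F}_J$ coincides with $J\cap\frac{1}{2}{\mathfrak F}_A$ (via Proposition \ref{auf2}) is a reasonable reading of the norm in the statement and, if anything, is more precise than the paper's own phrasing.
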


\begin{corollary} \label{r6}  {\rm (Cf.\ \cite[Corollary 1.6]{BRI})}
 \ Let $A$ be an $M$-approximately unital Banach algebra.  Then $A$ has  a
 countable bai $(f_n)$ iff $A$ has a countable cai in $\frac{1}{2} {\mathfrak F}_A$.
This is also equivalent (by Theorem {\rm \ref{otter}}) to $A = \overline{xAx}$ for some $x \in {\mathfrak F}_A$.
\end{corollary}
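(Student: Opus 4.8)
The plan is to establish the one nontrivial implication, that a countable bai forces a countable cai in $\frac{1}{2}{\mathfrak F}_A$; the converse is immediate since a cai is a bai, and the equivalence with $A = \overline{xAx}$ then follows formally from Theorem \ref{otter}.

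First I would use Theorem \ref{lbai} to fix a cai $(e_s)_{s \in S}$ for $A$ lying in $\frac{1}{2}{\mathfrak F}_A$, a net which may well be uncountable. Writing $(f_n)$ for the given countable bai, the crux is to extract from $(e_s)$ a \emph{countable} subfamily that remains a bai. For each $n$, since $(e_s)$ is a cai I would pick an index $s_n$ with $\Vert e_{s_n} f_k - f_k \Vert < 1/n$ and $\Vert f_k e_{s_n} - f_k \Vert < 1/n$ for all $k \leq n$. Then $(e_{s_n})_{n}$ is again a two-sided bai for $A$: given $a \in A$ and $\epsilon > 0$, choose $N$ with $\Vert f_N a - a \Vert < \epsilon$, and for $n \geq N$ the decomposition $e_{s_n} a - a = e_{s_n}(a - f_N a) + (e_{s_n} f_N - f_N) a + (f_N a - a)$ yields
$$\Vert e_{s_n} a - a \Vert \leq \Vert e_{s_n} \Vert \, \Vert a - f_N a \Vert + \Vert e_{s_n} f_N - f_N \Vert \, \Vert a \Vert + \Vert f_N a - a \Vert < 2 \epsilon + \frac{\Vert a \Vert}{n},$$
using $\Vert e_{s_n} \Vert \leq 1$; the right side is small for large $n$. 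The symmetric estimate gives $a e_{s_n} \to a$. Hence $(e_{s_n})$ is a countable bai, and as each $e_{s_n} \in \frac{1}{2}{\mathfrak F}_A$ — a set all of whose elements have norm at most $1$, since $\Vert 1 - 2w \Vert \leq 1$ forces $\Vert 2w \Vert \leq 2$ — it is a countable cai in $\frac{1}{2}{\mathfrak F}_A$, as desired.

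For the third condition I would apply Theorem \ref{otter} with the inner ideal $D = A$ (note $AAA \subseteq A$): it says $A$ has a countable bai from ${\mathfrak F}_A$ if and only if $A = \overline{xAx}$ for some $x \in {\mathfrak F}_A$. A countable cai in $\frac{1}{2}{\mathfrak F}_A$ is in particular a countable bai from ${\mathfrak F}_A$ (as $\frac{1}{2}{\mathfrak F}_A \subseteq {\mathfrak F}_A$), and a countable bai from ${\mathfrak F}_A$ is in particular a countable bai; thus the condition $A = \overline{xAx}$ is sandwiched between the two conditions already shown equivalent, completing the circle.

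The main obstacle, and the only real content, is the extraction step: producing from the possibly uncountable cai in $\frac{1}{2}{\mathfrak F}_A$ a countable one, while simultaneously keeping the approximate-identity property and membership in $\frac{1}{2}{\mathfrak F}_A$. This is achieved by using the given countable bai to localize the cai to a countable set of indices, together with the elementary observation that any bai lying inside $\frac{1}{2}{\mathfrak F}_A$ is automatically contractive.
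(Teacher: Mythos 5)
Your proposal is correct and follows essentially the route the paper intends: the paper simply cites Theorem \ref{lbai} together with the (unchanged) proof of \cite[Corollary 1.6]{BRI}, which is exactly your argument — use the countable bai to extract a countable subnet of the cai in $\frac{1}{2}{\mathfrak F}_A$ supplied by Theorem \ref{lbai}, note that membership in $\frac{1}{2}{\mathfrak F}_A$ forces contractivity, and close the circle via Theorem \ref{otter} applied to $D=A$.
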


{\bf Remark.}
We can also use the results in this section  to develop a slightly different approach to 
 hereditary subalgebras than the one taken in Section 4.   For example, the following is a generalization of 
the phenomenon in the first example in \cite[Section 2]{BHN}, which can be interpreted as
saying that for any contractive projection $p$ in the multiplier
algebra $M(A)$, $pAp$ is a HSA in the sense of that paper.     Suppose that
$A$ is an $M$-approximately unital Banach algebra,
and that $p$ is an
idempotent  in $M(A)$ with  
$\Vert 1-2p \Vert \leq 1$. 
For simplicity 
suppose that $A$ is Arens regular.  Define $D = pAp$.
Note that $D$ is an inner ideal in $A$.
We claim that $D$ has a bai in 
$\frac{1}{2} {\mathfrak F}_D$.  
To see this, note that by the 
usual arguments $D^{\perp \perp} = pA^{**} p$.  By  
Theorem  \ref{lbai}
there is a net $w_\lambda$  in
$\frac{1}{2} {\mathfrak F}_A$
with $w_\lambda \to p$ weak*.
Set $d_\lambda = p w_\lambda p$,
then $d_\lambda
\in \frac{1}{2} {\mathfrak F}_D$, and $d_\lambda  \to p$ weak*. 
By the usual arguments, convex combinations
of the $d_\lambda$ give a cai for $D$ 
 in $\frac{1}{2} {\mathfrak F}_D$.  It is easy to see that 
$\overline{DA}= pA$ and $\overline{AD} = Ap$ are the induced one-sided ideals,
and $(d_\lambda)$ is a one-sided cai for these.

\section{Banach algebras and order theory}

As we said earlier, ${\mathfrak r}_A$ and  ${\mathfrak r}^{\mathfrak e}_A$
are closed cones in $A$, but are not
proper in general (hence are what are sometimes called {\em wedges}).    By the argument at the start of Section 2 in 
\cite{Bord},  ${\mathfrak c}_A = \Rdb^+ {\mathfrak F}_A$  is a proper cone.  
These cones naturally induce orderings: we write $a \preceq b$ (resp.\ $a \preceq_{{\mathfrak e}} b$) if $b-a \in 
{\mathfrak r}_A$ (resp.\ $b-a \in  {\mathfrak r}^{{\mathfrak e}}_A$).
These 
are pre-orderings, but are  not in general antisymmetric.    
Because of this some aspects of the classical theory of ordered linear spaces
will not generalize.   Certainly many books on ordered linear spaces assume that their cones are proper.
However other books (such as \cite{AE} or \cite{Jameson}) do not make this assumption in large segments
of the text, and it turns out that the ensuing theory  interacts in
a remarkable way with our recent notion of positivity, as we  point out in this section and in \cite{Bord,BRII}.   For example, in the ordered space theory,
the cone  ${\mathfrak d} = \{ x \in X : x \geq 0 \}$  in an ordered space $X$ is said to be {\em
 generating} if $X = {\mathfrak d} - {\mathfrak d}$.  This is sometimes
called {\em positively generating} or {\em directed} or {\em co-normal}.   
If it is not generating one often looks at the subspace ${\mathfrak d} - {\mathfrak d}$.  
In this language, we shall see next  that ${\mathfrak r}_A$
and ${\mathfrak c}_A = \Rdb^+ {\mathfrak F}_A$  are  generating  cones if $A$ is
$M$-approximately unital, or has a sequential cai and satisfies some further conditions
of the type met in Section 2.   We first discuss the order theory of $M$-approximately unital algebras.

\begin{theorem}  \label{lbdiff}  Let $A$ be 
an  $M$-approximately unital  
Banach algebra.  Any 
$x \in A$ with $\Vert x \Vert < 1$ may be written as $x = a-b$ with $a, b \in {\mathfrak r}_A$
and $\Vert a \Vert < 1$ and $\Vert b \Vert < 1$.
In fact 
one may choose such $a, b$ to also be in $\frac{1}{2} {\mathfrak F}_{A}$.   
\end{theorem}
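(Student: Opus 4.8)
The plan is to reduce the statement to a common best approximation problem for the $M$-ideal $A$ inside $A^1$, and to solve it using the ball intersection properties of $M$-ideals, with Lemma \ref{ban} as a concrete engine for the approximate version.

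First I would reduce to a cleaner statement. Putting $b = a - x$, it suffices to produce $a \in A$ with $a \in \frac12 {\mathfrak F}_A$ and $a - x \in \frac12 {\mathfrak F}_A$; equivalently, writing $c = 2a$, to find $c \in A$ with $c \in {\mathfrak F}_A$ and $c - 2x \in {\mathfrak F}_A$. Since any element of $\frac12 {\mathfrak F}_A$ lies in ${\rm Ball}(A^1)$, such $a,b$ automatically have $\Vert a\Vert, \Vert b\Vert \le 1$; to upgrade to the strict bounds I would exploit the slack in $\Vert x\Vert < 1$. Indeed ${\mathfrak F}_A$, hence $\frac12 {\mathfrak F}_A$, is convex and contains $0$, so $t\, \frac12 {\mathfrak F}_A \subseteq \frac12 {\mathfrak F}_A$ for $t \in [0,1]$; thus if I first solve the problem with the non-strict bounds for $x/t$, where $t \in (\Vert x\Vert, 1)$ is chosen so that $\Vert x/t\Vert < 1$, obtaining $a', b' \in \frac12 {\mathfrak F}_A$ with $a' - b' = x/t$, then $a = ta'$, $b = tb'$ lie in $\frac12 {\mathfrak F}_A$, satisfy $a - b = x$, and have norm $\le t < 1$. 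So the whole content is the \emph{existence} of a decomposition $x = a - b$ with $a, b \in \frac12 {\mathfrak F}_A$.

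Now ${\mathfrak F}_A = \{ c \in A : \Vert 1 - c\Vert_{A^1} \le 1 \}$, and since $\chi_0|_A = 0$ forces $d(1,A) = d(1 + 2x, A) = 1$ in $A^1$, the set ${\mathfrak F}_A$ is precisely the set of best approximations to $1$ from $A$, while $2x + {\mathfrak F}_A$ is the set of best approximations to $1 + 2x$. Thus I must find $c \in A$ that is \emph{simultaneously} a best approximation to $1$ and to $1 + 2x$. The key geometric fact is that $1 + x$ is a common interior point of the two closed unit balls $B(1,1)$ and $B(1 + 2x, 1)$, with margin $1 - \Vert x\Vert > 0$, because $\Vert 1 - (1 + x)\Vert = \Vert (1 + 2x) - (1 + x)\Vert = \Vert x\Vert < 1$; moreover each ball meets $A$ (indeed $0 \in B(1,1)$ and $2x \in B(1 + 2x, 1)$). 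I would then invoke the $n$-ball (intersection) property of $M$-ideals from \cite[Chapter I]{HWW}: since the two closed balls have a common interior point and each meets $A$, their intersection meets $A$, producing the required $c$.

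For the approximate form of this last step there is a self-contained route through Lemma \ref{ban}: take a cai $(e_s)$ in $\frac12 {\mathfrak F}_A$ (Theorem \ref{lbai}) and set $y_s = e_s + x \in A$, so $y_s \to e + x$ weak*. In the Banach space $Z = A^1 \oplus^{\infty} A^1$ the bounded net $\zeta_s = (1 - y_s,\, (1 + 2x) - y_s)$ converges weak* to $\omega = ((1 - e) - x,\, (1 - e) + x)$, and the decomposition $(A^1)^{**} = A^{\perp\perp} \oplus^{\infty} \Cdb (1 - e)$ together with $\Vert 1 - e\Vert = 1$ gives $\Vert \omega\Vert_{Z^{**}} = \max(\Vert (1-e) - x\Vert, \Vert (1-e) + x\Vert) = 1$. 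Lemma \ref{ban} then yields convex combinations $c = \sum_j \lambda_j y_j \in A$ with $\max(\Vert 1 - c\Vert_{A^1}, \Vert (1 + 2x) - c\Vert_{A^1}) < 1 + \epsilon$. The main obstacle is exactly the passage from this \emph{approximate} common best approximant to an \emph{exact} one: strong proximinality (Lemma \ref{prox}) applied at $1$ fixes the first condition but disturbs the second, and naive alternation between the two points diverges because of the constant $3$. This is why I would ultimately lean on the exact ball intersection property of $M$-ideals, whose applicability rests on the common interior point $1 + x$ furnished by $\Vert x\Vert < 1$; the weak* limit arguments alone do not suffice, since they return a point of $A^{**}$ rather than of $A$.
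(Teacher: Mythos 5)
Your approach is correct in outline but genuinely different from the paper's. The paper first proves (as the opening assertion of Theorem \ref{lbai}) that $\tfrac12 {\mathfrak F}_A$ is weak* dense in $\tfrac12 {\mathfrak F}_{A^{**}}$, writes $x=\frac{e+x}{2}-\frac{e-x}{2}$ in the bidual using Lemma \ref{hfin}, concludes that $x$ lies in the weak (hence norm) closure of the convex set $\tfrac12 {\mathfrak F}_A - \tfrac12 {\mathfrak F}_A$, and then runs an elementary geometric-series iteration to convert the norm approximation into an exact decomposition $x=a-b$ with $a,b\in(1+\epsilon)\cdot\tfrac12{\mathfrak F}_A$, finally absorbing the $(1+\epsilon)$ by the same rescaling you use. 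Your route stays in $A^1$ and recasts the problem as finding a simultaneous exact best approximant from $A$ to $1$ and to $1+2x$; the reduction, the identification of ${\mathfrak F}_A$ and $2x+{\mathfrak F}_A$ as the relevant metric projections (via $d(1,A)=d(1+2x,A)=1$), the role of the common interior point $1+x$, and the rescaling are all correct, and your version makes especially transparent why $\Vert x\Vert<1$ is needed (consistent with Remark 1 after the theorem, where the decomposition fails at norm one). What the paper's route buys is that it isolates the weak* density statement as a result of independent interest and, per the Remark after Theorem \ref{lbai}, only needs strong proximinality at $1$ plus $\Vert 1-2e\Vert\le 1$ rather than the full $M$-ideal hypothesis; your route uses the full ball-intersection characterization of $M$-ideals.

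One step of yours needs more than a bare citation. The exact statement you invoke --- two \emph{closed} balls, each meeting $A$ and having a common \emph{interior} point, must meet $A$ simultaneously --- is not quite what \cite[Theorem I.2.2]{HWW} says: the clean equivalence there is for \emph{open} balls, and the open balls $B(1,1)$, $B(1+2x,1)$ do not meet $A$ at all (by the trivial character, $\Vert 1-c\Vert\geq 1$ for $c\in A$). The closed-ball version you need is nevertheless true, but it must be derived by the proximinality-type iteration of \cite[Proposition II.1.1]{HWW} adapted to two target balls plus one control ball: given $y_k\in A$ with $\Vert y_k-1\Vert,\Vert y_k-(1+2x)\Vert<1+\epsilon_k$, apply the (open-ball) $3$-ball property to $B(1,1+\epsilon_k/2)$, $B(1+2x,1+\epsilon_k/2)$ and $B(y_k,C\epsilon_k)$, which share the interior point $(1-t)y_k+t(1+x)$ for $t\asymp \epsilon_k/\delta$ with $\delta=1-\Vert x\Vert$; this produces a Cauchy sequence whose limit is the desired $c$. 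With that iteration supplied (the margin $\delta>0$ is exactly what makes the increments summable, which is why the "naive alternation" you rightly reject is replaced by a convergent scheme), your proof is complete.
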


\begin{proof}   Assume that $\Vert x \Vert = 1$.   Since ${\mathfrak F}_{A^{**}} = e + {\rm Ball}(A^{**})$
by Lemma \ref{hfin}, 
$x = \eta - \xi$ for $\eta, \xi \in \frac{1}{2} {\mathfrak F}_{A^{**}}$.    We may assume that
$A$ is nonunital (the unital case follows from the last line with $A^{**}$ replaced by $A$).
By \cite[Lemma 8.1]{BRI}
we deduce that $x$ is in the weak closure of the convex set
$\frac{1}{2} {\mathfrak F}_{A} - \frac{1}{2} {\mathfrak F}_{A}$.
Therefore it is in the norm closure, so given $\epsilon > 0$ there exists 
$a_0, b_0 \in \frac{1}{2} {\mathfrak F}_{A}$ with 
$\Vert x - (a_0 - b_0) \Vert < \frac{\epsilon}{2}$. 
Similarly, there exists
$a_1, b_1 \in \frac{1}{2} {\mathfrak F}_{A}$ with
 $\Vert x - (a_0 - b_0) - \frac{\epsilon}{2} (a_1 - b_1) \Vert < \frac{\epsilon}{2^2}$.
Continuing in this manner, one produces sequences $(a_k), (b_k)$ in $\frac{1}{2} {\mathfrak F}_{A}$. 
Setting $a' = \sum_{k=1}^\infty \, \frac{1}{2^k} \, a_k$ and 
$b' = \sum_{k=1}^\infty \, \frac{1}{2^k} \, b_k$, which are in $\frac{1}{2} {\mathfrak F}_{A}$ since the 
latter is a closed convex set,
we have $x = (a_0 - b_0) + \epsilon (a' - b')$.  Let $a = a_0 + \epsilon a'$ and 
$b = b_0 + \epsilon b'$.    By convexity $\frac{1}{1 + \epsilon} a \in \frac{1}{2} {\mathfrak F}_{A}$
and $\frac{1}{1 + \epsilon} b \in \frac{1}{2} {\mathfrak F}_{A}$.

If $\Vert x \Vert < 1$ choose  $\epsilon > 0$  with $\Vert x \Vert (1 + \epsilon) < 1$.
Then $x/\Vert x \Vert = a - b$ as above, so that 
$x = \Vert x \Vert \, a - \Vert x \Vert \, b$.    We have $$\Vert x \Vert \, a
= (\Vert x \Vert (1 + \epsilon)) \cdot (\frac{1}{1 + \epsilon} a ) \in [0,1) \cdot \frac{1}{2} {\mathfrak F}_{A}
\subset \frac{1}{2} {\mathfrak F}_{A} ,$$
and similarly $\Vert x \Vert \, b \in  \frac{1}{2} {\mathfrak F}_{A}$.  \end{proof}

{\bf Remarks.}   1) \ If $A$ is $M$-approximately unital then can every
$x \in {\rm Ball}(A)$   be written as $x = a-b$ with $a,b
 \in {\mathfrak r}_A \cap {\rm Ball}(A)$?
  As we said above, this is true if $A$ is unital.  We are particularly interested in this question when
$A$ is an operator algebra (or uniform algebra).  
We can show that in general $x \in {\rm Ball}(A)$ cannot be written  as $x = a-b$ with
$a, b \in \frac{1}{2} {\mathfrak F}_{A}$.   To see this let $A$ be the set
of functions in the  disk algebra vanishing at $-1$, an approximately unital function
algebra.   Let $W$ be the  closed connected set obtained from the unit disk by removing the
`slice' consisting of all complex numbers with negative real part and argument
in a small open interval containing $\pi$.  By the Riemann mapping theorem
 it is easy to see that there is a conformal map $h$ of the disk onto
$W$ taking $-1$ to $0$, so that $h \in {\rm Ball}(A)$.   By way of contradiction
suppose that
$h = a-b$ with $a,b \in \frac{1}{2} {\mathfrak F}_{A}$.  
We use
the geometry of circles
in the plane: if $z, w \in \overline{B(\frac{1}{2}, \frac{1}{2})}$ 
with $|z-w| = 1$ then $z + w = 1$.
It follows that $a + b = 1$
on a nontrivial arc of the unit circle, and hence everywhere
(by
e.g.\ \cite[p.\ 52]{Hof}).  However $a(-1) + b(-1) = 0$, which is the desired contradiction.

\smallskip

2) \ Applying Theorem \ref{lbdiff} to $ix$ for $x  \in A$, one gets a similar decomposition
$x = a-b$ with the `imaginary parts' of $a$ and $b$ positive.   One might ask if, as is suggested by
the $C^*$-algebra case, one may write for each $\epsilon$, any $x \in A$ with $\Vert x \Vert < 1$
as $a_1 - a_2 + i(a_3 - a_4)$ for $a_k$ with numerical range in a thin horizontal `cigar' of height $<
\epsilon$ centered on the line segment $[0,1]$ in the $x$-axis.  In fact this is false, as one can see
in the case that $A$ is  the set of upper triangular $2 \times 2$
matrices with constant diagonal entries.

\bigskip

 A bounded $\Rdb$-linear $\varphi : A \to \Rdb$ (resp.\ $\Cdb$-linear $\varphi : A \to \Cdb$) is called real positive if $\varphi({\mathfrak r}_A) \subset [0,\infty)$ (resp.\ Re$\, \varphi({\mathfrak r}_A) \geq 0$).  
The set  of real positive functionals on $A$ is the  {\em real dual cone}, and we write it as
${\mathfrak c}^{\Rdb}_{A^*} $.  Similarly, the `real version' of ${\mathfrak c}^{\mathfrak e}_{A^*}$ will be written
as ${\mathfrak c}^{{\mathfrak e},\Rdb}_{A^*}$. 
 By the usual trick, for any $\Rdb$-linear $\varphi : A \to \Rdb$, there
is a unique $\Cdb$-linear $\tilde{\varphi} : A \to \Cdb$  with Re $\, \tilde{\varphi} = \varphi$,
and clearly $\varphi$ is real positive 
iff $\tilde{\varphi}$ is real positive.

\begin{proposition} \label{preq}  Let $A$ be an 
M-approximately unital  
Banach algebra.
 An $\Rdb$-linear  $f : A \to \Rdb$ (resp.\ $\Cdb$-linear $f : A \to \Cdb$)
is real positive iff $f$ is a nonnegative multiple of the real part of a state (resp.\  nonnegative multiple
of a state).
Thus $M$-approximately unital algebras are scaled 
 Banach algebras.  
\end{proposition}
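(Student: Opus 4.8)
The plan is to prove the $\Cdb$-linear assertion directly and deduce everything else from it. The $\Rdb$-linear statement reduces to the complex one: given an $\Rdb$-linear real-positive $f$, the associated $\Cdb$-linear $\tilde f$ with $\mathrm{Re}\,\tilde f = f$ is again real positive, so once $\tilde f = t\varphi$ for a state $\varphi$ and $t \geq 0$, we recover $f = t\,\mathrm{Re}\,\varphi$. The final sentence is then immediate, since asserting that every $\Cdb$-linear real-positive functional (that is, every element of ${\mathfrak c}_{A^*}$) is a nonnegative multiple of a state is exactly ${\mathfrak c}_{A^*} = \Rdb^+ S(A)$, the definition of scaled. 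The easy direction of the equivalence is also quick: if $f = t\varphi$ with $t \geq 0$ and $\varphi \in S(A)$, then $\varphi$ extends to a state on $A^1$, so $\mathrm{Re}\,\varphi(a) \geq 0$ for $a \in {\mathfrak r}_A$ by the very definition of ${\mathfrak r}_A$, whence $f$ is real positive (and similarly $\mathrm{Re}\,\varphi$ in the $\Rdb$-linear case).

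So fix a nonzero $\Cdb$-linear real-positive $f \in A^*$; the goal is to show $f = \|f\|\,\varphi$ for a state $\varphi$. Let $e$ be the mixed identity of norm $1$ in $A^{**}$, which is unique by Lemmas \ref{hfin} and \ref{isun}, and let $\tilde f$ denote the canonical weak* continuous extension of $f$ to $A^{**}$, so that $\|\tilde f\| = \|f\|$. The heart of the matter is the claim that $\mathrm{Re}\,\tilde f(e) = \|f\|$.

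For the bound $0 \leq \mathrm{Re}\,\tilde f(e) \leq \|f\|$, I would use that by Theorem \ref{lbai} there is a cai $(e_t)$ in $\frac{1}{2}{\mathfrak F}_A \subset {\mathfrak r}_A$; since $A$ is $M$-approximately unital this cai converges weak* to $e$, so $\tilde f(e) = \lim_t f(e_t)$, and $0 \leq \mathrm{Re}\,f(e_t) \leq \|f\|$ because $\|e_t\| \leq 1$ and $e_t \in {\mathfrak r}_A$. The reverse inequality $\|f\| \leq \mathrm{Re}\,\tilde f(e)$ is the crux and is where the $M$-structure genuinely enters. Given $x \in \mathrm{Ball}(A)$, I would apply Theorem \ref{lbdiff} to write $x = a-b$ with $a,b \in \frac{1}{2}{\mathfrak F}_A$; since $b \in {\mathfrak r}_A$ this gives $\mathrm{Re}\,f(x) \leq \mathrm{Re}\,f(a)$. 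Because $2a \in {\mathfrak F}_A$ we have $\|e - 2a\|_{A^{**}} \leq 1$, so $\mathrm{Re}\,f(2a) = \mathrm{Re}\,\tilde f(e) - \mathrm{Re}\,\tilde f(e-2a) \leq \mathrm{Re}\,\tilde f(e) + \|f\|$, and hence $\mathrm{Re}\,f(x) \leq \tfrac12(\mathrm{Re}\,\tilde f(e) + \|f\|)$. Taking the supremum over $x$ and using $\|f\| = \sup_{\|x\|\le 1}\mathrm{Re}\,f(x)$ yields $\|f\| \leq \mathrm{Re}\,\tilde f(e)$, establishing the claim.

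With $\mathrm{Re}\,\tilde f(e) = \|f\|$ in hand, the conclusion is fast: since $|\tilde f(e)| \leq \|\tilde f\|\,\|e\| = \|f\|$ while its real part already equals $\|f\|$, we must have $\tilde f(e) = \|f\|$, a nonnegative real. Then $\varphi := f/\|f\|$ has norm $1$ and satisfies $\tilde\varphi(e) = 1$; as $e$ is the unique norm-one mixed identity, this is precisely condition (iv) of Lemma \ref{hbs}, so $\varphi$ is a state on $A$ and $f = \|f\|\,\varphi$ as desired. I expect the reverse inequality $\|f\| \leq \mathrm{Re}\,\tilde f(e)$ to be the main obstacle, as it is the only step that truly exploits the $M$-ideal decomposition of Theorem \ref{lbdiff} together with the relation $\|e-2a\|_{A^{**}} \leq 1$ for $a \in \frac{1}{2}{\mathfrak F}_A$; the remaining steps are bookkeeping with the mixed identity and an appeal to Lemma \ref{hbs}.
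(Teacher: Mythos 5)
Your argument is correct, but it follows a genuinely different route from the paper's. The paper disposes of the unital case by citing the proof of \cite[Theorem 2.2]{Mag}, and in the nonunital case applies Proposition \ref{auf2} (4) to the $M$-ideal inclusion $A \subset A^1$ to verify condition (iii) of Corollary \ref{snr}, which then yields that $A$ is scaled; no direct estimate on an individual functional is ever made. You instead give a self-contained Bohnenblust--Karlin/Magajna-style argument: you pin down $\mathrm{Re}\,\tilde f(e) = \Vert f \Vert$ at the unique mixed identity, using a real positive cai (Theorem \ref{lbai}) for the inequality $\mathrm{Re}\,\tilde f(e) \leq \Vert f \Vert$ and the decomposition $x = a - b$ with $a,b \in \frac{1}{2}{\mathfrak F}_A$ (Theorem \ref{lbdiff}) together with $\Vert e - 2a \Vert \leq 1$ for the reverse inequality, and then invoke Lemma \ref{hbs} (iv) $\Rightarrow$ (i). All the lemmas you use precede the proposition, so there is no circularity. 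What your route buys is transparency about exactly where the $M$-structure enters (only through Theorems \ref{lbai} and \ref{lbdiff} and Hahn--Banach smoothness), a uniform treatment of the unital and nonunital cases, and in fact a slightly more general statement: any approximately unital algebra that is Hahn--Banach smooth in $A^1$, has a cai in ${\mathfrak r}_A$ converging weak* to a unique mixed identity, and satisfies the conclusion of Theorem \ref{lbdiff} is scaled. What the paper's route buys is brevity and the stronger structural fact recorded in Proposition \ref{auf2} (4), which it reuses elsewhere. Two trivial points of hygiene: Theorem \ref{lbdiff} is stated for $\Vert x \Vert < 1$, so you should take your supremum over the open unit ball (which still computes $\Vert f \Vert$); and at the last step you can avoid Lemma \ref{hbs} altogether by noting that $\tilde\varphi_{|A + \Cdb e}$ is a norm-one functional on $A + \Cdb e \cong A^1$ taking the value $1$ at the identity, hence a state on $A^1$ extending $\varphi$.
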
 \begin{proof}    The one direction is obvious.  For the 
other, by the observation above
the Proposition, we can assume that  $f : A \to \Cdb$ is $\Cdb$-linear
and real positive.    If $A$ is unital then the result follows from the proof of \cite[Theorem 2.2]{Mag}.
Otherwise by 
Proposition \ref{auf2} (4)  applied to the inclusion $A \subset A^1$ we  see that the condition
in Corollary \ref{snr} (iii) holds.  
So $A$ is scaled by Corollary \ref{snr}.
(We remark that we had a different proof in an earlier draft.)  
   \end{proof}

We now turn  to other
 classes of algebras (although we will obtain another couple of results for $M$-approximately unital algebras
 later in this section in 
parts (2) of Corollaries \ref{dirset} and \ref{preqi}).

The following is a variant and simplification of 
\cite[Lemma 2.7 and 
Corollary 2.9]{BRIII} and \cite[Corollary 3.6]{BRII}.   

\begin{proposition}  \label{pr3}   Let $A$ be an 
scaled approximately unital  
 Banach algebra.  Then the real dual cone ${\mathfrak c}^{\Rdb}_{A^*} $  equals 
$\{ t \, {\rm Re}(\psi) : \psi \in S(A) , \, t \in [0,\infty) \}$.   The prepolar of ${\mathfrak c}^{\Rdb}_{A^*}$, which equals its
real predual cone,  is  ${\mathfrak r}_{A}$;  
and  the polar of ${\mathfrak c}^{\Rdb}_{A^*}$,  which equals its
real dual cone,  is  ${\mathfrak r}_{A^{**}}$.     
\end{proposition}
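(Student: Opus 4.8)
The plan is to derive all three assertions from the complexification trick recalled just before Proposition \ref{preq}, the scaled hypothesis, Corollary \ref{snr}, and the polar computation already carried out inside the proof of Proposition \ref{pgold}; there is no genuinely new idea, only the assembling of these. Throughout I would use that for a \emph{cone} the polar in this paper's sense (defined by ${\rm Re}\,\langle\cdot,\cdot\rangle \geq -1$) collapses to the dual cone (${\rm Re}\,\langle\cdot,\cdot\rangle \geq 0$): multiplying a cone element by large positive scalars forces the pairing to be nonnegative. In particular the prepolar of ${\mathfrak c}^{\Rdb}_{A^*}$ coincides with its real predual cone, and the polar with its real dual cone, so the two parenthetical identifications in the statement are automatic and I need only compute the cones.

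First I would establish the description of ${\mathfrak c}^{\Rdb}_{A^*}$. The assignment $\varphi \mapsto \tilde{\varphi}$ sending an $\Rdb$-linear functional to the unique $\Cdb$-linear $\tilde{\varphi}$ with ${\rm Re}\,\tilde{\varphi} = \varphi$ is an $\Rdb$-linear bijection carrying the real positive $\Rdb$-linear functionals onto ${\mathfrak c}_{A^*}$. Since $A$ is scaled, ${\mathfrak c}_{A^*} = \Rdb^+ S(A)$, so every $\tilde{\varphi} \in {\mathfrak c}_{A^*}$ has the form $t\psi$ with $t \geq 0$ and $\psi \in S(A)$; taking real parts gives $\varphi = t\,{\rm Re}(\psi)$. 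Conversely each such $t\,{\rm Re}(\psi)$ has complexification $t\psi \in {\mathfrak c}_{A^*}$, giving exactly ${\mathfrak c}^{\Rdb}_{A^*} = \{ t\,{\rm Re}(\psi) : \psi \in S(A),\, t \in [0,\infty)\}$.

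Next, for the predual cone: by the previous paragraph an element $a \in A$ lies in it iff $t\,{\rm Re}\,\psi(a) \geq 0$ for all $t \geq 0$ and $\psi \in S(A)$, that is, iff ${\rm Re}\,\psi(a) \geq 0$ for every $\psi \in S(A)$. I would then check this set equals ${\mathfrak r}_A$. The inclusion ${\mathfrak r}_A \subseteq \{ a \in A : {\rm Re}\,\psi(a) \geq 0 \text{ for all } \psi \in S(A)\}$ is immediate, since each $\psi \in S(A)$ extends to a state on $A^1$. For the reverse inclusion, if $A$ is unital then $S(A) = S(A^1)$ and there is nothing to prove; if $A$ is nonunital, Corollary \ref{snr} (applicable because $A$ is scaled) writes every $\varphi \in S(A^1)$ as $t\hat{\psi} + (1-t)\chi_0$ with $\hat{\psi}$ extending some $\psi \in S(A)$ and $\chi_0$ the trivial character, so that $\varphi(a) = t\psi(a)$ for $a \in A$, whence ${\rm Re}\,\varphi(a) \geq 0$ and $a \in {\mathfrak r}_A$.

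Finally, for the dual cone in $A^{**}$: under the pairing $\eta \mapsto {\rm Re}\,\eta(\tilde{\varphi})$ (where $\tilde{\varphi} \in {\mathfrak c}_{A^*}$ is the complexification of $\varphi$ and $\eta \in A^{**}$ is evaluated on it), an $\eta$ lies in the real dual cone of ${\mathfrak c}^{\Rdb}_{A^*}$ iff ${\rm Re}\,\eta(\tilde{\varphi}) \geq 0$ for all $\tilde{\varphi} \in {\mathfrak c}_{A^*}$, i.e.\ iff $\eta \in ({\mathfrak c}_{A^*})^\circ$ in the complex duality of $A^*$ with $A^{**}$. But the proof of Proposition \ref{pgold} already establishes $({\mathfrak c}_{A^*})^\circ = {\mathfrak r}_{A^{**}}$, so the polar is ${\mathfrak r}_{A^{**}}$ as claimed. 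The only point requiring care is this bookkeeping between the real cones ${\mathfrak c}^{\Rdb}_{A^*}$ and their complex counterparts ${\mathfrak c}_{A^*}$ and aligning their respective polars and prepolars; once the real dual cone is identified with the complex polar, each of the three assertions is a direct appeal to a prior result, so I expect no obstacle beyond getting this identification straight.
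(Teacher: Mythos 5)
Your proof is correct and follows essentially the same route as the paper's: the complexification bijection $\varphi\mapsto\tilde\varphi$ plus the scaled hypothesis for the description of ${\mathfrak c}^{\Rdb}_{A^*}$, and the computation $({\mathfrak c}_{A^*})^\circ={\mathfrak r}_{A^{**}}$ from the proof of Proposition \ref{pgold} for the last assertion. The only (harmless) divergence is in the prepolar step, where you verify $\{a:\mathrm{Re}\,\psi(a)\geq 0\ \forall\psi\in S(A)\}={\mathfrak r}_A$ directly via Corollary \ref{snr}, while the paper simply notes that the prepolar of ${\mathfrak c}_{A^*}={\mathfrak r}_A^{\circ}$ is ${\mathfrak r}_A$ by the bipolar theorem (which does not even need the scaled hypothesis); both are valid.
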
 \begin{proof}   It follows as in Proposition \ref{preq}   that  
  $${\mathfrak c}^{\Rdb}_{A^*} = \{ t \, {\rm Re}(\psi) : \psi \in S(A) , \, t \in [0,\infty) \}.$$
The prepolar of ${\mathfrak c}^{\Rdb}_{A^*}$, which equals its
real predual cone,  is  ${\mathfrak r}_{A}$ by the bipolar theorem.  
We proved in Proposition \ref{pgold}  that ${\mathfrak r}_{A}$ is weak* dense 
in ${\mathfrak r}_{A^{**}}$. 
This  together with  the bipolar theorem gives the last assertion.  \end{proof}

The following is a  `Kaplansky density' result for ${\mathfrak r}_{A^{**}}$:   

\begin{proposition}  \label{Kap} Let $A$ be an approximately unital Banach algebra
such that ${\mathfrak r}_A$ is weak* dense in ${\mathfrak r}_{A^{**}}$
(as we saw in Proposition {\rm \ref{pgold}} was the case for scaled approximately unital  
algebras).  Then the set of contractions
in ${\mathfrak r}_A$ is weak* dense in the set of contractions
in ${\mathfrak r}_{A^{**}}$.    If in addition  there exists a mixed identity of norm 1
in ${\mathfrak r}_{A^{**}}$, then $A$ has a cai in ${\mathfrak r}_{A}$.
\end{proposition}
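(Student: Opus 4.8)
The plan is to prove the two assertions in turn: first the density statement, which I would reduce to Lemma~\ref{ban} together with a rescaling, and then the final assertion, which I would deduce from the density statement via Lemma~\ref{netwee}.

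For the density statement, fix $\eta \in {\mathfrak r}_{A^{**}}$ with $\Vert \eta \Vert \leq 1$; the goal is to exhibit $\eta$ as a weak* limit of contractions lying in ${\mathfrak r}_A$. By the standing hypothesis there is a net $(x_t)$ in ${\mathfrak r}_A$ with $x_t \to \eta$ weak*, but the $x_t$ need not be contractions (indeed the net need not be bounded), so the heart of the matter is to tame the norm without spoiling the weak* limit or leaving the cone. Here I would pass to convex combinations: since ${\mathfrak r}_A$ is a convex cone, every $y \in {\rm conv}\{x_j : j \geq t\}$ again lies in ${\mathfrak r}_A$ and these still converge weak* to $\eta$, while Lemma~\ref{ban} gives $\lim_t \, \inf \{ \Vert y \Vert : y \in {\rm conv}\{x_j : j \geq t\}\} = \Vert \eta \Vert \leq 1$. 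Thus, proceeding exactly as in the proof of Theorem~\ref{lbai} (or Lemma~\ref{oz2}), for each $n$ I can choose $y^n_t \in {\rm conv}\{x_j : j \geq t\} \subset {\mathfrak r}_A$ with $\Vert y^n_t \Vert < 1 + \frac{1}{n}$, and, reindexing by $i=(n,t)$, obtain a net $(y_i)$ in ${\mathfrak r}_A$ with $y_i \to \eta$ weak*. Finally, since ${\mathfrak r}_A$ is a cone, the rescaled elements $w_i = (1+\frac{1}{n})^{-1} y_i$ lie in ${\mathfrak r}_A$, are contractions, and still satisfy $w_i \to \eta$ weak* (the scalar tends to $1$). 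The reverse inclusion is automatic, since the contractions in ${\mathfrak r}_{A^{**}}$, namely ${\mathfrak r}_{A^{**}} \cap {\rm Ball}(A^{**})$, form a weak* closed set containing the contractions in ${\mathfrak r}_A$.

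For the final assertion, suppose $e \in {\mathfrak r}_{A^{**}}$ is a mixed identity with $\Vert e \Vert = 1$. Then $e$ is in particular a contraction in ${\mathfrak r}_{A^{**}}$, so by the density just established there is a net $(w_\lambda)$ of contractions in ${\mathfrak r}_A$ with $w_\lambda \to e$ weak*. As $(w_\lambda)$ is bounded and $e$ is a mixed identity, Lemma~\ref{netwee} produces a bai for $A$ formed from convex combinations of the $w_\lambda$ and still having weak* limit $e$. Since both ${\mathfrak r}_A$ and ${\rm Ball}(A)$ are convex, these convex combinations remain contractions lying in ${\mathfrak r}_A$; and a bai consisting of contractions is a cai, so we obtain a cai for $A$ in ${\mathfrak r}_A$, as required.

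The main obstacle is the norm-taming step in the second paragraph: one cannot simply intersect the two weak* density facts (namely ${\mathfrak r}_A$ dense in ${\mathfrak r}_{A^{**}}$ and, by Goldstine, ${\rm Ball}(A)$ weak* dense in ${\rm Ball}(A^{**})$), and the approximating net $(x_t)$ may well be unbounded. What rescues the argument is that passing to convex combinations simultaneously keeps us inside the convex cone ${\mathfrak r}_A$ and, via the Hahn--Banach separation underlying Lemma~\ref{ban}, drives the norm down toward $\Vert \eta \Vert$. I would take care to note that this particular direction of the argument does not actually require boundedness of $(x_t)$ — the separation of the convex hull of a tail from a ball uses only that the ball has interior — so that the convex combinations themselves provide the bounded approximating net we need.
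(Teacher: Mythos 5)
Your proof is correct, but the density half takes a genuinely different route from the paper. The paper's proof is a bipolar argument in the ordered-space style: it observes that ${\rm Ball}(A^*) + {\mathfrak c}_{A^*}$ is weak* closed (compact plus closed), computes its prepolar to be ${\rm Ball}(A) \cap {\mathfrak r}_A$ and its polar to be ${\rm Ball}(A^{**}) \cap {\mathfrak r}_{A^{**}}$ (using $({\mathfrak c}_{A^*})^\circ = \overline{{\mathfrak r}_A}^{w*} = {\mathfrak r}_{A^{**}}$, which is where the density hypothesis enters), and concludes by the bipolar theorem. That version is shorter and exhibits the dual object ${\rm Ball}(A^*) + {\mathfrak c}_{A^*}$ explicitly, which ties in with the conormality discussion in Section 6. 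Your version instead runs the Mazur-type ``convex combinations of tails'' argument of Lemma \ref{ban} and Theorem \ref{lbai} directly on an approximating net from ${\mathfrak r}_A$, then rescales by $(1+\tfrac{1}{n})^{-1}$ inside the cone; this is more hands-on and produces an explicit net of contractions. You correctly identify the one delicate point, namely that the net furnished by the weak* density hypothesis need not be bounded, and your fix is sound: the direction of Lemma \ref{ban} you need (that the infimum of norms over convex hulls of tails is eventually below $\Vert \eta \Vert + \varepsilon$) is proved by separating the closed convex hull of a tail from a ball with nonempty interior, and that separation argument nowhere uses boundedness of the net; weak* convergence of $\varphi$ along the net is likewise independent of boundedness. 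The final assertion is handled exactly as in the paper, via Lemma \ref{netwee} and convexity of ${\rm Ball}(A) \cap {\mathfrak r}_A$.
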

 \begin{proof}  We use a standard kind of bipolar argument from the theory of ordered spaces.   
If $E$ and $F$ are closed sets in a TVS with $E$ compact, then $E + F$ is closed.   By this principle, 
and by Alaoglu's theorem, 
${\rm Ball}(A^*) + {\mathfrak c}_{A^*}$ is weak* closed.  Its prepolar (resp.\ polar) 
certainly is contained in ${\rm Ball}(A) \cap {\mathfrak r}_A$ (resp.\  ${\rm Ball}(A^{**}) \cap {\mathfrak r}_{A^{**}}$).  This uses the fact that $$({\mathfrak c}_{A^*})^\circ = {\mathfrak r}_A^{\circ \circ} = \overline{{\mathfrak r}_{A}}^{w*}
= {\mathfrak r}_{A^{**}}$$ 
 by the bipolar theorem.     However if $a \in {\rm Ball}(A) \cap {\mathfrak r}_A$ and 
$f \in {\rm Ball}(A^*)$ and $g \in {\mathfrak c}_{A^*}$   then Re$(f(a) + g(a)) \geq -1 + 0 = -1$.
So the prepolar  of ${\rm Ball}(A^*) + {\mathfrak c}_{A^*}$  
is  ${\rm Ball}(A) \cap {\mathfrak r}_A$,
and similarly its polar is ${\rm Ball}(A^{**}) \cap {\mathfrak r}_{A^{**}}$.
Thus ${\rm Ball}(A) \cap {\mathfrak r}_A$ is weak* dense in 
${\rm Ball}(A^{**}) \cap {\mathfrak r}_{A^{**}}$ by the  bipolar  theorem.
The last assertion clearly follows from this and Lemma \ref{netwee}.
 \end{proof}

   The condition in the next result
that $A^{**}$ is unital is a bit restrictive (it holds for example if $A$ is Arens regular and approximately 
unital), but the result illustrates some of what one might like to be true 
in more general situations:

\begin{theorem} \label{soaddf}   Let $A$ be a
Banach algebra such that $A^{**}$ is unital, and suppose that
 ${\mathfrak e}$ is a cai for $A$.  Then ${\mathfrak r}^{\mathfrak e}_{A} \subset {\mathfrak r}_{A^{**}}$ iff ${\mathfrak r}^{\mathfrak e}_{A}  = {\mathfrak r}_{A}$.
Suppose that the latter is true, and that 
$Q_{\mathfrak e}(A)$ is weak* closed.   Then  $A$ is scaled, 
 $S(A) = S_{\mathfrak e}(A)$, and $A$ has a  cai in ${\mathfrak r}_A$.   Also in this case,  
 $A =  {\mathfrak r}_A - {\mathfrak r}_A$.  Indeed any $x \in A$ with $\Vert x \Vert < 1$ may be written as $x = a-b$
for $a, b \in  {\mathfrak r}_A \cap {\rm Ball}(A)$.  
\end{theorem}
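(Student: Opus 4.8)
The plan is to dispatch the stated equivalence first, then the three listed consequences, saving the additive decomposition for last since that is where the real work lies.

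\emph{The equivalence.} The key preliminary is that, $A^{**}$ being unital with identity $e$ and $A$ being approximately unital, one has $A^1 \cong A + \Cdb e \subset A^{**}$ with $e$ the unit of $A^1$. Each state of $A^1$ extends by Hahn--Banach to a norm-one functional on $A^{**}$ fixing $e$, hence to a state of the unital algebra $A^{**}$; conversely states of $A^{**}$ restrict to states of $A^1$. From $S(A^1) = \{\psi_{|A^1} : \psi \in S(A^{**})\}$ I would read off the identity ${\mathfrak r}_A = A \cap {\mathfrak r}_{A^{**}}$. Granting this, both directions are immediate: one always has ${\mathfrak r}_A \subset {\mathfrak r}^{\mathfrak e}_A$ and ${\mathfrak r}_A \subset {\mathfrak r}_{A^{**}}$, so if ${\mathfrak r}^{\mathfrak e}_A = {\mathfrak r}_A$ then ${\mathfrak r}^{\mathfrak e}_A \subset {\mathfrak r}_{A^{**}}$; and if ${\mathfrak r}^{\mathfrak e}_A \subset {\mathfrak r}_{A^{**}}$, then intersecting with $A$ gives ${\mathfrak r}^{\mathfrak e}_A \subset A \cap {\mathfrak r}_{A^{**}} = {\mathfrak r}_A$, forcing equality.

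\emph{Scaledness, $S(A)=S_{\mathfrak e}(A)$, and a cai in ${\mathfrak r}_A$.} Assuming ${\mathfrak r}^{\mathfrak e}_A = {\mathfrak r}_A$ and passing to dual cones gives ${\mathfrak c}^{\mathfrak e}_{A^*} = {\mathfrak c}_{A^*}$. Since $Q_{\mathfrak e}(A)$ is weak* closed, $A$ is ${\mathfrak e}$-scaled by Lemma \ref{escaled}, i.e.\ ${\mathfrak c}^{\mathfrak e}_{A^*} = \Rdb^+ S_{\mathfrak e}(A)$. Using $S_{\mathfrak e}(A) \subset S(A) \subset {\mathfrak c}_{A^*}$ I obtain the sandwich $\Rdb^+ S_{\mathfrak e}(A) = {\mathfrak c}_{A^*} \supseteq \Rdb^+ S(A) \supseteq \Rdb^+ S_{\mathfrak e}(A)$, so all coincide: thus ${\mathfrak c}_{A^*} = \Rdb^+ S(A)$ ($A$ is scaled) and $\Rdb^+ S(A) = \Rdb^+ S_{\mathfrak e}(A)$; as every member of $S(A)$ and of $S_{\mathfrak e}(A)$ has norm one, the latter cone equality forces $S(A) = S_{\mathfrak e}(A)$. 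For the cai, scaledness and Proposition \ref{pgold} give that ${\mathfrak r}_A$ is weak* dense in ${\mathfrak r}_{A^{**}}$, and the identity $e$ is a mixed identity of norm one lying in ${\mathfrak r}_{A^{**}}$, so Proposition \ref{Kap} produces a cai for $A$ inside ${\mathfrak r}_A$.

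\emph{The decomposition: setup.} This is the step I expect to be the main obstacle: without an $M$-ideal structure there is no strong proximinality, so the argument of Theorem \ref{lbdiff} is unavailable, and the decomposition must be assembled from Kaplansky density by a norm-controlled iteration. In the unital algebra $A^{**}$, for $\Vert y \Vert \le 1$ one has $y = \tfrac12(e+y) - \tfrac12(e-y)$ with both summands in ${\mathfrak F}_{A^{**}} \cap {\rm Ball}(A^{**}) \subset {\mathfrak r}_{A^{**}} \cap {\rm Ball}(A^{**}) =: W$, so ${\rm Ball}(A^{**}) \subset W - W$, and by homogeneity $r\,{\rm Ball}(A^{**}) \subset (rW)-(rW)$ for every $r>0$. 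Writing $V = {\mathfrak r}_A \cap {\rm Ball}(A)$, Proposition \ref{Kap} gives that $rV$ is weak* dense in $rW$, and an iterated weak*-limit argument yields $(rW)-(rW) \subset \overline{(rV)-(rV)}^{w*}$. Since $(rV)-(rV)$ is convex, for $y \in A$ with $\Vert y \Vert < r$ one gets $y \in \overline{(rV)-(rV)}^{w*} \cap A = \overline{(rV)-(rV)}^{\Vert\cdot\Vert}$; that is, $y$ is norm-approximable by a difference $a'-b'$ with $a',b' \in {\mathfrak r}_A$ and $\Vert a'\Vert,\Vert b'\Vert \le r$.

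\emph{The decomposition: iteration.} Given $\Vert x \Vert < 1$, I would fix $r_0 \in (\Vert x\Vert,1)$ and $\theta \in (0,1-r_0)$, set $r_k = r_0\theta^k$, and iterate: with $\Vert x_k \Vert < r_k$, choose $a_k,b_k \in {\mathfrak r}_A$ satisfying $\Vert a_k\Vert,\Vert b_k\Vert \le r_k$ and $\Vert x_k - (a_k-b_k)\Vert < r_{k+1}$, and set $x_{k+1} = x_k - (a_k-b_k)$. Then $a = \sum_k a_k$ and $b = \sum_k b_k$ converge in norm because $\sum_k r_k = r_0/(1-\theta) \le 1$, they lie in the closed cone ${\mathfrak r}_A$ and satisfy $\Vert a\Vert,\Vert b\Vert \le 1$, and $a-b = \lim_N (x - x_N) = x$ since $x_N \to 0$. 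This delivers $x = a-b$ with $a,b \in {\mathfrak r}_A \cap {\rm Ball}(A)$, and $A = {\mathfrak r}_A - {\mathfrak r}_A$ follows by scaling. The delicate points are the scale-invariant form of Kaplansky density and the bookkeeping that keeps $\sum_k r_k \le 1$, which is exactly what confines $a$ and $b$ to the unit ball rather than merely to ${\mathfrak r}_A$.
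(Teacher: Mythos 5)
Your overall architecture coincides with the paper's: the equivalence via $\mathfrak{r}_A = A \cap \mathfrak{r}_{A^{**}}$ (Hahn--Banach extension/restriction of states through $A^1 = A + \Cdb e$), scaledness and $S(A)=S_{\mathfrak e}(A)$ via Lemma \ref{escaled} and the sandwich of cones, the cai via Propositions \ref{pgold} and \ref{Kap}, and for the decomposition an iteration in the style of Theorem \ref{lbdiff} (the paper literally says ``a slight variant of the proof of Theorem \ref{lbdiff}''; you have actually written that variant out, with cleaner geometric bookkeeping $r_k = r_0\theta^k$, and your Kaplansky-plus-convexity reduction of weak* closure to norm closure is exactly the intended mechanism).

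There is, however, one step that is wrong as stated and needs repair: the claim that $\tfrac12(e\pm y) \in {\mathfrak F}_{A^{**}}$ for $\Vert y \Vert \le 1$. The identity ${\mathfrak F}_{A^{**}} = e + {\rm Ball}(A^{**})$ is precisely the content of Lemma \ref{hfin} and characterizes the $M$-approximately unital case; for a general $A$ with unital bidual one only gets $\Vert 1 - \tfrac12(e+y)\Vert_{(A^1)^{**}} \le \Vert 1-e\Vert + 1$, which can be as large as $2$ (compare $A = L^1(\Rdb)$, where $\Vert 1 - e \Vert = 2$). Fortunately you only use the weaker membership $\tfrac12(e\pm y) \in {\mathfrak r}_{A^{**}}$, and this \emph{is} available under the standing hypotheses, but it requires the conclusions you have already established rather than being automatic: since $A$ is scaled, Proposition \ref{pgold} gives ${\mathfrak r}_{A^{**}} = ({\mathfrak c}_{A^*})^\circ$ with ${\mathfrak c}_{A^*} = \Rdb^+ S(A)$; since $S(A) = S_{\mathfrak e}(A)$ and every weak* limit point of the cai is the identity $e$ of $A^{**}$, the canonical weak* continuous extension $\tilde\psi$ of any $\psi \in S(A)$ satisfies $\tilde\psi(e) = 1$, whence ${\rm Re}\,\tilde\psi(e \pm y) \ge 1 - \Vert y \Vert \ge 0$ and $e \pm y \in {\mathfrak r}_{A^{**}}$. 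The same observation is what justifies your (also unproved) assertion that $e$ itself lies in ${\mathfrak r}_{A^{**}}$, which you need in order to invoke the last sentence of Proposition \ref{Kap} for the cai. With these two justifications inserted, your proof is complete and correct.
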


\begin{proof}    If $f \in S(A)$ then by viewing $A^1 = A + \Cdb e$ we may extend $f$ to a state $\hat{f}$ of $A^{**}$.  If $x \in {\mathfrak r}^{\mathfrak e}_{A} \subset {\mathfrak r}_{A^{**}}$ then Re$ \, f(x) = {\rm Re} \, \hat{f} (x) \geq 0$.  Thus ${\mathfrak r}^{\mathfrak e}_{A} \subset {\mathfrak r}_{A}$, and so
these sets are equal.  We also see that ${\mathfrak c}_{A^*} = {\mathfrak c}^{\mathfrak e}_{A^*}$.    If $Q_{\mathfrak e}(A)$ is weak* closed then  $A$ is ${\mathfrak e}$-scaled by
Lemma \ref{escaled}, so that $f = tg$ for some $g \in S_{\mathfrak e}(A)$ and for 
some $t$ which must equal 1.   It follows that $S(A) = S_{\mathfrak e}(A)$.
Hence $A$ is scaled, so that the weak* closure of ${\mathfrak r}_A \cap {\rm Ball}(A)$ is ${\mathfrak r}_{A^{**}} \cap {\rm Ball}(A^{**})$ by Proposition  \ref{Kap}.   Since the latter contains an identity,  
$A$ has a cai in ${\mathfrak r}_{A}$ by the observation after that result.   The assertion concerning  $\Vert x \Vert < 1$ 
follows by a slight variant of the 
proof of Theorem \ref{lbdiff}.  
 \end{proof}  

In
fact it is not too hard to see, as we shall show in another paper, that
if $A^{**}$ is unital (or if it
has a unique mixed identity), and $A$ has a cai in ${\mathfrak r}_A$
then $A$ has a cai in ${\mathfrak F}_A$
(and the latter cai can be chosen to be sequential if the first cai
is sequential).

We now attempt to prove parts of the last theorem, and some other order theoretic results,
 in the case that $A^{**}$ is not unital.  We will  mostly 
be using  the class of states $S_{\mathfrak e}(A)$ with respect to a fixed cai ${\mathfrak e}$, and the matching
cones ${\mathfrak r}^{\mathfrak e}_{A}$ and ${\mathfrak c}^{\mathfrak e}_{A^*}$, as opposed to $S(A)$ and its
matching cones.   The reason for this is that we will want norm additivity $$\Vert c_1 \varphi_1 + \cdots + c_n \varphi_n \Vert = c_1 +  \cdots + c_n
, \qquad \varphi_k \in S(A), c_k \geq 0.$$  In many interesting examples $S(A)$ 
will
 satisfies this additivity property  (for example if $A$ is Hahn-Banach smooth, by
Lemma \ref{hbs}), and in this case almost 
all the rest of the results in this section will be true for the $S(A)$ variants, and with all the subscript and superscript  and hyphenated
${\mathfrak e}$'s dropped.    

\begin{lemma}  \label{dualc2}   Suppose that ${\mathfrak e} = (e_t)$ is a fixed cai for a Banach algebra $A$, and suppose that
$Q_{{\mathfrak e}}(A)$ is weak* closed in $A^*$.
\begin{itemize} \item [(1)] 
  The cones  
${\mathfrak c}^{\mathfrak e}_{A^*}$ and  ${\mathfrak c}^{{\mathfrak e},\Rdb}_{A^*}$ are
 {\em additive} (that is, the norm on the dual space of $A$ 
is additive on these cones).   
\item [(2)] 
 If $(\varphi_t)$ is an increasing net in 
${\mathfrak c}^{{\mathfrak e},\Rdb}_{A^*}$ which is bounded in norm,
then the net converges in norm, and its limit is the least upper 
bound of the net.  \end{itemize} 
\end{lemma}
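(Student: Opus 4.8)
The plan is to establish (1) first and then derive (2) from it by a standard monotone-net argument in the Banach space $A^*$. The key input is Lemma~\ref{escaled}(1): since $Q_{\mathfrak e}(A)$ is weak* closed, $A$ is $\mathfrak e$-scaled, i.e. ${\mathfrak c}^{\mathfrak e}_{A^*} = \Rdb^+ S_{\mathfrak e}(A)$. This lets me represent any element of the dual cone as a nonnegative multiple of a state that is detected by the cai.

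For (1) I would first handle the $\Cdb$-linear cone ${\mathfrak c}^{\mathfrak e}_{A^*}$. Given $\varphi_1,\dots,\varphi_n$ there, write $\varphi_k = t_k\psi_k$ with $t_k\ge 0$ and $\psi_k\in S_{\mathfrak e}(A)$. Each $\psi_k$ lies in ${\rm Ball}(A^*)$ with $\psi_k(e_t)\to 1$, so $\|\psi_k\|=1$ and hence $\|\varphi_k\|=t_k$. Since $\|e_t\|\le 1$, evaluating $\sum_k\varphi_k$ at $e_t$ and letting $t$ run gives $\sum_k t_k=\lim_t(\sum_k\varphi_k)(e_t)\le\|\sum_k\varphi_k\|$; the opposite inequality is the triangle inequality, so $\|\sum_k\varphi_k\|=\sum_k\|\varphi_k\|$. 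For the real cone ${\mathfrak c}^{{\mathfrak e},\Rdb}_{A^*}$ I would pass through the $\Cdb$-linearization $\varphi\mapsto\tilde\varphi$ from just before the statement: it is isometric, carries the real cone into ${\mathfrak c}^{\mathfrak e}_{A^*}$, and respects sums, so additivity transfers verbatim.

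For (2) I would relabel the net as $(\varphi_s)$ (to avoid clashing with the cai index) and use additivity to make it Cauchy. First, ${\mathfrak c}^{{\mathfrak e},\Rdb}_{A^*}$ is norm closed, being an intersection of closed half-spaces $\{\varphi:\varphi(x)\ge 0\}$ over $x\in{\mathfrak r}^{\mathfrak e}_A$. If $s'\ge s$ then $\varphi_{s'}=\varphi_s+(\varphi_{s'}-\varphi_s)$ with both summands in the cone, so (1) gives $\|\varphi_{s'}\|=\|\varphi_s\|+\|\varphi_{s'}-\varphi_s\|$; thus $(\|\varphi_s\|)$ increases and, being bounded, converges to $L=\sup_s\|\varphi_s\|$. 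Given $\epsilon>0$ pick $s_0$ with $\|\varphi_{s_0}\|>L-\epsilon$; for $s,s'\ge s_0$ choose (by directedness) $s''\ge s,s'$, and from $\|\varphi_{s''}-\varphi_s\|=\|\varphi_{s''}\|-\|\varphi_s\|\le\epsilon$ and the triangle inequality conclude $\|\varphi_s-\varphi_{s'}\|\le 2\epsilon$. Hence $(\varphi_s)$ is norm Cauchy with some limit $\varphi\in A^*$. Finally, fixing $s$ and letting $s'$ run over indices $\ge s$ in $\varphi_{s'}-\varphi_s\in{\mathfrak c}^{{\mathfrak e},\Rdb}_{A^*}$ shows, by norm-closedness, that $\varphi-\varphi_s$ is in the cone, so $\varphi$ is an upper bound; and if $\psi$ is any upper bound then $\psi-\varphi_s$ lies in the cone for all $s$ and converges to $\psi-\varphi$, which is therefore in the cone, so $\varphi$ is a least upper bound.

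I expect the substantive step to be (1): the $\mathfrak e$-scaled hypothesis is used precisely to write cone elements as $t_k\psi_k$ and to know that the cai reads off the scalars via $\psi_k(e_t)\to 1$ together with $\|e_t\|\le 1$. Once additivity is available, (2) is the routine fact that an additive cone in a Banach space enjoys the monotone convergence property. The only points I would verify carefully are the minor ones that $\|\psi_k\|=1$ for $\psi_k\in S_{\mathfrak e}(A)$ and that the $\Cdb$-linearization is isometric and additive, both standard.
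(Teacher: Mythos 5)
Your proof is correct and follows essentially the same route as the paper: part (1) rests on the ${\mathfrak e}$-scaled property from Lemma \ref{escaled}(1) and the fact that the norm of a cone element is read off by evaluating against the cai (equivalently, against the mixed identity $e$), with the real case handled by the isometric complexification. For part (2) the paper simply cites \cite[Proposition 3.2, Chapter 2]{AE}; your monotone-net/Cauchy argument is exactly the standard proof of that cited fact, written out in full.
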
  \begin{proof}   (1) \ 
If $\psi = c \varphi$ for  $\varphi \in S_{\mathfrak e}(A)$
and $c \geq 0$,  
then  
$$\Vert \psi \Vert = c \Vert \varphi \Vert =
\lim_t \, \psi(e_t) .$$
Indeed for an appropriate mixed identity $e$ of $A^{**}$  of norm $1$ we have
  $\Vert \varphi \Vert = \langle e , \varphi \rangle$ for all
$\varphi  \in {\mathfrak c}^{{\mathfrak e},\Rdb}_{A^*}$.  It follows
that the norm on $B(A,\Rdb)$ is additive on ${\mathfrak c}^{{\mathfrak e},\Rdb}_{A^*}$.   The complex scalar case is similar. 

(2) \ Follows from (1) and \cite[Proposition 3.2,  Chapter 2]{AE}.   
\end{proof}  

We recall that the positive part of the  open unit ball of a $C^*$-algebra
is a directed set.  The following is a  Banach
 algebra version of this:

\begin{corollary} \label{dirset}  \begin{itemize} \item [(1)]  Let ${\mathfrak e}$ be a cai for a Banach algebra $A$, and suppose that
$Q_{{\mathfrak e}}(A)$ is weak* closed in $A^*$.
Then the open unit ball of $A$
is a directed set with respect to the 
$\preceq_{\mathfrak e}$ ordering.    That is,
if $x, y \in A$ with $\Vert x \Vert , \Vert y  \Vert < 1$,
then there
exists $z \in A$ with $\Vert z\Vert < 1$ and $z \in {\mathfrak r}^{\mathfrak e}_A$ ,
and also
$x \preceq_{\mathfrak e} z$ and $y  \preceq_{\mathfrak e} z$.   
 \item [(2)]    If $A$ is an $M$-approximately unital Banach algebra,   then given $x, y
 \in A$ with $\Vert x \Vert , \Vert y  \Vert < 1$, a majorant 
$z$ can be chosen as in {\rm (1)}, but also with
 $z \in \frac{1}{2} {\mathfrak F}_A$.  \end{itemize}  
 \end{corollary}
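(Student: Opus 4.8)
The plan is to recognize the claim as the ordered-Banach-space duality between additivity of the dual cone and directedness of the open unit ball, and to verify it by a Hahn--Banach/Lagrangian computation in which Lemma \ref{dualc2}(1) is the decisive input. Throughout I would regard $A$ as a real Banach space ordered by the closed cone $C={\mathfrak r}^{\mathfrak e}_A$. The hypothesis that $Q_{\mathfrak e}(A)$ is weak* closed makes $A$ $\mathfrak e$-scaled (Lemma \ref{escaled}), so ${\mathfrak c}^{{\mathfrak e},\Rdb}_{A^*}$ is exactly the real dual cone of $C$, it has the weak* compact base $Q_{\mathfrak e}(A)$, and by Lemma \ref{dualc2}(1) it is additive.

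For part (1) I would fix $x,y$ with $\|x\|,\|y\|<1$, set $r=\max(\|x\|,\|y\|)<1$, and let $T=C\cap(x+C)\cap(y+C)$ be the closed convex set of common upper bounds of $\{0,x,y\}$ in the $\preceq_{\mathfrak e}$ ordering. Since $T+C\subseteq T$, the goal $T\cap\{\|z\|<1\}\neq\emptyset$ reduces to showing $\inf\{\|z\|:z\in T\}<1$. The core computation dualizes this infimum: relaxing the three cone constraints by multipliers $f_0,f_1,f_2\in{\mathfrak c}^{{\mathfrak e},\Rdb}_{A^*}$ and minimizing $\|z\|-f_0(z)-f_1(z-x)-f_2(z-y)$ over $z\in A$ gives, using the elementary fact that $\inf_z(\|z\|-g(z))$ equals $0$ or $-\infty$ according as $\|g\|\le1$ or not, the dual value $\sup\{f_1(x)+f_2(y):\ \|f_0+f_1+f_2\|\le1\}$. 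This is the point where additivity enters: it replaces the constraint by $\|f_0\|+\|f_1\|+\|f_2\|\le1$, so that the optimal choice is $f_0=0$ and $f_1(x)+f_2(y)\le\|f_1\|\,\|x\|+\|f_2\|\,\|y\|\le r(\|f_1\|+\|f_2\|)\le r<1$. Granting strong duality (addressed next), this yields $\inf\{\|z\|:z\in T\}\le r<1$, so some $z\in T$ has $\|z\|<1$, which is the assertion.

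The step I expect to be the main obstacle is precisely strong duality, i.e. attaining the bound $\le r$ in $A$ rather than merely bounding the dual. In the nonunital case $C$ has empty interior (there is no order unit in $A$), so no Slater-type constraint qualification is available and one cannot separate $T$ from the open ball naively; indeed a separating $f$ necessarily lies in ${\mathfrak c}^{{\mathfrak e},\Rdb}_{A^*}$ (since $T+C\subseteq T$ forces $f\ge0$ on $C$) but the separation alone produces only $\|f\|\le\inf_T f$, not a small element of $T$. This is exactly what the weak* compactness of $Q_{\mathfrak e}(A)$ is for: it supplies the compact base needed for the ordered-space duality theorem (\cite{AE,Jameson}) to apply without an interior hypothesis. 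Concretely, I would pass to $A^{**}$, where a mixed identity $e$ of norm $1$ (a weak* limit point of $\mathfrak e$, satisfying $\tilde\varphi(e)=1$ for $\varphi\in S_{\mathfrak e}(A)$) is an order unit, so that $re$ is a common upper bound of $0,x,y$ of norm $r<1$ and Slater holds in the bidual; one then descends the bidual minimizer to $A$ using the weak* density of the contractions in $C$ in the contractions of its bidual counterpart, exactly as in Proposition \ref{Kap}.

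Finally, for part (2), when $A$ is $M$-approximately unital it is scaled by Proposition \ref{preq}, so part (1) applies and only the extra requirement $z\in\frac12{\mathfrak F}_A$ remains. Here the unique mixed identity satisfies $e\in\frac12{\mathfrak F}_{A^{**}}$ by Lemma \ref{hfin}, so the bidual majorant $re$ can be taken inside $\frac12{\mathfrak F}_{A^{**}}$; the descent to $A$ then uses the weak* density of ${\mathfrak F}_A$ in ${\mathfrak F}_{A^{**}}$ and the existence of a cai in $\frac12{\mathfrak F}_A$ furnished by Theorem \ref{lbai}. Running the same separation argument with the additional weak* closed convex constraint $z\in\frac12{\mathfrak F}_A$, and invoking the decomposition technique of Theorem \ref{lbdiff}, should produce the desired common majorant in $\frac12{\mathfrak F}_A\cap\{\|z\|<1\}$.
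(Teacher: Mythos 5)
Your identification of Lemma \ref{dualc2}(1) as the decisive input, and of the statement as the classical duality between additivity of the dual cone and directedness of the open unit ball, matches the paper. But the concrete argument you offer for the crucial step does not close. Your Lagrangian computation establishes only weak duality: it shows the dual value is at most $r$, whereas $\inf\{\|z\|:z\in T\}$ is bounded \emph{below} by the dual value, so nothing follows about the primal without strong duality. Your proposed route to strong duality --- pass to $A^{**}$, where $re$ is a common majorant, and ``descend'' via Kaplansky density as in Proposition \ref{Kap} --- does not work: the descent requires weak* approximating $re$ by elements of $A$ lying \emph{simultaneously} in ${\mathfrak r}^{\mathfrak e}_A$, in $x+{\mathfrak r}^{\mathfrak e}_A$, in $y+{\mathfrak r}^{\mathfrak e}_A$, and in the open ball; single-cone density handles each constraint separately, and the bipolar argument of Proposition \ref{Kap} (which rests on ${\rm Ball}(A^*)+{\mathfrak c}_{A^*}$ being weak* closed, i.e.\ a compact set plus one closed set) does not extend to a sum of several non-compact polars. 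Indeed the simultaneous-approximation statement is essentially equivalent to the directedness you are trying to prove, so this step is circular. The correct way to finish is either to quote the Asimow--Ellis equivalence (dual cone additive iff open unit ball directed) outright --- which is what the paper tacitly does when it says ``By Lemma \ref{dualc2}(1) \dots there exists $w$'' --- or to run a Hahn--Banach separation in $A\times A$ against the convex set $\{(x-z+c_1,\;y-z+c_2): z\in U,\ c_1,c_2\in C\}$ (additivity then forces the separating pair $(f,g)$ of real positive functionals to satisfy $\|f\|+\|g\|=\|f+g\|\le \|f\|\|x\|+\|g\|\|y\|-\delta$, a contradiction), which yields \emph{approximate} common majorants, and then upgrade to exact ones by the geometric-series iteration used in the proof of Theorem \ref{lbdiff}.

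The paper's proof is also structured differently after that point: the duality only produces a common majorant $w$ of $x,y$ with $\|w\|<1$ which need not itself be real positive, and positivity is arranged by a second, constructive step --- in (1) by applying the ``moreover'' clause of Theorem \ref{con} with $a=-tw$ for suitable $t>1$, so the positive majorant is manufactured from the cai (this is where the sequential-cai restriction and the deferral to \cite{B2015} enter), and in (2) by dropping all ${\mathfrak e}$'s via Lemma \ref{hbs} and writing $z=a-b$ with $a,b\in\frac{1}{2}{\mathfrak F}_A$ by Theorem \ref{lbdiff}, so that $a=z+b\succeq z$ is the desired majorant in $\frac{1}{2}{\mathfrak F}_A$. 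Your idea of folding the constraint $z\succeq 0$ into the primal set $T$ from the start is a legitimate alternative to that second step, but only once the duality itself has actually been proved.
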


\begin{proof}  (1) \ By Lemma \ref{dualc2} (1),  for any $x, y \in A$ with $\Vert x \Vert  <1$ and $\Vert y \Vert  <1$, 
there exists a $w \in A$ with $\Vert w \Vert < 1$ and  $w-x, w-y \in {\mathfrak r}^{\mathfrak e}_A$.   In the 
`countable case', by 
the last assertion of
Theorem  \ref{con} (setting the $a$ there to be $-t w$ for some appropriate $t > 1$), we have $w \preceq_{\mathfrak e} z$ for some $z \in {\mathfrak r}^{\mathfrak e}_A$ with $\Vert z \Vert < 1$.
So $$-z \preceq_{\mathfrak e} - w  \, \preceq_{\mathfrak e} x \, \preceq_{\mathfrak e}  w \, \preceq_{\mathfrak e} z.$$ 
Similarly, $y$ `lies between' $z$ and $-z$.   In the general case the easy trick is given in 
\cite{B2015}.

(2) \ This is similar to (1), but uses the fact that $S(A) = S_{\mathfrak e}(A)$ by Lemma \ref{hbs}, so all ${\mathfrak e}$'s can be dropped.
We also use the following principle twice in place of 
the cited results in the proof above:  if
 $\Vert z \Vert < 1$ then by Corollary  \ref{lbdiff} we may write $z = a- b$ for $a,b \in \frac{1}{2}{\mathfrak F}_A$, and
 then $-b \preceq z \preceq a$.
\end{proof}

For a $C^*$-algebra $B$, a natural ordering on the positive
part of the open unit ball of $B$
turns the latter into a net which is a positive cai for $B$ (see e.g.\ \cite{Ped}).  
A similar result holds for operator algebras \cite[Proposition 2.6]{Bord}.  We are not sure if there is an analogue of this
for the classes of algebras in the last result.

\begin{corollary} \label{preqi}   \begin{itemize} \item [(1)]   Let ${\mathfrak e}$ be a cai for a Banach algebra $A$, and suppose that
$Q_{{\mathfrak e}}(A)$ is weak* closed in $A^*$. 
For all $x  \in A$ there exists an element $z \in {\mathfrak r}^{\mathfrak e}_A$ with $-z \preceq_{\mathfrak e}  x \, \preceq_{\mathfrak e} z$.
 Thus $x = a - b$ where 
$a,b \in {\mathfrak r}^{\mathfrak e}_A$.  Moreover if $\Vert x \Vert < 1$
then $z, a, b$ can all be chosen in ${\rm Ball}(A)$.    
 \item [(2)]   If $A$ is an $M$-approximately unital Banach algebra,   then given $x \in A$  
with  $\Vert x \Vert < 1$,  an element  
$z$ can be chosen satisfying the inequalities in {\rm (1)}, but also with 
$z \in \frac{1}{2} {\mathfrak F}_A$.
\end{itemize}
 \end{corollary}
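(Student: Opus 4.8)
The plan is to obtain both parts as immediate consequences of Corollary~\ref{dirset}, using the single substitution $y = -x$ and the fact that the relation $\preceq_{\mathfrak e}$ is induced by the cone (wedge) ${\mathfrak r}^{\mathfrak e}_A$, which is closed under multiplication by nonnegative scalars. The real content has already been supplied by Corollary~\ref{dirset} (and behind it Theorem~\ref{con}); what remains is bookkeeping.

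First I would prove part (1) under the normalization $\Vert x \Vert < 1$. Then $\Vert -x \Vert < 1$ as well, so Corollary~\ref{dirset}(1) applied to the pair $x, -x$ yields $z \in {\mathfrak r}^{\mathfrak e}_A$ with $\Vert z \Vert < 1$ and $x \preceq_{\mathfrak e} z$, $-x \preceq_{\mathfrak e} z$. The second relation unwinds to $z + x \in {\mathfrak r}^{\mathfrak e}_A$, that is $-z \preceq_{\mathfrak e} x$, so together $-z \preceq_{\mathfrak e} x \preceq_{\mathfrak e} z$ as required. Setting $a = \frac{1}{2}(z + x)$ and $b = \frac{1}{2}(z - x)$, both lie in ${\mathfrak r}^{\mathfrak e}_A$ (closure of the cone under nonnegative scaling), satisfy $x = a - b$, and have norm at most $\frac{1}{2}(\Vert z \Vert + \Vert x \Vert) < 1$; so $z, a, b$ all lie in ${\rm Ball}(A)$.

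To remove the normalization for a general $x \in A$ I would use homogeneity of the wedge. If $x = 0$ take $z = 0$; otherwise pick $c > \Vert x \Vert$, apply the previous paragraph to $x/c$ (which has norm $< 1$) to get a majorant $z_0$ and a decomposition $x/c = a_0 - b_0$ with $z_0, a_0, b_0 \in {\mathfrak r}^{\mathfrak e}_A$, and then rescale to $z = c z_0$, $a = c a_0$, $b = c b_0$. Multiplying the relations $z_0 \mp x/c \in {\mathfrak r}^{\mathfrak e}_A$ by $c > 0$ keeps them in the cone, giving $-z \preceq_{\mathfrak e} x \preceq_{\mathfrak e} z$ and $x = a - b$ with $a, b \in {\mathfrak r}^{\mathfrak e}_A$ (now without any norm bound, as the statement allows).

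Part (2) is the identical argument routed through Corollary~\ref{dirset}(2) instead of (1): for $x$ with $\Vert x \Vert < 1$, applying it to the pair $x, -x$ furnishes a common majorant $z \in \frac{1}{2}{\mathfrak F}_A$ of both $x$ and $-x$, which is exactly the desired $z$ with $-z \preceq x \preceq z$. Here, $A$ being $M$-approximately unital, Lemma~\ref{hbs} gives $S(A) = S_{\mathfrak e}(A)$, so $\preceq_{\mathfrak e}$ coincides with $\preceq$ and the ${\mathfrak e}$-decorations may be dropped throughout. I do not expect a genuinely hard step: the only points needing care are the norm arithmetic in the averaging $a, b = \frac{1}{2}(z \pm x)$ and checking that the passage to general $x$ by positive scaling respects the cone, both of which are routine since ${\mathfrak r}^{\mathfrak e}_A$ is a closed wedge.
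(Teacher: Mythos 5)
Your proposal is correct and follows exactly the paper's proof, which likewise applies Corollary~\ref{dirset} to the pair $x, -x$ and sets $a = \frac{z+x}{2}$, $b = \frac{z-x}{2}$. The only difference is that you spell out the rescaling step for general $x$ and the norm arithmetic, which the paper leaves implicit.
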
 \begin{proof}    Apply
Corollary \ref{dirset} to $x$ and $-x$.   Of course $a = \frac{z+x}{2}$ and $b =  \frac{z-x}{2}$.
\end{proof}

In the language of \cite{Mes}, item (1) implies that the associated preorder on $A$ there is {\em  approximately} $1$-{\em absolutely conormal}, and from the theory of ordered Banach spaces in that reference this is equivalent to $B(A,\Rdb)$ being `absolutely monotone'.   
That is, with respect to the natural induced ordering on
$B(A,\Rdb)$, if $-\psi \leq \varphi \leq \psi$ then $\Vert \varphi \Vert \leq \Vert \psi \Vert$.   

\begin{corollary}  \label{dualc2a}  Let ${\mathfrak e}$ be a cai for a Banach algebra $A$, and suppose that
$Q_{{\mathfrak e}}(A)$ is weak* closed in $A^*$.  If $f \leq g \leq h$   in $B(A,\Rdb)$ in 
the natural 
${\mathfrak c}^{\mathfrak e}_{A^*}$-ordering,
then $\Vert g \Vert \leq \Vert f \Vert + \Vert h \Vert$.  \end{corollary}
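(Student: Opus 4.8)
The plan is to reduce the three-term inequality $f \leq g \leq h$ to the symmetric ``absolutely monotone'' form $-\psi \leq \varphi \leq \psi$, and then to invoke the absolute monotonicity of $B(A,\Rdb)$ recorded in the remark immediately after Corollary \ref{preqi}. That remark establishes, precisely under the standing hypothesis that $Q_{{\mathfrak e}}(A)$ is weak* closed, that $B(A,\Rdb)$ is absolutely monotone in the natural ordering: whenever $-\psi \leq \varphi \leq \psi$ one has $\Vert \varphi \Vert \leq \Vert \psi \Vert$. Since this is the genuine content of the corollary and it is already available, the remaining work is purely elementary.

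First I would \emph{center} the problem. Set $\psi = \tfrac{1}{2}(h-f)$ and $\varphi = g - \tfrac{1}{2}(h+f)$. A direct computation gives $\varphi + \psi = g - f$ and $\psi - \varphi = h - g$, both of which lie in the relevant dual cone by the hypotheses $f \leq g$ and $g \leq h$ respectively. Hence $-\psi \leq \varphi \leq \psi$ in the given ordering.

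Next I would apply absolute monotonicity to obtain $\Vert \varphi \Vert \leq \Vert \psi \Vert$, that is, $\Vert g - \tfrac{1}{2}(h+f) \Vert \leq \tfrac{1}{2} \Vert h - f \Vert$. The triangle inequality then yields
$$\Vert g \Vert \leq \left\Vert g - \tfrac{1}{2}(h+f) \right\Vert + \tfrac{1}{2} \Vert h+f \Vert \leq \tfrac{1}{2} \Vert h-f \Vert + \tfrac{1}{2} \Vert h+f \Vert \leq \Vert f \Vert + \Vert h \Vert,$$
where the last step uses $\Vert h \pm f \Vert \leq \Vert h \Vert + \Vert f \Vert$. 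This gives the claim.

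I do not expect a substantive obstacle here: the deep input (absolute monotonicity, coming from approximate $1$-absolute conormality via the additivity of the cone norm in Lemma \ref{dualc2}) is already in hand, so the only point requiring care is choosing the correct midpoint $\tfrac{1}{2}(h+f)$ and radius $\tfrac{1}{2}(h-f)$ so that the two cone membership conditions fall out cleanly. One should simply double-check that the ordering referenced in the statement matches the real cone ${\mathfrak c}^{{\mathfrak e},\Rdb}_{A^*}$ used in the remark after Corollary \ref{preqi}, since $f,g,h \in B(A,\Rdb)$; with that identification the argument above applies verbatim.
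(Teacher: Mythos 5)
Your argument is correct, and it arrives at the same place the paper does but by a more explicit route. The paper's entire proof is a citation: it deduces the estimate from Corollary \ref{preqi} via \cite[Theorem 1.1.4]{BR}, i.e.\ it invokes the standard ordered-Banach-space duality theorem that conormality of the predual ordering (the existence of majorants $z$ with $-z \preceq_{\mathfrak e} x \preceq_{\mathfrak e} z$ and norm control) forces this three-term normality inequality in the dual. You instead take the absolute monotonicity of $B(A,\Rdb)$ recorded in the remark after Corollary \ref{preqi}, and then reduce $f \leq g \leq h$ to the symmetric form by the centering $\psi = \tfrac{1}{2}(h-f)$, $\varphi = g - \tfrac{1}{2}(h+f)$; your verifications $\varphi + \psi = g-f$ and $\psi - \varphi = h-g$ and the concluding triangle-inequality step are all correct, and you are right that the ordering in the statement is the one induced by ${\mathfrak c}^{{\mathfrak e},\Rdb}_{A^*}$, matching the remark. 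The substantive input is identical in both routes, namely Corollary \ref{preqi}; what you buy is transparency, since the passage from absolute monotonicity to the stated inequality becomes a two-line computation rather than a black-box reference. The one thing to note is that you have not fully escaped the external literature: the absolute monotonicity you invoke is itself justified in the paper only by appeal to \cite{Mes}, so you have effectively traded the citation of \cite{BR} for a citation of \cite{Mes}. That gap is easily closed, though: absolute monotonicity follows directly from Corollary \ref{preqi}(1) by pairing $-\psi \leq \varphi \leq \psi$ against a majorant $z \in {\rm Ball}(A)$ of $\pm x$ for $\Vert x \Vert < 1$, which gives $|\varphi(x)| \leq \psi(z) \leq \Vert \psi \Vert$; adding that remark would make your proof self-contained modulo Corollary \ref{preqi}.
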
 

\begin{proof}  This follows 
from   Corollary \ref{preqi} by \cite[Theorem 1.1.4]{BR}.  \end{proof} 

\begin{corollary} \label{soadd}  If $A$ is an approximately unital Banach algebra then the last four results are true with all
the subscript and superscript  and hyphenated ${\mathfrak e}$'s dropped, if  also 
$S(A) = S_{\mathfrak e}(A)$ for the cai ${\mathfrak e}$ appearing in those results (which holds
for example if $A$ is Hahn-Banach smooth in $A^1$).
\end{corollary}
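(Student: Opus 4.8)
The plan is to reduce each of the four undecorated assertions to the already-proved ${\mathfrak e}$-decorated version, by showing that under the standing hypothesis $S(A) = S_{\mathfrak e}(A)$ (together with the weak* closedness that each of those results carries) every ${\mathfrak e}$-decorated object coincides with its undecorated counterpart. First I would observe that $S(A) = S_{\mathfrak e}(A)$ forces $Q(A) = Q_{\mathfrak e}(A)$, since each quasi-state set is merely $[0,1]$ times the corresponding state set. Hence the hypothesis ``$Q(A)$ is weak* closed'' appearing in the undecorated forms of Lemma \ref{dualc2}, Corollary \ref{dirset}, Corollary \ref{preqi}, and Corollary \ref{dualc2a} is literally equivalent to ``$Q_{\mathfrak e}(A)$ is weak* closed,'' so the ${\mathfrak e}$-decorated results are available to be quoted.

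Next I would verify that $A$ is scaled. Because $S_{\mathfrak e}(A)$ is convex, so is $S(A) = S_{\mathfrak e}(A)$, and therefore $Q(A)$ is convex; being a weak* closed bounded subset of ${\rm Ball}(A^*)$ it is also weak* compact. With $S(A)$ convex and $Q(A)$ weak* closed, Lemma \ref{escaled}(2) yields that $A$ is scaled (this is also the content of the first Remark after Corollary \ref{snr}). In the $M$-approximately unital cases, which are the parts (2) of Corollaries \ref{dirset} and \ref{preqi}, scaledness is automatic by Proposition \ref{preq} and $S(A) = S_{\mathfrak e}(A)$ already holds, so nothing new is needed there.

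The crux is the identification ${\mathfrak r}^{\mathfrak e}_A = {\mathfrak r}_A$. The inclusion ${\mathfrak r}_A \subseteq {\mathfrak r}^{\mathfrak e}_A$ is recorded in the text (from $S_{\mathfrak e}(A) \subseteq S(A)$). For the reverse, take $x \in {\mathfrak r}^{\mathfrak e}_A$; since $S(A) = S_{\mathfrak e}(A)$ this says exactly that ${\rm Re}\,\psi(x) \geq 0$ for every $\psi \in S(A)$. Given any state $\varphi$ on $A^1$, scaledness together with Corollary \ref{snr}(iii) gives $\varphi_{|A} \in Q(A) = \Rdb^+ S(A) \cap {\rm Ball}(A^*)$, so $\varphi_{|A} = t\psi$ with $t \in [0,1]$ and $\psi \in S(A)$, whence ${\rm Re}\,\varphi(x) = t\,{\rm Re}\,\psi(x) \geq 0$. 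Thus $x \in {\mathfrak r}_A$, proving ${\mathfrak r}^{\mathfrak e}_A = {\mathfrak r}_A$. It follows at once that ${\mathfrak c}^{\mathfrak e}_{A^*} = {\mathfrak c}_{A^*}$ (and likewise for their real versions ${\mathfrak c}^{{\mathfrak e},\Rdb}_{A^*}$ and ${\mathfrak c}^{\Rdb}_{A^*}$), and that the preorders $\preceq_{\mathfrak e}$ and $\preceq$ coincide.

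Finally I would simply invoke each of the four ${\mathfrak e}$-decorated results. Since $Q_{\mathfrak e}(A)$ is weak* closed by the first step, and every decorated symbol now equals its undecorated version by the second and third steps, the conclusion of each ${\mathfrak e}$-result is verbatim the desired undecorated statement. For the parenthetical claim, $S(A) = S_{\mathfrak e}(A)$ for every cai ${\mathfrak e}$ when $A$ is Hahn-Banach smooth in $A^1$ follows from the equivalence of (i) and (iii) in Lemma \ref{hbs}. I expect the only genuine obstacle to be the identification ${\mathfrak r}^{\mathfrak e}_A = {\mathfrak r}_A$ in the third step, which rests squarely on establishing scaledness and on the description of the restrictions to $A$ of states of $A^1$; the remainder is bookkeeping.
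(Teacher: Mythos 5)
Your proposal is correct and follows essentially the same route as the paper, whose proof is just the one-line observation that once $S(A) = S_{\mathfrak e}(A)$ all the ${\mathfrak e}$'s may be dropped (with the Hahn--Banach smooth case handled by Lemma \ref{hbs}). What you add is the explicit verification that the decorated objects really do coincide with the undecorated ones --- in particular establishing scaledness via Lemma \ref{escaled}(2) and then deducing ${\mathfrak r}^{\mathfrak e}_A = {\mathfrak r}_A$ from Corollary \ref{snr}(iii) --- which is exactly the bookkeeping the paper leaves implicit.
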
  

\begin{proof}  
Indeed in the Hahn-Banach smooth case $S(A) = S_{\mathfrak e}(A)$  by
Lemma \ref{hbs}, and if the latter holds then all ${\mathfrak e}$'s may be dropped. 
\end{proof}  

In the part of Corollary  \ref{soadd} dealing with Corollary {\rm  \ref{dirset} (2)},
and in  Corollary {\rm  \ref{preqi} (2)} in the $\Vert x \Vert < 1$ case, one may often get the majorants
 $z$ appearing in those Corollaries to also  be in ${\mathfrak F}_A$  (and even get a sequential cai for A in ${\mathfrak F}_A$ consisting
of such majorants $z$).  We will discuss this in another paper, but
briefly this follows from the ideas in Corollary \ref{scase} and the paragraphs
after that, and the idea in the paragraph after Theorem \ref{soaddf}. 

\bigskip

{\bf Remarks.}  1) \   Above we saw that under various hypotheses, 
a Banach algebra $A$ had a cai in ${\mathfrak r}_A$, and the latter was a generating cone, that is $A = {\mathfrak r}_A - {\mathfrak r}_A$.
Conversely we shall
see in Corollary \ref{gennew} that if $A$ is commutative, approximately unital, and $A = {\mathfrak r}_A - {\mathfrak r}_A$, 
then $A$ has a bai in ${\mathfrak F}_A$.  

\smallskip
 
2) \ It is probably never  true for 
an approximately unital operator algebra $A$ 
that 
$B(A,\Rdb) = {\mathfrak c}^{\Rdb}_{A^*} - {\mathfrak c}^{\Rdb}_{A^*}$.
Indeed, in the case $A = \Cdb$ the latter space has  real dimension $1$.
However the complex span of the (usual) states of an
 approximately unital operator algebra $A$ is $A^*$ (the complex dual space).  
Indeed by a result of Moore \cite{Moore,AE2}, the 
complex span of the states of any unital Banach algebra $A$ is $A^*$.
In the approximately unital Banach algebra case, 
at least if $A$ is scaled the same fact follows by using a Hahn-Banach extension and
Corollary \ref{snr} (iii).  

\smallskip

3) \  Every element $x \in \frac{1}{2} 
{\mathfrak F}_A$ need not achieve its norm at a state, even in $M_2$ (consider 
$x = (I + E_{12})/2$ for example).

\smallskip

4) \     
We thank Miek Messerschmidt for calling our attention to the result in \cite{BR} used in  Corollary
\ref{dualc2a}.  Previously we 
had a cruder inequality in that result.

\smallskip

5) \ Note that $A$ is not `order-cofinal'  in $A^1$ usually, in the sense of the ordered space literature,
even for $A$ any $C^*$-algebra with no countable cai (and hence no  strictly real positive element).

\section{Ideals in commutative Banach algebras}

Throughout this section $A$ will
be a commutative approximately unital Banach algebra.    
  We will use ideas from \cite{BHN,BRI, BRII} (see \cite{Est,KLU} for some other Banach algebra variants of some of these ideas).    In the following statement, the `respectively's are placed correctly, despite first impressions.

\begin{theorem} \label{nowha}  Let $A$ be a commutative approximately unital Banach algebra.  The   closed  ideals
in $A$ with a bai in ${\mathfrak r}_A$ (resp.\ ${\mathfrak F}_A$)  are precisely the ideals of the form $\overline{EA}$ for 
some subset  $E \subset {\mathfrak F}_A$ (resp.\ $E \subset {\mathfrak r}_A$).    
They are also the closures of increasing unions of  ideals of the form $\overline{xA}$ for 
$x \in {\mathfrak F}_A$ (resp.\ $x \in {\mathfrak r}_A$).      \end{theorem}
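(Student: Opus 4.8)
The plan is to prove the unparenthesized version of each equivalence and then read off the ``(resp.)'' version for free. Two earlier facts make this possible: by Corollary \ref{hasbi} a closed ideal has a bai in ${\mathfrak r}_A$ iff it has one in ${\mathfrak F}_A$, so the left-hand class of ideals is the same in both versions; and by Proposition \ref{whba} we have $\overline{xA} = \overline{{\mathfrak F}(x)A}$ with ${\mathfrak F}(x) \in {\mathfrak F}_A$, so (applying this termwise and using ${\mathfrak F}_A \subset {\mathfrak r}_A$) the ideals $\overline{EA}$ with $E \subset {\mathfrak F}_A$ form exactly the same class as those with $E \subset {\mathfrak r}_A$, and likewise for the principal ideals $\overline{xA}$ in the increasing-union description. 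This is why the ``respectively''s may be placed as they are. Hence it suffices to prove: a closed ideal $J$ has a bai in ${\mathfrak F}_A$ iff $J = \overline{EA}$ for some $E \subset {\mathfrak F}_A$ iff $J$ is the closure of an increasing union of principal ideals $\overline{xA}$ with $x \in {\mathfrak F}_A$.

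The forward implication (bai $\Rightarrow \overline{EA}$) is the easy one. If $(e_t) \subset J \cap {\mathfrak F}_A$ is a bai, put $E = \{e_t\}$; then $\overline{EA} \subset J$ because $J$ is a closed ideal, while $x = \lim_t e_t x \in \overline{EA}$ for each $x \in J$, so $J = \overline{EA}$.

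The substantial direction, the one that fails noncommutatively (see Example \ref{Ex7}), is that every $\overline{EA}$ with $E \subset {\mathfrak F}_A$ carries a bai in ${\mathfrak F}_A$. I would first treat the finitely generated subideals. For a finite set $F = \{x_1, \dots, x_n\} \subset E$ put $J_F = \overline{\sum_{j=1}^n x_j A}$ and consider
$$u_m = 1 - \prod_{j=1}^n (1 - x_j^{1/m}), \qquad m \in \Ndb.$$
Each $x_j^{1/m} \in {\mathfrak F}_A$ by Proposition \ref{clnth}, so every factor $1 - x_j^{1/m}$ lies in ${\rm Ball}(A^1)$, giving $\Vert 1 - u_m \Vert \le 1$ and hence $u_m \in {\mathfrak F}_{A^1} \cap A = {\mathfrak F}_A$; expanding the product shows $u_m \in J_F$ too. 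The crux is that $(u_m)$ is a bai for $J_F$, and this is exactly where commutativity enters: for each $k$ one factors out $1 - x_k^{1/m}$ to obtain
$$u_m x_k - x_k = -\Big(\prod_{j \neq k}(1 - x_j^{1/m})\Big)\,(x_k - x_k^{1/m} x_k),$$
so that $\Vert u_m x_k - x_k \Vert \le \Vert x_k - x_k^{1/m} x_k \Vert \le c_m$ by Lemma \ref{sin}, with $c_m \to 0$ independently of $F$. Since $\sum_j x_j A$ is dense in $J_F$ and the $u_m$ are bounded, $(u_m)$ is a sequential bai for $J_F$ lying in ${\mathfrak F}_A \cap J_F$. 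I expect this computation to be the main point; everything else is assembly of cited results.

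With the finite case established, the remainder is bookkeeping. Since $(u_m)$ is a countable bai in ${\mathfrak r}_A$, Corollary \ref{ctrid} gives $J_F = \overline{z_F A}$ for a single $z_F \in {\mathfrak F}_A$, so each finitely generated subideal is principal. Ordering the finite subsets $F \subset E$ by inclusion makes $(J_F)$ an increasing net of principal ideals, and $\overline{\bigcup_F J_F} = \overline{EA} = J$ (using $x \in \overline{xA}$ for $x \in E$), which already yields the increasing-union description. Finally, an increasing union of principal ideals $\overline{z_F A}$ is a right ${\mathfrak F}$-ideal by the converse half of Corollary \ref{pred} (in the commutative case a left bai is a bai), so $J$ has a bai in ${\mathfrak F}_A$; conversely any such increasing union has the form $\overline{EA}$ with $E = \bigcup_F \{z_F\}$. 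Chaining these equivalences with the opening remarks on the ``(resp.)'' versions completes the proof.
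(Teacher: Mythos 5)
Your proposal is correct, and its skeleton matches the paper's: reduce to $E \subset {\mathfrak F}_A$ via the ${\mathfrak F}$-transform and Proposition \ref{whba}, settle the finite case using the products $1 - \prod_j (1 - x_j^{1/m})$, then pass to general $E$ by an increasing net of finitely generated subideals. The one genuine divergence is in how the finite case is closed out. The paper works to identify an explicit single generator, proving $\overline{GA} = \overline{x_G A}$ with $x_G = \frac{1}{m}(x_1+\cdots+x_m)$, and for non-Arens-regular algebras this requires a separate argument via support idempotents and an extreme-point trick showing $\overline{xA+yA} = \overline{(\frac{x+y}{2})A}$. You bypass all of that: you verify directly, via the commutativity factorization $u_m x_k - x_k = -\bigl(\prod_{j\neq k}(1-x_j^{1/m})\bigr)(x_k - x_k^{1/m}x_k)$ and Lemma \ref{sin}, that $(u_m)$ is a bai with the uniform bound $c_m$, and then extract a principal generator $z_F$ from Corollary \ref{ctrid} (i.e.\ from the Cohen-factorization machinery of Section 4). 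This is a legitimate simplification and avoids the Arens-regular/non-regular case split entirely; what it gives up is the explicit identification of the generator as the average of the $x_j$, which the paper reuses (e.g.\ in Theorem \ref{nowha2}, where the generator $\sum_k 2^{-k} x_k$ is needed concretely) but which is not required for the statement of Theorem \ref{nowha} itself. Your final assembly via the converse half of Corollary \ref{pred} is also sound, since in a commutative algebra a left bai for an ideal is a two-sided bai.
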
 

\begin{proof} 
 Suppose that $E \subset {\mathfrak r}_A$, and we will prove that $\overline{EA}$ has a bai in ${\mathfrak F}_A$.
We may assume that $E \subset {\mathfrak F}_A$ since $\overline{EA} = \overline{{\mathfrak F}(E) \, A}$
as may be seen using  Proposition \ref{whba}.  
We will first suppose that $E$ has two elements, and here we will 
include a separate  argument if $A$ is Arens regular since the computations are interesting.   Then we 
will discuss the case  where $E$ has
 $n$ elements, and then the general case.

If $x, y \in {\mathfrak r}_A$
then $\overline{xA}$ and $\overline{yA}$ are ideals with bai in ${\mathfrak F}_A$ by Corollary  \ref{lba}.  Their support idempotents $s(x)$ and $s(y)$ are in ${\mathfrak F}_{A^{**}}$.    
Indeed if $J = \overline{xA}$ then by Corollary  \ref{lba}  we have $J^{\perp \perp} = s(x) A^{**}$, and $J = s(x) A^{**} \cap A$.   (In the non-Arens regular case we are using the   
`second Arens product' here.)  
In the rest of this paragraph we assume that $A$ is Arens regular.
 Set $$s(x,y) = s(x) + s(y) - s(x) s(y) = 1 - (1- s(x)) (1 - s(y)),$$
an idempotent dominating both $s(x)$ and $s(y)$ in the sense that $s(x,y) s(x)= s(x)$ and $s(x,y) s(y)= s(y)$.  
If $f$ is another idempotent dominating both $s(x)$ and $s(y)$ then 
$f s(x,y) = s(x,y)$, so that  $s(x,y)$ is the `supremum' of $s(x)$ and $s(y)$  in this ordering.
Then notice that 
$\Vert (1-x^{\frac{1}{n}}) (1-y^{\frac{1}{m}}) \Vert  \leq 1$, and 
also $$\Vert (1 - s(x)) (1 - s(y)) \Vert = \Vert 1 -  s(x,y) \Vert \leq 1.$$   Notice too
that $\overline{xA + yA}$
has a  bai in ${\mathfrak F}_A$ with terms of form
$$x^{\frac{1}{n}} + y^{\frac{1}{m}} - x^{\frac{1}{n}} y^{\frac{1}{m}}
= 1 - (1 - x^{\frac{1}{n}})(1 - y^{\frac{1}{m}})$$
which have bound $2$.  A double weak* limit point of this bai from ${\mathfrak F}_A \cap \overline{EA}$ is $s(x,y)$.   So as
usual $\overline{xA + yA} = \{ a \in A : s(x,y) a = a \}$.    

In the non-Arens regular case we use the
`second Arens product' below.   We 
show that $\overline{xA + yA} = \overline{(\frac{x+ y}{2}) A} = \overline{aA}$ where $a = \frac{x+ y}{2} \in {\mathfrak F}_A$.    By the proof of \cite[Lemma 2.1]{BRI} we know that 
$(1 - \frac{1}{n}\sum_{k=1}^n \, (1-a)^k)  \in {\mathfrak F}_A$ is a bai for ba$(a)$, and for $\overline{aA}$.  
Write $x = 1-z, y = 1-w$ for contractions $z,w \in A^1$, and let $b  = \frac{z + w}{2}$.
Then $a = 1 - b$. Let $r$ be a weak* limit  point
of the bai above, which is a mixed identity for ba$(a)^{**}$.   Then $r a = a$, so that 
$(1-r) b = (1-r)$.  Note that $s = 1-r$ is a contractive idempotent, and is an identity for 
$s (A^1)^{**} s$.  Since the identity in a Banach algebra is an extreme point, and since
$\frac{sz + sw}{2} = s$ we deduce that $sz = zs = s$.  
 Similarly $sw = ws = s$.
Thus $r x = x$, so that $x \in r A^{**} \cap A = \overline{a A}$ (as in Corollary \ref{lba}).   Similarly for $y$, and thus $\overline{xA + yA} = \overline{(\frac{x+ y}{2}) A}$.   
Thus if $x, y  \in {\mathfrak F}_A$ then the support idempotent
$s(\frac{x + y}{2})$ for $a$ can be taken to be
a `support idempotent' for $\overline{xA + yA}$.    

A very similar argument works for three elements $x, y, z  \in {\mathfrak F}_A$, using for 
example  
the fact that $\Vert (1-x^{\frac{1}{n}}) (1-y^{\frac{1}{n}}) (1-z^{\frac{1}{n}}) \Vert  \leq 1$.
Indeed a similar argument works for any finite collection  $G = \{ x_1, \cdots , x_m \} \in {\mathfrak F}_A$.  We have $\overline{GA} = \overline{x_G A}$, where  $$x_G
= \frac{1}{m} \, (x_1 +  \cdots + x_m)
\in {\mathfrak F}_A \cap \overline{EA}.$$
Let us write $s(G)$ for $s(\frac{1}{m} \, (x_1 +  \cdots + x_m))$,
then $s(G)$ is the support idempotent of $\overline{GA}$, and $s(G) A^{**} = 
(G A)^{\perp \perp}$, and thus $\overline{G A} = s(G) A^{**} \cap A$.    This has a
 bai in ${\mathfrak F}_A \cap \overline{EA}$, namely $(1 - [(1-x_1^{\frac{1}{n}}) \cdots
 (1-x_m^{\frac{1}{n}})])$, or $(1 - [(1-x_1^{\frac{1}{n_1}}) \cdots
 (1-x_m^{\frac{1}{n_m}})])$.  

If $E$ is a subset of ${\mathfrak F}_A$,
let $J  = \overline{EA}$, and let $\Lambda$ be the collection of finite subsets $G$  of
$E$ ordered by inclusion.  Writing $\Lambda$ as a net $(G_i)_{i \in \Lambda}$, we have
$$J = \overline{EA} = \overline{\cup_{i \in \Lambda} \, G_i A} = \overline{\cup_{i \in \Lambda} \, x_{G_i} A},$$
where $x_{G_i}
\in {\mathfrak F}_A \cap \overline{EA}$.  To see that
$J$ has a bai in ${\mathfrak F}_A$, as in e.g.\
 \cite[Theorem 5.1.2 (a)]{Pal}  it is enough 
to show that given $G \in \Lambda$ and $\epsilon > 0$
there exists $a \in {\mathfrak F}_A \cap J$ with $\Vert a x - x \Vert < \epsilon$ for 
all $x \in G$.  However this is clear since, as we saw above, 
$\overline{G A}$ has a bai in ${\mathfrak F}_A$.

Conversely, suppose that $J$ is an  ideal
in $A$ with a bai $(x_t)$ in ${\mathfrak r}_A$.
Then $J =   \overline{\sum_t \, x_t A} = \overline{EA}$ where $E = \{ {\mathfrak F}(x_t)  \}
\subset {\mathfrak F}_A$
by Proposition \ref{whba}.    The remaining results are clear from what we have proved.
 \end{proof}

{\bf Remarks.}   1) \ See \cite{LU} for a recent characterization
of ideals with bai.

\smallskip

2) \   We saw in Example \ref{Ex7} that several of the methods used
 in the last proof fail for noncommutative algebras.
First, it is not true there that if $x, y  \in {\mathfrak F}_A$ then 
$\overline{xA + yA} = \overline{(\frac{x + y}{2})A}$.
 Also $\overline{xA + yA}$  may have no  left cai.  
Also, it need not be the case that $EAE$ has a bai if $E \subset {\mathfrak F}_A$. 

\bigskip

If $E$ is any subset of ${\mathfrak F}_A$
and  $J = \overline{EA}$, and if $s = s_E$ is a weak* limit point of any bai 
in ${\mathfrak F}_A$ for $J$, then we call $s$ a {\em support idempotent}
 for $J$.   Note that $s A^{**} = J^{\perp \perp}$ as usual,
and so $J = s A^{**} \cap A$.  

\bigskip 

{\bf Remark.}   Suppose that $I$ is a directed set,
and that  $\{ E_i : i \in I \}$
is a family of subsets of ${\mathfrak F}_A$ with $E_i \subset E_j$ 
if $i \leq j$.
Then $\overline{\sum_i \, E_i A} = \overline{EA}$, where
$E = \cup_i \, E_i$.  Moreover,
if $s_i$ is a support idempotent for $\overline{E_i A}$,
and if $s_i$ has weak* limit point $s'$ in $A^{**}$ then we
claim that $s'$  is a support idempotent for $J = \overline{EA}$.  
Indeed clearly $s' \in (J \cap {\mathfrak F}_{A})^{\perp \perp}$,
since each $s_i$ resides here.  Conversely, 
if $x \in E_i$ then $s_j x = x$
if $j \geq i$, so that $s' x = x$.
Thus $s_i  x \to x$ in norm for all $x \in J$, so that 
$s' x = x$ for all $x \in J$.  Hence $s' x = x$  for all $x \in J^{\perp \perp}$.
Therefore $s'$ is idempotent,
and $J^{\perp \perp} \subset s' A^{**}$, and so
$J^{\perp \perp} = s' A^{**}$.   As usual, $J = s' A^{**} \cap A$.      This 
concludes the proof of the claim.  
If $(x_t)$ is a net in $J \cap {\mathfrak F}_{A}$ with
weak* limit $s'$ then we leave it as an exercise that one can choose a 
net of  convex combinations of the 
$x_t$, which is a bai for $J$ in ${\mathfrak F}_{A}$ with weak* limit $s'$. 
In particular, if $(G_i)_{i \in \Lambda}$ is as in the proof of Theorem \ref{nowha}, then
the net  $s_i = s(G_i)$ has 
a weak* limit point which is a support projection for 
$J = \overline{EA}$.  

\bigskip

Let us define an ${\mathfrak F}$-{\em ideal} to be an ideal of the kind characterized in  Theorem \ref{nowha},
namely a closed  ideal
in $A$ with a bai in ${\mathfrak r}_A$.

\begin{theorem} \label{nowha2}  Let $A$ be a commutative approximately unital Banach algebra.  Any separable ${\mathfrak F}$-ideal in $A$ is of the form $\overline{xA}$ for 
$x \in {\mathfrak F}_A$.    Also, the closure of the sum of a countable set of 
ideals $\overline{x_k \, A}$ for $x_k \in {\mathfrak F}_A$, 
equals $\overline{z A}$ where $z = \sum_{k = 1}^\infty
\, \frac{1}{2^k} \, x_k$.  \end{theorem}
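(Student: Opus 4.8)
The plan is to handle the two assertions separately. The separable case is immediate: in a commutative algebra a closed ideal with a bai in ${\mathfrak r}_A$ is a closed right ideal with a left bai in ${\mathfrak r}_A$ (equivalently, in ${\mathfrak F}_A$, by Corollary \ref{hasbi}), that is, a right ${\mathfrak F}$-ideal, so a separable one equals $\overline{xA}$ for some $x \in {\mathfrak F}_A$ directly by the separability assertion in Corollary \ref{ctrid}. For the countable-sum statement, I would first verify that $z = \sum_{k=1}^\infty 2^{-k} x_k$ is a genuine element of ${\mathfrak F}_A$: since $\Vert x_k \Vert \leq 2$ in $A^1$ the series converges in norm, and $\Vert 1 - z \Vert = \Vert \sum_k 2^{-k}(1 - x_k) \Vert \leq \sum_k 2^{-k} = 1$. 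Writing $J = \overline{\sum_k x_k A}$, the inclusion $\overline{zA} \subseteq J$ is easy: each $x_k \in \overline{x_k A} \subseteq J$ by Corollary \ref{lba}, the set $J$ is a closed ideal, and $z$ is a norm limit of finite combinations of the $x_k$, so $z \in J$ and hence $\overline{zA} \subseteq J$.

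The substance is the reverse inclusion $J \subseteq \overline{zA}$, which I would obtain by adapting the extreme-point argument from the non-Arens-regular part of the proof of Theorem \ref{nowha}. Put $w_k = 1 - x_k \in {\rm Ball}(A^1)$, so that $z = 1 - b$ with $b = \sum_k 2^{-k} w_k \in {\rm Ball}(A^1)$, and let $r = s(z)$ and $s = 1 - r$; since support idempotents lie in ${\mathfrak F}_{A^{**}}$ we have $\Vert s \Vert \leq 1$, and $s$ is a contractive idempotent. From $rz = z$ one computes $(1 - r)b = 1 - r$, i.e.\ $sb = s$, so that $\sum_k 2^{-k}(s w_k) = s$. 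Commutativity enters crucially here: in a commutative algebra every element of $A$, hence of $A^1$, commutes with all of $A^{**}$ in either Arens product (left and right multiplication by such elements being weak* continuous and agreeing with the original product), so $s w_k = w_k s = s w_k s \in s(A^1)^{**}s$ with $\Vert s w_k \Vert \leq 1$. Now $s$ is the identity, and therefore an extreme point of the closed unit ball, of the unital Banach algebra $s(A^1)^{**}s$, and $s = \sum_k 2^{-k}(s w_k)$ realizes $s$ as a countable convex combination of unit-ball elements. A regrouping induction --- at stage $n$, write $s$ as a convex combination of $s w_n$ and a tail that again lies in the closed unit ball, and invoke extremality --- forces $s w_k = s$ for every $k$. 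Hence $s x_k = s - s w_k = 0$, so $r x_k = x_k$ and $x_k \in s(z) A^{**} \cap A = \overline{zA}$ by Corollary \ref{lba}. As this holds for all $k$ and $\overline{zA}$ is a closed ideal, $J \subseteq \overline{zA}$, giving equality.

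The step I expect to be most delicate is keeping the products $s w_k$ inside the corner $s(A^1)^{**}s$ in the possibly non-Arens-regular commutative setting: Example \ref{Ex3} warns that ${\rm ba}(z)^{**}$ need not be commutative, so I must justify $s w_k = w_k s$ via the weak*-continuity (topological-center) property of multiplication by elements of $A^1$, and not by any commutativity of $A^{**}$ itself. The second point requiring care is the passage from finite to countably infinite convex combinations in the extreme-point argument; this is exactly what the regrouping induction handles, using only that each tail $\sum_{k > n} 2^{-(k-n)}\, s w_k$ remains in the closed unit ball.
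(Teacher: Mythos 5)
Your proof is correct and is essentially the paper's argument: the paper also rests on the extreme-point fact that the identity $s$ of the corner algebra $s(A^1)^{**}s$ (with $s = 1-r$ for a contractive idempotent $r$ satisfying $rz=z$ and $rA^{**}\cap A = \overline{zA}$) cannot be a nontrivial convex combination of unit-ball elements, though it organizes the induction as repeated binary splittings $z = \frac{1}{2}(x_1+y)$ handled by the two-element case in the third paragraph of the proof of Theorem \ref{nowha}, and takes $r$ to be a weak* limit point of the bai $(1-\frac{1}{n}\sum_{k=1}^n(1-z)^k)$ rather than $s(z)$. These choices are interchangeable here, and your justification of $sw_k = w_k s$ via $A^1$ lying in the topological centre of $(A^1)^{**}$ (rather than any commutativity of the bidual itself) is precisely the point the paper's argument also depends on.
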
 
\begin{proof}   The first assertion follows from the matching result in Section 4 (Corollary \ref{ctrid}),
or from the second assertion as in \cite[Theorem 2.16]{BRI}.
For the second assertion,
let $x_k, z$ be as in the statement.
Inductively one can prove that $x_k \in \overline{zA}$, which is what is needed.
One begins by setting $x  = x_1$ and $y = \sum_{k = 2}^\infty
\, \frac{1}{2^{k-1}} \, x_k \in {\mathfrak F}_A$.  Then $z =  \frac{x + y}{2}$, and the third paragraph of the
 proof of Theorem \ref{nowha}
shows that $x = x_1 \in \overline{zA}$, and $y \in \overline{zA}$.   One then repeats the argument
to show all $x_k  \in \overline{zA}$.  \end{proof}

As in Section 4, we obtain again that for example: 

 \begin{corollary} \label{geM}   Let $A$ be a
commutative $M$-approximately unital Banach algebra.  Then $A$ has a countable cai iff there exists $x \in  {\mathfrak F}_{A}$ with 
$A = \overline{xA}$  (or equivalently, iff $s(x)$ is the unique mixed identity of $A^{**}$  of norm $1$).    
\end{corollary}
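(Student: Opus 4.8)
The plan is to prove the two nontrivial implications by assembling the machinery already developed, the essential point being that in the commutative case the one-sided and two-sided constructions collapse onto each other. Throughout, write $e$ for the unique mixed identity of $A^{**}$ of norm one; this exists by Lemma \ref{isun} since $A$ is $M$-approximately unital.

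First I would treat the equivalence of a countable cai with the existence of $x \in \mathfrak{F}_A$ with $A = \overline{xA}$. Since $A$ is $M$-approximately unital, Corollary \ref{r6} shows that $A$ has a countable cai iff $A$ has a countable bai iff $A = \overline{xAx}$ for some $x \in \mathfrak{F}_A$. Here commutativity does the remaining work: in Lemma \ref{zH} one has $D = \overline{zAz} = J \cap K$ with $J = \overline{zA}$ and $K = \overline{Az}$, and when $A$ is commutative $J = K$, so $\overline{xAx} = \overline{xA}$. Thus $A = \overline{xAx}$ is literally the statement $A = \overline{xA}$, and one may always take the generator in $\mathfrak{F}_A$, replacing $x$ by $\mathfrak{F}(x)$ via Proposition \ref{whba} and Lemma \ref{stam} if needed. (Alternatively this whole step is immediate from the equivalence of (i) and (iii) in Corollary \ref{ottertoo}, applied with $B = A^1$ and with $\mathfrak{r}_A$ replaced by $\mathfrak{F}_A$, once one notes that a countable cai and a sequential bai from $\mathfrak{F}_A$ are interchangeable for such $A$ by Corollary \ref{r6}, and that $\overline{xA} = \overline{Ax}$ automatically.)

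Next I would show that $A = \overline{xA}$ is equivalent to $s(x) = e$. The easy direction is that if $s(x) = e$ then $s(x)$ is in particular a left identity for $A^{**}$ in the second Arens product, whence $\overline{xA} = A$ by the last sentence of Corollary \ref{supp3}. Conversely, if $A = \overline{xA}$ then that same result gives that $s(x)$ is a left identity for the second Arens product; because $A$ is commutative the first and second Arens products on $A^{**}$ are opposite to one another, so any left identity for the second product is a right identity for the first, and hence $s(x)$ is a mixed identity for $A^{**}$. It then remains to identify this mixed identity with $e$, and this is where the $M$-structure must enter: I would argue that right multiplication $\xi \mapsto \xi \diamond s(x)$ (equivalently, in the commutative setting, left multiplication) is a weak* continuous idempotent map carrying $(A^1)^{**}$ onto $A^{\perp \perp}$, check that it is contractive using $s(x) \in \mathfrak{F}_{A^{**}}$ together with the decomposition $(A^1)^{**} = A^{\perp \perp} \oplus^{\infty} \Cdb(1-e)$ furnished by Lemma \ref{hfin}, and then invoke the uniqueness of the contractive projection onto an $M$-summand to conclude it coincides with multiplication by $e$; evaluating at the identity $1$ of $(A^1)^{**}$ then yields $s(x) = e$.

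The main obstacle is precisely this last identification $s(x) = e$. The support idempotent $s(x)$ is built from the bounded (but not contractive) approximate identity $(x^{1/n})$ lying in $\mathfrak{F}_A$, so a priori its norm is controlled only by $2$ and it is known only to lie in $\mathfrak{F}_{A^{**}}$ rather than in $\frac{1}{2}\mathfrak{F}_{A^{**}}$; consequently the naive estimates, which rest only on $\Vert e - s(x)\Vert \leq 1$, produce the constant $2$ rather than $1$, and do not by themselves show that the associated projection is contractive. My expectation is therefore that one must exploit the full mixed-identity property of $s(x)$, and not merely its membership in $\mathfrak{F}_{A^{**}}$, in tandem with the uniqueness of the $M$-projection onto $A^{\perp \perp}$, exactly in the spirit of the proof of Lemma \ref{isun}, in order to force $s(x) = e$. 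Once that is secured the three stated conditions are equivalent, and the final parenthetical assertion is merely the translation of $A = \overline{xA}$ into the statement that $s(x)$ equals the mixed identity of norm one, which is unique by Lemma \ref{isun}.
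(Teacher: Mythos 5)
Your first paragraph is correct and is exactly the route the paper intends: the paper gives no argument beyond the phrase ``As in Section 4,'' and what is meant is precisely Corollary \ref{r6} (countable cai $\Leftrightarrow$ countable bai $\Leftrightarrow$ $A = \overline{xAx}$ for some $x \in {\mathfrak F}_A$) together with the collapse $\overline{xAx} = \overline{xA} \cap \overline{Ax} = \overline{xA}$ from Lemma \ref{zH} in the commutative case, the converse direction coming from the countable bai $(x^{\frac{1}{n}})$. So the displayed equivalence is fine.

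The genuine gap is in the parenthetical assertion, and you have located it yourself but not closed it. You correctly obtain ``$s(x) = e \Rightarrow A = \overline{xA}$'' and ``$A = \overline{xA} \Rightarrow s(x)$ is a mixed identity'' (via Corollary \ref{supp3} and the fact that the two Arens products are mutually opposite for commutative $A$), but the step ``$s(x)$ is a mixed identity $\Rightarrow s(x) = e$'' does not follow from the mechanism you sketch. First, the weak*-continuous candidate is $\eta \mapsto s(x) \diamond \eta$, not $\eta \mapsto \eta \diamond s(x)$ (that is the slot in which $\diamond$ is not weak* continuous). Second, and more seriously, that map carries $(A^1)^{**}$ idempotently onto $A^{\perp\perp}$ but has norm at least $\Vert s(x) \Vert$, and $s(x)$ is only known to lie in ${\mathfrak F}_{A^{**}}$, so $\Vert s(x) \Vert \leq 2$; Lemma \ref{isun} identifies only the mixed identities of norm one, and Example \ref{Ex1} shows idempotents in ${\mathfrak F}_A$ genuinely can have norm $2$, so nothing quoted forces contractivity. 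Writing $v = s(x) - e$, the relations one can extract ($v \diamond \eta = 0$ for all $\eta \in A^{**}$, and $\Vert v \Vert \leq 1$ from Lemma \ref{hfin}) are all consistent with $v \neq 0$; to kill $v$ one would need, for instance, that $\langle s(x), \psi \rangle = 1$ for every state $\psi$ of $A$ (and then use that states span $A^*$ for scaled algebras), and this is not delivered by Lemma \ref{hbs} because $(x^{\frac{1}{n}})$ is only a bai, not a cai. In fairness, the paper itself supplies no argument for this refinement either --- Corollary \ref{ottertoo}(iv) only asserts that $s(x)$ is \emph{a} mixed identity --- but as written your proposal establishes the main iff and leaves the identification of $s(x)$ with the unique norm-one mixed identity unproved.
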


With this in hand, one can generalize some part of the theory of left ideals and cai's in \cite{BHN,BRI,BRII} to the class of ideals
in the last theorem, in the commutative case.   This class  is not closed 
under finite intersections.  In fact this fails rather badly (see Example \ref{Ex1}).  
One may define an ${\mathfrak F}$-{\em open 
idempotent} in $A^{**}$ to be an idempotent
$p \in A^{**}$ for which there exists a net $(x_t)$ in ${\mathfrak F}_A$ 
(or equivalently, as we shall see, in  ${\mathfrak r}_A$) with $x_t = p x_t \to p$ weak*.   Thus 
a left identity for the second Arens product in $A^{**}$ 
is ${\mathfrak F}$-open  iff it is in the weak* closure
of ${\mathfrak F}_A$.    See e.g.\ \cite{Ake2, Ped} for the notion of open projection in a $C^*$-algebra.

 \begin{lemma} \label{suppo}  If $A$ is a
commutative approximately unital Banach algebra
then the  ${\mathfrak F}$-open
idempotents in $A^{**}$ are precisely 
the support
idempotents for ${\mathfrak F}$-ideals.
\end{lemma}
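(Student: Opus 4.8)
The plan is to prove the two inclusions separately, working throughout with the second Arens product on $A^{**}$ and exploiting that, for the ${\mathfrak F}$-ideals under consideration, $J^{\perp\perp}$ is always of the form $sA^{**}$ for a support idempotent $s$, with $J = sA^{**}\cap A$ (Corollary \ref{lba} and the discussion of support idempotents preceding this lemma). For the first inclusion I would show every support idempotent is ${\mathfrak F}$-open. Let $s$ be a support idempotent for an ${\mathfrak F}$-ideal $J=\overline{EA}$, so by definition $s$ is a weak* limit point of a bai $(x_t)$ for $J$ lying in ${\mathfrak F}_A$; passing to a subnet I may assume $x_t\to s$ weak*. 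Since each $x_t\in J\subset sA^{**}$, I may write $x_t=s\eta_t$, and then $s x_t = (s\, s)\eta_t = s\eta_t = x_t$ by idempotence of $s$ and associativity of the Arens product. Thus $(x_t)$ is a net in ${\mathfrak F}_A$ with $s x_t=x_t\to s$ weak*, which is exactly the requirement for $s$ to be ${\mathfrak F}$-open.

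For the converse I would begin from an ${\mathfrak F}$-open idempotent $p$, with a net $(x_t)\subset{\mathfrak F}_A$ satisfying $p x_t=x_t\to p$ weak*, and set $E=\{x_t\}$ and $J=\overline{EA}$; this is an ${\mathfrak F}$-ideal by Theorem \ref{nowha}, and the goal is to exhibit $p$ as a support idempotent for it. By Corollary \ref{lba} each $x_t\in\overline{x_t A}\subset J$, so $(x_t)$ is a net in $J\cap{\mathfrak F}_A$ with weak* limit $p$; in particular $p\in(J\cap{\mathfrak F}_A)^{\perp\perp}\subset J^{\perp\perp}$. The key step is to verify that $p$ acts as the identity on $J^{\perp\perp}$. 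Using $p x_s=x_s$ and associativity, $p(x_s b)=(p x_s)b=x_s b$ for $x_s\in E$ and $b\in A$; by linearity and norm-density this yields $pa=a$ for all $a\in J$, and then weak* continuity of the second Arens product in its second variable upgrades this to $p\zeta=\zeta$ for all $\zeta\in J^{\perp\perp}$. Hence $J^{\perp\perp}=pA^{**}$.

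It then remains to promote the witnessing net to a genuine approximate identity. A short Arens-product computation (tracking which variable the product is weak* continuous in), combined with commutativity of $A$, which forces $x_t a = a x_t$, shows that $x_t a\to a$ weakly for every $a\in J$, so $(x_t)$ is a weak two-sided approximate identity for $J$ drawn from the convex set ${\mathfrak F}_A$. The standard Mazur-type convexity argument already invoked in Lemma \ref{netwee} and in the Remark preceding this lemma then produces from $(x_t)$ a genuine bai for $J$ in ${\mathfrak F}_A$ with weak* limit $p$. This exhibits $p$ as a support idempotent for $J$, completing the second inclusion.

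The main obstacle is conceptual rather than computational. Because $A^{**}$ under the second Arens product need not be commutative even though $A$ is (cf.\ Example \ref{Ex3}), and because left identities of $J^{\perp\perp}$ need not be unique, one cannot simply identify $p$ with some previously constructed support idempotent for $J$ and be done. Instead one must check \emph{directly} that the particular net witnessing the ${\mathfrak F}$-openness of $p$, after passing to convex combinations, is a bai for the ideal it generates; this is precisely where the careful one-sided use of weak* continuity of the Arens product, together with the commutativity of $A$ to make a one-sided approximate identity two-sided, is needed.
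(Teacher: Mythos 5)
Your proof is correct and takes essentially the same route as the paper's: the ``support $\Rightarrow$ open'' direction via $x_t \in J = sA^{**}\cap A$, so that $s x_t = x_t \to s$ weak*, and the ``open $\Rightarrow$ support'' direction by forming $J=\overline{EA}$, checking $pA^{**}=J^{\perp\perp}$, and upgrading the witnessing net to a bai in ${\mathfrak F}_A$ by convex combinations (using commutativity to make the weak approximate identity two-sided). The only difference is that you carefully spell out the steps the paper compresses into ``it is easy to see'' and ``obvious''.
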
 \begin{proof}   If $p$ is
an ${\mathfrak F}$-open
idempotent then it follows
that $p \in {\mathfrak F}_{A^{**}}$, and 
that $J = \overline{EA}$ is
an ${\mathfrak F}$-ideal,
where $E = \{ x_t \}$ (using Theorem \ref{nowha}).    Also $p x = x$ if $x \in J$,
and $p \in J^{\perp \perp}$.  So 
$pA^{**} = J^{\perp \perp}$, from which it is easy to see that $p$ is a support
idempotent of $J$.     

The converse is obvious by the definition of support idempotent above,
and the fact that $\overline{EA} = s_E A^{**} \cap A$.  
 \end{proof}

\begin{corollary} \label{gen}   If $A$ is a
commutative approximately unital Banach algebra,
and $E \subset  {\mathfrak r}_{A}$,
then the closed subalgebra generated by $E$ 
has a bai in ${\mathfrak F}_{A}$.
\end{corollary}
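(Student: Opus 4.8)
The plan is to observe that the bounded approximate identity for the ideal $\overline{EA}$ built in the proof of Theorem \ref{nowha} can be taken to lie inside the subalgebra generated by $E$, and hence serves as a bai for that subalgebra as well. Write $B$ for the closed subalgebra generated by $E$.

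First I would reduce to the case $E \subset \mathfrak{F}_A$. For each $x \in E \subset \mathfrak{r}_A$ we have $\mathfrak{F}(x) \in \mathfrak{F}_A$, and by Proposition \ref{whba} we have $\mathrm{ba}(x) = \mathrm{ba}(\mathfrak{F}(x))$; in particular $\mathfrak{F}(x) \in \mathrm{ba}(x) \subseteq B$ and $x \in \mathrm{ba}(\mathfrak{F}(x))$. Hence the closed subalgebra generated by $E$ coincides with the one generated by $\{ \mathfrak{F}(x) : x \in E \} \subset \mathfrak{F}_A$, so we may assume $E \subset \mathfrak{F}_A$ from the start. Note also that $B \subseteq J := \overline{EA}$, since (using that $A$ is approximately unital to get $E \subseteq J$) each finite product of elements of $E$ lies in $EA \subseteq J$, and $J$ is closed.

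Next, I would recall from the proof of Theorem \ref{nowha} that for each finite $G = \{x_1, \dots, x_m\} \subseteq E$ the elements
$$u_{G} = 1 - (1 - x_1^{1/n_1}) \cdots (1 - x_m^{1/n_m})$$
lie in $\mathfrak{F}_A$ (each factor $1 - x_k^{1/n_k}$ has norm at most $1$ by Proposition \ref{clnth}, so the product does too), and that such elements furnish a bai for $J = \overline{EA}$ in $\mathfrak{F}_A$. The key extra observation is that $u_{G} \in B$: expanding the product, the constant term $1$ cancels, leaving a sum of products of the $x_k^{1/n_k}$, each of which lies in $\mathrm{ba}(x_k) \subseteq B$ by Proposition \ref{clnth}; since $B$ is a subalgebra these products, and hence $u_{G}$, lie in $B$. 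Thus the entire bai constructed for $J$ lies in $B \cap \mathfrak{F}_A$.

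Finally, since this net is a bai for $J$ and $B \subseteq J$, it satisfies $u_\lambda b \to b$ for every $b \in B$; being bounded and contained in $B \cap \mathfrak{F}_A$, it is therefore a bai for $B$ lying in $\mathfrak{F}_A$, as required. (Alternatively, one may verify the approximate identity property directly on the dense set of finite products $x_{i_1} \cdots x_{i_l}$ of generators, using commutativity to move a factor $(1 - x_{i_1}^{1/n})$ onto the matching $x_{i_1}$ and Lemma \ref{sin} to bound $\Vert (1 - x_{i_1}^{1/n}) x_{i_1} \Vert \le c_n \to 0$, and then invoke the standard criterion for bai's as in \cite{Pal}.) I expect the one point that really needs care to be precisely this membership $u_{G} \in B$, that is, that the approximate units can be chosen inside the generated subalgebra rather than merely inside the larger ideal $\overline{EA}$; once that is secured, everything else is inherited from Theorem \ref{nowha}.
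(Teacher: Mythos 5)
Your proposal is correct and is essentially the paper's own argument: the paper proves this corollary in one line by observing that the bai constructed for $\overline{EA}$ in the proof of Theorem \ref{nowha} lies in the closed subalgebra generated by $E$ and hence is a bai for that subalgebra. You have simply supplied the details the paper leaves implicit (the reduction to $E\subset{\mathfrak F}_A$ via ${\rm ba}(x)={\rm ba}({\mathfrak F}(x))$, the membership $u_G\in B$ after expanding the product, and the inclusion $B\subseteq\overline{EA}$), all of which check out.
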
 \begin{proof} 
In Theorem \ref{nowha}
we constructed a bai in ${\mathfrak F}_{A}$
for $\overline{EA}$,
and this bai is clearly in the closed subalgebra generated by $E$, and is a bai for that subalgebra.
\end{proof}

If $A$ is any approximately unital commutative Banach algebra,
define $A_H = \overline{{\mathfrak F}_A A}$.
This is an ideal of the type in Theorem {\rm \ref{nowha}},
and is the largest such (by that result). 

If $A$ is an operator algebra it is proved in \cite{BRII}
that $A = {\mathfrak r}_A - {\mathfrak r}_A$ iff $A$ has 
a cai.   In our setting we at least  have:

\begin{corollary} \label{gennew}    If $A$ is a commutative approximately unital  Banach algebra
which is generated by ${\mathfrak r}_A$ as a Banach algebra
 (and certainly  if $A = {\mathfrak r}_A - {\mathfrak r}_A$), 
then $A$ has a bai in ${\mathfrak F}_A$.   
\end{corollary}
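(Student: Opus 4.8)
The plan is to obtain this as an immediate consequence of Corollary \ref{gen}. That corollary tells us that for \emph{any} subset $E \subseteq {\mathfrak r}_A$, the closed subalgebra generated by $E$ carries a bai contained in ${\mathfrak F}_A$. The key step is simply to apply it with the largest admissible choice $E = {\mathfrak r}_A$: this produces a bai in ${\mathfrak F}_A$ for the closed subalgebra generated by ${\mathfrak r}_A$. Under the standing hypothesis that $A$ is generated by ${\mathfrak r}_A$ as a Banach algebra, that closed subalgebra is $A$ itself, so $A$ acquires a bai in ${\mathfrak F}_A$, which is exactly the assertion.

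To dispatch the parenthetical sufficient condition, I would note that the closed subalgebra generated by ${\mathfrak r}_A$ is in particular a closed linear subspace of $A$ containing ${\mathfrak r}_A$, and hence contains every difference in ${\mathfrak r}_A - {\mathfrak r}_A$. Therefore if $A = {\mathfrak r}_A - {\mathfrak r}_A$ then that subalgebra already equals $A$, so this hypothesis is a special case of ``generated by ${\mathfrak r}_A$'' and the argument of the previous paragraph applies verbatim.

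I expect there to be essentially no obstacle, since the genuine work is carried by Corollary \ref{gen} (and, through it, by the construction of the bai for $\overline{EA}$ in Theorem \ref{nowha}). The one small point worth recording --- already implicit in the proof of Corollary \ref{gen} --- is why the constructed net is a bai for the \emph{generated subalgebra} and not merely for the ideal $\overline{{\mathfrak r}_A A}$. Here one uses that each $x \in {\mathfrak r}_A$ satisfies $x^{1/n} x = x^{1+1/n} \to x$, so $x \in \overline{xA} \subseteq \overline{{\mathfrak r}_A A}$; hence the closed subalgebra $B$ generated by ${\mathfrak r}_A$ sits inside $\overline{{\mathfrak r}_A A}$, a bai for the latter automatically satisfies $e_t b \to b$ for all $b \in B$, and the bai elements (polynomials without constant term in the roots $x_i^{1/n}$) lie in $B \cap {\mathfrak F}_A$. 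This confirms the net is a bai for $B = A$ lying in ${\mathfrak F}_A$.
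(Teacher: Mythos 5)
Your proposal is correct and follows essentially the same route as the paper: the paper's proof also simply invokes Corollary \ref{gen} (with the generating set drawn from ${\mathfrak r}_A$, noting ${\mathfrak r}_A = \overline{\Rdb^+ {\mathfrak F}_A}$) and observes that the closed subalgebra generated is all of $A$. Your extra remarks on why the bai from Theorem \ref{nowha} serves the generated subalgebra, and why $A = {\mathfrak r}_A - {\mathfrak r}_A$ is a special case, are accurate and already implicit in the paper's argument.
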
 \begin{proof} 
This follows from Corollary \ref{gen}  because $A$ is generated by ${\mathfrak r}_A$ in this case,
and hence is generated by  ${\mathfrak F}_A$ since 
${\mathfrak r}_A = \overline{\Rdb^+ {\mathfrak F}_A}$. 
\end{proof} 

Conversely, if $A$ is $M$-approximately unital or has a sequential cai satisfying certain conditions discussed in Section 6, 
then we saw in Section 6 that $A = {\mathfrak r}_A - {\mathfrak r}_A$.
Indeed we saw in the $M$-approximately unital case in Theorem  \ref{lbdiff} that 
$$A = \Rdb^+ ({\mathfrak F}_A - {\mathfrak F}_A)
\subset  {\mathfrak r}_A - {\mathfrak r}_A
\subset A .$$

We do not know if it is always true if,
as in the operator algebra case, for any 
approximately unital commutative Banach algebra
we have $A_H = {\mathfrak r}_A - {\mathfrak r}_A
= \Rdb^+ ({\mathfrak F}_A - {\mathfrak F}_A)$.

\section{$M$-ideals which are ideals}  \label{Mididsect}

We now turn to an interesting class of closed approximately unital ideals in a general  approximately unital Banach algebra
that generalizes the class of approximately unital  closed two-sided ideals in operator algebras.   (Unfortunately,
we see no way yet to apply e.g.\ the theory in \cite{BZ} to 
generalize the results in this section to one-sided ideals.)  
The
study of this class was initiated by Roger Smith and J. Ward \cite{SW1,SW2,SW3}.  We will use basic ideas from these papers 
(see also Werner's theory of inner ideals in the sense of \cite[Section V.3]{HWW}).

First, let $A$ be a unital Banach algebra.  We define an {\em $M$-ideal ideal} in $A$ to be a 
subspace $J$ of $A$ which is  
an $M$-ideal in $A$, such that if $P$ is the $M$-projection then
$z = P1$ is central in $A^{**}$ (the latter  is automatic 
for example if $A$ is commutative and Arens regular).   
Actually it suffices in all the arguments below that simply $z a = a z$ for $a \in A$, 
but for convenience we will stick to the `central' hypothesis.   
By \cite[Proposition 3.1]{SW1}, $z$ is a 
hermitian projection of norm $1$ (or $0$).    It is then a consequence of Sinclair's 
theorem on hermitians \cite{Si} that $z$ is accretive, 
indeed $W(z) \subset [0,1]$.   The proof of  \cite[Proposition 3.4]{SW1} shows that 
$(1-z)J^{\perp \perp} = (0)$ (it is shown there that $z J^{\perp \perp} z \subset J^{\perp \perp} = J_1$ 
in the notation there, and 
that $(1-z)J \subset J_2$, but clearly $z J \subset J_1$ so that $(1-z)J \subset  (J - J_1) \cap J_2  \subset  J_1 \cap J_2 = (0)$).  
 It also shows that $z (I-P) A^{**} = 0$, so that $P$ is simply 
left multiplication by $z$, and  $J^{\perp \perp} = zA^{**}$.
Since the latter is an ideal, so is $J = J^{\perp \perp} \cap A$ an ideal in $A$.  Moreover, $J$ is 
approximately unital since $z$ is a mixed identity for $J^{\perp \perp}$  of norm $1$.   We call $z$ the 
support projection of $J$, and write it as $s_J$.    The correspondence $J \mapsto s_J$ 
is bijective on the class of $M$-ideal ideals.  

\begin{proposition} \label{isfidi}  An $M$-ideal ideal $J$ in a unital
Banach algebra $A$ is $M$-approximately unital, indeed $J$ has a  
 cai in $\frac{1}{2}  {\mathfrak F}_A$.   
Also $J$ is a two-sided ${\mathfrak F}$-ideal in $A$, and $J = \overline{EA} = \overline{AE}$ for 
some subset $E \in J \cap {\mathfrak F}_A$.
\end{proposition}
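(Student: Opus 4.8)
The plan is to deduce everything quickly from the structural results of Sections 2 and 3 together with Theorem \ref{lbai}, since by the discussion preceding the Proposition we already know that $J$ is a closed approximately unital ideal of $A$ which is an $M$-ideal in $A$ (with support projection $z = s_J$ and $J^{\perp\perp} = zA^{**}$). So there is very little new work; the argument is mostly an assembly of cited facts.

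First I would record that, being unital, $A$ is $M$-approximately unital (here $A^1 = A$). Thus the hypotheses of Proposition \ref{auf2} are satisfied by the pair $J \subset A$: $J$ is a closed approximately unital ideal, it is an $M$-ideal in $A$, and $A$ is (even $M$-)approximately unital. Invoking part (3) of that Proposition immediately gives that $J$ is $M$-approximately unital, which is the first assertion.

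Next I would produce the cai. Applying Theorem \ref{lbai} to the $M$-approximately unital algebra $J$ yields a cai $(e_t)$ for $J$ lying in $\frac{1}{2}{\mathfrak F}_J$. The one point that needs care --- and really the only subtlety in the whole argument --- is that the Proposition asks for a cai in $\frac{1}{2}{\mathfrak F}_A$, computed in the ambient algebra $A$, rather than merely in $\frac{1}{2}{\mathfrak F}_J$. This is resolved by the identity ${\mathfrak F}_J = J \cap {\mathfrak F}_A$ from Proposition \ref{auf2} (1): since $J$ is a subspace this gives $\frac{1}{2}{\mathfrak F}_J = J \cap \frac{1}{2}{\mathfrak F}_A \subset \frac{1}{2}{\mathfrak F}_A$, so $(e_t)$ automatically lies in $\frac{1}{2}{\mathfrak F}_A$.

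Finally I would address the remaining claims. As a cai for the Banach algebra $J$, the net $(e_t)$ is a two-sided bai for $J$ sitting inside $\frac{1}{2}{\mathfrak F}_A \subset {\mathfrak F}_A$; hence $J$ is a two-sided ${\mathfrak F}$-ideal. Taking $E = \{ e_t \} \subset J \cap {\mathfrak F}_A$, the inclusion $\overline{EA} \subset J$ is clear because $E \subset J$ and $J$ is a closed right ideal, while for $a \in J$ the identity $a = \lim_t e_t a$ shows $a \in \overline{EA}$; thus $J = \overline{EA}$, and $J = \overline{AE}$ follows symmetrically from $a = \lim_t a e_t$. I do not anticipate any genuine obstacle here; the whole argument is essentially a bookkeeping assembly of the cited results, the only delicate point being the ${\mathfrak F}_J$ versus ${\mathfrak F}_A$ distinction handled via Proposition \ref{auf2} (1).
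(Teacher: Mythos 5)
Your proposal is correct and follows essentially the same route as the paper's own proof: Proposition \ref{auf2} gives that $J$ is $M$-approximately unital and that ${\mathfrak F}_J = J \cap {\mathfrak F}_A$, Theorem \ref{lbai} supplies the cai in $\frac{1}{2}{\mathfrak F}_J$, and taking $E$ to be that cai yields $J = \overline{EA} = \overline{AE}$. The one point you handle explicitly (the ${\mathfrak F}_J$ versus ${\mathfrak F}_A$ distinction) is exactly the point the paper also flags, so there is nothing to add.
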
  \begin{proof}   By Proposition
\ref{auf2}, $J$ is $M$-approximately unital, so by 
Theorem \ref{lbai} it has a cai in $\frac{1}{2}  {\mathfrak F}_J
= J \cap \frac{1}{2}  {\mathfrak F}_A$.  (The latter 
equality following from Proposition \ref{auf2} applied in $A^1$.)   Thus $J$ is a two-sided ${\mathfrak F}$-ideal.   We also 
deduce from Proposition \ref{auf2} that $J^1 \cong J + \Cdb 1_A$.
Hence  
$J = \overline{EA} = \overline{AE}$ for some $E \subset J \cap {\mathfrak F}_A$, for example
take $E$ to be the cai above.  
\end{proof} 
     
The converse of the last result fails.  Indeed even in a commutative algebra,
not every ideal $\overline{EA}$ for a subset $E \in {\mathfrak F}_A$, is an $M$-ideal ideal, nor need have a cai in $\frac{1}{2}  {\mathfrak F}_A$
(see Example \ref{Ex2}).

Suppose that $J_1$ and $J_2$ are $M$-ideal ideals in $A$, and that 
$P_1, P_2$ are the corresponding  $M$-projections on $A^{**}$ with $z_k = P_k 1$ central in $A^{**}$.  As in Corollary \ref{supp3},
$J_1 \subset J_2$ iff $z_2 z_1  = z_1$, and the latter equals $z_1 z_2$.
So the correspondence $J \mapsto s_J$ is an order embedding with respect to 
the usual ordering of projections in $A^{**}$.
Then by facts above, $P_1 P_2 (1) = P_1 (z_2) = z_1 z_2$, and this is central in $A^{**}$.  
  Similarly, $(P_1 + P_2 - P_1 P_2) 1 = z_1 + z_2 - z_1 z_2$, and this is central in $A^{**}$. 
Hence $J_1 \cap J_2$ and $J_1 + J_2$  are $M$-ideal ideals in $A$.  

To describe the matching fact about `joins' of an infinite family of ideals 
we introduce some notation.
Set  $N$ to be $A^{**}$.  
We will use the fact that $N$ contains a  commutative
von Neumann algebra.   We recall
that the  {\em centralizer} $Z(X)$ of a dual Banach space $X$ is a  weak* closed
subalgebra of $B(X)$, and it is densely spanned in the norm
topology by its contractive projections, which are the $M$-projections (see e.g.\ \cite{HWW}
and \cite[Section 7.1]{BZ}).
It is also a commutative  $W^*$-algebra in the weak* topology from $B(X)$.  
By \cite[Theorem V.2.1]{HWW}), 
the map $\theta : Z(N) \to N$ taking $T \in Z(N)$ to $T(1)$  is 
an isometric  homomorphism,
and it is weak* continuous by definition of the
weak* topology on $B(N)$ and hence on $Z(N)$.
Therefore by the Krein-Smulian theorem
the range of $\theta$ is weak* closed, and
$\theta$ is a  weak*  homeomorphism onto its range.   
  Thus $Z(N)$ is identifiable with a weak* closed subalgebra $\Delta$ of $N$, 
which is a commutative  $W^*$-algebra,
via the map $T \mapsto T(1)$.
 All computations can be done inside this
commutative  von Neumann algebra.   Indeed the ordering  of support projections $z_1,  z_2$, and their
`meet' and `join', which we met a couple of paragraphs above, are simply the standard operations $z_1 \leq z_2, z_1 \vee z_2,
z_1 \wedge z_2$ with projections, computed in the $W^*$-algebra $\Delta$.  
Of course we are specifically interested in
the weak* closed subalgebra consisting of elements
in $\Delta$ that commute with $A$.  The projections in this
subalgebra densely span a commutative  von Neumann algebra inside
$\Delta$.    

\begin{lemma} \label{unmid}  The closure of the span of  a family  $\{ J_i : i \in I \}$ of $M$-ideal ideals in 
a unital Banach algebra $A$, 
is an $M$-ideal ideal in $A$.
\end{lemma}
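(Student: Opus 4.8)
The plan is to pass to support projections and take their join in the commutative von Neumann algebra $\mathcal{Z}$ of $A$-central elements of $\Delta \cong Z(N)$, where $N = A^{**}$. Writing $z_i = s_{J_i}$ for the support projection of $J_i$, each $z_i$ is a projection in $\mathcal{Z}$, and since $\mathcal{Z}$ is a commutative $W^*$-algebra (hence monotone complete) the supremum $z = \bigvee_i z_i$ exists in $\mathcal{Z}$; it is the weak* limit of the increasing net $(z_F)$ of finite joins $z_F = \bigvee_{i \in F} z_i$, with $F$ ranging over the finite subsets of $I$. As an element of $\mathcal{Z}$, $z$ is a projection commuting with $A$; in particular $za = az$ for every $a \in A$, which (as noted just after the definition of $M$-ideal ideal) is all that the centrality hypothesis requires.

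First I would identify the bidual of $J := \overline{\mathrm{span}}\{J_i\}$ as $zN$. Since left multiplication $L_z \colon \eta \mapsto z \diamond \eta$ by $z$ in the second Arens product is weak* continuous and idempotent (as $z \diamond z = z$), its range $zN$ is weak* closed. Each $J_i$ satisfies $J_i \subset J_i^{\perp\perp} = z_i N \subset zN$ because $z z_i = z_i$; hence $J \subset zN \cap A$ and so $J^{\perp\perp} \subset zN$. Conversely, $J^{\perp\perp}$ is a weak*-closed ideal of $N$ containing each $z_i$, so it contains every product and sum of the $z_i$, in particular every finite join $z_F$ (by induction, using $z_i \vee z_j = z_i + z_j - z_i z_j$); passing to the weak* limit gives $z \in J^{\perp\perp}$, whence $zN \subset J^{\perp\perp}$. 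Thus $J^{\perp\perp} = zN$.

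With $J^{\perp\perp} = zN$ in hand, it remains to exhibit the $M$-projection. The operator $T_z := \theta^{-1}(z) \in Z(N)$ is a projection in the $W^*$-algebra $Z(N)$, hence an $M$-projection on $N$, and $T_z(1) = z$. The key step is to check that the range of $T_z$ is exactly $zN = J^{\perp\perp}$: each finite $T_{z_F}$ is the $M$-projection of the finite join $\sum_{i\in F} J_i$, which by the finite-family case is left multiplication by $z_F$ with range $z_F N$, and $T_z = \bigvee_F T_{z_F}$ in $Z(N)$ since $\theta$ is a weak* homeomorphism and $*$-isomorphism carrying this supremum to $z = \bigvee_F z_F$; identifying the range of this monotone supremum of $M$-projections with the weak*-closed ideal $zN$ then shows that $J^{\perp\perp}$ is the range of the $M$-projection $T_z$. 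Consequently $J$ is an $M$-ideal in $A$ with support projection $s_J = T_z(1) = z$, and since $z \in \mathcal{Z}$ commutes with $A$, $J$ is an $M$-ideal ideal with $s_J = \bigvee_i s_{J_i}$.

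The main obstacle is this last identification, namely reconciling the abstract centralizer and $W^*$-algebra description of $T_z$ with the concrete picture of left multiplication, and verifying that the range of the monotone supremum $\bigvee_F T_{z_F}$ of $M$-projections is precisely the weak*-closed ideal $zN$. This requires care with the operator topology on $Z(N)$ and with the weak* continuity of the Arens module maps; once it is settled, the remaining assertions are formal, following the single- and finite-family cases already established above.
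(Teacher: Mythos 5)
Your overall strategy is sound and genuinely different from the paper's, but as written it has a real gap: the step you yourself call ``the main obstacle'' --- showing that the range of the $M$-projection $T_z=\theta^{-1}(z)$ is exactly $zN=J^{\perp\perp}$ --- is the entire content of the lemma, and you defer it rather than prove it. It does not follow formally from $T_z(1)=z$: an $M$-projection is not determined by its value at $1$ until one knows it acts as multiplication by that value, which is precisely what has to be established. Worse, if you try to compute $T_z(\eta)$ as the pointwise weak* limit of $T_{z_F}(\eta)=z_F\diamond\eta$, the second Arens product is weak* continuous only in its \emph{second} variable, so what you actually get is $\lim_F z_F\diamond\eta=\lim_F \eta\diamond z_F=\eta\diamond z$, i.e.\ \emph{right} multiplication by $z$; reconciling $Nz$ with $zN$ then needs the centrality of the $z_F$ and the weak* density of $A$. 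The cleaner way to close your gap avoids identifying $T_z$ with a multiplication operator at all: since $T_{z_F}\leq T_z$ in the commutative $W^*$-algebra $Z(N)$ one has $T_zT_{z_F}=T_{z_F}$, so $\mathrm{ran}(T_z)\supset z_FN=J_F^{\perp\perp}$ for every $F$; as $T_z$ is an $M$-projection on a dual space it is weak* continuous, so its range is weak* closed and therefore contains $J^{\perp\perp}$. Conversely $T_z(\eta)$ is the weak* limit of $T_{z_F}(\eta)\in J_F^{\perp\perp}\subset J^{\perp\perp}$, and $J^{\perp\perp}$ is weak* closed, so $\mathrm{ran}(T_z)\subset J^{\perp\perp}$. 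With that inserted, your argument is complete except that you only obtain $za=az$ for $a\in A$ rather than centrality of $z$ in $A^{**}$; the paper remarks that the weaker property suffices for its purposes, but the strict definition asks for more, and full centrality requires exactly the computation $z\eta=P(z\eta)=\lim_F z_F\eta=\lim_F\eta z_F=\eta z$.

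For comparison, the paper's proof sidesteps your obstacle entirely: it invokes the standard fact that the closed linear span of any family of $M$-ideals is an $M$-ideal (so that $J$ is an $M$-ideal with $M$-projection $P=\mathrm{w}^*\text{-}\lim_F P_F$ for free), and then only has to check that $z=P(1)$ is central, which it does by the displayed computation above. Your route buys a more explicit description of the support projection as $\bigvee_i z_i$ in the centralizer from the outset, and the identification $J^{\perp\perp}=zN$ you give in your second paragraph is correct and useful; but the cost is having to reconstruct by hand the piece of $M$-ideal theory the paper simply cites.
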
 \begin{proof} Let 
$\{  P_i : i \in I \}$ be the corresponding family of $M$-projections on $A^{**}$ with $z_i = P_i 1$ central in $A^{**}$.
Let $\Lambda$ be the collection of finite subsets of
$I$ ordered by inclusion.   For $F \in \Lambda$ let $J_F = \sum_{i \in F} \, J_i$, by the above 
this will be an $M$-ideal ideal  in $A$ whose  
support projection  $s_{J_F}$ corresponds to $P_F(1)$, where $P_F$ is the $M$-projection 
for $J_F$.     Next suppose that $(P_F)$ has weak* limit $P$ in $Z(N)$; by the 
theory of $M$ projections $P$ is the $M$-projection corresponding to the $M$-ideal 
$J = \overline{\sum_i \, J_i} = \overline{\sum_{F \in \Lambda} \, J_F}$.    We have $P(1) = z$ is the
weak* limit of the $(z_i)$, this is a contractive hermitian projection in the 
ideal $J^{\perp \perp}$.   For $\eta \in N$ we have
$z \eta 
\in J^{\perp \perp}$ so that $$z \eta = P (z \eta ) = \lim_i \, P_i (z \eta ) = \lim_i \, z_i z \eta
=  \lim_i \, z_i  \eta =  \lim_i \, \eta  z_i = \eta z.$$
Thus $z$ is central in $N$, and so $J$ is an  $M$-ideal ideal with support projection $z$,
and $z$ is the supremum $\vee_i \, z_i$ in $\Delta$.    \end{proof}  

Next assume that $A$ is an approximately unital Banach algebra.  We define an {\em $M$-ideal ideal} in $A$ to be a 
subspace $J$ of $A$ which is  
an $M$-ideal in $A^1$, such that $z = P1$ is central in $A^{**}$ 
 (or, as we said above, simply that  $z a = a z$ for $a \in A$,  which will then allow  $M$-approximately unital $A$ to 
always be an $M$-ideal ideal in itself).
We may then apply the theory in the last several paragraphs   to 
$A^1$; thus $N = (A^1)^{**}$ there.  Set $\Delta'$ to be the weak* closure in $\Delta$ of the
span of those projections that happen to be in $A^{**}$.  This 
is also a commutative  $W^*$-algebra.

\begin{theorem} \label{Mis2}  If $A$ is an approximately unital Banach algebra then    
the class of 
$M$-ideal ideals in $A$ forms a lattice, indeed the 
intersection of a finite number, or the closure of the 
sum of any collection, of $M$-ideal ideals is again 
an $M$-ideal ideal. 
 The correspondence between $M$-ideal ideals $J$ in $A$ and their support 
projections $s_J$ in $\Delta' \subset  A^{**}$,
is bijective and preserves order, and preserves
finite `meets' and arbitrary `joins'.   That is,
$s_{J_1 \cap J_2} = s_{J_1} s_{J_2}$ for $M$-ideal 
ideals $J_1, J_2$ in $A$; and if $\{ J_i : i \in I \}$ is any collection of $M$-ideal ideals in $A$ and $J$ is the closure of their span, then  
$s_{J}$  is the supremum in $\Delta' \subset  A^{**}$  of
$\{ s_{J_i} : i \in I \}$.  
 \end{theorem}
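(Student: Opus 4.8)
The plan is to reduce the whole statement to the unital algebra $A^1$, to which the lattice theory built up in Lemma \ref{unmid} and the paragraphs preceding it applies essentially verbatim. Write $N = (A^1)^{**}$ and recall that $A^{**}$ is identified with the ideal $A^{\perp \perp}$ of $N$, with $N = A^{\perp \perp} + \Cdb 1$. First I would observe that for a projection $z \in A^{**}$, being central in $A^{**}$ is the same as being central in $N$: such a $z$ commutes with $A^{\perp \perp} = A^{**}$ by hypothesis and trivially with $1$, hence with all of $A^{\perp \perp} + \Cdb 1 = N$. Consequently an $M$-ideal ideal $J$ of $A$ is precisely an $M$-ideal ideal of the unital algebra $A^1$ (in the sense of the unital discussion) whose support projection $z = s_J$ lies in $A^{**}$. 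Indeed $J \subseteq A$ forces $J^{\perp \perp} = zN \subseteq A^{\perp \perp}$ and so $z \in A^{**}$; conversely, if $z \in A^{\perp \perp}$ then $zN \subseteq A^{\perp \perp}$ because $A^{\perp \perp}$ is an ideal of $N$, whence $J = zN \cap A^1 \subseteq A$. Since every such $z$ is a projection of the centralizer $\Delta$ lying in $A^{**}$, it lies in $\Delta'$; this already locates all support projections inside $\Delta'$, and it shows that $J \mapsto s_J$ is injective with inverse $z \mapsto zN \cap A$ and is an order embedding via the criterion $J_1 \subseteq J_2 \Leftrightarrow z_1 z_2 = z_1$ recorded above (compare Corollary \ref{supp3}).

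Next I would dispatch finite meets. For $M$-ideal ideals $J_1, J_2$ in $A$ with support projections $z_1, z_2$, the unital discussion shows that $J_1 \cap J_2$ is an $M$-ideal ideal of $A^1$ with support projection $z_1 z_2 = z_1 \wedge z_2$. Since $z_1, z_2 \in \Delta'$ and $\Delta'$ is an algebra, while $A^{**}$ is an ideal of $N$, the product $z_1 z_2$ again lies in $\Delta' \subseteq A^{**}$ and is central in $N$. Because $J_1, J_2 \subseteq A$, the intersection formed in $A$ coincides with that formed in $A^1$, so $J_1 \cap J_2$ is an $M$-ideal ideal of $A$ and $s_{J_1 \cap J_2} = z_1 z_2 = s_{J_1} s_{J_2}$, as claimed.

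The substantive step, and the one I expect to be the main obstacle, is the arbitrary join. Given a family $\{ J_i \}$ of $M$-ideal ideals in $A$ with support projections $z_i$, Lemma \ref{unmid} applied in $A^1$ shows that $J = \overline{\sum_i J_i}$ is an $M$-ideal ideal of $A^1$ whose support projection is the supremum $z = \bigvee_i z_i$ taken in the commutative $W^*$-algebra $\Delta$, and that $z$ is central in $N$. What remains, and is the crux, is to check that $z$ falls back into $A^{**}$ --- equivalently into $\Delta'$ --- so that $J$ is genuinely an $M$-ideal ideal of $A$ rather than merely of $A^1$. For this I would run over the finite subsets $F$ of the index set and note that each finite join $z_F = \bigvee_{i \in F} z_i$ is, by inclusion--exclusion, a polynomial in the $z_i$ with no constant term; hence $z_F$ lies in the algebra $\Delta'$, and it lies in $A^{**}$ because every monomial involves at least one factor $z_i \in A^{**}$ and $A^{**}$ is an ideal of $N$. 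The net $(z_F)$ increases to $z$, so $z_F \to z$ weak* in $\Delta$, and since $\Delta'$ is weak* closed we conclude $z \in \Delta' \subseteq A^{**}$. Thus $J$ is an $M$-ideal ideal of $A$ with $s_J = z = \bigvee_i s_{J_i}$, the supremum being the same whether computed in $\Delta$ or in $\Delta'$ (the latter, being weak* closed, contains the join of any family of its projections).

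Putting these together, the class of $M$-ideal ideals of $A$ is closed under finite intersection and under closure of arbitrary sums, hence is a lattice, and $J \mapsto s_J$ is an order isomorphism onto the corresponding sublattice of projections of $\Delta'$ that preserves finite meets and arbitrary joins. Everything except the join is a transcription of the unital results to $A^1$; the genuinely delicate point is the weak* closedness of $\Delta'$, which is exactly what prevents the supremum of the support projections from escaping $A^{**}$.
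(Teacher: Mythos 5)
Your proof is correct and takes essentially the same route as the paper's, whose entire argument is to apply the unital discussion and Lemma \ref{unmid} to $A^1$ with $N=(A^1)^{**}$; you have simply filled in the details the paper leaves implicit. (The one point you single out as the crux, that the support projection of the join lies in $A^{**}$, also follows in one line without the inclusion--exclusion argument: $z = P1 \in J^{\perp\perp} \subseteq A^{\perp\perp} = A^{**}$ since $J \subseteq A$.)
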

\begin{proof}   
This result is essentially a summary of some 
facts above, these facts applied to $A^1$ instead of $A$, and with  $N = (A^1)^{**}$.   \end{proof} 

Clearly any $M$-ideal ideal in $A$ is Hahn-Banach smooth in $A^1$ \cite{HWW}, hence in $A$.

If $J$ is an $M$-ideal ideal then we call $s_J$  above a {\em central open projection} in $A^{**}$.    
Clearly such open projections $p$ are  
weak* limits of nets $x_t \in \frac{1}{2}  {\mathfrak F}_{A}$
with $p x_t = x_t p = x_t$.   However  not every
projection in $A^{**}$ which is such a weak* limit is
the support idempotent of an $M$-ideal ideal (again
see Example \ref{Ex2}).   Nonetheless we expect to generalize 
more  of the theory in \cite{BHN,BRI, BRII} of open projections
and r-ideals to this setting.  For a start, it is now clear that  sups of any collection, and inf's of finite collections,
of central open projections, are central open projections. 
 If $A$ is an $M$-approximately unital Banach algebra
then the mixed identity $e$ for $A^{**}$  of norm $1$ is a 
central open projection.

\begin{proposition} \label{lsc}  If $A$ is an approximately unital Banach algebra
then any central open projection is lower semicontinuous on $Q(A)$.
\end{proposition}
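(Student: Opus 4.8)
The plan is to generalize the argument used earlier (in the remark following Proposition~\ref{pgold}, which showed that a norm-one mixed identity is lower semicontinuous on $Q(A)$ in the scaled case) by identifying the value of a central open projection at a quasi-state with the norm of a restriction. Recall that a central open projection is by definition the support projection $p = s_J$ of an $M$-ideal ideal $J$ in $A$; as developed earlier in this section, $p$ is a central contractive hermitian projection in $(A^1)^{**}$ with $J^{\perp \perp} = p(A^1)^{**} = pA^{**}$, the $M$-projection $P$ onto $J^{\perp\perp}$ is left multiplication by $p$ in the second Arens product, and by Sinclair's theorem $W(p) \subset [0,1]$.

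First I would establish the key identity
\[
\varphi(p) = \Vert \varphi_{|J} \Vert_{J^*}, \qquad \varphi \in Q(A).
\]
Write $\varphi = t \psi$ with $\psi \in S(A)$ and $t \in [0,1]$, and let $\hat{\psi} \in S(A^1)$ extend $\psi$. Since $J$ is an $M$-ideal in $A^1$, the predual $L$-decomposition of $(A^1)^*$ gives $\Vert \hat{\psi} \Vert = \Vert \hat{\psi} \cdot p \Vert + \Vert \hat{\psi} \cdot (1-p) \Vert$, where $(\hat{\psi} \cdot p)(a) = \hat{\psi}(pa)$; this is precisely the statement that $P$ (being weak* continuous, as the second Arens product is weak* continuous in the second variable) has a pre-adjoint $L$-projection on $(A^1)^*$ with kernel $J^\perp$ and range isometric, via restriction to $J$, to $J^*$ (see \cite{HWW}). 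Since $p$ is a contractive hermitian projection with $W(p) \subset [0,1]$, both $\hat{\psi}(p)$ and $\hat{\psi}(1-p)$ lie in $[0,1]$ and sum to $\hat{\psi}(1) = 1$, so mirroring the cited remark one has
\[
1 = \Vert \hat{\psi} \Vert \geq \vert \hat{\psi}(p) \vert + \vert \hat{\psi}(1-p) \vert \geq \vert \hat{\psi}(1) \vert = 1,
\]
which forces $\Vert \hat{\psi} \cdot p \Vert = \hat{\psi}(p)$. Finally, under the isometry of the $L$-summand with $J^*$, and using $pj = j$ for $j \in J$, one gets $\Vert \hat{\psi} \cdot p \Vert = \Vert (\hat{\psi} \cdot p)_{|J} \Vert_{J^*} = \Vert \psi_{|J} \Vert_{J^*}$; multiplying by $t$ yields the displayed identity. (Here $J$ is $M$-approximately unital with $p$ its mixed identity of norm $1$, by Proposition~\ref{isfidi}, so all the structure above is available.)

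With the identity in hand the conclusion is formal: the restriction map $A^* \to J^*$, $\varphi \mapsto \varphi_{|J}$, is weak*-to-weak* continuous, and since $\mathrm{Ball}(J)$ is balanced we may write $\Vert \varphi_{|J} \Vert_{J^*} = \sup_{x \in {\rm Ball}(J)} {\rm Re}\, \varphi(x)$. Thus $\varphi \mapsto \varphi(p)$ equals a supremum of weak* continuous functions on $Q(A)$, hence is weak* lower semicontinuous there, as desired.

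The main obstacle is the key identity $\varphi(p) = \Vert \varphi_{|J} \Vert_{J^*}$, and within it the bookkeeping of the $M$-ideal duality: one must verify that the pre-adjoint $L$-projection attached to left multiplication by $p$ is exactly the one with kernel $J^\perp$ and range isometric to $J^*$, so that the forced equality $\Vert \hat{\psi} \cdot p \Vert = \hat{\psi}(p)$ can be transported to $\Vert \psi_{|J} \Vert_{J^*}$. Once the $M$-ideal structure of $J$ in $A^1$ is correctly invoked, the remaining steps are routine and the lower semicontinuity drops out immediately.
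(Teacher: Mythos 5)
Your proof is correct, but it takes a genuinely different route from the paper's. The paper handles the unital case by citing the Smith--Ward lower semicontinuity result from \cite{SW2}, and then reduces the approximately unital case to it by a weak* compactness/subnet argument: writing $\varphi_t = c_t \psi_t$, extending $\psi_t$ to states $\widehat{\psi_t}$ of $A^1$, passing to subnets so that $c_t \to s$ and $\widehat{\psi_{t_\nu}} \to \rho$ weak* in $S(A^1)$, and applying the unital result to $\rho$. You instead prove the exact identity $\varphi(p) = \Vert \varphi_{|J} \Vert_{J^*}$ on $Q(A)$ and read off lower semicontinuity as a supremum of weak* continuous functions; the forced-equality chain $1 = \Vert \hat{\psi} \cdot p \Vert + \Vert \hat{\psi} \cdot (1-p) \Vert \geq \vert \hat{\psi}(p) \vert + \vert \hat{\psi}(1-p)\vert \geq 1$ is precisely the $L$-decomposition trick the paper itself deploys in Proposition \ref{auf2} (4) and in Theorem \ref{isfa}, and the identification of the $L$-summand complementary to $J^\perp$ with $J^*$ via restriction (equivalently, uniqueness of Hahn--Banach extensions from an $M$-ideal) is standard from \cite{HWW}. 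The two small points you rightly need, and do have available, are that $\hat{\psi}(p) = \psi(p)$ since $p \in A^{\perp\perp}$ pairs with $(A^1)^*$ through restriction to $A$, and that $\hat{\psi}(p) \geq 0$ because $W(p) \subset [0,1]$ by the Sinclair hermitian argument quoted at the start of Section \ref{Mididsect}. What your approach buys is self-containedness (no appeal to \cite{SW2}) and a sharper conclusion --- the formula $\varphi(p) = \Vert \varphi_{|J}\Vert_{J^*}$ itself, which for instance immediately gives $F_p = Q(A) \cap J^\perp$ in Theorem \ref{isfa}; what the paper's route buys is brevity, since the unital statement is already in the literature.
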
 \begin{proof}  If $A$ is unital
then this result is in \cite{SW2}, and we use this below.   
Let $\varphi_t \to \varphi$ weak* in $Q(A)$,
and suppose that  $\varphi_t(p)  \leq r$ for all $t$.  Write $\varphi_t = c_t \, \psi_t$ 
for $\psi_t \in S(A)$, and let $\hat{\psi_t} \in S(A^1)$
be a state extending $\psi_t$.
By replacing by a subnet we can assume that  $c_t \to s \in [0,1]$.
  A further subnet
$\widehat{\psi_{t_\nu}} \to \rho \in S(A^1)$ 
weak*.   Thus $\varphi = s \, \rho_{|A}$, since
$$\varphi_{t_\nu}(a) = 
c_{t_\nu}  \, \psi_{t_\nu}(a) = 
c_{t_\nu} \, \widehat{\psi_{t_\nu}}(a) \to s \, \rho(a) , \qquad a \in A.$$ 
  By the result from \cite{SW2} mentioned
above, $\rho(p) \leq \liminf_\nu \,
\widehat{\psi_{t_\nu}}(p) = \liminf_\nu \, \psi_{t_\nu}(p).$
Hence $$\varphi(p) = s \rho(p) \leq \liminf_\nu \, s \,
\psi_{t_\nu}(p)
= \liminf_\nu \, c_{t_\nu} \, \psi_{t_\nu}(p)  \leq r,$$
as desired.   \end{proof}

Given a central open projection $p \in A^{**}$ we set
$F_p = \{ \varphi \in Q(A) : \varphi(p) = 0 \}$.

\begin{theorem} \label{isfa}  Suppose
that  $A$ is a scaled 
approximately unital 
Banach algebra,
 and $p$ is a central open projection in $A^{**}$,
and $J = p A^{**} \cap A$ is the corresponding 
ideal.   Then $F_p = Q(A)
\cap J^\perp$, and this is a weak* closed face
of $Q(A)$.  Moreover, the assignment $\Theta$ taking
$p \mapsto F_p$ (resp.\ 
 $J \mapsto F_p$), from the set of
central open projections (resp.\ $M$-ideal ideals of $A$)
into the set of weak* closed faces of $Q(A)$,
 is one-to-one and is a (reverse) order embedding.
Moreover, `sups' (that is, `joins' of arbitrary families)
are taken by $\Theta$ to
intersections of the corresponding faces.
 \end{theorem}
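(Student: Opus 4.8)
The plan is to reduce everything to the single identity $F_p = Q(A) \cap J^\perp$, after which the face, closedness, injectivity, order, and sup assertions all follow by soft arguments. Since $A$ is scaled, Corollary \ref{snr} tells us that $Q(A)$ is convex and weak* compact and that every $\varphi \in Q(A)$ is a restriction $\Phi|_A$ of a state $\Phi$ of $A^1$; I would regard each such $\Phi$ as a weak* continuous functional on $N := (A^1)^{**}$, so that $\varphi(p)$ is literally $\Phi(p)$ for the pairing of $p \in A^{**} \subset N$ with $\Phi$. Because $p = s_J$ is a hermitian projection of norm one with $W(p) \subset [0,1]$, each $\Phi(p)$ lies in $[0,1]$; in particular $\varphi(p) \geq 0$ for all $\varphi \in Q(A)$. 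Granting the identity, $F_p$ is weak* closed as the zero set on the compact $Q(A)$ of the weak* continuous map $\varphi \mapsto \varphi(p)$; and if $\varphi = t\varphi' + (1-t)\varphi''$ with $\varphi',\varphi'' \in Q(A)$, $t \in (0,1)$, and $\varphi(p) = 0$, then $t\varphi'(p) + (1-t)\varphi''(p) = 0$ with both summands nonnegative forces $\varphi'(p) = \varphi''(p) = 0$, so $F_p$ is a face.

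The crux is the identity itself, i.e. that for $\varphi = \Phi|_A$ one has $\Phi(p) = 0$ iff $\Phi|_J = 0$. Here I would use that $J$ is an $M$-ideal in $A^1$, so $(A^1)^* = J^\perp \oplus^1 F$ is an $L$-decomposition whose $L$-projection $P_*$ onto $F$ is the pre-adjoint of the $M$-projection $P$ on $N$. By the description of $M$-ideal ideals preceding Theorem \ref{Mis2}, $P$ is multiplication by $p$, so $\ker P_* = J^\perp$ and, evaluating at the identity, $(P_*\Phi)(1) = \langle P1, \Phi\rangle = \Phi(p)$. Writing $\Phi = \Phi_1 + \Phi_2$ with $\Phi_1 = \Phi - P_*\Phi \in J^\perp$ and $\Phi_2 = P_*\Phi \in F$, $L$-additivity together with evaluation at $1$ gives
\[
1 = \|\Phi\| = \|\Phi_1\| + \|\Phi_2\| \geq |\Phi_1(1)| + |\Phi_2(1)| \geq |\Phi(1)| = 1,
\]
so equality holds throughout and $\|\Phi_2\| = |\Phi_2(1)| = \Phi(p)$ (this mirrors the computation in the Remark after Lemma \ref{isun}). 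Hence $\Phi(p) = 0$ iff $\Phi_2 = P_*\Phi = 0$ iff $\Phi \in \ker P_* = J^\perp$ iff $\Phi|_J = 0$, which is the desired identity.

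For injectivity and the order properties I would show that $J$ is recovered as the pre-annihilator ${}^\perp F_p$ in $A$. One inclusion is automatic: $F_p \subseteq J^\perp$ gives ${}^\perp F_p \supseteq {}^\perp(J^\perp) = J$. For the reverse, given $a \in A \setminus J$ I would separate the nonzero image of $a$ in the unital Banach algebra $A^1/J$ (note $J$ is an ideal in $A^1$): by Moore's theorem (the Remark after Corollary \ref{soadd}) the states of $A^1/J$ span its dual and so separate points, hence some state pulls back along $A^1 \to A^1/J$ to a state $\Phi$ of $A^1$ vanishing on $J$ with $\Phi(a) \neq 0$; then $\Phi|_A \in Q(A) \cap J^\perp = F_p$ does not annihilate $a$. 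Thus $J = {}^\perp F_p$. Since $J \mapsto s_J = p$ is an order isomorphism (Theorem \ref{Mis2}), this makes $\Theta$ injective and a reverse order embedding, via $F_{p_1} \subseteq F_{p_2} \Leftrightarrow J_1 = {}^\perp F_{p_1} \supseteq {}^\perp F_{p_2} = J_2 \Leftrightarrow p_2 \leq p_1$.

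Finally, for a family with join $p = \bigvee_i p_i$ corresponding to $J = \overline{\sum_i J_i}$ (Theorem \ref{Mis2}), one has $J^\perp = \bigcap_i J_i^\perp$, whence $F_p = Q(A) \cap \bigcap_i J_i^\perp = \bigcap_i F_{p_i}$, so $\Theta$ carries sups to intersections. I expect the main obstacle to be the middle step: correctly handling the $L$-decomposition of $(A^1)^*$ and identifying its $L$-projection with multiplication by $p$ in order to extract $\|\Phi_2\| = \Phi(p)$, together with producing, via Moore's theorem, enough separating states of $A^1/J$ to recover $J$ from $F_p$.
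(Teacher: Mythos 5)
Your proposal is correct in substance, and for the most interesting part it takes a genuinely different route from the paper. The key identity $F_p=Q(A)\cap J^\perp$ is obtained by essentially the same mechanism as in the paper: the $L$-decomposition of $(A^1)^*$ coming from the $M$-ideal structure, plus the norm-additivity/evaluation-at-$1$ squeeze $1=\Vert \Phi_1\Vert+\Vert \Phi_2\Vert \geq |\Phi_1(1)|+|\Phi_2(1)|\geq 1$ (the paper phrases this as $\Vert \varphi\Vert = \Vert \varphi\cdot p\Vert + \Vert \varphi\cdot(1-p)\Vert$, which is the same computation). Where you diverge is in the injectivity and order-embedding claims: the paper passes to $F^1_p\subset S(A^1)$ and invokes the Smith--Ward theory of weak* closed split faces, realizing $p_1,p_2$ as affine lower semicontinuous functions on $S(A^1)$ and comparing them there. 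You instead recover the ideal directly as $J={}^\perp F_p$, using that states of the unital Banach algebra $A^1/J$ separate points (Moore's theorem, or just the equivalence of numerical radius and norm) together with Corollary \ref{snr}(iii) to see that the pulled-back state restricts to an element of $Q(A)\cap J^\perp$; injectivity and the reverse order embedding then fall out of the bijective order isomorphism $J\leftrightarrow s_J$ from Theorem \ref{Mis2}. This is cleaner, avoids the split-face machinery entirely, and all the steps check out ($J$ is indeed a closed ideal in $A^1$, so the quotient is a unital Banach algebra, and ${}^\perp(J^\perp)=J$ since $J$ is norm closed). The treatment of sups is identical to the paper's.

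One small correction: your justification for weak* closedness of $F_p$ --- that $\varphi\mapsto\varphi(p)$ is weak* continuous on $Q(A)$ --- is not right, since $p\in A^{**}$ and evaluation at $p$ is only $\sigma(A^*,A^{**})$-continuous; indeed Proposition \ref{lsc} only asserts lower semicontinuity of $p$ on $Q(A)$, and in general one cannot do better. This does not damage the proof: once you have $F_p=Q(A)\cap J^\perp$, weak* closedness is immediate because $J^\perp$ is weak* closed and $Q(A)$ is weak* compact (Corollary \ref{snr}(iv)), which is exactly the one-line argument the paper uses.
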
  \begin{proof} 
If $J = p A^{**} \cap A$ and $\varphi \in Q(A) 
\cap J^\perp$ then
$\varphi \in F_p$ since $p \in J^{\perp \perp}$.
Conversely, if $\varphi \in F_p$ has norm 1 then we 
have $$1 = \Vert \varphi \Vert = \Vert \varphi \cdot p \Vert + 
\Vert \varphi \cdot (1-p) \Vert \geq |\varphi (1-p)| = 1.$$
Thus $\varphi \cdot p = 0$, and so $\varphi \in Q(A) 
\cap J^\perp$.

If $\varphi \in F_p$ and $\varphi = t \psi_1 + (1-t) \psi_2$
for $\psi_1, \psi_2 \in Q(A)$ and $t \in [0,1]$, then 
it is clear that $\psi_1, \psi_2 \in F_p$.  So $F_p$ is a face 
of $Q(A)$.    Since $F_p = Q(A)
\cap J^\perp$ it is weak* closed.

Write $F^1_p = \{ \varphi \in S(A^1) : \varphi(p) = 0 \}$.  Suppose that $\varphi_t \to \varphi \in Q(A)$ weak*,
with $\varphi_t \in F_p$ and $\varphi \neq 0$.  Suppose that $\varphi_t = c_t \psi_t$ with $\psi_t \in S(A)$.   
We may assume that $\psi_t \in S(A^1)$, and then $\psi_t \in F^1_p$.
By \cite{SW1,SW2}, $F^1_p$ is weak* closed, so we have a weak* convergent subnet $\varphi_{t_\mu}
 \to \psi \in F^1_p$.   A further subnet of the $c_{t_\mu}$ converges to $c \in [0,1]$ say.  In fact $c \neq 0$ 
or else $\varphi_{t_\mu}$ has a norm null subnet, so that $\varphi = 0$.
Now it is clear that  $c \psi_{|A} = \varphi \in F_p$.    So $F_p$ is weak* closed.    

If we have
two central open projections $p_1 \leq p_2$ then
$w = p_2 - p_1$ is a hermitian projection in $(A^1)^{**}$,
so that as we said above $W(z) \subset [0,1]$.  Thus
it is clear that $\varphi(p_1) \leq \varphi(p_2)$ for states
$\varphi \in S(A)$.   Hence $F_{p_2} \subset F_{p_1}$.

Conversely, suppose that $F_{p_2} \subset F_{p_1}$.
If $\varphi \in F^1_{p_2}$ and $\varphi$ is nonzero
on $A$ then since it is real positive on $A$ it 
will be a positive multiple of a state $\psi$ on $A$. 
We have $\psi \in F_{p_2} \subset F_{p_1}$, so that
$\varphi \in F^1_{p_1}$.    That is, $F^1_{p_2} \subset F^1_{p_1}$.
We are now in the setting of \cite{SW1,SW2}, from where we see 
that these are split faces of $S(A^1)$, and are weak* closed.    Let $N_1 \subset N_2$ be the 
complementary split faces.   
  We may view $p_1, p_2$ as affine lower semicontinuous functions $f_1, f_2$
 on $S(A^1)$.   As in those references, we  have $f_k = 0$ on $F^1_{p_k}$, and $f_k = 1$ on $N_k$.
 From this
and the theory of split faces \cite[Section II.6]{Alf}
it is easy to see that $f_1 \leq f_2$.
 That is, $\varphi(p_2 - p_1) \geq 0$
for all $\varphi \in S(A^1)$.   By \cite{Mag} this 
is also true if $\varphi \in S((A^1)^{**})$,
and hence if $\varphi \in S(\Delta)$.  Therefore $p_1 \leq p_2$
in $\Delta$, so that indeed $p_1 \leq p_2$ in the usual ordering
of projections in $A^{**}$.       

The last assertion follows from the identity
$Q(A) \cap (\sum_i \, J_i)^\perp = \cap_i \, (Q(A) \cap J_i^\perp)$.
\end{proof}

Note that 
the support projection $s(x) \notin \Delta$ in general
if $x \in {\mathfrak F}_A$.  This can be overcome by
restricting to the class  where this  is 
true--but unfortunately this class seems
often  only 
to be interesting if $A$ is commutative.
Thus if 
 $A$ is an
approximately unital Banach algebra,
write ${\mathfrak F}'_A$ for the set of 
$x \in {\mathfrak F}_A$ such that 
multiplying on the left by $s(x)$ in the second Arens product is an $M$-projection  
on $N = (A^1)^{**}$, and $s(x)$ is commutes with $A^1$ (again the latter is
automatic if $A$ is commutative and Arens regular).   (Note that 
 if $A$ is $M$-approximately unital then multiplying on the left by $s(x)$ 
is an $M$-projection  
on $A^{**}$ iff it is an $M$-projection on $(A^1)^{**}$.) 
Define an $m$-ideal in $A$ to be an  ideal of form $\overline{EA}$ for a subset $E \subset {\mathfrak F}'_A$. 
If $A$ is also a commutative  operator algebra then the $m$-ideals in $A$ are exactly the closed ideals with a cai, by the characterization of r-ideals in \cite{BRI} (see also \cite{ER}), since in this case ${\mathfrak F}'_A = {\mathfrak F}_A$. 

\begin{proposition} \label{Mis0}  If $A$ is an
approximately  unital Banach algebra then 
any $m$-ideal in $A$ is an $M$-ideal ideal in $A$.    \end{proposition}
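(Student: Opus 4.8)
The plan is to reduce the statement to the single-generator case and then invoke the lattice result Theorem \ref{Mis2}. First I would record the elementary observation that for any subset $E \subset {\mathfrak F}'_A$ the ideal $J = \overline{EA}$ is the closure of the span of the family $\{ \overline{xA} : x \in E \}$. Indeed each $xA$ is a subspace of $A$, and $\overline{EA}$ is by definition the closed span of the products $xa$ with $x \in E$ and $a \in A$; hence $J = \overline{\sum_{x \in E} \, \overline{xA}}$, the closure of the sum of a collection of ideals of the form $\overline{xA}$.

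The heart of the argument is then to show that each such $\overline{xA}$, for $x \in {\mathfrak F}'_A$, is an $M$-ideal ideal in $A$. By Corollary \ref{lba} we have $(xA)^{\perp \perp} = s(x) A^{**}$, where $s(x)$ is the support idempotent. Since $s(x) \in A^{**} = A^{\perp \perp}$, which is a two-sided ideal in $(A^1)^{**}$, and since $s(x)$ is idempotent, left multiplication by $s(x)$ in the second Arens product is an idempotent map $L_{s(x)}$ on $(A^1)^{**}$ whose range is $s(x)(A^1)^{**} = s(x) A^{**} = (xA)^{\perp \perp}$. By the defining property of ${\mathfrak F}'_A$, this $L_{s(x)}$ is an $M$-projection on $N = (A^1)^{**}$; hence $\overline{xA}$ is an $M$-ideal in $A^1$, with associated projection $P = L_{s(x)}$. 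Its support element is $z = P(1) = s(x)$, and again by the definition of ${\mathfrak F}'_A$ this $s(x)$ commutes with $A^1$, so in particular $z a = a z$ for all $a \in A$. This is exactly the centrality condition in the definition of an $M$-ideal ideal (and it also forces $\overline{xA}$ to be a genuine two-sided ideal), so $\overline{xA}$ is an $M$-ideal ideal in $A$.

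Finally I would apply Theorem \ref{Mis2}, according to which the closure of the sum of any collection of $M$-ideal ideals in $A$ is again an $M$-ideal ideal. Applying this to the collection $\{ \overline{xA} : x \in E \}$ yields that $J = \overline{EA}$ is an $M$-ideal ideal in $A$, which is what was to be proved.

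I expect the only delicate point to be the identification of the range of $L_{s(x)}$ with $(xA)^{\perp \perp}$, which rests on the facts that $A^{**}$ is an ideal in $(A^1)^{**}$ and that $s(x)$ is idempotent; once that is in place, everything follows immediately from the definition of ${\mathfrak F}'_A$ together with Corollary \ref{lba} and the already-established lattice theorem, so no substantial new computation is required.
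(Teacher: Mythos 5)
Your proposal is correct and follows essentially the same route as the paper's proof: identify $(xA)^{\perp\perp}=s(x)A^{**}$ via Corollary \ref{lba}, use the defining property of ${\mathfrak F}'_A$ to see that left multiplication by $s(x)$ is an $M$-projection on $(A^1)^{**}$ with central range element, and then invoke Theorem \ref{Mis2} for the closed sum over $E$. Your expanded justification that the range of $L_{s(x)}$ on $(A^1)^{**}$ equals $s(x)A^{**}$ is a point the paper leaves implicit, but it is the intended argument.
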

\begin{proof}    Suppose that $x \in {\mathfrak F}'_A$.  Setting  $J_x = \overline{xA} \subset s(x) A^{**} \cap A$,
we have $J_x^{\perp \perp} = s(x) A^{**} = s(x) N$, as in the 
proof of Corollary \ref{lba}.
So $J_x =  s(x) A^{**} \cap A$ is an $M$-ideal ideal.
Then $\overline{EA} = \overline{\sum_{x \in E} \, xA}$
is also an $M$-ideal ideal by Theorem \ref{Mis2}.  \end{proof} 

The above class is perhaps also a context to which there is a natural 
generalization of some of the  results in \cite{BHN,BRI,BRII,Hay} related to
noncommutative peak interpolation, and noncommutative peak and $p$-sets (see \cite{Bnpi} for a short survey of
this topic).
However one should not expect the ensuing  theory to be particularly useful for noncommutative 
algebras since the projections in this section are all `central'.  
  
Indeed it is unlikely that one could generalize to general Banach algebras the main noncommutative peak 
interpolation results surveyed in \cite{Bnpi}, or see e.g.\ \cite{Hay,BHN,BRII,Bord}.
However we end with one nice noncommutative peak interpolation result concerning $M$-ideal ideals in general Banach algebras, which can 
also be viewed as  a `noncommutative Tietze theorem'.  In particular it also solves a problem that arose at the time
of   \cite{BRII}, and was mentioned in \cite{BRIII}, namely whether 
 ${\mathfrak r}_{A/J} = q_J({\mathfrak r}_{A})$
when $J$ is an approximately unital ideal in an 
operator algebra $A$, and $q_J : A \to A/J$ is the quotient map.  
In \cite{BRI} it was shown that 
${\mathfrak F}_{A/J} = q_J({\mathfrak F}_{A})$, and it is easy to see  that
$q_J({\mathfrak r}_{A}) \subset {\mathfrak r}_{A/J}$.  
In fact a much more general fact is true.   The main new ingredient needed  
is \cite[Theorem 3.1]{CSSW}.   Their proof of this result, while remarkable and deep,
clearly contains misstatements.   However we were able to confirm that (a small modification of)  their proof works
at least in the case of unital Banach algebras.    For the readers interest we will give a rather
different, and more direct, proof  of their full result.

Let $(X,e)$ be a pair consisting of a Banach space $X$ and an element $e\in X$ such that $\|e\|\le1$. 
Let 
\[
S_e(X) = \{ \varphi\in X^* : \|\varphi\|=1=\varphi(e) \}
\ \mbox{ and }\ 
W(x) =W_X^e(x)=\{ \varphi(x) : \varphi\in S_e(X) \}
\]
denote respectively the state space and the numerical range of $x\in X$, relative to $e$. 
Of course, these are empty if $\| e\|<1$.   Below  we write $B(\lambda,r)$ for the closed disk centered at $\lambda$ of radius $r$.  The following
formula in the Banach algebra case is attributed to Williams in \cite{BDNRII}, and it may be proved by a tiny modification of the proof at the 
end of page 1 there.

\begin{lemma} \label{LSW}  {\rm (Williams formula)} \
For every $x\in X$, one has 
\[
W(x) = \bigcap_{\lambda\in\Cdb} B(\lambda,\| x- \lambda e \|).
\]
In particular, $W_X^e(x)=W_{X^{**}}^e(x)$ for every $x\in X$. 
\end{lemma}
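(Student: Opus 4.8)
The plan is to prove the two inclusions of Williams' formula separately, with essentially all the content in the reverse inclusion, and then to read off the bidual statement as a corollary.

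For the inclusion $W(x) \subseteq \bigcap_{\lambda \in \Cdb} B(\lambda, \Vert x - \lambda e\Vert)$ I would argue directly: given $\mu = \varphi(x)$ with $\varphi \in S_e(X)$, the defining properties $\varphi(e)=1$ and $\Vert \varphi\Vert = 1$ give, for every $\lambda \in \Cdb$, that $|\mu - \lambda| = |\varphi(x) - \lambda\varphi(e)| = |\varphi(x - \lambda e)| \le \Vert x - \lambda e\Vert$, so $\mu \in B(\lambda, \Vert x - \lambda e\Vert)$. This needs nothing further.

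The reverse inclusion is the main point and the place where the work lies. Fix $\mu \in \bigcap_{\lambda} B(\lambda, \Vert x - \lambda e\Vert)$, that is $|\mu - \lambda| \le \Vert x - \lambda e\Vert$ for all $\lambda$. I would first dispose of the degenerate case $\Vert e\Vert < 1$: taking $\lambda = t > 0$ real gives $t - |\mu| \le |\mu - t| \le \Vert x - te\Vert \le \Vert x\Vert + t\Vert e\Vert$, i.e.\ $t(1 - \Vert e\Vert) \le \Vert x\Vert + |\mu|$, which is impossible for large $t$ when $\Vert e\Vert < 1$; so the intersection is empty and both sides of the claimed identity are empty. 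Assuming now $\Vert e\Vert = 1$, I would set up a Hahn--Banach argument on $Y = \spn\{e,x\}$: define the linear functional $\varphi_0$ on $Y$ by $\varphi_0(e) = 1$ and $\varphi_0(x) = \mu$ (this is consistent even when $x$ is a scalar multiple $ce$ of $e$, since then $\lambda = c$ in the hypothesis forces $\mu = c$). The crux is that $\varphi_0$ is contractive on $Y$: writing a general element with $\beta \neq 0$ as $\alpha e + \beta x = \beta(x - \lambda e)$ with $\lambda = -\alpha/\beta$, one computes $|\varphi_0(\alpha e + \beta x)| = |\beta|\,|\mu - \lambda| \le |\beta|\,\Vert x - \lambda e\Vert = \Vert \alpha e + \beta x\Vert$, which is precisely the hypothesis, while the case $\beta = 0$ reduces to $|\alpha| = |\alpha|\Vert e\Vert$. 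Extending $\varphi_0$ by Hahn--Banach to $\varphi \in X^*$ with $\Vert\varphi\Vert \le 1$, and noting that $\varphi(e) = 1$ together with $\Vert e\Vert = 1$ forces $\Vert\varphi\Vert \ge 1$, yields $\varphi \in S_e(X)$ with $\varphi(x) = \mu$, so $\mu \in W(x)$.

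Finally, for $W_X^e(x) = W_{X^{**}}^e(x)$ I would invoke that the canonical embedding $X \hookrightarrow X^{**}$ is isometric and fixes $e$ and $x$, so $\Vert x - \lambda e\Vert$ is the same whether computed in $X$ or in $X^{**}$, for every $\lambda$. Applying the formula just proved in both spaces expresses each numerical range as the same intersection $\bigcap_{\lambda} B(\lambda, \Vert x - \lambda e\Vert)$, whence they coincide. The main obstacle is the reverse inclusion: organizing the Hahn--Banach setup so that the geometric hypothesis $|\mu - \lambda| \le \Vert x - \lambda e\Vert$ is exactly the contractivity of $\varphi_0$. The edge cases $\Vert e\Vert < 1$ and $e, x$ linearly dependent are minor but do need to be recorded.
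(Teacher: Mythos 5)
Your proof is correct. The paper itself does not write out an argument for this lemma — it only cites Bonsall--Duncan \cite{BDNRII} and says the formula follows by a tiny modification of the proof at the end of page 1 there — and that reference's argument is exactly the one you give: the trivial inclusion $|\varphi(x)-\lambda| = |\varphi(x-\lambda e)| \le \Vert x - \lambda e\Vert$ in one direction, and in the other a Hahn--Banach extension of the functional defined on ${\rm span}\{e,x\}$ by $e \mapsto 1$, $x \mapsto \mu$, whose contractivity is precisely the hypothesis $|\mu - \lambda| \le \Vert x - \lambda e\Vert$; your handling of the degenerate cases $\Vert e\Vert < 1$ and $x \in \Cdb e$, and the deduction of $W_X^e(x) = W_{X^{**}}^e(x)$ from isometry of the canonical embedding, are all as intended.
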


\begin{theorem} \label{CSSW}  {\rm (Chui--Smith--Smith--Ward)}  \
Let $(X,e)$ be as above. 
Suppose that $J$ is an $M$-ideal in $X$ and $x\in X$ is such that 
$W_{X/J}^{Q(e)}(Q(x)))$ has non-empty interior, 
where $Q\colon X\to X/J$ is the quotient map. 
Then there exists $y\in J$ such that $\|x-y\|_X=\| Q(x)\|_{X/J}$ 
and $W^e_X(x-y) = W_{X/J}^{Q(e)}(Q(x))$.
\end{theorem}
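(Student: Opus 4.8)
The plan is to collapse the two requirements on $y$ into a single simultaneous best-approximation statement, solve that statement exactly in the bidual $X^{**}$ by an explicit formula coming from the $M$-structure, and then descend to $J$. First I would pass to dual language. Since $(X/J)^*=J^\perp$ isometrically via $\psi\mapsto\psi\circ Q$, we get $S_{Q(e)}(X/J)=S_e(X)\cap J^\perp$ and hence $W_{X/J}^{Q(e)}(Q(x))=\{\varphi(x):\varphi\in S_e(X)\cap J^\perp\}$. For any $y\in J$ and any $\varphi\in S_e(X)\cap J^\perp$ one has $\varphi(x-y)=\varphi(x)$, so the inclusion $W_{X/J}^{Q(e)}(Q(x))\subseteq W_X^e(x-y)$ holds automatically; the content is the reverse inclusion together with the norm identity. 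Writing $f(\lambda)=\|Q(x)-\lambda Q(e)\|_{X/J}=d(x-\lambda e,J)$, Williams' formula (Lemma~\ref{LSW}) gives $W_{X/J}^{Q(e)}(Q(x))=\bigcap_{\lambda}B(\lambda,f(\lambda))$ and $W_X^e(x-y)=\bigcap_{\lambda}B(\lambda,\|x-y-\lambda e\|)$. As always $\|x-y-\lambda e\|\ge f(\lambda)$, both the numerical-range identity and the norm identity (the case $\lambda=0$) follow at once as soon as I produce a single $y\in J$ with $\|x-\lambda e-y\|_X=f(\lambda)$ for all $\lambda\in\Cdb$, i.e. a common best approximation in $J$ to every point of the line $\{x-\lambda e\}$. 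Since both sides are $1$-Lipschitz in $\lambda$, it suffices to arrange $\|x-\lambda e-y\|\le f(\lambda)$ for $\lambda$ in a fixed countable dense subset of $\Cdb$.

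Next I would quantify the hypothesis. If $B(\mu_0,\delta)\subseteq W_{X/J}^{Q(e)}(Q(x))$ with $\delta>0$, then $B(\mu_0,\delta)\subseteq B(\lambda,f(\lambda))$ for every $\lambda$, which yields
\[
f(\lambda)\ge|\lambda-\mu_0|+\delta,\qquad \lambda\in\Cdb;
\]
call this $(\star)$ (letting $|\lambda|\to\infty$ in $(\star)$ incidentally forces $\|Q(e)\|=\|e\|=1$). Now for the bidual solution: as $J$ is an $M$-ideal in $X$, the bidual splits as an $\ell^\infty$-sum $X^{**}=J^{\perp\perp}\oplus^{\infty}\ker\rho$, where $\rho$ is the $M$-projection onto $J^{\perp\perp}$ \cite{HWW}, and for $z\in X$ one has $\|(I-\rho)z\|_{X^{**}}=d(z,J)$. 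Hence for $\eta\in J^{\perp\perp}$,
\[
\|x-\lambda e-\eta\|_{X^{**}}=\max\{\,\|\rho(x-\lambda e)-\eta\|,\ f(\lambda)\,\}.
\]
Choosing $\eta=\rho(x-\mu_0 e)\in J^{\perp\perp}$ gives $\rho(x-\lambda e)-\eta=(\mu_0-\lambda)\rho e$, so by contractivity of $\rho$ and $(\star)$, $\|\rho(x-\lambda e)-\eta\|=|\lambda-\mu_0|\,\|\rho e\|\le|\lambda-\mu_0|\le f(\lambda)$. Thus $\|x-\lambda e-\eta\|_{X^{**}}=f(\lambda)$ for every $\lambda$, and since $W_{X^{**}}^e=W_X^e$ (Lemma~\ref{LSW}) this solves the problem in $X^{**}$.

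The main obstacle is the descent from $J^{\perp\perp}$ to $J$: I must replace the bidual point $\eta$ by a genuine $y\in J$. Here I would invoke the $n$-ball property of $M$-ideals \cite{HWW}. For any finite $F\subseteq\Cdb$ with $0\in F$ and any $\epsilon>0$, the balls $B_X(x-\lambda e,f(\lambda))$, $\lambda\in F$, have the common point $x-\mu_0 e$ (by $(\star)$, since $\|(x-\mu_0 e)-(x-\lambda e)\|=|\mu_0-\lambda|\le f(\lambda)$), and each meets $J$ by proximinality of the $M$-ideal; hence there is $y_{F,\epsilon}\in J$ with $\|x-\lambda e-y_{F,\epsilon}\|\le f(\lambda)+\epsilon$ for all $\lambda\in F$. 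The remaining and hardest step is to upgrade these \emph{approximate} simultaneous best approximations to an \emph{exact} one lying in $J$: enumerating a countable dense set of $\lambda$'s and running a Cohen-type successive-approximation scheme --- at each stage correcting finitely many constraints via the $n$-ball property and absorbing the defect using strong proximinality (Lemma~\ref{prox}), with the successive corrections forming a norm-summable series --- should produce $y\in J$ with $\|x-\lambda e-y\|\le f(\lambda)$ on the dense set, and hence, by Lipschitz continuity in $\lambda$, for all $\lambda$. Controlling this iteration so that it converges in norm inside $J$ while driving every defect to $0$ is the technical heart of the argument; everything else is bookkeeping around Williams' formula and the $\ell^\infty$-decomposition of $X^{**}$.
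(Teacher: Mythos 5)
Your reduction via Williams' formula and your bidual solution are both correct; indeed your $\eta=\rho(x-\mu_0 e)$ is exactly the element $z_0=P(x-\alpha e)$ from which the paper's proof starts, and the inequality $(\star)$, the identity $\|x-\lambda e-\eta\|_{X^{**}}=\max\{\|\rho(x-\lambda e)-\eta\|,f(\lambda)\}$, and the automatic inclusion $W_{X/J}^{Q(e)}(Q(x))\subseteq W_X^e(x-y)$ are all fine. The gap is the descent step, which you correctly flag as the technical heart but do not carry out, and the scheme you sketch will not close up as described. Two issues. First, you have replaced the theorem by a strictly stronger statement: a single $y\in J$ that is simultaneously an \emph{exact} best approximant from $J$ to every point $x-\lambda e$ of the line. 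The theorem only needs $\bigcap_\lambda B(\lambda,\|x-y-\lambda e\|)=\bigcap_\lambda B(\lambda,f(\lambda))$ together with equality at $\lambda=0$; this can hold without $\|x-y-\lambda e\|=f(\lambda)$ for each individual $\lambda$, and it is not established (nor obvious) that your stronger statement is true. Second, and decisively, the passage from ``finitely many constraints up to $\epsilon$'' to ``countably many constraints exactly'' is unsupported. The $n$-ball property produces a point of $J$ in the $\epsilon$-enlarged balls but gives no control on its distance from the previously constructed approximant; Lemma \ref{prox} is a single-ball statement and repairing one ball exactly can damage the other $n$ constraints by a comparable amount; and if you simply pick $y_n\in J\cap\bigcap_{i\le n}B(x-\lambda_i e,f(\lambda_i))$ at each stage, nothing forces $\|y_{n+1}-y_n\|$ to be summable, since the shrinking convex sets $K_n$ need not stay near the previously chosen point. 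Asserting that ``the successive corrections form a norm-summable series'' is precisely what has to be proved, and no mechanism is offered that makes the corrections small.

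It is worth seeing how the paper sidesteps exactly this obstruction. At stage $n$ it does not impose any exact ball constraint; it maintains only the dilated inclusion $W(x_n)\subseteq N(W_{X/J}^{Q(e)}(Q(x)),\alpha,2^{-n})$ (a multiplicative enlargement of the target set about the interior point $\alpha$) together with a norm bound, and it corrects by the explicit element $z=2^{-n}P(x_n-\alpha e)$, whose norm is at most $3\cdot 2^{-n}\|Q(x)\|$ \emph{by construction}. The smallness of the correction is built in from the outset, and the reason such a small correction suffices is that contracting the $P$-part of $x_n$ toward $\alpha e$ by the factor $(1-2^{-n})$ exactly cancels the dilation factor $(1+2^{-n})$ in the constraint being maintained, after which Lemma \ref{ban} and strong proximinality push the bidual correction into $J$ with only another controlled dilation loss. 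Your additive-$\epsilon$ ball constraints have no analogous self-correcting structure: to salvage the common-best-approximant formulation you would need a quantitative simultaneous strong proximinality for infinitely many balls, which is not available from the cited lemmas and which the paper's argument is designed to avoid needing.
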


\begin{proof}  For a bounded convex subset $C\subset \Cdb$, $\alpha\in C$, and $\varepsilon>0$, 
we define 
\[
N(C,\alpha,\varepsilon)=\{ \alpha + (1+\varepsilon)(\gamma - \alpha) : \gamma \in C\}.
\]
It is an exercise that the $N(C,\alpha,\varepsilon)$ are open convex neighborhoods of $C$ if 
$\alpha\in {\rm int}(C)$, and they shrink as $\epsilon$ decreases.

Let $x\in X$ be given, and fix $\alpha\in  {\rm int}( W_{X/J}^{Q(e)}(Q(x)) )$.   Then $|\alpha| < \Vert Q(x) \Vert$.
Now $N(W_{X/J}^{Q(e)}(Q(x)),\alpha,1)$ is an open neighborhood of the 
compact subset $W_{X/J}^{Q(e)}(Q(x))$.  The latter equals $\bigcap_{\lambda\in\Cdb}B(\lambda, \|Q(x-\lambda e)\|_{X/J})$ by the lemma, and so 
we can find 
$0=\lambda_0,\lambda_1,\ldots,\lambda_n\in\Cdb$, and $\delta>0$, such that 
\[
\bigcap_{i}B(\lambda_i, \|Q(x-\lambda_i e)\|_{X/J}+\delta)\subset N(W_{X/J}^{Q(e)}(Q(x)),\alpha,1).
\]
Let $z_0 = P(x-\alpha e) \in J^{\perp\perp}$ and  $\lambda\in\Cdb$.   Since $P$ is an $M$-projection, 
$$\| x - z_0 - \lambda e \| = \max\{\| P((\alpha - \lambda) e) \| , \| (I-P)(x -  \lambda e + y) \| \}
, \qquad y \in J ,$$  which is dominated by 
$$ \max\{ |\lambda - \alpha |, \| Q( x -\lambda e) \|_{X/J}\} = \| Q( x -\lambda e) \|_{X/J}$$
since 
$\alpha \in \bigcap_{\lambda\in\Cdb}B(\lambda, \|Q(x-\lambda e)\|_{X/J})$.
Thus $\| x - z_0 - \lambda_i e \| < r_i$ for each $i$, where $r_i = \| Q( x -\lambda_i e) \|_{X/J} + \delta$.   
 Hence by Lemma \ref{ban} there exists $y_0\in J$ such that 
$\| x - y_0 -\lambda_i e\|< r_i$ for all $i$. 
Indeed using that lemma similarly to 
some other proofs in our paper,  if $x'\in X$ and $z\in J^{\perp\perp}$ are such that $\| z + x'\|_{X^{**}}<r$, and if
$\{ y_i\}$ is a net in $J$ which converges to $z$ weak*, 
one can find a net $\{ y_j'\}$ of convex combinations of the $y_j$ such that $y_j'\to z$ and 
$\|y_j' + x'\|_X<r$. One can iterate this procedure and obtain the same conclusion for 
any finite sequence  $x_1', \cdots , x_m' \in X$ such that $\| z + x_i'\|_{X^{**}}<r_i$ for all $i = 1, \cdots, m$. 

It follows that $x_0 = x-y_0$ satisfies 
$\| x_0 \| <\|Q(x)\|_{X/J}+\delta$, and
\[
|\varphi(x_0) - \lambda_i |= |\varphi( x- y_0-\lambda_i e )| \le\|Q(x-\lambda_i e)\|_{X/J}+\delta, \qquad
\varphi\in S_e(X).
\]
This implies 
$W_X(x_0)\subset \bigcap_{i}B(\lambda_i,\|Q(x-\lambda_i e)\|_{X/J}+\delta)
 \subset N(W_{X/J}^{Q(e)}(Q(x)),\alpha,1)$.

Now we iterate the above process, controlling the increments. If $\epsilon > 0$ let
$N(\varepsilon)$ denote the set of those $x' \in x+J \subset X$ such that 
$$\|x'\|_X\le\|Q(x)\|_{X/J}+\frac{\varepsilon}{1-\varepsilon}(\|Q(x)\|_{X/J} - |\alpha|),$$ 
and such that $W_{X}(x')\subset N(W_{X/J}^{Q(e)}(Q(x)),\alpha,\varepsilon)$.
 Note that $x_0\in N(1)$ (the first condition in the definition of $N(1)$ we treat as being vacuous). 

Claim:
For any $n = 0, 1, 2, \ldots$ and $x_n \in N(2^{-n})$, there is $x_{n+1} \in N(2^{-{(n+1)}})$ 
such that $\| x_{n+1} - x_n \| \le 3 \cdot 2^{-n} \| Q(x) \|$ when $n \geq 1$. 

Before we prove the Claim, we finish the proof of the theorem. 
Note that  if $n\geq1$ then $\|x_n\|\le 2\|Q(x)\|_{X/J}$ by the first clause in the definition 
of $N(\epsilon)$.   It follows from this and the inequality in the Claim that the norm-limit $v =\lim x_n$ 
exists in $x+J$. It satisfies $\|v\|\le\|Q(x)\|_{X/J}$ by  the first clause in the definition 
of $N(2^{-n})$, and 
$W_X(v)\subset W_{X/J}(Q(x))$ since  by  the second clause in that definition, 
$$\varphi(v)=\lim\varphi(x_n)\in \bigcap_n N(W_{X/J}^{Q(e)}(Q(x)),\alpha,2^{-n}) = W_{X/J}(Q(x)), \qquad
\varphi\in S_e(X).$$   That $W_{X/J}(Q(x)) \subset W_X(v)$ is an easy exercise.  
This completes the proof of the theorem.

To prove the  Claim, let $z = 2^{-n} P(x_n-\alpha e) \in J^{\perp\perp}$.   Using the first clause in the definition 
of $x_n \in N(2^{-n})$ we have 
$$\|z\| \leq 2^{-n}(\| x_n \| + |\alpha|) < 3 \cdot 2^{-n} \| Q(x) \|.$$   Also, $P(x_n-z) = (1-2^{-n}) x_n + 2^{-n} \alpha$, so by an argument similar to the
$M$-projection argument in the second paragraph of the proof, we have 
\[
\| x_n-z \|\le \max\{  (1-2^{-n})\|x_n\|+2^{-n}|\alpha|, \| Q(x) \|_{X/J}\} .
\]
The latter equals $\| Q(x) \|_{X/J}$, using  the first clause in the definition 
of $x_n \in N(2^{-n})$.

Suppose that $\varphi_1\in S_e(X^{**})$ with $\varphi_1\circ P=\varphi_1$. 
There exists  $\gamma\in W_{X/J}^{Q(e)}(Q(x))$ such that 
$\varphi_1(x_n)=\alpha + (1+2^{-n})(\gamma - \alpha)$, by  the second clause in the definition 
of  $x_n \in N(2^{-n})$.
Hence, one has 
\[
\varphi_1(x_n-z) = \alpha + (1-2^{-n})(\varphi_1(x_n)-\alpha)
= \alpha + (1-2^{-2n})(\gamma - \alpha),
\] and the latter is in  $W_{X/J}^{Q(e)}(Q(x))$ since it is a convex combination of $\alpha$ and
$\gamma$.
Next, suppose that $\varphi_2\in S_e(X^{**})$ with $\varphi_2\circ P=0$. 
Then $\varphi_2$ induces a `state' on $(X/J)^{**} \cong X^{**}/J^{\perp \perp}$, so that  
\[
\varphi_2(x_n-z)=\varphi_2(x_n) \in W_{(X/J)^{**}}^{Q(e)}(Q(x))=W_{X/J}^{Q(e)}(Q(x)).
\]
Thus $W_{X^{**}}^e(x_n-z)\subset W_{X/J}^{Q(e)}(Q(x))$, 
since any $\varphi\in S_e(X^{**})$ is a convex combination of $\varphi_1 = \varphi \circ P$ and $\varphi_2 = \varphi \circ (I-P)$ 
as above.   Here we are using the $L$-projection argument we have seen several times, relying on 
$$1 =  \varphi(e) = \varphi_1(e) + \varphi_2(e) \leq \Vert \varphi_1 \Vert  + \Vert \varphi_2 \Vert = 1.$$  
By the Williams formula (Lemma \ref{LSW}), 
\[
\bigcap_{\lambda\in\Cdb} B(\lambda, \| x_n-z-\lambda e\|_{X^{**}}) = W_{X^{**}}^e(x_n-z) 
\subset W_{X/J}^{Q(e)}(Q(x)).
\]
Let $\delta=2^{-(n+1)}$.  By the argument at the start of the proof
one can choose a finite sequence $\lambda_1,\ldots,\lambda_m \in\Cdb$ such that 
\[
\bigcap_i B(\lambda_i, \| x_n-z-\lambda_i e\|) 
\subset N(W_{X/J}^{Q(e)}(Q(x)),\alpha,\delta) .
\]
Choose $r_i >  \| x_n-z-\lambda_i e\|$ with $\bigcap_i B(\lambda_i, r_i) 
\subset N(W_{X/J}^{Q(e)}(Q(x)),\alpha,\delta)$.   By the argument using
 Lemma \ref{ban} in the second paragraph of the proof, we can replace $z$ in these
inequalities by an element in $J$.  Thus there exists $y \in J$ such that 
$\| y \| < 3 \cdot 2^{-n}  \| Q(x) \|$, 
$\| x_n - y \|\le \|Q(x)\|_{X/J}+\frac{\delta}{1-\delta}(\|Q(x)\|_{X/J} - |\alpha|)$,
and $$W(x_n-y) \subset \bigcap_i B(\lambda_i, \| x_n-y-\lambda_i e\|) \subset \bigcap_i B(\lambda_i, r_i)
\subset N(W_{X/J}^{Q(e)}(Q(x)),\alpha,\delta).$$ 
Hence $x_{n+1} =x_n-y\in N(\delta)$, which completes the proof of the Claim. 
\end{proof}

We next deal with the exceptional case when $W_{X/J}^{Q(e)}(Q(x)))$ has empty interior, which by convexity hapens
 exactly when it is a line segment or point.

\begin{corollary} \label{staTi}     Suppose that  $J$ is an $M$-ideal ideal (or simply an ideal 
which is an $M$-ideal) in a 
unital Banach algebra $A$.    Let $x \in A/J$ with $K = W_{A/J}(x)$.
 Then \begin{itemize} \item [(1)]  
 If 
$K$  is a point, 
 then 
there exists $a \in A$  with $\Vert a \Vert = \Vert x \Vert$ and with $W_A(a) = W_{A/J}(x)$.  
\item [(2)]    If $K = W_{A/J}(x)$ is a nontrivial line segment then {\rm (1)}
 is true `within epsilon'.  More precisely, in this case  
let $\hat{K}$ be any thin triangle with $K$ as one of the sides (so contained in a thin rectangle with side $K$).
Then there exists $a \in A$  with $\Vert a \Vert = \Vert x \Vert$ and with 
  $K \subset W_A(a) \subset \hat{K}$. \end{itemize} \end{corollary}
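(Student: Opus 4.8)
The plan is to settle the point case (1) directly and to obtain the line-segment case (2) by re-running the proof of Theorem \ref{CSSW} essentially verbatim, with the given thin triangle $\hat K$ in the role of the quotient numerical range.

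For (1), note first that $J$ is proper (else $A/J=0$), so $A/J$ is unital with identity $Q(1)$ and $\Vert Q(1)\Vert=1$ by the Neumann lemma (any $j\in J$ with $\Vert 1-j\Vert<1$ would be invertible, forcing $J=A$). The numerical range $W_{A/J}(x)$ is relative to $Q(e)=Q(1)$, hence is the ordinary numerical range of the unital algebra $A/J$, and the numerical radius is a norm equivalent to $\Vert\cdot\Vert$ there. Thus $W_{A/J}(x)=\{\alpha\}$ forces $x-\alpha Q(1)$ to have numerical radius $0$, i.e. $x=\alpha Q(1)$. I would then simply take $a=\alpha 1\in A$, which satisfies $Q(a)=x$, $W_A(a)=\{\alpha\}=W_{A/J}(x)$ and $\Vert a\Vert=|\alpha|=\Vert x\Vert$.

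For (2), I would first record a free inclusion: for \emph{any} lift $a\in A$ of $x$ one has $K\subset W_A(a)$, since for $\varphi\in S_{Q(e)}(A/J)$ the functional $\varphi\circ Q$ lies in $S_e(A)$ (it is contractive and $(\varphi\circ Q)(e)=\varphi(Q(e))=1$ forces norm one) and $(\varphi\circ Q)(a)=\varphi(x)$, so $W_{A/J}(x)=W_{A/J}(Q(a))\subset W_A(a)$. Hence only $W_A(a)\subset\hat K$, the exact norm $\Vert a\Vert=\Vert x\Vert$, and $Q(a)=x$ remain. To produce such an $a$ I would fix a lift $x'\in A$ of $x$ and an interior point $\alpha\in\mathrm{int}(\hat K)$ with $|\alpha|<\Vert x\Vert$; this is possible because the midpoint $m$ of the nondegenerate chord $K$ has $|m|<\Vert x\Vert$ (the convex function $|\cdot|$ is nonconstant on $K$, so its maximum, which is at most $\Vert x\Vert$, is attained only at an endpoint), whence nearby points of $\mathrm{int}(\hat K)$ also have modulus $<\Vert x\Vert$. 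I would then run the iteration in the proof of Theorem \ref{CSSW} with the compact convex set $\hat K\supset K$ replacing $W_{A/J}(Q(x'))=K$ everywhere, using the neighbourhoods $N(\hat K,\alpha,\varepsilon)$ and the sets $N(\varepsilon)$ built around $\hat K$. The initial step goes through because $K=\bigcap_{\lambda}B(\lambda,\Vert x-\lambda Q(1)\Vert_{A/J})\subset\mathrm{int}\,N(\hat K,\alpha,1)$, so Lemma \ref{ban} yields $x_0\in x'+J$ with $W_A(x_0)\subset N(\hat K,\alpha,1)$; every $x_n$ stays in $x'+J$, so $Q(x_n)=x$.

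The one step needing re-examination, and the main (if mild) obstacle, is the Claim in that proof. With $z=2^{-n}P(x_n-\alpha e)$ and $\varphi\in S_e(A^{**})$ decomposed as $\varphi_1=\varphi\circ P$ and $\varphi_2=\varphi\circ(I-P)$, one gets $\varphi_1(x_n-z)=\alpha+(1-2^{-2n})(\gamma-\alpha)$ for some $\gamma\in W_{A/J}(Q(x'))=K$ and $\varphi_2(x_n-z)=\varphi_2(x_n)\in W_{(A/J)^{**}}(Q(x'))=K$. Since $\alpha\in\hat K$, $\gamma\in K\subset\hat K$ and $\hat K$ is convex, the first value lies in $\hat K$, and the second lies in $K\subset\hat K$; hence $W_{A^{**}}(x_n-z)\subset\hat K$, which is all the remainder of the argument uses. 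This is exactly where convexity of $\hat K$ and $K\subset\hat K$ enter, and the norm bookkeeping in the Claim needs $\Vert x\Vert-|\alpha|\ge0$, which is why $|\alpha|<\Vert x\Vert$ was arranged. The limit $v=\lim_n x_n\in x'+J$ then has $Q(v)=x$, $W_A(v)\subset\bigcap_n N(\hat K,\alpha,2^{-n})=\hat K$, and $\Vert v\Vert\le\Vert x\Vert$; since also $\Vert x\Vert=\Vert Q(v)\Vert\le\Vert v\Vert$, in fact $\Vert v\Vert=\Vert x\Vert$. Together with the free inclusion $K\subset W_A(v)$, the element $a=v$ has all the required properties, completing (2).
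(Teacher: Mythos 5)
Part (1) of your proposal is correct and is essentially the paper's own argument: a degenerate numerical range forces $x=\alpha Q(1)$, and $a=\alpha 1$ does the job. Your ``free inclusion'' $K\subset W_A(a)$ for any lift $a$ is also correct and is implicitly what the paper uses at the end of its proof of (2).

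The gap is in part (2), at the initial step of your re-run of Theorem~\ref{CSSW} --- precisely the step you assert ``goes through''. In that proof the first approximant is obtained from $z_0=P(x-\alpha e)\in J^{\perp\perp}$, and the key estimate is
\[
\Vert x - z_0-\lambda e\Vert \;\le\; \max\{\,|\lambda-\alpha|,\ \Vert Q(x-\lambda e)\Vert\,\}\;=\;\Vert Q(x-\lambda e)\Vert ,
\]
where the last equality uses $\alpha\in\bigcap_{\lambda}B(\lambda,\Vert Q(x-\lambda e)\Vert)=W_{A/J}(x)=K$ (Williams' formula). Only because of this can Lemma~\ref{ban} produce a single $y_0\in J$ with $\Vert x-y_0-\lambda_i e\Vert<\Vert Q(x-\lambda_i e)\Vert+\delta$ \emph{simultaneously} for all the finitely many $\lambda_i$; for a single $\lambda_i$ this is just the definition of the quotient norm, and simultaneity is exactly what the $M$-projection buys. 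In your version $\alpha\in\mathrm{int}(\hat K)$, and since $K$ is a side of the triangle $\hat K$ we have $\mathrm{int}(\hat K)\cap K=\emptyset$; so $\alpha\notin K$, and by Williams there exist $\lambda$ with $|\lambda-\alpha|>\Vert Q(x-\lambda e)\Vert$, for which the estimate only gives $\Vert x-z_0-\lambda e\Vert\le|\lambda-\alpha|$, possibly exceeding the allowed radius. The inclusion $K\subset\mathrm{int}\,N(\hat K,\alpha,1)$ that you cite does not by itself produce the required $z_0$. The gap is reparable --- e.g.\ anchor the initial correction at the midpoint $\beta$ of $K$, taking $z_0=P(x'-\beta e)$ with $\beta\in K$, while keeping $\alpha\in\mathrm{int}(\hat K)$ as the dilation centre in $N(\hat K,\alpha,\varepsilon)$ and in the Claim (where, as you correctly checked, only $\alpha\in\hat K$, $|\alpha|<\Vert x\Vert$ and convexity of $\hat K$ are used) --- but as written the argument does not go through. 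The paper avoids the issue entirely by a different route: it passes to $B=A\oplus^\infty\Cdb$ and $I=J\oplus(0)$ and applies Theorem~\ref{CSSW} as a black box to $(x,\lambda)$ with $\lambda$ the apex of $\hat K$, since $W_{B/I}((x,\lambda))=\mathrm{conv}(K\cup\{\lambda\})=\hat K$ has nonempty interior; then $K\subset W_A(a)\subset\hat K$ and $\Vert a\Vert=\Vert x\Vert$ are read off from $W_B((a,\lambda))=\mathrm{conv}(W_A(a)\cup\{\lambda\})=\hat K$, extreme points, and $|\lambda|<\Vert x\Vert$.
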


\begin{proof}   If $K$  is a point, then $x$ is a scalar multiple of $1$, so this case is
obvious.  
For (2), if  $K$ is a nontrivial line segment, choose 
$\lambda$ within a small distance $\epsilon$ of the midpoint of the line.   Then replace $A$ by $B = A \oplus^\infty \Cdb$, replace $J$ by  $I = J \oplus (0)$, and consider $(x,\lambda) \in B/I$.
It is easy to see that $W_{B/I}((x,\lambda))$ is the  convex hull $\hat{K}$ of $K$ and $\lambda$.  By Theorem \ref{CSSW} there exists
$(a,\lambda) \in B$ with $W_B((a,\lambda)) = \hat{K}$.  If $\epsilon$ is small enough, we also have   $\Vert a \Vert = \Vert x \Vert$ (since then $|\lambda|$ is dominated by 
the maximum of the moduli of two numbers in the numerical range, which is dominated by 
$\Vert x \Vert \leq \Vert a \Vert$).  
However similarly  $W_B((a,\lambda))$ is the
convex hull  of $W_A(a)$ and $\lambda$, which makes the rest of the proof of (2) an easy exercise in the geometry of triangles. \end{proof}

We remark that in a previous version of our paper
the last result (and Theorem \ref{CSSW} in the unital 
Banach algebra case) was stated as a `Claim', not as a theorem.   Thus it is
referred to in \cite{Bord} as `the Claim at the end of' the present paper. 

We can now answer the open question referred to above Theorem \ref{CSSW}.

\begin{corollary}      If $A$ is an approximately unital Banach algebra, and if $J$ is an  $M$-ideal ideal in $A$,
then  ${\mathfrak r}_{A/J} = q_J({\mathfrak r}_{A})$.    In particular ${\mathfrak r}_{A/J} = q_J({\mathfrak r}_{A})$ for approximately unital 
closed two sided ideals $J$ in any (not necessarily  approximately unital)
 operator algebra $A$.
\end{corollary}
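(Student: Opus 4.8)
The plan is to treat the two inclusions separately, the easy one directly and the hard one via the noncommutative Tietze theorem (Theorem \ref{CSSW}). For $q_J({\mathfrak r}_A) \subseteq {\mathfrak r}_{A/J}$, I would note that the natural map $\bar\pi\colon A^1 \to (A/J)^1$, $a + \lambda 1 \mapsto q_J(a) + \lambda 1$, is a contractive unital homomorphism (the multiplier unitization carries the smallest unitization norm, so $\bar\pi$ is contractive), and hence $\psi \mapsto \psi \circ \bar\pi$ carries $S((A/J)^1)$ into $S(A^1)$. If $a \in {\mathfrak r}_A$ and $\psi \in S((A/J)^1)$ then $\mathrm{Re}\,\psi(q_J(a)) = \mathrm{Re}\,(\psi\circ\bar\pi)(a) \ge 0$, giving $q_J(a) \in {\mathfrak r}_{A/J}$.

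For the reverse inclusion I would reduce to the unital case: by definition of an $M$-ideal ideal, $J$ is an $M$-ideal in the unital algebra $A^1$, and I would identify $(A/J)^1$ isometrically with $A^1/J$, so that membership $\bar x \in {\mathfrak r}_{A/J}$ becomes the statement that the (algebra) numerical range $K := W_{A^1/J}(\bar x)$, relative to $\bar 1$, lies in the closed right half-plane. A lift $x \in A$ of $\bar x$ exists, and any element of $x + J$ is again in $A$; the goal is to choose the lift so that its numerical range in $A^1$ also lands in the right half-plane.

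To do this uniformly I would fatten $K$ so as to apply Theorem \ref{CSSW}, which requires a quotient numerical range with nonempty interior. Passing to $A^1 \oplus^\infty \Cdb^2$, in which $J \oplus 0$ is still an $M$-ideal, I would consider $(\bar x, \alpha, \beta)$ with $\alpha, \beta$ in the open right half-plane and off the affine line through $K$; then the quotient numerical range is $\widetilde K = \mathrm{conv}(K \cup \{\alpha,\beta\})$, which has nonempty interior and still lies in the right half-plane. Theorem \ref{CSSW} then produces $y \in J$ with $W((x - y, \alpha, \beta)) = \widetilde K$ in $A^1 \oplus^\infty \Cdb^2$. Since states of $A^1$, extended by zero on the $\Cdb^2$ summand, are states of the $\ell^\infty$-sum, one gets $W_{A^1}(x - y) \subseteq \widetilde K \subseteq \{\mathrm{Re}\ge 0\}$. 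Thus $a := x - y \in A$ satisfies $q_J(a) = \bar x$ and $W_{A^1}(a) \subseteq \{\mathrm{Re}\ge 0\}$, i.e.\ $a \in {\mathfrak r}_A$, as required. For the final assertion I would first pass to the (unique) operator-algebra unitization when $A$ has no cai, and then use that an approximately unital closed two-sided ideal in an operator algebra is an $M$-ideal with central support projection, hence an $M$-ideal ideal, by \cite{ER}.

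The step I expect to be the main obstacle is guaranteeing that the lift's numerical range stays in the right half-plane, which has two facets. First is the isometric identification $(A/J)^1 \cong A^1/J$: this is exactly the assertion that $A/J$ is an $M$-ideal in $A^1/J$ (via the Remark after Lemma \ref{hfin}), which I would extract from the $L$-summand decomposition of $(A^1)^*$ determined by the $M$-ideal $J$; only with this identification does ${\mathfrak r}_{A/J}$ coincide with $\{W_{A^1/J}(\cdot) \subseteq \{\mathrm{Re}\ge 0\}\}$. Second is the fattening: it is what lets me bypass the degenerate cases of Theorem \ref{CSSW} (where $K$ is a point or a segment, possibly meeting the imaginary axis) without the thin-triangle neighborhoods of Corollary \ref{staTi}(2) protruding into the left half-plane. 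The Williams formula (Lemma \ref{LSW}) underlies the stability of numerical ranges under bidual and quotient that makes these manipulations legitimate.
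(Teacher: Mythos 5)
Your handling of the hard inclusion in the unital setting is essentially the paper's: both arguments run the Chui--Smith--Smith--Ward theorem and deal with a numerical range of empty interior by fattening inside an $\ell^\infty$-sum with scalars. Your two-point fattening in $A^1\oplus^\infty \Cdb^2$ is a clean, uniform version of what the paper does via Corollary \ref{staTi}(2) (one added scalar near the midpoint of the segment, plus the instruction to place the thin triangle to the right of $K$); the computation $W_{X\oplus^\infty Y}((x,y))={\rm conv}(W_X(x)\cup W_Y(y))$ that both need is the same, and the easy inclusion via pulling back states along the contractive unital map $A^1\to (A/J)^1$ is fine.

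The gaps are in the reduction to the unital case. First, the identification $(A/J)^1\cong A^1/J$ is \emph{not} ``exactly the assertion that $A/J$ is an $M$-ideal in $A^1/J$'': the Remark after Lemma \ref{hfin} gives only the one-way implication ($M$-ideal in a unitization $\Rightarrow$ that unitization is the multiplier one), and the converse fails ($L^1(\Rdb)$ lies in its multiplier unitization but, by \cite[Proposition II.3.5]{HWW}, is an $M$-ideal in no proper superspace). Worse, the $M$-ideal claim you would have to prove amounts to the one-dimensional space $\Cdb\chi_0$ being an $L$-summand of $J^\perp\cong (A^1/J)^*$, which does not follow from $J$ being an $M$-ideal in $A^1$; it would essentially force $A/J$ to be $M$-approximately unital. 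The identification you need is nevertheless true when $A/J$ is nonunital, but for a different reason: it is the general fact (the last paragraph of \cite[A.4.3]{BLM}, which the paper invokes) that for an approximately unital closed ideal $J$ --- and an $M$-ideal ideal is approximately unital by Proposition \ref{isfidi} --- the quotient of the multiplier unitization is the multiplier unitization of the quotient. Second, you do not address the case where $A/J$ is unital: there $(A/J)^1=A/J$ is a proper subalgebra of $A^1/J$, so ${\mathfrak r}_{A/J}$ is computed against the unit of $A/J$ rather than against $q_J(1_{A^1})$, and the hypothesis $\bar x\in{\mathfrak r}_{A/J}$ no longer tells you that $W_{A^1/J}(\bar x)$ lies in the right half-plane, which is what your CSSW step consumes. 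The paper disposes of this case by replacing $(A,J)$ with $(A\oplus^\infty B,\, J\oplus^\infty K)$ for an auxiliary approximately unital ideal $K$ in a commutative $C^*$-algebra $B$ with $B/K$ nonunital, together with the identity ${\mathfrak r}_{A\oplus^\infty B}={\mathfrak r}_A\oplus{\mathfrak r}_B$; some such device is needed to complete your argument.
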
  

\begin{proof}    
First suppose that $A$ is unital.   We leave it as an exercise that $q_J({\mathfrak r}_{A}) \subset {\mathfrak r}_{A/J}$.
The converse inclusion  follows from 
Theorem \ref{CSSW} and Corollary \ref{staTi}  (in the line situation
take the triangle above and/or  to the right of $K$).  
Next suppose that  $A$ is a nonunital approximately unital Banach algebra, and  that $A/J$ is also nonunital.
Then by the last paragraph of 
A.4.3 in \cite{BLM}, the inclusion  $A/J \subset A^1/J$  induces an isometric isomorphism $A^1/J \cong (A/J)^1$.
The result then follows by applying the unital case to the canonical map from    
$A^1$ onto $(A/J)^1$.    If  $A/J$ was unital then one can reduce to the previous case where it is not, by considering 
the ideal $J \oplus^\infty K$ in $A \oplus^\infty B$, where $K$ is an approximately unital  ideal in (e.g.\ a commutative $C^*$-algebra)
$B$ such that $B/J$ is not unital.  For this latter trick one needs to know that ${\mathfrak r}_{A \oplus^\infty B} = \{ (x,y) \in 
A \oplus^\infty B : x \in {\mathfrak r}_{A}, y \in {\mathfrak r}_{B} \}$ for approximately unital Banach algebras, but this is an easy exercise (and a similar relation holds 
for ${\mathfrak F}_{A \oplus^\infty B}$).   

Finally, suppose that $A$ is any nonunital operator algebra and $J$ is an approximately unital 
closed ideal in $A$.   Then $J$ is an $M$-ideal in $A^1$ by \cite{ER}.   Also, by the uniqueness of the unitization 
of an operator algebra mentioned in the introduction, we have $A^1/J \cong (A/J)^1$ completely 
isometrically if $A/J$ is nonunital (see also \cite[Lemma 4.11]{Bord}).
Then the result follows again by applying the unital case to the canonical map from    
$A^1$ onto $(A/J)^1$.    If $A/J$ is unital we  can reduce to the case where it is not by the trick in the last paragraph.
\end{proof}

By the assertion about the norms in Theorem \ref{CSSW} and Corollary \ref{staTi}, we can lift elements in ${\mathfrak r}_{A/J}$ to elements in ${\mathfrak r}_A$ keeping the same norm, in the situations
considered in the corollary.

As we said,  these results may be viewed as  noncommutative peak interpolation or  noncommutative Tietze theorems.  For in the case that $A$ is a uniform algebra on a compact Hausdorff set $\Omega$,
the $M$-ideals $J$ are well known to be the closed ideals with a cai, and are exactly the functions in $A$ vanishing on some $p$-set $E \subset \Omega$
(see  \cite{SW3} and \cite[Theorem V.4.2]{HWW}).
Then $q_J$ is identifiable with the restriction map $f \mapsto f_{|E}$, and $A/J \cong \{  f_{|E} : f \in A \} \subset C(E)$.      
The lifting result in Theorems \ref{CSSW} and \ref{staTi} in this case 
 say that if $f \in A$ with $f(E) \subset C$ for a compact convex set $C$ in the plane, then there 
exists a function $g \in A$ which agrees with $f$ on $E$, which has norm $\Vert g \Vert_{\Omega} = \Vert f_{|E} \Vert_E$, and 
which has range $g(\Omega) \subset C$ (or $g(\Omega) \subset \hat{K}$ if conv$(f(E))$ is a line segment $K$, where $\hat{K}$ is a thin triangle 
given in advance,  one of whose sides is $K$).

\section{Banach algebras without cai}

If $A$ is a Banach algebra without a cai, or without any kind of bai,
 we briefly indicate here how to obtain nearly all the results from Sections
3, 4, and 7.   We give more details in a forthcoming conference proceedings survey article
\cite{B2015}, however 
the interested reader will have no trouble reconstructing this independently from the discussion below.
Namely, if $B$ is any unital Banach algebra
containing $A$, for example any unitization of $A$,
one can define  ${\mathfrak F}^B_A = \{ a \in A : \Vert 1_B - a \Vert
\leq 1 \}$, and define ${\mathfrak r}^B_A$ to be the set of $a \in A$ whose 
numerical range in $B$ is contained in the right half plane.    Also one can define
 ${\mathfrak F}_A$ (resp.\ ${\mathfrak r}_A$) to be the union of the ${\mathfrak F}^B_A$
(resp.\ ${\mathfrak r}^B_A$) over all $B$ as above.    Unfortunately it is not clear to us that
 ${\mathfrak F}_A$ and ${\mathfrak r}_A$ are always convex, which is needed in Sections 4 and 7 (indeed we often need them closed too there).   Of course  ${\mathfrak F}_A$ and ${\mathfrak r}_A$ are 
convex and closed if there is an `extremal' unitization $B$ of $A$
such that ${\mathfrak F}^B_A = {\mathfrak F}_A$ (resp.\ ${\mathfrak r}^B_A = {\mathfrak r}_A$).
This is the case with $B$ equal to the multiplier unitization if $A$ is  approximately unital, or more generally if the   left regular representation embeds $A$ isometrically in $B(A)$.

Most of the results in Sections 3, 4, and 7 of our paper then work without the approximately unital
hypothesis, if ${\mathfrak F}^B_A$ and ${\mathfrak r}^B_A$ 
are used.    In particular 
we mention the results \ref{clnth}--\ref{Bal}, \ref{hasbi}--\ref{whba},  \ref{wh}--\ref{supp3},
\ref{ws}, \ref{Aha3}--\ref{Aha}, and all  lemmas, theorems, and corollaries in Sections 4 and 7 not concerning $M$-approximately unital 
algebras. 
Every one of the statements of these results is still correct if one  drops the  approximately unital
hypothesis, but uses  ${\mathfrak F}^B_A$ and ${\mathfrak r}^B_A$ in place of
${\mathfrak F}_A$ and ${\mathfrak r}_A$.   Indeed the results just mentioned in Section 3 (and also
the first lemma in Section 4) are also correct for general Banach algebras 
if one uses ${\mathfrak F}_A$ or ${\mathfrak r}_A$ as defined in the last paragraph (the other results in Sections
4 and 7 would seem to  need  ${\mathfrak F}_A$ and ${\mathfrak r}_A$ (as defined in the last paragraph)
being closed and convex).   

Some of the results asserted in the last paragraph are obvious from the unital case of the result, and some follow by the obvious modification of
the given proof of the result.   See for example Corollary \ref{ottertoo} as an example of this.
However in some of these results  one also needs to know
that $\overline{EA} = \overline{EB}$ where $B$ is a unitization of $A$ and $E$ is
a subset of ${\mathfrak F}^B_A$ or ${\mathfrak r}^B_A$.  This follows from the following 
fact: if $x \in {\mathfrak r}_A$ as defined in the last paragraph then 
$$x \in \overline{xA} = \overline{{\rm ba}(x) \, A} =  \overline{xB},$$
for any unitization $B$ of $A$.   Indeed this is clear since by Cohen factorization 
$x \in {\rm ba}(x) = {\rm ba}(x)^2 \subset \overline{xA}$.   We also need to know that the ${\mathfrak F}$-transform, and $n$th roots, are  independent of 
the particular unitization used, but this is easy to see 
using the fact that all unitization norms are equivalent.

\bigskip

{\bf Acknowledgments.}    
We thank Charles Read for useful discussions, and for allowing us to 
take out some of the material in \cite{BRIII} for inclusion here.    We thank the referee for his careful reading of the manuscript, which was plagued by innumerable typos, and for his suggestions.
We were also  supported as participants
in the Thematic Program on Abstract Harmonic Analysis, Banach and Operator Algebras 2014 at the Fields Institute,
for which we thank the  Institute and the organizers of that program.  As we said earlier, the survey article
\cite{B2015} contains a few additional details on some of the material in the present
paper, as well as some small improvements found while this paper was in press.

\end{document}